\newtheorem{thm}{Theorem}[section]
\newtheorem{cor}[thm]{Corollary}
\newtheorem{lem}[thm]{Lemma}
\newtheorem{prop}[thm]{Proposition}
\newtheorem{quest}[thm]{Question}
\theoremstyle{definition}
\newtheorem{defn}[thm]{Definition}
\theoremstyle{property}
\theoremstyle{remark}
\newtheorem{rem}[thm]{Remark}
\newtheorem{main theorem}[thm]{Main Theorem}
\newtheorem{ex}[thm]{Example}
\numberwithin{equation}{section}
\definecolor{ceruleanblue}{rgb}{0.16, 0.32, 0.75}
\begin{document}

\title[Blow-up formulae for twisted cohomologies with  supports]{Blow-up formulae for twisted cohomologies with supports}

\author{Lingxu Meng}
\address{Department of Mathematics, North University of China, Taiyuan, Shanxi 030051, P.R. China}
\email{menglingxu@nuc.edu.cn}%

\subjclass[2010]{Primary 32L10, 55N25; Secondary  32C35, 14F25,  55R20}
\keywords{blow-up formula; self-intersection formula; twisted; Dolbeault; de Rham; cohomology with supports;
K\"{u}nneth theorem; Leray-Hirsch theorem; exceptional intersection formula}

% -----------------------------------------------------------

\begin{abstract}
  We study twisted cohomologies with paracompactifying families of supports.
  The K\"{u}nneth theorems, Leray-Hirsch theorems and self-intersection formulae are established.
  Based on these results, we eventually give explicit expressions of complex blow-up formulae for twisted Dolbeault cohomology on  arbitrary complex manifolds and the ones of generalized blow-ups  formulae for twisted de Rham cohomology  on arbitrary oriented smooth manifolds.
  These expressions are  induced by the morphisms of (simple or double) complexes of spaces of  forms and currents rather than just the maps between cohomologies, which help us to obtain the corresponding results
  for twisted Bott-Chern, Aeppli cohomologies and hypercohomologies of truncated twisted holomorphic de Rham complexes.

\end{abstract}

% -----------------------------------------------------------
\maketitle
% -----------------------------------------------------------

%==================================
\section{Introduction}
Unless stated otherwise, all manifolds are assumed to be \emph{connected}, \emph{paracompact},  all submanifolds (resp. complex submanifolds) are  assumed to be \emph{closed} (in the topological sense) \emph{embedded smooth} (resp. \emph{complex}) \emph{submanifolds   without boundary}
and set $\mathbf{k}$ $=$ $\mathbb{R}$ or $\mathbb{C}$.

The concept of blow-ups was invented by algebraic geometers in the study of birational transformations.
O. Zariski \cite{Zar} first defined it in modern language and used it to study singularities.
In complex geometry, the corresponding notion was first introduced by H. Hopf \cite{Ho}, which is said to be the \emph{complex blow-up} in the present paper.
For smooth complex algebraic varieties,   blow-ups in complex setting coincide with the ones in  algebraic setting.
Blow-up transformations play an important role in complex and algebraic geometries.
Using them, K. Kodaira \cite{K} proved the well-known embedding theorem and H. Hironaka \cite{Hi} constructed the first example of non-algebraic Moishezon threefold.
J.-P. Demailly and M. Paun \cite{DP} obtained a characterization of  the Fujiki class $\mathcal{C}$ via the complex blow-up operations.
Besides algebraic and complex settings,  blow-ups can also be defined in other geometric categories.
D. McDuff \cite{Mc} defined the symplectic blow-ups and used it to construct the examples of simply-connected non-K\"{a}hlerian symplectic manifolds.
Inspired by \cite{Mc}, S. Yang, X.-D. Yang and G. Zhao \cite{YYZ} defined
the blow-up of a locally conformally symplectic manifold along a compact induced symplectic submanifold and proved it  admits a locally conformally symplectic structure.
Generalized complex geometry unified complex geometry and symplectic geometry in one framework, which plays a significant role in string theory.
To find more examples of generalized complex manifolds, the ones have been trying to construct  blow-ups in this setting.
G. Cavalcanti and M. Gualtieri \cite{CG} showed that a blow-up exists for a generically symplectic $4$-manifold along a non-degenerate point of
complex type.
This was used to produce
new examples of generalized complex structures on $m\mathbb{C}P^2\#n\mathbb{C}P^2$ for $m$ odd.
In \cite{CG2,v2}, the condition of generalized K\"{a}hlerian blow-ups were studied.
M. Bailey, G. Cavalcanti and J. van der Leer Dur\'{a}n \cite{BCv}  introduced the concept of holomorphic ideals  to define a blow-up in the category of smooth manifolds.
They  proved that a generalized Poisson submanifold carries a canonical holomorphic ideal and gave a necessary and sufficient condition for the  blow-up of a generalized complex manifold  along a generalized Poisson submanifold  to be generalized complex.
They also proved a normal form theorem for a neighborhood of a generalized Poisson transversal and used it to define a generalized
complex blow-up of a generalized complex manifold  along a generalized Poisson transversal.
Forgetting additional structures, the underlying manifolds of blow-ups in all above settings are roughly viewed as a union of two pieces: the complement of blow-up center and the complex projectivization of normal bundle of blow-up center.
Based on this observation, we define the concept of generalized blow-ups (see Sect. 5.2.1) and investigate it.

How invariants vary under  blow-up transformations is a natural and important question.
Recently, there are some progress on the complex blow-up formulae for twisted cohomologies.
On compact complex manifolds, complex blow-up formulae were established for twisted Dolbeault cohomologies \cite{RYY,RYY2,ASTT,St1,St2}
and twisted de Rham cohomologies \cite{YZ, YYZ,  CY, Z}.
In different approaches, we \cite{M1,M2,M2.5,M3} established and explicitly expressed these formulae on arbitrary complex manifolds \emph{without the
hypothesis of compactness}.

Our first goal of the present paper is to  establish  complex blow-up formulae for twisted Dolbeault cohomology with supports in a paracompactifying family.
\begin{thm}\label{1.2}
Let $\pi:\widetilde{X}\rightarrow X$ be the complex blow-up of a complex manifold $X$ along a complex submanifold $Y$ of complex codimension $r$ with the exceptional
divisor $E$.
Suppose that  $\mathcal{E}$ is a locally free sheaf of $\mathcal{O}_X$-modules of finite rank on $X$ and $\Phi$ is a paracompactifying family of supports on
$X$.
Denote by $i_E:E\rightarrow\widetilde{X}$  the inclusion and by $h\in H^{1,1}(E)$ the Dolbeault class of a first Chern form of the universal line bundle
$\mathcal{O}_E(-1)$ over the projective bundle $\pi|_E:E=\mathbb{P}(N_{Y/X})\rightarrow Y$ associated to the normal bundle $N_{Y/X}$ of $Y$ in $X$.
Then
\begin{equation}\label{b-u-m1}
\pi^*+\sum_{i=1}^{r-1}i_{E*}\circ (h^{i-1}\cup)\circ (\pi|_E)^*
\end{equation}
gives an isomorphism
\begin{displaymath}
H_{\Phi}^{\bullet,\bullet}(X,\mathcal{E})\oplus \bigoplus_{i=1}^{r-1}H_{\Phi|_Y}^{\bullet-i,\bullet-i}(Y,i_Y^*\mathcal{E})\tilde{\rightarrow}
H_{\pi^{-1}\Phi}^{\bullet,\bullet}(\widetilde{X},\pi^*\mathcal{E})
\end{displaymath}
and
\begin{equation}\label{b-u-m2}
(\pi_*,\mbox{ }G_{h,\Phi}^{-1}\circ i_E^*,\mbox{ }...,\mbox{ }G_{h,\Phi}^{-r+1}\circ i_E^*)
\end{equation}
gives an isomorphism
\begin{equation}\label{b-u-m4}
H_{\pi^{-1}\Phi}^{\bullet,\bullet}(\widetilde{X},\pi^*\mathcal{E})\tilde{\rightarrow} H_{\Phi}^{\bullet,\bullet}(X,\mathcal{E})\oplus
\bigoplus_{i=1}^{r-1}H_{\Phi|_Y}^{\bullet-i,\bullet-i}(Y,i_Y^*\mathcal{E}),
\end{equation}
where $G_{h,\Phi}^{-i}: H_{(\pi^{-1}\Phi)|_E}^{\bullet,\bullet}(E,(\pi|_E)^*i_Y^*\mathcal{E})\rightarrow H_{\Phi|_Y}^{\bullet-i,\bullet-i}(Y,i_Y^*\mathcal{E})$
is defined in Sect. 4.3.
\end{thm}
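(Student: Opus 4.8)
The plan is to compare two Gysin (localization) long exact sequences, exploit the projective-bundle structure of $E\to Y$ through the Leray--Hirsch theorem, and finish with the self-intersection formula and a five-lemma. Concretely, I would first invoke the two Gysin sequences for twisted Dolbeault cohomology with supports in a paracompactifying family established in the earlier sections: one for the pair $(X,Y)$, containing the Gysin map $i_{Y*}\colon H^{\bullet-r,\bullet-r}_{\Phi|_Y}(Y,i_Y^*\mathcal E)\to H^{\bullet,\bullet}_{\Phi}(X,\mathcal E)$ together with the restriction morphism to $X\setminus Y$, and one for the pair $(\widetilde X,E)$, containing $i_{E*}\colon H^{\bullet-1,\bullet-1}_{(\pi^{-1}\Phi)|_E}(E,(\pi|_E)^*i_Y^*\mathcal E)\to H^{\bullet,\bullet}_{\pi^{-1}\Phi}(\widetilde X,\pi^*\mathcal E)$ together with the restriction morphism to $\widetilde X\setminus E$. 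Since $\pi$ restricts to a biholomorphism $\widetilde X\setminus E\xrightarrow{\sim}X\setminus Y$ carrying $(\pi^{-1}\Phi)|_{\widetilde X\setminus E}$ to $\Phi|_{X\setminus Y}$ and $\pi^*\mathcal E$ to $\mathcal E$, these two sequences share the common term $H^{\bullet,\bullet}_{\Phi|_{X\setminus Y}}(X\setminus Y,\mathcal E)$, their middle terms are linked by $\pi^*$, and the link between $i_{Y*}$ and $i_{E*}$ is furnished by the exceptional intersection formula of the earlier sections, namely $\pi^*\circ i_{Y*}=i_{E*}\circ\bigl(c_{r-1}(Q_{Y/X})\cup(-)\bigr)\circ(\pi|_E)^*$ for the universal quotient bundle $Q_{Y/X}$ on $E=\mathbb P(N_{Y/X})$, together with $\pi_*\circ i_{E*}=i_{Y*}\circ(\pi|_E)_*$ and the base-change identity $i_E^*\circ\pi^*=(\pi|_E)^*\circ i_Y^*$.

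Next I would bring in the projective-bundle structure. By the Leray--Hirsch theorem of Sect.~4 with the class $h$, the group $H^{\bullet,\bullet}_{(\pi^{-1}\Phi)|_E}(E,(\pi|_E)^*i_Y^*\mathcal E)$ is a free $H^{\bullet,\bullet}_{\Phi|_Y}(Y,i_Y^*\mathcal E)$-module with basis $1,h,\dots,h^{r-1}$; accordingly $(\pi|_E)^*$ is the inclusion of the summand of $h$-degree $0$, one has $(\pi|_E)_*h^{j}=0$ for $0\le j\le r-2$ and $(\pi|_E)_*h^{r-1}=\mathrm{id}$, and the operators $G^{-i}_{h,\Phi}$ of Sect.~4.3 recover (triangularly) the coefficients on $h,\dots,h^{r-1}$, so in particular $G^{-i}_{h,\Phi}\circ(\pi|_E)^*=0$ for $i\ge1$. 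I would also record the self-intersection formula for $i_E$: since $N_{E/\widetilde X}\cong\mathcal O_E(-1)$, the composite $i_E^*\circ i_{E*}$ is cup product with the Dolbeault Chern class of $\mathcal O_E(-1)$, i.e.\ (up to the chosen sign convention) with $h$.

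Now to the two maps. On one side, substituting $\pi_*\circ\pi^*=\mathrm{id}$ (valid since $\pi$ is a proper modification), $i_E^*\circ\pi^*=(\pi|_E)^*\circ i_Y^*$, $\pi_*\circ i_{E*}=i_{Y*}\circ(\pi|_E)_*$, the vanishings $(\pi|_E)_*h^{i-1}=0$ for $1\le i\le r-1$ and $G^{-j}_{h,\Phi}\circ(\pi|_E)^*=0$, and the self-intersection formula into the composite $(\ref{b-u-m2})\circ(\ref{b-u-m1})$ shows, by a routine calculation, that this composite is a block-triangular endomorphism of $H^{\bullet,\bullet}_{\Phi}(X,\mathcal E)\oplus\bigoplus_{i=1}^{r-1}H^{\bullet-i,\bullet-i}_{\Phi|_Y}(Y,i_Y^*\mathcal E)$ with identity maps along the diagonal, hence an isomorphism; in particular $(\ref{b-u-m1})$ is injective and $(\ref{b-u-m2})$ is surjective. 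On the other side, using the Leray--Hirsch identification $H^{\bullet,\bullet}_{(\pi^{-1}\Phi)|_E}(E,(\pi|_E)^*i_Y^*\mathcal E)\cong\bigoplus_{i=0}^{r-1}H^{\bullet-i,\bullet-i}_{\Phi|_Y}(Y,i_Y^*\mathcal E)$, I would assemble the $(\widetilde X,E)$-sequence and the direct sum of the $(X,Y)$-sequence with the trivial exact sequences $0\to H^{\bullet-i,\bullet-i}_{\Phi|_Y}(Y,i_Y^*\mathcal E)\xrightarrow{\mathrm{id}}H^{\bullet-i,\bullet-i}_{\Phi|_Y}(Y,i_Y^*\mathcal E)\to0$ for $1\le i\le r-1$ into a ladder whose middle vertical map is precisely $(\ref{b-u-m1})$; its outer vertical maps are the identity on the $X\setminus Y$-terms and, on the Gysin-source terms, the map induced by the Leray--Hirsch identification and the exceptional intersection formula, which is again block-triangular with identities on the diagonal and hence an isomorphism. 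The five lemma then yields that $(\ref{b-u-m1})$ is an isomorphism, and therefore $(\ref{b-u-m2})=\bigl[(\ref{b-u-m2})\circ(\ref{b-u-m1})\bigr]\circ(\ref{b-u-m1})^{-1}$ is an isomorphism as well.

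I expect the main obstacle to be making the ladder genuinely commute. This requires the exceptional intersection formula in a form adapted to twisted cohomology with paracompactifying supports --- describing exactly how $\pi^*$ intertwines the two Gysin pushforwards and the Leray--Hirsch splitting, with all the signs consistent --- and it requires carefully tracking that the families $\Phi$, $\Phi|_Y$, $\pi^{-1}\Phi$, $(\pi^{-1}\Phi)|_E$ and their restrictions to the open complements correspond under every map in sight, since it is precisely this support bookkeeping that legitimizes the common third term of the two sequences. By comparison, the verification that $(\ref{b-u-m2})\circ(\ref{b-u-m1})$ is block-triangular with identity diagonal is routine once the self-intersection and Leray--Hirsch statements are in place.
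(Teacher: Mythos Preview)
Your strategy differs from the paper's, and it contains a genuine gap: you assume the Dolbeault self-intersection formula $i_E^*\circ i_{E*}=h\cup(-)$ holds on $H^{\bullet,\bullet}_{(\pi^{-1}\Phi)|_E}(E,(\pi|_E)^*i_Y^*\mathcal E)$ in general. In the smooth/de Rham setting this is standard because every closed submanifold has a tubular neighbourhood and one can reduce to the normal bundle; but in the holomorphic/Dolbeault setting one needs a \emph{holomorphically} contractible neighbourhood of $E$ (or of $Y$), and such neighbourhoods do not exist in general. The paper proves this identity only under extra hypotheses (Lemma~\ref{key0} and Proposition~\ref{key}) and explicitly records the general case as an open question (Question~\ref{prob.1}). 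Without it, your computation that $(\ref{b-u-m2})\circ(\ref{b-u-m1})$ is block-triangular with identity diagonal breaks down, and your claim that the two maps are mutual inverses is unsupported --- indeed the paper does \emph{not} assert that $\psi_{Dol,\Phi}^{\mathcal E}$ and $\phi_{Dol,\Phi}^{\mathcal E}$ are inverse to each other.

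A second, related issue: the ``exceptional intersection formula'' $\pi^*\circ i_{Y*}=i_{E*}\circ(c_{r-1}(Q)\cup(-))\circ(\pi|_E)^*$ that you invoke to make the Gysin ladder commute is, in the paper, a \emph{consequence} of the blow-up formula (Corollary~\ref{excess intersection}, proved for de Rham), not an input to it; its Dolbeault analogue is not established beforehand. The paper avoids both difficulties by arguing for the two maps separately: $\phi_{Dol,\Phi}^{\mathcal E}$ is handled via the relative Dolbeault complex $\mathcal F_{X,Y}^{p,\bullet}$ and the projective bundle formula, while $\psi_{Dol,\Phi}^{\mathcal E}$ is handled by the gluing principle (Lemma~\ref{glued}) and Mayer--Vietoris, reducing to Stein charts where $Y$ \emph{does} have a holomorphically contractible neighbourhood and the self-intersection formula becomes available (Lemma~\ref{key0}, Lemma~\ref{special}). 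Your five-lemma idea is close in spirit to the paper's use of (\ref{commutative1}), but the commutativity you need rests precisely on the formulas that are unavailable globally in the Dolbeault setting.
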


On compact complex manifolds, S. Rao, S. Yang and X.-D. Yang \cite{RYY2} gave an expression of (\ref{b-u-m4}) for $\Phi=clt_X$ in the form (\ref{rep0}) (see Sect. 4.4.1), which is exactly  (\ref{b-u-m2}) via $\tau^{\mathcal{E}}_{Dol,clt_X}$ (see Sect. 4.4.1).
An advantage of the expressions given here is to help us understand (\ref{b-u-m4}) on the level of forms and currents rather than just on the level of
cohomologies.
The twisted Bott-Chern cohomology is a useful tool in the study  of locally conformally K\"{a}hlerian geometry \cite{OV,OVV}.
Utilizing (\ref{b-u-m1}) (\ref{b-u-m2}) and a Stelzig's result \cite{St2}, we give the  explicit complex blow-up formulae for twisted Bott-Chern, Aeppli cohomologies in Sect. 4.5.1.
Hypercohomology  of a truncated holomorphic de Rham complex is an important invariant in Hodge theory.
For instance, it is used to compute the Hodge filtration on de Rham cohomology of a compact K\"{a}hler manifold \cite{V};
it  connects with  singular cohomology and Deligne cohomology \cite{EV},  singular cohomology and integral Bott-Chern cohomology \cite{Sch}.
By use of (\ref{b-u-m1}) (\ref{b-u-m2}), we obtain the complex blow-up formula for the hypercohomology of a truncated twisted holomorphic de Rham complex on an arbitrary complex manifold,
which extends an open question of Y. Chen, S. Yang \cite{CY} and answers it in Sect. 4.5.2.

Our second goal is to establish the formulae of generalized blow-ups  for twisted de Rham cohomologies with supports in paracompactifying families  on (not necessarily compact) oriented smooth manifolds, which apply to complex, symplectic, locally conformally symplectic and generalized complex blow-ups.
\begin{thm}\label{1.3}
Let $\pi:\widetilde{X}\rightarrow X$ be a generalized blow-up of an oriented smooth manifold $X$ along an oriented smooth submanifold $Y$ of codimension $2r$ with the exceptional divisor $E$.
Assume that $\mathcal{L}$ is a  local system of $\mathbf{k}$-modules with finite rank  on $X$ and $\Phi$ is a paracompactifying family of supports on $X$.
Denote by $i_E:E\rightarrow \widetilde{X}$ the inclusion and by $h=c_1(\mathcal{O}_E(-1))\in H_{dR}^2(E,\mathbf{k})$ the first Chern form of the universal line bundle
$\mathcal{O}_E(-1)$ on the complex projectivization $\pi|_E:E=\mathbb{P}(N_{Y/X})\rightarrow Y$ associated to the normal bundle $N_{Y/X}$ of $Y$ in $X$.
Then
\begin{equation}\label{b-u-m1b}
\pi^*+\sum_{i=1}^{r-1}i_{E*}\circ (h^{i-1}\cup)\circ (\pi|_E)^*:
\end{equation}
\begin{equation}\label{b-u-m2b}
H_{\Phi}^k(X,\mathcal{L})\oplus \bigoplus_{i=1}^{r-1}H_{\Phi|_Y}^{k-2i}(Y,\mathcal{L}|_Y)\tilde{\rightarrow} H_{\pi^{-1}\Phi}^k(\widetilde{X},\pi^{-1}\mathcal{L})
\end{equation}
and
\begin{equation}\label{b-u-m3b}
(\pi_*,\mbox{ }G_{h,\Phi}^{-1}\circ i_E^*,\mbox{ }...,\mbox{ }G_{h,\Phi}^{-r+1}\circ i_E^*):
\end{equation}
\begin{equation}\label{b-u-m4b}
H_{\pi^{-1}\Phi}^{\bullet}(\widetilde{X},\pi^{-1}\mathcal{L})\rightarrow H_{\Phi}^{\bullet}(X,\mathcal{L})\oplus
\bigoplus_{i=1}^{r-1}H_{\Phi|_Y}^{\bullet-2i}(Y,\mathcal{L}|_Y),
\end{equation}
are inverse  isomorphisms, where $G_{h,\Phi}^{-i}: H_{(\pi^{-1}\Phi)|_E}^{\bullet}(E,(\pi|_E)^{-1}(\mathcal{L}|_Y))\rightarrow H_{\Phi|_Y}^{\bullet-2i}(Y,\mathcal{L}|_Y)$
is defined as in Sect. 5.1.
\end{thm}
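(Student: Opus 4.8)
The plan is to show that the map (\ref{b-u-m1b}) and the map (\ref{b-u-m3b}) are mutually inverse, by verifying that both composites are the identity. The composite $(\ref{b-u-m3b})\circ(\ref{b-u-m1b})$ is a direct computation on cohomology; the composite $(\ref{b-u-m1b})\circ(\ref{b-u-m3b})=\mathrm{id}$ is the substantial point, and it is where the geometry of the generalized blow-up enters, through an excision/Mayer--Vietoris argument together with the Leray--Hirsch theorem. The whole argument runs in parallel to that of Theorem \ref{1.2}, with the Dolbeault bidegree replaced by the single de Rham degree and sheaves of $\mathcal{O}_X$-modules replaced by local systems; I will freely use the twisted-with-supports versions (established in the earlier sections) of: the Gysin maps $i_{Y*},i_{E*},\pi_*,(\pi|_E)_*$, pull-backs and the projection formulae; the Leray--Hirsch isomorphism $\bigoplus_{j=0}^{r-1}H_{\Phi|_Y}^{\bullet-2j}(Y,\mathcal{L}|_Y)\xrightarrow{\ \sim\ }H_{(\pi^{-1}\Phi)|_E}^{\bullet}(E,(\pi|_E)^{-1}(\mathcal{L}|_Y))$, $(\beta_0,\dots,\beta_{r-1})\mapsto\sum_j h^j\cup(\pi|_E)^*\beta_j$, for the $\mathbb{C}P^{r-1}$-bundle $\pi|_E\colon E=\mathbb{P}(N_{Y/X})\to Y$; the self-intersection and exceptional intersection formulae, in particular $i_E^*\circ i_{E*}=(\,\cdot\,)\cup c_1(\mathcal{O}_E(-1))=(\,\cdot\,)\cup h$, $i_E^*\circ\pi^*=(\pi|_E)^*\circ i_Y^*$, $\pi\circ i_E=i_Y\circ(\pi|_E)$, $\pi_*\circ\pi^*=\mathrm{id}$ (degree one), and $(\pi|_E)_*(h^j)=0$ for $0\le j\le r-2$ with $(\pi|_E)_*(h^{r-1})$ a unit; and excision together with the Thom isomorphisms for the disk bundles of $N_{Y/X}$ and of $N_{E/\widetilde{X}}\cong\mathcal{O}_E(-1)$.

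For $(\ref{b-u-m3b})\circ(\ref{b-u-m1b})$, the $X$-row is $\pi_*\circ\pi^*=\mathrm{id}$ on the first summand, while $\pi_*\circ i_{E*}\circ(h^{i-1}\cup)\circ(\pi|_E)^*=i_{Y*}\circ(\pi|_E)_*\circ(h^{i-1}\cup)\circ(\pi|_E)^*=i_{Y*}\big(((\pi|_E)_*h^{i-1})\cup(\,\cdot\,)\big)=0$ for $1\le i\le r-1$, since $i-1\le r-2$; thus $\pi_*$ recovers exactly the $X$-component. The $Y$-rows are governed by $G_{h,\Phi}^{-i}\circ i_E^*$: by construction (Sect.\ 5.1) $G_{h,\Phi}^{-i}$ is the coefficient-extraction attached to the Leray--Hirsch decomposition, designed to invert the triangular system produced by $i_E^*\circ i_{E*}\circ(h^{j-1}\cup)\circ(\pi|_E)^*=(h^{j}\cup)\circ(\pi|_E)^*$ and $i_E^*\circ\pi^*=(\pi|_E)^*\circ i_Y^*$ (the latter contributes only in Leray--Hirsch degree $j=0$, hence is annihilated by every $G_{h,\Phi}^{-i}$); unwinding these identities gives that the full composite is the identity of $H_{\Phi}^{\bullet}(X,\mathcal{L})\oplus\bigoplus_{i=1}^{r-1}H_{\Phi|_Y}^{\bullet-2i}(Y,\mathcal{L}|_Y)$. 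In particular (\ref{b-u-m1b}) is injective and (\ref{b-u-m3b}) is surjective.

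For the reverse composite I would run the standard blow-up excision argument with supports. Choose a tubular neighbourhood $U$ of $Y$ in $X$ diffeomorphic to a disk bundle of $N_{Y/X}$ and put $\widetilde{U}=\pi^{-1}(U)$, which is the generalized blow-up of $U$ along $Y$ and a tubular neighbourhood of $E$, so $\widetilde{U}$ is diffeomorphic to a disk bundle of $N_{E/\widetilde{X}}\cong\mathcal{O}_E(-1)$. Since $\pi$ restricts to a diffeomorphism $\widetilde{X}\setminus E\cong X\setminus Y$ under which $\pi^{-1}\Phi$ corresponds to $\Phi$, comparison of the long exact sequences in twisted cohomology of the pairs $(\widetilde{X},\widetilde{X}\setminus E)$ and $(X,X\setminus Y)$ (for the families $\pi^{-1}\Phi$ and $\Phi$) reduces the claim to the corresponding statement for the cohomologies supported on the centres; excision replaces these by the $E$-supported cohomology of $\widetilde{U}$ and the $Y$-supported cohomology of $U$, which the Thom isomorphisms identify with $H_{(\pi^{-1}\Phi)|_E}^{\bullet-2}(E,(\pi|_E)^{-1}(\mathcal{L}|_Y))$ and $H_{\Phi|_Y}^{\bullet-2r}(Y,\mathcal{L}|_Y)$. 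By the self-intersection/exceptional intersection formula the induced map $H_{\Phi|_Y}^{\bullet-2r}(Y,\mathcal{L}|_Y)\to H_{(\pi^{-1}\Phi)|_E}^{\bullet-2}(E,(\pi|_E)^{-1}(\mathcal{L}|_Y))$ is, in the Leray--Hirsch decomposition, triangular with leading term $(h^{r-1}\cup)\circ(\pi|_E)^*$ (lower-order corrections involving the Chern classes of $N_{Y/X}$), hence injective with cokernel canonically $\bigoplus_{j=0}^{r-2}H_{\Phi|_Y}^{\bullet-2-2j}(Y,\mathcal{L}|_Y)=\bigoplus_{i=1}^{r-1}H_{\Phi|_Y}^{\bullet-2i}(Y,\mathcal{L}|_Y)$, spanned by the classes $i_{E*}(h^{i-1}\cup(\pi|_E)^*(-))$. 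Feeding this back through the five lemma shows that for every $\gamma\in H_{\pi^{-1}\Phi}^{\bullet}(\widetilde{X},\pi^{-1}\mathcal{L})$ the element $\gamma-\pi^*\pi_*\gamma$ lies in the image of the $E$-supported cohomology and equals $\sum_{i=1}^{r-1}i_{E*}\big(h^{i-1}\cup(\pi|_E)^*G_{h,\Phi}^{-i}(i_E^*\gamma)\big)$, i.e.\ $(\ref{b-u-m1b})\circ(\ref{b-u-m3b})=\mathrm{id}$. Hence (\ref{b-u-m1b}) and (\ref{b-u-m3b}) are isomorphisms, inverse to one another.

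The part needing genuine care --- rather than being genuinely hard --- is that every ingredient above (excision, the exact sequence of a pair, the Thom isomorphism, Leray--Hirsch, the projection and self-intersection formulae) must be invoked in the twisted form and for the paracompactifying families $\Phi$, $\pi^{-1}\Phi$, $\Phi|_Y$, $(\pi^{-1}\Phi)|_E$ and the local systems $\mathcal{L}$, $\pi^{-1}\mathcal{L}$, $\mathcal{L}|_Y$, $(\pi|_E)^{-1}(\mathcal{L}|_Y)$; these are precisely the versions prepared in the earlier sections, so what remains here is the bookkeeping of assembling them, done once already for Theorem \ref{1.2}. The only computational step is checking that the triangular system defining the maps $G_{h,\Phi}^{-i}$ is unipotent, hence invertible, which follows by descending induction on $i$ from $i_E^*\circ i_{E*}=(\,\cdot\,)\cup h$; working with representatives in the complexes of forms and currents throughout makes the construction natural enough to yield, as in the Dolbeault case, the consequences for the truncated and Bott--Chern/Aeppli-type variants.
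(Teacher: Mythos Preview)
Your computation of $(\ref{b-u-m3b})\circ(\ref{b-u-m1b})=\mathrm{id}$ is correct and is exactly what the paper does (the analog of Proposition~\ref{relation}): once the self-intersection formula $i_E^*i_{E*}=h\cup(\,\cdot\,)$ is available (Proposition~\ref{key}, which in the de~Rham setting holds unconditionally because a tubular neighbourhood always exists), this composite is a direct check using the projection formula and~(\ref{composit-projection}).

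Where you diverge from the paper is in the second half. You set up an excision/Thom-isomorphism comparison and try to read off $(\ref{b-u-m1b})\circ(\ref{b-u-m3b})=\mathrm{id}$ from the five lemma. Two remarks. First, the five lemma will only tell you that some map in the ladder is an isomorphism; it will not by itself hand you the specific identity $\gamma-\pi^*\pi_*\gamma=\sum_i i_{E*}\bigl(h^{i-1}\cup(\pi|_E)^*G_{h,\Phi}^{-i}(i_E^*\gamma)\bigr)$. What your argument really yields is that $(\ref{b-u-m1b})$ is an isomorphism, and then, combined with $(\ref{b-u-m3b})\circ(\ref{b-u-m1b})=\mathrm{id}$ from the first step, the inverse relation follows automatically. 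Second, the paper reaches the same conclusion by a shorter route that avoids Thom isomorphisms altogether: it uses the open--closed long exact sequence of Lemma~\ref{comm-long} (for $Y\subset X$ and $E\subset\widetilde X$), together with the fact that $\pi|_{\widetilde X\setminus E}$ is a diffeomorphism and $\pi_*\pi^*=\mathrm{id}$, to obtain via the snake lemma the cokernel diagram~(\ref{commutative2}), i.e.\ $i_E^*$ induces $\mathrm{coker}\,\pi^*\xrightarrow{\sim}\mathrm{coker}\,(\pi|_E)^*$. From this and the self-intersection formula one proves directly (as in Lemma~\ref{special}) that $(\ref{b-u-m1b})$ is injective and surjective; no description of the induced map on Thom classes or of its triangular form is needed. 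Your Thom/excision picture is of course equivalent information, but packaging it through~(\ref{commutative2}) is both lighter and already matches the Dolbeault argument the paper has in hand.
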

It is worthy to notice that, self-intersection formulae (Propositions \ref{key0}, \ref{key})  play a key role for giving the expressions (\ref{b-u-m1}) (\ref{b-u-m1b}) (Proposition \ref{special}, Sect. 5.2.3) and studying the inverse relationships of  (\ref{b-u-m1}) and (\ref{b-u-m2}),  (\ref{b-u-m1b}) and (\ref{b-u-m3b}) (Proposition \ref{relation}, Theorem \ref{1.3}).

All these  formulae are established on the cohomologies with supports in quite general families,
which apply to the usual cohomologies and the cohomologies with compact supports.
They may be useful to study the topological properties of blow-up manifolds.

%==================================
\subsection*{Acknowledgements}
The author is supported by Scientific and Technologial Innovation Programs of Higher Education Institutions in Shanxi (STIP), the Natural Science Foundation of
Shanxi Province of China (Grant No. 201901D111141) and the Fund for Shanxi ``1331KIRT".

%==================================

\section{Preliminaries}
\subsection{Families of supports}
For the readers' convenience, we collect some terminology and results on families of supports, refer to \cite[I. \S6, II. \S9, IV. \S5]{Br}.

\emph{A  family  $\Phi$  of supports} on a topological space $X$ means a family $\Phi$ of closed subsets of $X$   satisfying that:

$(1)$ any closed subset of a member of $\Phi$ is a member of $\Phi$,

$(2)$ $\Phi$ is closed under finite unions.\\
If in addition:

$(3)$ each element of $\Phi$ is paracompact,

$(4)$ each element of $\Phi$ has a closed neighborhood which is in $\Phi$,\\
then $\Phi$ is said to be \emph{a paracompactifying  family of supports} on $X$.

Let $\Phi$  be  a family  of supports on $X$. For a subset $A\subseteq X$, set $\Phi|_A:=\{K\subseteq A|\mbox{ }K\in \Phi\}$ and $\Phi\cap A:=\{K\cap A|\mbox{ }K\in \Phi\}$, which are two families of supports on $A$.
If $B\subseteq A$, $(\Phi|_A)|_B=\Phi|_B$.
For the  families $\Phi$ and $\Psi$ of supports on $X$, $\Phi\cap\Psi$ denotes the family of all closed subsets of sets of the form $K\cap L$ for $K\in \Psi$
and $L\in \Phi$.
For the  families $\Phi$ and $\Psi$ of supports on $X$, $Y$ respectively, $\Phi\times\Psi$ means the family of all closed subsets of sets of the form $K\times
L$ for $K\in \Psi$ and $L\in \Phi$.
Let $f:Y\rightarrow X$ be a continuous map of topological spaces and let $\Phi$  be a family of supports on $X$.
Denote by $f^{-1}(\Phi)$  the family of all closed subsets of sets of the form $f^{-1}(K)$  for $K\in \Phi$ .
For a continuous map $g:Z\rightarrow Y$ of topological spaces, $g^{-1}(f^{-1}\Phi)=(f\circ g)^{-1}\Phi$.

Denote by $clt_X$ and $c_X$ the families of  all closed subsets and all compact subsets of $X$  respectively.
If $X$ is paracompact, $clt_X$ is paracompactifying.
If $X$ is locally compact, $c_X$ is paracompactifying.
Let $f:X\rightarrow Y$ be a continuous map of topological spaces. Then $f^{-1}clt_Y=clt_X$.
Moreover, if $f$ is \emph{proper}, $f^{-1}c_Y=c_X$.
Let $pr_1:X\times Y\rightarrow X$ be the first projection and $\Phi$ a family of supports on $X$.
Then $pr_1^{-1}\Phi=\Phi\times clt_Y$, which is denoted by $\Phi\times Y$.

\begin{prop}\label{inverse-paracompact}
$(1)$ Let $\Phi$ be a family of supports on a topological space $X$.
For a closed subset $A\subseteq X$, $\Phi|_A=\Phi\cap A=i^{-1}\Phi$, where $i:A\rightarrow X$ is the inclusion.

$(2)$ Let  $f:X\rightarrow Y$ be a proper map of locally compact Hausdorff spaces and $\Phi$ a family of supports of $Y$.
For any  subset $A\subseteq Y$,
$(f|_{f^{-1}(A)})^{-1}(\Phi|_A)=(f^{-1}\Phi)|_{f^{-1}(A)}$.

$(3)$ Assume that $\Phi$ and $\Psi$ are families of supports on a topological space $X$.
Then $\Phi\cap \Psi=l^{-1}(\Phi\times \Psi)$, where $l:X\rightarrow X\times X$  is the diagonal map.

$(4)$ Let $\Phi$ be a paracompactifying family of supports on $X$.
For  a locally closed subset $A\subseteq X$, $\Phi|_A$ is paracompactifying  on $A$.

$(5)$ Let $f:X\rightarrow Y$ be a continuous  map of locally compact Hausdorff spaces and $\Psi$ a paracompactifying family of supports on $Y$.
Then $f^{-1}\Psi$ is  paracompactifying  on $X$.

$(6)$ Let $\Phi$ and $\Psi$ be paracompactifying families of supports on locally compact Hausdorff spaces $X$ and $Y$ respectively.
Then $\Phi\times \Psi$ is paracompactifying on $X\times Y$.
%$(6)$ Let $\Phi$ and $\Psi$ be paracompactifying families of supports on smooth manifolds $X$.
%Then $\Phi\cap \Psi$ is paracompactifying on $X$.
\end{prop}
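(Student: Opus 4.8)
The plan is to obtain all six statements by unwinding the definitions of the families $\Phi|_A$, $\Phi\cap A$, $\Phi\cap\Psi$, $\Phi\times\Psi$ and $f^{-1}\Phi$ recalled above, supplemented by a few standard point-set facts. Statements (1) and (3) are pure bookkeeping. For (1): if $K\in\Phi$ with $K\subseteq A$ then $K=K\cap A=i^{-1}(K)$, so $\Phi|_A\subseteq\Phi\cap A$ and $\Phi|_A\subseteq i^{-1}\Phi$; conversely, since $A$ is closed, $K\cap A$ is a closed subset of $K\in\Phi$, hence a member of $\Phi$ contained in $A$, giving $\Phi\cap A\subseteq\Phi|_A$; and because $\Phi|_A$ is closed under passing to closed subsets while $i^{-1}(K)=K\cap A\in\Phi|_A$, also $i^{-1}\Phi\subseteq\Phi|_A$. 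For (3): from $l^{-1}(K\times L)=K\cap L$, every member of $\Phi\cap\Psi$ is a closed subset of some $l^{-1}(K\times L)$ and so lies in $l^{-1}(\Phi\times\Psi)$; conversely, if $M\in\Phi\times\Psi$ is closed with $M\subseteq K\times L$, then $l^{-1}(M)$ is closed in $X$ and contained in $K\cap L$, so its closed subsets lie in $\Phi\cap\Psi$.

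Statement (2) likewise unwinds the definitions. Put $g=f|_{f^{-1}(A)}$ and note $g^{-1}(K)=f^{-1}(K)$ for $K\subseteq A$. The inclusion $(f|_{f^{-1}(A)})^{-1}(\Phi|_A)\subseteq(f^{-1}\Phi)|_{f^{-1}(A)}$ needs only that a subset closed in $f^{-1}(A)$ and contained in the closed set $f^{-1}(K)$ is closed in $X$. For the reverse inclusion, given $C$ closed in $X$ with $C\subseteq f^{-1}(L)\cap f^{-1}(A)$ and $L\in\Phi$, take $K:=f(C)$; then $K\subseteq L\cap A$ and $C\subseteq f^{-1}(K)$, so it remains only to see that $K=f(C)$ is closed in $Y$. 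This is exactly where the hypotheses are used: a proper continuous map into a locally compact Hausdorff space is closed, so $K\in\Phi$, hence $K\in\Phi|_A$ and $C\in(f|_{f^{-1}(A)})^{-1}(\Phi|_A)$.

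For (4)--(6), it is immediate that the new family is a family of supports, so only the two additional axioms of a paracompactifying family need checking. The closed-neighborhood axiom is handled structurally: a closed neighborhood $N$ of $K$ in the source, lying in the original family, restricts (for (4)), pulls back via $f^{-1}(K)\subseteq\mathrm{int}\,f^{-1}(N)$ (for (5)), or forms a product $N\times M$ with a second such neighborhood via $K\times L\subseteq\mathrm{int}(N\times M)$ (for (6)), producing a closed neighborhood in the new family. For (4) I would first reduce, using $\Phi|_A=(\Phi|_{\overline A})|_A$, to the cases $A$ closed in $X$ (which is part (1)) and $A$ open in $\overline A$, where one uses the normality of the paracompact member $N$ to shrink a neighborhood of $K$ so that its closure lands inside $A$.

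The paracompactness axiom is the genuinely delicate ingredient, since paracompactness passes neither to arbitrary preimages nor to arbitrary products: here I would argue via the fact that a locally compact Hausdorff space is paracompact precisely when it is a topological disjoint sum of open $\sigma$-compact subspaces. This reduces (6) to the observations that a product of two $\sigma$-compact sets is $\sigma$-compact, hence Lindel\"of and hence paracompact, and that a disjoint sum of paracompact spaces is paracompact; for (4) and (5) one instead uses that the relevant members are closed subspaces of the ambient spaces, along which paracompactness is inherited. I expect this paracompactness bookkeeping in (4)--(6) to be the only genuine obstacle: everything else is a direct translation between the definitions of the families, and it is only the preservation of paracompactness under restriction to a locally closed set and pullback along a continuous map that requires care, which is why I would route that step through the $\sigma$-compact decomposition rather than transport paracompactness directly.
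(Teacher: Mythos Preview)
Your arguments for (1), (2), (3) are correct and coincide with the paper's: the paper declares (1), (3), (6) ``immediate from the definitions'' and writes out only (2), giving exactly your argument---the key step being that $f(C)$ is closed in $Y$ because a proper map into a locally compact Hausdorff space is closed. For (4) and (5) the paper simply cites Bredon (I.6.5 and IV.5.4(3), 5.5 respectively), so your direct sketches are more than the paper provides.

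There is, however, a gap in your treatment of paracompactness in (5). You write that for (4) and (5) ``the relevant members are closed subspaces of the ambient spaces, along which paracompactness is inherited.'' For (4) this is fine: a member of $\Phi|_A$ already lies in $\Phi$ and is therefore paracompact by hypothesis. But for (5) the ambient space is $X$, which is only assumed locally compact Hausdorff, \emph{not} paracompact; being closed in $X$ therefore buys you nothing. Nor does the inclusion $C\subseteq f^{-1}(K)$ help directly, since (as you yourself note) paracompactness does not pass to preimages: $K\in\Psi$ is paracompact, but $f^{-1}(K)$ need not be. Your $\sigma$-compact decomposition idea from (6) is closer to what one actually needs---decompose a closed neighborhood $N\in\Psi$ of $K$ as a disjoint sum of $\sigma$-compact opens and analyze $f^{-1}$ of the pieces---but even that requires care, and you have explicitly routed (5) away from this approach. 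This is precisely the delicate point the paper sidesteps by citing Bredon, and your sketch as written does not close it.

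Incidentally, for (6) the paper's ``immediate from the definitions'' is rather optimistic, since paracompactness of products is genuinely nontrivial; your $\sigma$-compact decomposition argument is the right way to make it honest.
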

\begin{proof}
By the definitions, we immediately get $(1)$ $(3)$ $(6)$.
For $(4)$, see \cite[I. 6.5]{Br}.
By \cite[IV. 5.4 (3), 5.5]{Br}, $(5)$ holds.
For $K\in (f|_{f^{-1}(A)})^{-1}(\Phi|_A)$, $K$ is a closed subset of  $(f|_{f^{-1}(A)})^{-1}(L)=f^{-1}(L)$ for a set $L\subseteq A$  with $L\in\Phi$.
Then $K\in f^{-1}\Phi$ and $K\subseteq f^{-1}(A)$, i.e., $K\in(f^{-1}\Phi)|_{f^{-1}(A)}$.
So $(f|_{f^{-1}(A)})^{-1}(\Phi|_A)\subseteq (f^{-1}\Phi)|_{f^{-1}(A)}$.
For $K\in(f^{-1}\Phi)|_{f^{-1}(A)}$, $K\subseteq f^{-1}(A)$ and there is a set $L$  such that $L\in\Phi$ and $K$ is a closed subset of  $f^{-1}(L)$.
Set $Z=f(K)$. Then $Z\subseteq L\in \Phi$ and $Z\subseteq A$.
Since $f$ is proper, $Z$ is closed in $Y$.
So $Z\in\Phi|_A$.
Since $K$ is a closed subset of $(f|_{f^{-1}(A)})^{-1}(Z)$, $K\in (f|_{f^{-1}(A)})^{-1}(\Phi|_A)$.
We proved $(2)$.
\end{proof}

Suppose that $\mathcal{F}$ is a sheaf on $X$ and $\Phi$ is a family of supports on $X$. Denote by $\Gamma_\Phi(X,\mathcal{F})$ the group of sections of
$\mathcal{F}$ on $X$ with supports in $\Phi$ and by $H^\bullet_\Phi(X,\mathcal{F})$ the cohomology of $\mathcal{F}$ with supports in $\Phi$.
The sheaf $\mathcal{F}$ is said to be \emph{$\Phi$-acyclic}, if  $H^p_\Phi(X,\mathcal{F})=0$ for $p>0$.
The sheaf $\mathcal{F}$ is called a \emph{$\Phi$-soft} sheaf, if the restriction map $\Gamma(X,\mathcal{F})\rightarrow \Gamma(Z,\mathcal{F})$ is surjective for
any $Z\in \Phi$.
Let $\Phi$ be a paracompactifying familiy of supports on a complex manifold $X$.
The sheaf $\mathcal{C}_X^\infty$ of germs of (real or complex valued) smooth functions on $X$ is $\Phi$-soft (\cite[II. 9.4]{Br}), so are
$\mathcal{A}_X^{p,q}$ and $\mathcal{D}_X^{\prime p,q}$  for any $p$, $q$ by \cite[II. 9.16]{Br}.
Hence $\mathcal{A}_X^{p,q}$ and $\mathcal{D}_X^{\prime p,q}$  are $\Phi$-acyclic by \cite[II. 9.11]{Br}.

\subsection{Sheaf Theory}
We recall some notations and results in sheaf theory, refer to  \cite[IV. Sects. 2, 8, 9]{Dem}.
Suppose that $U$ is any open set of $X$.
For a sheaf $\mathcal{F}$ on $X$, $\Gamma(U,\mathcal{F}^{[0]})$ is the set of all maps $f:U\rightarrow \mathcal{F}$ such that $f(x)\in\mathcal{F}_x$ for all $x\in U$.
For any $s\in\Gamma(U,\mathcal{F})$, let $\tilde{s}(x)=s_x$ for any $x\in U$, where $s_x\in\mathcal{F}_x$ is the stalk of $s$ over $x$.
Then $s\mapsto \tilde{s}$ define a natural injection $j:\mathcal{F}\rightarrow \mathcal{F}^{[0]}$ of sheaves.
Set $\mathcal{F}^{[p]}=(\mathcal{F}^{[p-1]})^{[0]}$ for  $p\geq 0$.
For all $p$, $\mathcal{F}^{[p]}$ are flabby sheaves.
The stalk $\mathcal{F}_x^{[p]}$ can be considered as the set of equivalence classes of maps $f:X^{p+1}\rightarrow \mathcal{F}$ such that $f(x_0,\ldots,x_p)\in
\mathcal{F}_{x_p}$ for any $(x_0,\ldots,x_p)\in X^{p+1}$, with such two maps are equivalent if and only if they coincide on a set of the form
\begin{displaymath}
x_0\in V,\mbox{ } x_1\in V(x_0),\mbox{ \ldots, } x_p\in V(x_0,\ldots,x_{p-1}),
\end{displaymath}
where $V$ is an open neighborhood of $x$ and $V(x_0,\ldots,x_j)$ an open neighborhood of $x_j$, depending on  $x_0,\ldots,x_j$.
Similarly, $\Gamma(U,\mathcal{F}^{[p]})$ can be considered as the set of equivalence classes of maps $f:X^{p+1}\rightarrow \mathcal{F}$ such that $f(x_0,\ldots,x_p)\in
\mathcal{F}_{x_p}$ for any $(x_0,\ldots,x_p)\in X^{p+1}$, with such two maps are equivalent if and only if they coincide on a set of the form
\begin{displaymath}
x_0\in U,\mbox{ } x_1\in V(x_0),\mbox{ \ldots, } x_p\in V(x_0,\ldots,x_{p-1}).
\end{displaymath}
For $f\in \Gamma(U,\mathcal{F}^{[p]})$, denote by $\textrm{supp}f$ the support of $f$.
Then $U-\textrm{supp}f$ is just the set of the point $x\in U$ satisfying that there exists an open neighborhood $V\subseteq U$ of $x$ such that $f:X^p\rightarrow \mathcal{F}$ is zero on a set of the form
\begin{displaymath}
x_0\in V, \mbox{ } x_1\in V(x_0),\mbox{ \ldots, } x_p\in V(x_0,\ldots,x_{p-1}).
\end{displaymath}

For $t\in \mathcal{F}_x$, let $S(t)$ be any element in $\mathcal{F}^{[0]}(X)$ whose restriction on some neighborhood $U$ of $x$ is in $\Gamma(U,\mathcal{F})$ and $S(t)(x)=t$.
Define  $d^p:\mathcal{F}^{[p]}\rightarrow \mathcal{F}^{[p+1]}$ as
\begin{displaymath}
\small{
\begin{aligned}
(d^pf)(x_0,\ldots,x_{p+1})=\sum\limits_{0\leq i\leq p}(-1)^if(x_0,\ldots,\hat{x}_i,\ldots x_{p+1})+(-1)^{p+1}S(f(x_0,\ldots,x_{p}))(x_{p+1}).
\end{aligned}}
\end{displaymath}
The definition is independent of the choice of $S$.
Moreover, $d^0\circ j=0$ and $d^{p+1}\circ d^p=0$ for any $p\geq 0$.
The complex $(\mathcal{F}^{[\bullet]},d)$ is a flabby resolution of the sheaf $\mathcal{F}$, which is called the \emph{simplicial flabby resolution} of $\mathcal{F}$ and is briefly denoted  as $\mathcal{F}^{[\bullet]}$.

For any exact sequence $\mathcal{F}\rightarrow \mathcal{G}\rightarrow \mathcal{H}$,
their resolutions $\mathcal{F}^{[\bullet]}\rightarrow
\mathcal{G}^{[\bullet]}\rightarrow \mathcal{H}^{[\bullet]}$ is exact.
So $\mathcal{F}\mapsto \mathcal{F}^{[p]}$ is an exact functor for any $p$.
%Then $\mathcal{H}^q\left((\mathcal{F}^\bullet)^{[p]}\right)=\left(\mathcal{H}^q(\mathcal{F}^\bullet)\right)^{[p]}$.
Any flabby sheaf on $X$ is $\Phi$-acyclic for any family $\Phi$  of supports on $X$,
so the functor $\mathcal{F}\mapsto \Gamma_\Phi(X,\mathcal{F}^{[p]})$ is exact for any $p$.
In particular,
\begin{equation}\label{exact-functor}
H^q\left(\Gamma_\Phi(X,(\mathcal{F}^\bullet)^{[p]})\right)=\Gamma_\Phi\left(X,(\mathcal{H}^q(\mathcal{F}^\bullet))^{[p]})\right)
\end{equation}
for a complex $\mathcal{F}^\bullet$ of sheaves on $X$,
where  $\mathcal{H}^q(\mathcal{F}^\bullet)$ denotes the $q$-th cohomological sheaf of $\mathcal{F}^\bullet$.

Let $\mathcal{F}$, $\mathcal{G}$ be sheaves of $\mathcal{O}_X$-modules (resp. $\mathbf{k}$-modules) on a complex manifold (resp. topological space) $X$ and let $\Phi$, $\Psi$ be two families of supports on $X$.
For $u\in\Gamma(X,\mathcal{F}^{[p]})$ and $v\in\Gamma(X,\mathcal{G}^{[q]})$, the cup product $u\cup v\in \Gamma(X,(\mathcal{F}\otimes_{\mathcal{O}_X} \mathcal{G})^{[p+q]})$ (resp.  $\Gamma(X,(\mathcal{F}\otimes_{\underline{\mathbf{k}}_X} \mathcal{G})^{[p+q]})$) is defined as
\begin{displaymath}
\begin{aligned}
&u\cup v(x_0,\ldots,x_{p+q})=S(u(x_0,\ldots,x_q))(x_{p+q})\otimes v(x_p,\ldots,x_{p+q})\\
&\in \mathcal{F}_{x_{p+q}}\otimes_{\mathcal{O}_{X,x_{p+q}}} \mathcal{G}_{x_{p+q}} \mbox{ (resp. $\mathcal{F}_{x_{p+q}}\otimes_{\mathbf{k}} \mathcal{G}_{x_{p+q}}$)}
\end{aligned}
\end{displaymath}
In such way, we get a $\mathbb{C}$-bilinear (resp. $\mathbf{k}$-bilinear) map
\begin{displaymath}
\Gamma_\Phi(X,\mathcal{F}^{[p]})\times \Gamma_{\Psi}(X,\mathcal{G}^{[q]})\rightarrow \Gamma_{\Phi\cap\Psi}(X,(\mathcal{F}\otimes_{\mathcal{O}_X}
\mathcal{G})^{[p+q]})
\end{displaymath}
\begin{displaymath}
(\mbox{resp. }\Gamma_\Phi(X,\mathcal{F}^{[p]})\times \Gamma_{\Psi}(X,\mathcal{G}^{[q]})\rightarrow \Gamma_{\Phi\cap\Psi}(X,(\mathcal{F}\otimes_{\underline{\mathbf{k}}_X}
\mathcal{G})^{[p+q]})\mbox{ }),
\end{displaymath}
which maps $(u,v)$ to $u\cup v$.
It induces a cup product
\begin{equation}\label{cup1}
\cup:H^p_\Phi(X,\mathcal{F})\times H^q_{\Psi}(X,\mathcal{G})\rightarrow H^{p+q}_{\Phi\cap\Psi}(X,\mathcal{F}\otimes_{\mathcal{O}_X} \mathcal{G})
\end{equation}
\begin{displaymath}
(\mbox{resp. }H^p_\Phi(X,\mathcal{F})\times H^q_{\Psi}(X,\mathcal{G})\rightarrow H^{p+q}_{\Phi\cap\Psi}(X,\mathcal{F}\otimes_{\underline{\mathbf{k}}_X} \mathcal{G})\mbox{ }).
\end{displaymath}
The wedge product of holomorphic forms gives an embedding $\wedge:\Omega_X^r\otimes_{\mathcal{O}_X}\Omega_X^s\hookrightarrow \Omega_X^{r+s}$ for $r+s\leq \textrm{dim}_{\mathbb{C}}X$.
Via this embedding, we furthermore define a \emph{second type of cup product}
\begin{equation}\label{cup2}
\begin{aligned}
\cup:&H^p_\Phi(X,\mathcal{F}\otimes_{\mathcal{O}_X}\Omega_X^r)\times H^q_{\Psi}(X,\mathcal{G}\otimes_{\mathcal{O}_X}\Omega_X^s)\rightarrow H^{p+q}_{\Phi\cap\Psi}(X,\mathcal{F}\otimes_{\mathcal{O}_X} \mathcal{G}\otimes_{\mathcal{O}_X}\Omega_X^r\otimes_{\mathcal{O}_X}\Omega_X^s)\\
&\hookrightarrow H^{p+q}_{\Phi\cap\Psi}(X,\mathcal{F}\otimes_{\mathcal{O}_X} \mathcal{G}\otimes_{\mathcal{O}_X}\Omega_X^{r+s}).
\end{aligned}
\end{equation}

Let $f:Y\rightarrow X$ be a holomorphic (resp. continuous) map of complex manifolds (resp. topological spaces) and $\mathcal{F}$ a sheaf of $\mathcal{O}_Y$-modules (resp. $\mathbf{k}$-modules) on $Y$.
For any $u\in \Gamma(X,\mathcal{F}^{[p]})$, define $f^*u\in \Gamma(Y,(f^{*}\mathcal{F})^{[p]})$ (resp. $\Gamma(Y,(f^{-1}\mathcal{F})^{[p]})$) as
\begin{displaymath}
(f^*u)(y_0,\ldots,y_p)=u(f(y_0),\ldots,f(y_p))\otimes 1 \in (f^*\mathcal{F})_{y_p}=\mathcal{F}_{f(y_p)}\otimes_{\mathcal{O}_{X,f(y_p)}} \mathcal{O}_{Y,y_p}
\end{displaymath}
\begin{displaymath}
(\mbox{resp. }(f^*u)(y_0,\ldots,y_p)=u(f(y_0),\ldots,f(y_p)) \in (f^{-1}\mathcal{F})_{y_p}=\mathcal{F}_{f(y_p)}\mbox{ }).
\end{displaymath}
Clearly, $\textrm{supp}(f^*u)\subseteq f^{-1}(\textrm{supp}u)$.
Suppose that $\Phi$ is a family of supports on $X$.
We get a morphism  $f^*:\Gamma_\Phi(X,\mathcal{F}^{[\bullet]})\rightarrow \Gamma_{f^{-1}\Phi}(Y,(f^*\mathcal{F})^{[\bullet]})$
(resp. $f^*:\Gamma_\Phi(X,\mathcal{F}^{[\bullet]})\rightarrow \Gamma_{f^{-1}\Phi}(Y,(f^{-1}\mathcal{F})^{[\bullet]})$) of complexes,
which induces a pullback
\begin{displaymath}
f^*:H_\Phi^p(X,\mathcal{F})\rightarrow H_{f^{-1}\Phi}^p(Y,f^*\mathcal{F}).
\end{displaymath}
\begin{displaymath}
(\mbox{resp. }f^*:H_\Phi^p(X,\mathcal{F})\rightarrow H_{f^{-1}\Phi}^p(Y,f^{-1}\mathcal{F})\mbox{ }).
\end{displaymath}
The pullback of holomorphic forms gives a morphism $f^*\Omega_X^q\rightarrow \Omega_Y^{q}$, hence we can define a \emph{second type of pullback}
\begin{equation}\label{pullback2}
\begin{aligned}
f^*:&H^p_\Phi(X,\mathcal{F}\otimes_{\mathcal{O}_X}\Omega_X^q)\rightarrow H^{p}_{f^{-1}\Phi}(Y,f^*\mathcal{F}\otimes_{\mathcal{O}_Y} f^*\Omega_X^q)\\
&\rightarrow H^{p}_{f^{-1}\Phi}(Y,f^*\mathcal{F}\otimes_{\mathcal{O}_Y} \Omega_Y^{q}).
\end{aligned}
\end{equation}

\begin{lem}\label{simple}
Suppose that $X$ is a topological space.

$(1)$ Let $j:U\rightarrow X$  be an inclusion of an open subset.
Assume that $\mathcal{F}$ is a sheaf on $X$ and $\mathcal{G}$ is a sheaf on $U$.
Then $(j^{-1}\mathcal{F})^{[p]}=j^{-1}(\mathcal{F}^{[p]})$ and $(j_{!}\mathcal{G})^{[p]}=j_{!}(\mathcal{G}^{[p]})$ for any $p$.

$(2)$ Let $i:Z\rightarrow X$ be an inclusion of a closed subset and $\mathcal{H}$  a sheaf on $Z$.
Then $(i_{*}\mathcal{H})^{[p]}=i_{*}(\mathcal{H}^{[p]})$ for any $p$.
\end{lem}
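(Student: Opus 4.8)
The plan is to prove each of the three identities in the case $p=0$ and then propagate it to every $p$ by a one-line induction resting on the recursion $\mathcal{F}^{[p]}=(\mathcal{F}^{[p-1]})^{[0]}$. For instance, once we know $(j^{-1}\mathcal{F})^{[0]}=j^{-1}(\mathcal{F}^{[0]})$ for every sheaf $\mathcal{F}$ on $X$, we get $(j^{-1}\mathcal{F})^{[p]}=\big((j^{-1}\mathcal{F})^{[p-1]}\big)^{[0]}=\big(j^{-1}(\mathcal{F}^{[p-1]})\big)^{[0]}=j^{-1}\big((\mathcal{F}^{[p-1]})^{[0]}\big)=j^{-1}(\mathcal{F}^{[p]})$, the two middle equalities being the inductive hypothesis and the $p=0$ case applied to the sheaf $\mathcal{F}^{[p-1]}$; the identical bootstrap handles the $i_*$-identity (applied to sheaves on $Z$) and the $j_!$-identity (applied to sheaves on $U$). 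So the lemma reduces entirely to the case $p=0$.

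For $p=0$ I would evaluate both sides on an arbitrary open set and use the concrete description recalled above: $\Gamma(V,\mathcal{A}^{[0]})$ is the set of all maps $f\colon V\to\mathcal{A}$ with $f(x)\in\mathcal{A}_x$ for all $x\in V$, equivalently $\Gamma(V,\mathcal{A}^{[0]})=\prod_{x\in V}\mathcal{A}_x$, together with the stalk formulas $(j^{-1}\mathcal{F})_x=\mathcal{F}_x$ for $x\in U$, $(i_*\mathcal{H})_x=\mathcal{H}_x$ for $x\in Z$ and $0$ for $x\notin Z$, and $(j_!\mathcal{G})_x=\mathcal{G}_x$ for $x\in U$ and $0$ for $x\notin U$. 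For $j^{-1}$: if $V\subseteq U$ is open then $\Gamma(V,(j^{-1}\mathcal{F})^{[0]})=\prod_{x\in V}\mathcal{F}_x=\Gamma(V,\mathcal{F}^{[0]})=\Gamma(V,j^{-1}(\mathcal{F}^{[0]}))$, and these identifications commute with restriction maps, so the two sheaves on $U$ coincide. For $i_*$: if $V\subseteq X$ is open then $\Gamma(V,(i_*\mathcal{H})^{[0]})=\prod_{x\in V}(i_*\mathcal{H})_x=\prod_{x\in V\cap Z}\mathcal{H}_x=\Gamma(V\cap Z,\mathcal{H}^{[0]})=\Gamma(V,i_*(\mathcal{H}^{[0]}))$, again naturally in $V$. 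If this section-chasing turns out to be awkward, a more structural route is to write $\mathcal{A}^{[0]}=p_*p^*\mathcal{A}$, with $p\colon X_{\mathrm{disc}}\to X$ the identity on the underlying set of $X$ taken with the discrete topology; then the $j^{-1}$- and $i_*$-identities fall out of topological base change along the cartesian squares relating $p$ to $j$ and to $i$, using that $p_*$ is exact.

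The step I expect to be the genuine obstacle is the remaining identity $(j_!\mathcal{G})^{[0]}=j_!(\mathcal{G}^{[0]})$, because $j_!$ is the only functor in play that carries a nontrivial condition on supports: $\Gamma(V,j_!\mathcal{H})$ is the set of sections of $\mathcal{H}$ over $V\cap U$ whose support is closed not merely in $V\cap U$ but in $V$. A discontinuous section of $(j_!\mathcal{G})^{[0]}$ over $V$ automatically vanishes at every point of $V\setminus U$ (the stalks of $j_!\mathcal{G}$ are zero there), so as a datum it is exactly a discontinuous section of $\mathcal{G}^{[0]}$ over $V\cap U$; the delicate part is to check that under this correspondence the two support conditions match, i.e. that one obtains a natural bijection $\Gamma(V,(j_!\mathcal{G})^{[0]})\xrightarrow{\sim}\Gamma(V,j_!(\mathcal{G}^{[0]}))$. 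I would carry this out by unwinding the description of the support of an element of $\mathcal{A}^{[0]}$ recalled above and comparing the closure of the non-vanishing locus taken in $V$ against the one taken in $V\cap U$, paying attention to the points of $V\setminus U$. With $p=0$ settled for all three functors, the induction of the first paragraph finishes the proof.
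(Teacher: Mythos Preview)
Your strategy matches the paper's exactly: reduce to $p=0$ via the recursion $\mathcal{F}^{[p]}=(\mathcal{F}^{[p-1]})^{[0]}$, then compare sections over an arbitrary open set using $\Gamma(V,\mathcal{A}^{[0]})=\prod_{x\in V}\mathcal{A}_x$. Your treatment of the $j^{-1}$- and $i_*$-identities is correct and is essentially what the paper does.

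You are right that $j_!$ is the genuine obstacle, but the difficulty is worse than you anticipate: the support-matching you propose to verify does not go through, and in fact the identity $(j_!\mathcal{G})^{[0]}=j_!(\mathcal{G}^{[0]})$ fails in general. For open $W\subseteq X$ one has $\Gamma\bigl(W,(j_!\mathcal{G})^{[0]}\bigr)=\prod_{x\in W\cap U}\mathcal{G}_x$ with \emph{no} support restriction, whereas $\Gamma\bigl(W,j_!(\mathcal{G}^{[0]})\bigr)$ consists only of those $t\in\prod_{x\in W\cap U}\mathcal{G}_x$ whose support in $W\cap U$ is closed in $W$. Take $X=\mathbb{R}$, $U=(0,1)$, $\mathcal{G}=\underline{\mathbb{Z}}_U$, $W=X$, and $s(x)=1$ for every $x\in(0,1)$: then $s$ lies in the first group but not the second, since $\mathrm{supp}_{(0,1)}s=(0,1)$ is not closed in $\mathbb{R}$. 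The paper's proof makes exactly the leap you would need, asserting that $s(x)=0$ on $W\setminus U$ forces $\mathrm{supp}\,s\subseteq W\cap U$; but $\mathrm{supp}\,s$ is the closure \emph{in $W$} of $\{x:s(x)\neq 0\}$, and that closure can meet $W\setminus U$. Your structural alternative via $\mathcal{A}^{[0]}=p_*p^*\mathcal{A}$ hits the same wall: the needed base-change $p_*j'_!\cong j_!q_*$ (with $j':U_{\mathrm{disc}}\hookrightarrow X_{\mathrm{disc}}$ and $q:U_{\mathrm{disc}}\to U$) fails because $j_!$ on the non-discrete side imposes a support condition that $p_*j'_!$ does not. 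What \emph{is} true is a natural inclusion $j_!(\mathcal{G}^{[0]})\hookrightarrow(j_!\mathcal{G})^{[0]}$; the short exact sequence the paper actually uses afterwards can be obtained directly, but not through the sheaf equality claimed here.
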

\begin{proof}
We only prove $(j_{!}\mathcal{G})^{[p]}=j_{!}(\mathcal{G}^{[p]})$ and the other two conclusions can be obtained similarly.
For any open set $W\subseteq X$,
\begin{displaymath}
\Gamma(W,j_!(\mathcal{G}^{[0]}))=\{t\in\Gamma(W\cap U, \mathcal{G}^{[0]})|\mbox{ }\textrm{supp}t \mbox{ } \textrm{is}\mbox{ } \textrm{closed}\mbox{ } \textrm{in} \mbox{ } W\}.
\end{displaymath}
For any $s\in\Gamma(W,(j_!\mathcal{G})^{[0]})$, $s(x)=0$ for any $x\in W-U$, so $\textrm{supp}s\subseteq W\cap U$.
Set $\tilde{s}=s|_{W\cap U}$.
Then $\textrm{supp}\tilde{s}=\textrm{supp}s$ is closed in $W$.
Hence $s\mapsto \tilde{s}$ for all $s\in \Gamma(W,(j_!\mathcal{G})^{[0]})$ give a morphism $\Gamma(W,(j_!\mathcal{G})^{[0]})\rightarrow \Gamma(W,j_!(\mathcal{G}^{[0]}))$.
For $t\in \Gamma(W,j_!(\mathcal{G}^{[0]}))$, set $\bar{t}=t(x)$ for $x\in W\cap U$ and $0$ for $x\in W-U$.
Then $t\mapsto \bar{t}$ for all $t\in \Gamma(W,j_!(\mathcal{G}^{[0]}))$ give a morphism $\Gamma(W,j_!(\mathcal{G}^{[0]}))\rightarrow \Gamma(W,(j_!\mathcal{G})^{[0]})$.
We easily see that the two morphisms are inverse to each other.
So $(j_{!}\mathcal{G})^{[0]}=j_{!}(\mathcal{G}^{[0]})$.
By the induction, we complete the proof.
\end{proof}

Assume that $X$ is a topological space and $\mathcal{F}$ is a sheaf on $X$.
Let $Z$ be a closed subset of $X$ and set $U=X-Z$.
Denote by $i:Z\rightarrow X$ and $j:U\rightarrow X$ the inclusions.
Suppose that $\Phi$ is a family of supports on $X$.
As we know, $0\rightarrow j_!j^{-1}\mathcal{F}\rightarrow\mathcal{F}\rightarrow i_*i^{-1}\mathcal{F}\rightarrow 0$ is exact,
so is $0\rightarrow\Gamma_\Phi(X,(j_!j^{-1}\mathcal{F})^{[p]})\rightarrow\Gamma_\Phi(X,\mathcal{F}^{[p]})\rightarrow \Gamma_\Phi(X,(i_*i^{-1}\mathcal{F})^{[p]})\rightarrow 0$.
By Lemma \ref{simple},  $\Gamma_\Phi(X,(i_*i^{-1}\mathcal{F})^{[p]})=\Gamma_{\Phi|_Z}(Z,(i^{-1}\mathcal{F})^{[p]})$ and
$\Gamma_\Phi(X,(j_!j^{-1}\mathcal{F})^{[p]})=\Gamma_{\Phi|_U}(U,\mathcal{F}^{[p]})$.
So we have the short exact
\begin{displaymath}
\xymatrix{
0 \ar[r]& \Gamma_{\Phi|_U}(U,\mathcal{F}^{[p]})      \ar[r]^{\mbox{ }\mbox{ } j_*}& \Gamma_\Phi(X,\mathcal{F}^{[p]})     \ar[r]^{i^*\quad } & \Gamma_{\Phi|_Z}(Z,(i^{-1}\mathcal{F})^{[p]})     \ar[r]& 0}
\end{displaymath}
for any $p$, where $j_*$ is  the \emph{extension by zero} and $i^*$ is the pullback.
Furthermore, we have

\begin{lem}\label{comm-long}
Let $f:Y\rightarrow X$ be a continuous map of topological spaces.
Suppose that $\mathcal{F}$ a sheaf on $X$ and $\Phi$ is a family of supports on $X$.
Put $A$ a closed subset of $X$ and set $B=f^{-1}(A)$.
Denote by $j:X-A\rightarrow X$, $\tilde{j}:Y-B\rightarrow Y$, $i:A\rightarrow X$, $\tilde{i}:B\rightarrow Y$ the inclusions.
Then there exists a commutative diagram
\begin{displaymath}
\tiny{\xymatrix{
 \cdots H_{\Phi|_{X-A}}^k(X-A,\mathcal{F})\ar[d]^{f_{X-A}^*} \ar[r]^{\qquad\quad j_*}& H_{\Phi}^k(X,\mathcal{F}) \ar[d]^{f^*} \ar[r]^{i^*\quad}& H_{\Phi|_A}^k(A,i_A^{-1}\mathcal{F}) \ar[d]^{f_A^*}\ar[r]& H_{\Phi|_{X-A}}^{k+1}(X-A,\mathcal{F})\ar[d]^{f_{X-A}^*}\cdots\\
 \cdots H_{(f^{-1}\Phi)|_{Y-B}}^k(Y-B,f^{-1}\mathcal{F})       \ar[r]^{\qquad\quad \tilde{j}_*}& H_{f^{-1}\Phi}^k(Y,f^{-1}\mathcal{F})     \ar[r]^{\tilde{i}^*\quad } & H_{(f^{-1}\Phi)|_B}^k(B,i_B^{-1}f^{-1}\mathcal{F})     \ar[r]& H_{(f^{-1}\Phi)|_{Y-B}}^{k+1}(Y-B,f^{-1}\mathcal{F})         \cdots }}
\end{displaymath}
of long exact sequences, where $f_Z:f^{-1}(Z)\rightarrow Z$ denotes the restriction of $f$ for any $Z\subseteq X$.
\end{lem}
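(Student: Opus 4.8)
The plan is to realize both rows of the diagram as the cohomology long exact sequences attached to short exact sequences of complexes, and to show that $f$ induces a morphism between these two short exact sequences; the commutative ladder then follows from the naturality of the long exact cohomology sequence. Concretely, the short exact sequence of complexes
\begin{displaymath}
0\to\Gamma_{\Phi|_{X-A}}(X-A,\mathcal{F}^{[\bullet]})\xrightarrow{j_*}\Gamma_\Phi(X,\mathcal{F}^{[\bullet]})\xrightarrow{i^*}\Gamma_{\Phi|_A}(A,(i^{-1}\mathcal{F})^{[\bullet]})\to 0
\end{displaymath}
obtained just before the lemma (with $Z=A$, $U=X-A$) has, by exactly the same construction carried out on $Y$ with the sheaf $f^{-1}\mathcal{F}$, the family $f^{-1}\Phi$, the closed set $B$ and the open set $Y-B$, an evident analogue involving $\tilde{j}_*$, $\tilde{i}^*$ and the groups $\Gamma_{(f^{-1}\Phi)|_{Y-B}}(Y-B,(f^{-1}\mathcal{F})^{[\bullet]})$, $\Gamma_{f^{-1}\Phi}(Y,(f^{-1}\mathcal{F})^{[\bullet]})$, $\Gamma_{(f^{-1}\Phi)|_B}(B,(\tilde{i}^{-1}f^{-1}\mathcal{F})^{[\bullet]})$; here one uses $f^{-1}clt=clt$ and Proposition \ref{inverse-paracompact}$(1)$ to match the support families, e.g. $\Phi|_A=i^{-1}\Phi$ and $(f^{-1}\Phi)|_B=\tilde{i}^{-1}(f^{-1}\Phi)=f_A^{-1}(\Phi|_A)$ since $B=f^{-1}(A)$ is closed. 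As $\mathcal{F}^{[\bullet]}$ is a flabby, hence $\Phi$-acyclic, resolution of $\mathcal{F}$, the cohomology of these complexes computes the twisted cohomologies with supports appearing in the two rows, so taking cohomology of these short exact sequences of complexes gives the two long exact sequences.

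Next I would check that $(f_{X-A}^*,f^*,f_A^*)$ is a morphism between the two short exact sequences of complexes. The square involving $i^*$, i.e. $\tilde{i}^*\circ f^*=f_A^*\circ i^*$, is just functoriality of the pullback on simplicial flabby resolutions applied to the identity $f\circ\tilde{i}=i\circ f_A$ of maps $B\to X$: by the defining formula both composites send $u$ to $(b_0,\dots,b_p)\mapsto u(f(b_0),\dots,f(b_p))$. For the square involving the extension-by-zero maps I would argue indirectly: by Lemma \ref{simple} and the exactness of $\mathcal{G}\mapsto\mathcal{G}^{[p]}$ applied to $0\to j_!j^{-1}\mathcal{F}\to\mathcal{F}\to i_*i^{-1}\mathcal{F}\to 0$, the image of $j_*$ is precisely $\ker i^*$ (namely $\Gamma_\Phi(X,(j_!j^{-1}\mathcal{F})^{[\bullet]})$), and likewise for $\tilde{j}_*$ on $Y$. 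Since $f^*$ commutes with $i^*$, it carries $\ker i^*$ into $\ker\tilde{i}^*$, and it remains to identify the induced map with $f_{X-A}^*$: a section $u$ of $\mathcal{F}^{[p]}$ over $X-A$ with support in $\Phi|_{X-A}$ has support closed in $X$, is thereby extended by zero over $X$, and $f^*$ of that extension has support contained in $f^{-1}(\textrm{supp}\,u)\subseteq f^{-1}(X-A)=Y-B$, where it is given by $(y_0,\dots,y_p)\mapsto u(f_{X-A}(y_0),\dots,f_{X-A}(y_p))$; hence $f^*\circ j_*=\tilde{j}_*\circ f_{X-A}^*$.

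Once the morphism of short exact sequences of complexes is in place, the naturality of the associated long exact cohomology sequence produces the commutative ladder asserted in the lemma, with vertical arrows $f_{X-A}^*$, $f^*$, $f_A^*$. The step requiring genuine care, and the main obstacle, is the middle one: verifying that $f^*$ restricts in the simplicial flabby model to $f_{X-A}^*$ between the kernels of $i^*$ and $\tilde{i}^*$, i.e. that pullback commutes with extension by zero. This is exactly where the support estimate $\textrm{supp}(f^*u)\subseteq f^{-1}(\textrm{supp}\,u)$, the equality $f^{-1}(X-A)=Y-B$, and the identifications of Lemma \ref{simple} all enter; everything else is formal homological algebra.
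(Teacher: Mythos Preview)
Your proposal is correct and follows essentially the same approach as the paper: set up the commutative diagram of short exact sequences of complexes using the simplicial flabby resolutions on $X$ and $Y$, then invoke naturality of the long exact cohomology sequence. The paper's proof merely asserts that one ``easily checks'' this diagram of short exact sequences, whereas you have spelled out the verifications of the two squares (functoriality for $i^*$, and the support estimate plus Lemma~\ref{simple} for $j_*$); these are exactly the details the paper suppresses.
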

\begin{proof}
We easily check the commutative diagram
\begin{displaymath}
\tiny{\xymatrix{
 0\ar[r]& \Gamma_{\Phi|_{X-A}}(X-A,\mathcal{F}^{[\bullet]})\ar[d]^{f_{X-A}^*} \ar[r]^{\quad \qquad j_*}& \Gamma_\Phi(X,\mathcal{F}^{[\bullet]}) \ar[d]^{f^*} \ar[r]^{i^*\qquad }& \Gamma_{\Phi|_A}(A,(i^{-1}\mathcal{F})^{[\bullet]}) \ar[d]^{f_A^*}\ar[r]& 0\\
 0\ar[r]& \Gamma_{(f^{-1}\Phi)|_{Y-B}}(Y-B,(f^{-1}\mathcal{F})^{[\bullet]})       \ar[r]^{\qquad\quad  \tilde{j}_*}& \Gamma_{f^{-1}\Phi}(Y,(f^{-1}\mathcal{F})^{[\bullet]})     \ar[r]^{\tilde{i}^*\qquad } & \Gamma_{(f^{-1}\Phi)|_{B}}(B,(\tilde{i}^{-1}f^{-1}\mathcal{F})^{[\bullet]}     \ar[r]& 0         }}
\end{displaymath}
of short exact sequences of complexes, which implies the conclusion.
\end{proof}

\begin{lem}\label{higher-direct-image}
Assume that $\pi:X\rightarrow Y$ is a continuous map of topological spaces and  $\mathcal{F}$ is a sheaf of  on  $X$.
Let  $\mathfrak{H}$ be the group consisting of the map $h:Y^{p+1}\times X^{q+1}\rightarrow \mathcal{F}$ satisfying
$h(y_0,\ldots,y_p; x_0,\ldots,x_{q})\in\mathcal{F}_{x_q}$
for any $(y_0,\ldots,y_p; x_0,\ldots,x_q)\in Y^{p+1}\times X^{q+1}$
and let  $\mathfrak{H}_0$ be the subgroup of $\mathfrak{H}$ consisting of the map $h$ satisfying that $h=0$ on a set of the form
\begin{equation}\label{set}
\begin{aligned}
&y_0\in Y, \mbox{ }y_i\in V(y_0,\ldots,y_{i-1}), \mbox{ }1\leq i\leq p,\\
&\mbox{ }x_0\in \pi^{-1}(V(y_0,\ldots,y_p)),\\
& \mbox{ }x_j\in V(y_0,\ldots,y_p; x_0,\ldots,x_{j-1}), 1\leq j\leq q,
\end{aligned}
\end{equation}
where $V(y_0,\ldots,y_{i-1})$ is an open neighborhood of $y_i$ in $Y$ depending on $y_0,\ldots,y_{i-1}$ for $1\leq i\leq p$
and  $V(y_0,\ldots,y_p; x_0,\ldots,x_{j-1})$ is an open neighborhood of $x_j$ in $X$ depending on  $y_0,\ldots,y_p, x_0,\ldots,x_{j-1}$ for $1\leq j\leq q$.
Then $\mathfrak{H}/\mathfrak{H}_0$ can be viewed as a subgroup of $\Gamma\left(Y,(\pi_*(\mathcal{F}^{[q]}))^{[p]}\right)$.
\end{lem}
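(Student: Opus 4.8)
The plan is to exhibit an explicit group homomorphism $\Psi\colon\mathfrak{H}\to\Gamma(Y,(\pi_*(\mathcal{F}^{[q]}))^{[p]})$ whose kernel is exactly $\mathfrak{H}_0$; the first isomorphism theorem then realizes $\mathfrak{H}/\mathfrak{H}_0$ as the image of $\Psi$, a subgroup of $\Gamma(Y,(\pi_*(\mathcal{F}^{[q]}))^{[p]})$. Throughout I would use only the description of the simplicial flabby resolution recalled in Section 2.2: for a sheaf $\mathcal{G}$ on a space $Z$ and an open $W\subseteq Z$, a raw map $g\colon Z^{m+1}\to\mathcal{G}$ with $g(z_0,\dots,z_m)\in\mathcal{G}_{z_m}$ canonically represents an element $[g]\in\Gamma(W,\mathcal{G}^{[m]})$, and $[g_1]=[g_2]$ in $\Gamma(W,\mathcal{G}^{[m]})$ precisely when $g_1$ and $g_2$ agree on a set of the form $z_0\in W,\ z_j\in V(z_0,\dots,z_{j-1})$ $(1\le j\le m)$.

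To build $\Psi$: given $h\in\mathfrak{H}$ and a tuple $(y_0,\dots,y_p)\in Y^{p+1}$, apply the above with $Z=X$, $\mathcal{G}=\mathcal{F}$, $W=X$ to the slice $h(y_0,\dots,y_p;-)\colon X^{q+1}\to\mathcal{F}$; it represents an element of $\Gamma(X,\mathcal{F}^{[q]})=\Gamma(Y,\pi_*(\mathcal{F}^{[q]}))$, whose germ at $y_p$ I call $\widetilde{h}(y_0,\dots,y_p)\in(\pi_*(\mathcal{F}^{[q]}))_{y_p}$. Then $(y_0,\dots,y_p)\mapsto\widetilde{h}(y_0,\dots,y_p)$ is a raw map $Y^{p+1}\to\pi_*(\mathcal{F}^{[q]})$ hitting the correct stalks, so, applying the description once more with $Z=Y$, $\mathcal{G}=\pi_*(\mathcal{F}^{[q]})$, $W=Y$, it represents a class $\Psi(h)\in\Gamma(Y,(\pi_*(\mathcal{F}^{[q]}))^{[p]})$. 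Since pointwise addition of raw maps, passage to germs, and passage to $[\,\cdot\,]$-classes are all additive, $\Psi$ is a homomorphism.

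Next I would compute $\ker\Psi$ by peeling off the two layers of equivalence. By the description, $\Psi(h)=0$ means $\widetilde{h}$ agrees with the zero map on a set $y_0\in Y,\ y_i\in V(y_0,\dots,y_{i-1})$ $(1\le i\le p)$, i.e.\ for every such tuple the germ $\widetilde{h}(y_0,\dots,y_p)$ vanishes in $(\pi_*(\mathcal{F}^{[q]}))_{y_p}$. The germ of a section of $\pi_*(\mathcal{F}^{[q]})$ at $y_p$ vanishes iff that section restricts to zero over $\pi^{-1}(W)$ for some open $W=V(y_0,\dots,y_p)\ni y_p$, which, applying the description to $\mathcal{F}^{[q]}$ over the open set $\pi^{-1}(W)$, means the slice $h(y_0,\dots,y_p;-)$ vanishes on a set $x_0\in\pi^{-1}(V(y_0,\dots,y_p)),\ x_j\in V(y_0,\dots,y_p;x_0,\dots,x_{j-1})$ $(1\le j\le q)$, where the inner neighborhoods are permitted to depend on $(y_0,\dots,y_p)$. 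Reassembling the three families of neighborhoods shows that $\Psi(h)=0$ iff $h$ vanishes on a set of exactly the shape (\ref{set}); that is, $\ker\Psi=\mathfrak{H}_0$ (the inclusion $\mathfrak{H}_0\subseteq\ker\Psi$ coming from reading the same implications backwards). Hence $\Psi$ induces an injection $\mathfrak{H}/\mathfrak{H}_0\hookrightarrow\Gamma(Y,(\pi_*(\mathcal{F}^{[q]}))^{[p]})$, as claimed.

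The one point that needs genuine care is the bookkeeping of the nested ``depending on'' clauses: I must check that successively resolving (a) the outer $[\,\cdot\,]^{[p]}$-equivalence on $Y^{p+1}$, (b) the germ-at-$y_p$ vanishing for $\pi_*(\mathcal{F}^{[q]})$, and (c) the inner $\mathcal{F}^{[q]}$-equivalence on $X^{q+1}$ reproduces precisely the hierarchy $V(y_0,\dots,y_{i-1})$, then $V(y_0,\dots,y_p)$, then $V(y_0,\dots,y_p;x_0,\dots,x_{j-1})$ appearing in (\ref{set}) --- with exactly those dependencies and no others --- so that the kernel is literally $\mathfrak{H}_0$ rather than a larger or smaller subgroup. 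Everything else (that raw maps into stalks represent sections of the $[\,\cdot\,]$-construction, compatibility of restriction with this identification, additivity of $\Psi$) is formal from Section 2.2.
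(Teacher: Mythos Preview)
Your proposal is correct and follows essentially the same approach as the paper: you define the homomorphism $\Psi$ by sending $h$ to the raw map $(y_0,\dots,y_p)\mapsto$ (germ at $y_p$ of the slice $h(y_0,\dots,y_p;-)$ viewed as a global section of $\mathcal{F}^{[q]}$), then identify $\ker\Psi$ with $\mathfrak{H}_0$ by unwinding the three nested equivalence relations exactly as you describe. The paper does the identical construction, merely introducing intermediate names $\mathfrak{P}$, $\mathfrak{P}_0$ for the group of raw maps $Y^{p+1}\to\pi_*(\mathcal{F}^{[q]})$ and its null subgroup, so that $\Gamma(Y,(\pi_*(\mathcal{F}^{[q]}))^{[p]})\cong\mathfrak{P}/\mathfrak{P}_0$; your $\widetilde{h}$ is their $P(h)$ and the kernel computation is line-for-line the same.
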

\begin{proof}
Let $\mathfrak{P}$ be the group consisting of the map $f:Y^{p+1}\rightarrow \pi_*(\mathcal{F}^{[q]})$
satisfying that $f(y_0,\ldots,y_p)\in(\pi_*(\mathcal{F}^{[q]}))_{y_p}$ for any $(y_0,\ldots,y_p)\in Y^{p+1}$.
Let $\mathfrak{P}_0$ be the subgroup of $\mathfrak{P}$ consisting of the map $f$ which is zero
on a set of the form $y_0\in Y$, $y_1\in V(y_0)$, \ldots, $y_p\in V(y_0,\ldots,y_{p-1})$.
%$y_j\in V(y_0,\ldots,y_{j-1})$,  for $1\leq j\leq p$.
Then $\Gamma\left(Y,(\pi_*(\mathcal{F}^{[q]}))^{[p]}\right)\cong \mathfrak{P}/\mathfrak{P}_0$.
For any $h\in\mathfrak{H}$, set
\begin{displaymath}
G(h)(y_0,\ldots,y_p)=h(y_0,\ldots,y_p;\bullet): X^{q+1}\rightarrow \mathcal{F}
\end{displaymath}
for any $(y_0,\ldots,y_p)\in Y^{p+1}$.
Then $G(h)(y_0,\ldots,y_p)$ can be viewed as a section of $\mathcal{F}^{[q]}$ on $X$.
Set
\begin{displaymath}
\small{
\begin{aligned}
P(h)(y_0,\ldots,y_p)=[G(h)(y_0,\ldots,y_p)]_{y_p}\in
\lim\limits_{\overrightarrow{V\ni y_p}}\Gamma(\pi^{-1}(V),\mathcal{F}^{[q]})
=\left(\pi_*(\mathcal{F}^{[q]})\right)_{y_p},
\end{aligned}}
\end{displaymath}
where $[\bullet]_{y_p}$ denotes the equivalent class under the direct limit.
Then $P(h)\in \mathfrak{P}$.
Define $\mathfrak{H}\rightarrow\mathfrak{P}/\mathfrak{P}_0$ as $h\mapsto P(h) \mbox{ modulo } \mathfrak{P}_0$.
We only need to prove that the kernel of this morphism is $\mathfrak{H}_0$.

Assume that  $h\in \mathfrak{H}$ satisfies that $P(h)\in \mathfrak{P}_0$,
i.e., $[G(h)(y_0,\ldots,y_p)]_{y_p}=0$ on the set of the form $y_0\in Y$,  $y_1\in V(y_0)$, \ldots, $y_p\in V(y_0,\ldots,y_{p-1})$.
This is equivalent to say that, there exists an open neighborhood $V$ of $y_p$ such that $G(h)(y_0,\ldots,y_p)|_{\pi^{-1}(V)}=0$ on the set of the form $y_0\in Y$, $y_i\in V(y_0,\ldots,y_{i-1})$, $1\leq i\leq p$.
We can write $V=V(y_0,\ldots,y_p)$, since $y_p\in V(y_0,\ldots,y_{p-1})$ and $V$ also depends on $y_p$.
Hence  $h(y_0,\ldots,y_p; x_0,\ldots,x_{q})=0$ on a set of the form (\ref{set}), i.e., $h\in\mathfrak{H}_0$.
Inversely, if $h\in\mathfrak{H}_0$, $P(h)\in \mathfrak{P}_0$ from above arguments.
We complete the proof.
\end{proof}

\subsection{Gluing principle}
Recall the \emph{gluing principle}, which will be used in the following part.
\begin{lem}[{\cite[Lemma 2.1]{M2}}]\label{glued}
Let $X$ be a smooth manifold and denote by $\mathcal{P}(U)$ a statement on any open subset $U$ in $X$. Assume that $\mathcal{P}$ satisfies  conditions:

$(i)$ \emph{(local condition)} There exists a basis $\mathfrak{U}$ of the topology of $X$ such that $\mathcal{P}(U_1\cap\ldots\cap U_l)$ holds for any finite
many $U_1$, $\ldots$, $U_l\in \mathfrak{U}$.

$(ii)$ \emph{(disjoint condition)} Let $\{U_n|n\in\mathbb{N}^+\}$ be any collection of disjoint open subsets of $X$. If $\mathcal{P}(U_n)$ hold for all
$n\in\mathbb{N}^+$, $\mathcal{P}(\bigcup\limits_{n=1}^\infty U_n)$ holds.

$(iii)$ \emph{(Mayer-Vietoris condition)} For open subsets $U$, $V$ of $X$, $\mathcal{P}(U\cup V)$ holds if $\mathcal{P}(U)$, $\mathcal{P}(V)$ and
$\mathcal{P}(U\cap V)$ hold.\\
Then $\mathcal{P}(X)$ holds.
\end{lem}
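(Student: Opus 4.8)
The plan is to run the classical Mayer--Vietoris (``good cover'') induction in three stages: first establish $\mathcal{P}$ for finite unions of basis elements, then cover $X$ by a suitably organized countable family of such sets coming from an exhaustion function, and finally glue via $(ii)$ and $(iii)$. First I would note that we may assume the basis $\mathfrak{U}$ is closed under finite intersections --- replacing $\mathfrak{U}$ by the set of all finite intersections of its members leaves $(i)$ valid (it now reduces to the single-set case) and keeps $\mathfrak{U}$ a basis. Granting this, I claim $\mathcal{P}(U_1\cup\cdots\cup U_n)$ holds for all $U_1,\dots,U_n\in\mathfrak{U}$, by induction on $n$: the case $n=1$ is $(i)$, and for the inductive step apply $(iii)$ to $U=U_1\cup\cdots\cup U_{n-1}$ and $V=U_n$, noting that $U\cap V=\bigcup_{i<n}(U_i\cap U_n)$ is again a union of $n-1$ members of $\mathfrak{U}$, so $\mathcal{P}(U\cap V)$ holds by the inductive hypothesis. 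Thus $\mathcal{P}$ holds on every open set that is a finite union of basis elements; call these sets \emph{admissible}.

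Next I would use that $X$, being connected and paracompact, is second countable and hence admits a smooth proper exhaustion function $f\colon X\to[0,\infty)$. Fix $0<\varepsilon<\tfrac12$. For each integer $n\ge0$ the set $f^{-1}([n,n+1])$ is compact, hence covered by finitely many basis elements each contained in $f^{-1}\!\big((n-\varepsilon,\,n+1+\varepsilon)\big)$; let $W_n$ be their union. Each $W_n$ is admissible, so $\mathcal{P}(W_n)$ holds; moreover $X=\bigcup_{n\ge0}W_n$, and the inclusion $W_n\subseteq f^{-1}\!\big((n-\varepsilon,\,n+1+\varepsilon)\big)$ forces $W_n\cap W_m=\varnothing$ whenever $|n-m|\ge2$. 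Hence $\{W_n:n\text{ even}\}$ and $\{W_n:n\text{ odd}\}$ are each pairwise disjoint, and condition $(ii)$ gives $\mathcal{P}(W_{\mathrm{ev}})$ and $\mathcal{P}(W_{\mathrm{od}})$ for $W_{\mathrm{ev}}=\bigcup_{n\ \mathrm{even}}W_n$ and $W_{\mathrm{od}}=\bigcup_{n\ \mathrm{odd}}W_n$, with $W_{\mathrm{ev}}\cup W_{\mathrm{od}}=X$.

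It remains to handle $W_{\mathrm{ev}}\cap W_{\mathrm{od}}=\bigcup_{n\ge0}(W_n\cap W_{n+1})$ and then apply $(iii)$ once more. Each $W_n\cap W_{n+1}$ is a finite union of sets $B\cap B'$ with $B,B'\in\mathfrak{U}$, hence admissible; the pieces with $n$ even lie in the pairwise disjoint sets $f^{-1}\!\big((n+1-\varepsilon,\,n+1+\varepsilon)\big)$, and likewise for $n$ odd, so $(ii)$ gives $\mathcal{P}$ on the ``even part'' $S_{\mathrm{ev}}$ and the ``odd part'' $S_{\mathrm{od}}$ of that union. Using $W_n\cap W_{n+2}=\varnothing$ one checks $S_{\mathrm{ev}}\cap S_{\mathrm{od}}=\varnothing$, where $\mathcal{P}$ holds (e.g.\ by $(ii)$ applied to the empty family); so $(iii)$ yields $\mathcal{P}(S_{\mathrm{ev}}\cup S_{\mathrm{od}})=\mathcal{P}(W_{\mathrm{ev}}\cap W_{\mathrm{od}})$, and a last application of $(iii)$ to $X=W_{\mathrm{ev}}\cup W_{\mathrm{od}}$ gives $\mathcal{P}(X)$.

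Everything above is formal once the cover is in place; the only genuinely delicate point --- and the sole place the topology of $X$ enters --- is the construction, from an exhaustion function, of the alternately disjoint cover $\{W_n\}$ together with the bookkeeping of the (empty or admissible) intersections that makes all three hypotheses of $(iii)$ available at each use. I expect this combinatorial organization to be the main obstacle; the rest is routine.
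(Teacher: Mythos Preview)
The paper does not supply its own proof of this lemma: it is quoted verbatim from \cite[Lemma~2.1]{M2} and used as a black box. So there is nothing in the present paper to compare your argument against.

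That said, your argument is correct and is essentially the standard one used to prove such ``gluing principles'' (this is the same strategy as in \cite[Proposition~2.7.2]{BT} or in the original \cite{M2}). Two small remarks. First, in the final step you split $W_{\mathrm{ev}}\cap W_{\mathrm{od}}=\bigcup_{n\ge0}(W_n\cap W_{n+1})$ into an even and an odd part and then recombine; this is unnecessary, because with $\varepsilon<\tfrac12$ the inclusion $W_n\cap W_{n+1}\subseteq f^{-1}\big((n+1-\varepsilon,\,n+1+\varepsilon)\big)$ already makes \emph{all} the sets $W_n\cap W_{n+1}$ pairwise disjoint, so a single application of $(ii)$ suffices. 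Second, your appeal to ``$(ii)$ applied to the empty family'' to get $\mathcal{P}(\varnothing)$ is a little glib, since $(ii)$ is phrased for families indexed by $\mathbb{N}^+$; a cleaner justification is that any two disjoint members of $\mathfrak{U}$ (which exist once $X$ has more than one point) give $\mathcal{P}(\varnothing)$ directly from $(i)$. Neither point affects the validity of your proof.
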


\section{Twisted forms and currents with supports}
\subsection{Locally free sheaves on complex manifolds}
Let $X$ be a complex manifold and $\mathcal{E}$ a locally free sheaf of $\mathcal{O}_X$-modules of rank $m$ on $X$.
An open subset $U$ of $X$ is said to be \emph{$\mathcal{E}$-free}, if the restriction $\mathcal{E}|_U$ is a free sheaf of $\mathcal{O}_U$-modules.
An open covering $\mathfrak{U}$ of $X$ is said to be \emph{$\mathcal{E}$-free}, if all $U\in\mathfrak{U}$ are $\mathcal{E}$-free.
An open covering of $X$ is called an \emph{$\mathcal{E}$-free} \emph{basis}, if it is both a basis of the topology  and an $\mathcal{E}$-free covering of $X$.
For an open set $U\subseteq X$, the elements of $\Gamma(U, \mathcal{E}\otimes_{\mathcal{O}_X}\mathcal{A}_X^{p,q})$ and $\Gamma(U,
\mathcal{E}\otimes_{\mathcal{O}_X}\mathcal{D}_X^{\prime p,q})$ are called \emph{$\mathcal{E}$-vlaued $(p,q)$-forms and currents on $U$}, respectively.
In  Sects. 3 and 4, the tensor $\otimes_{\mathcal{O}_X}$ of sheaves of $\mathcal{O}_X$-modules will be simply denoted by $\otimes$.

\subsubsection{\emph{\textbf{Local representations}}}
Let $U$ be an $\mathcal{E}$-free open subset of $X$ and  $e_1$, $\ldots$, $e_m$  a basis of $\Gamma(U,\mathcal{E})$ as an $\mathcal{O}_X(U)$-module.
For $\omega\in\Gamma(X, \mathcal{E}\otimes\mathcal{A}_X^{p,q})$, the restriction $\omega|_U$ to $U$ can be written as $\sum\limits_{i=1}^{m}e_i\otimes
\alpha_i$, where $\alpha_1$, $\ldots$, $\alpha_m\in \mathcal{A}^{p,q}(U)$.
Similarly, for $S\in\Gamma(X, \mathcal{E}\otimes\mathcal{D}_X^{\prime p,q})$, $S|_U=\sum\limits_{i=1}^{m}e_i\otimes T_i$ for some $T_1$, $\ldots$, $T_m\in
\mathcal{D}^{\prime p,q}(U)$.
We easily get
\begin{lem}\label{support}
For any $1\leq i\leq m$, $\emph{supp}\alpha_i\subseteq\emph{supp}\omega\cap U$ and $\emph{supp}T_i\subseteq\emph{supp}S\cap U$.
\end{lem}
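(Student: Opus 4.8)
\textbf{Proof proposal for Lemma \ref{support}.}
The plan is to reduce the global statement to a local computation using the fact that supports are determined locally and that the decomposition $\omega|_U=\sum_{i=1}^m e_i\otimes\alpha_i$ is unique once a frame $e_1,\dots,e_m$ of $\Gamma(U,\mathcal{E})$ is fixed. First I would recall that a point $x\in U$ lies outside $\mathrm{supp}\,\omega$ precisely when there is an open neighborhood $V\subseteq U$ of $x$ with $\omega|_V=0$. Restricting the identity $\omega|_U=\sum_{i=1}^m e_i\otimes\alpha_i$ further to $V$ gives $\sum_{i=1}^m (e_i|_V)\otimes(\alpha_i|_V)=0$ in $\Gamma(V,\mathcal{E}\otimes\mathcal{A}_X^{p,q})$. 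Since $e_1|_V,\dots,e_m|_V$ still form an $\mathcal{O}_X(V)$-basis of $\Gamma(V,\mathcal{E})$ (a restriction of a frame of a free sheaf over the smaller open set is again a frame), and since $\mathcal{E}\otimes\mathcal{A}_X^{p,q}$ is the free $\mathcal{A}_X^{p,q}$-module on that frame, the coefficients must vanish: $\alpha_i|_V=0$ for all $i$. Hence $x\notin\mathrm{supp}\,\alpha_i$ for every $i$, which is exactly the inclusion $\mathrm{supp}\,\alpha_i\subseteq\mathrm{supp}\,\omega\cap U$ (the containment in $U$ being automatic since $\alpha_i\in\mathcal{A}^{p,q}(U)$).

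The argument for currents is formally identical: if $x\in U\setminus\mathrm{supp}\,S$, choose $V\subseteq U$ open with $S|_V=0$, write $S|_V=\sum_{i=1}^m (e_i|_V)\otimes(T_i|_V)$, and use that $\mathcal{E}\otimes\mathcal{D}_X^{\prime p,q}$ is freely generated over $\mathcal{D}_X^{\prime p,q}$ by the frame to conclude $T_i|_V=0$ for all $i$, hence $x\notin\mathrm{supp}\,T_i$. The only point that deserves a line of justification is the freeness claim — that tensoring the free sheaf $\mathcal{E}|_U\cong\mathcal{O}_U^{\oplus m}$ with $\mathcal{A}_X^{p,q}|_U$ (resp. $\mathcal{D}_X^{\prime p,q}|_U$) over $\mathcal{O}_U$ yields $(\mathcal{A}_X^{p,q}|_U)^{\oplus m}$ (resp. $(\mathcal{D}_X^{\prime p,q}|_U)^{\oplus m}$), so that the coefficient functions/currents in the local expansion are unique and a section vanishes iff all its coefficients do.

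I do not expect any real obstacle here; the lemma is a bookkeeping statement whose content is just "local vanishing of a section forces local vanishing of its frame-coefficients." The mild subtlety to state carefully is the restriction-of-frame step, since the expansion over $U$ uses the fixed frame on $U$ but the vanishing is detected on a possibly much smaller $V$; once one observes that the same $e_i$ restrict to a frame on $V$ and generate the relevant module freely, both inclusions follow at once. I would phrase the whole proof in two or three sentences, handling forms and currents in parallel.
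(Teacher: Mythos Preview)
Your proposal is correct and is exactly the natural argument; the paper in fact omits the proof entirely (it introduces the lemma with ``We easily get''), so your local-vanishing-of-coefficients argument is precisely what is implicitly intended.
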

\subsubsection{\emph{\textbf{Extensions by zero and restrictions}}}
Let $\Phi$ be a paracompactifying family  of supports on $X$.
Assume that $j:V\rightarrow X$ is the inclusion of the open subset $V$ into $X$.
Let $\mathfrak{U}$ be an $\mathcal{E}$-free covering of $X$ and $e^U_1$, $\ldots$, $e^U_m$  a basis of $\Gamma(U,\mathcal{E})$ as an $\mathcal{O}_X(U)$-module
for any $U\in\mathfrak{U}$.

For $\omega\in \Gamma_{\Phi|_V}(V,\mathcal{E}\otimes\mathcal{A}_X^{p,q})$ and $U\in\mathfrak{U}$, the restriction $\omega|_{V\cap U}$ to $U\cap V$ is
$\sum\limits_{i=1}^me_i^U|_{V\cap U}\otimes \alpha_i$, where $\alpha_1$, $\ldots$, $\alpha_m\in \mathcal{A}^{p,q}(V\cap U)$. Clearly, $\alpha_i=0$ on $(V\cap
U)\cap (U-\textrm{supp}\alpha_i)=V\cap U-\textrm{supp}\alpha_i$. So $\alpha_i$ can be extended on $(V\cap U)\cup (U-\textrm{supp}\alpha_i)=U$ by zero, denoted
by $\tilde{\alpha}_i$.  Set
\begin{displaymath}
\widetilde{\omega}_U=\sum_{i=1}^me^U_i\otimes \tilde{\alpha}_i
\end{displaymath}
in $\Gamma(U,\mathcal{E}\otimes\mathcal{A}_X^{p,q})$.
Then $\{\widetilde{\omega}_U|U\in\mathfrak{U}\}$ can be glued as a global section of $\mathcal{E}\otimes\mathcal{A}_X^{p,q}$ on $X$, denoted by $j_*\omega$.
It is noteworthy that $j_*\omega$ doesn't depend on the choice of the $\mathcal{E}$-free open covering $\mathfrak{U}$.
%Actually, let $\mathfrak{U}_0$ be the collection of all $\mathcal{E}$-free open sets in $X$. Any  $\mathcal{E}$-free covering $\mathfrak{U}$ is a subcovering %of
%$\mathfrak{U}_0$, then $\{\tilde{\omega}_U|U\in\mathfrak{U}\}\subseteq \{\tilde{\omega}_U|U\in\mathfrak{U}_0\}$.
%So $j_*\omega$ defined by $\mathfrak{U}$ and the one defined by $\mathfrak{U}_0$ coincides.
Since $\textrm{supp}j_*\omega=\textrm{supp}\omega\in\Phi$, we get a map
\begin{displaymath}
j_*:\Gamma_{\Phi|_V}(V,\mathcal{E}\otimes\mathcal{A}_X^{p,q})\rightarrow\Gamma_\Phi(X,\mathcal{E}\otimes\mathcal{A}_X^{p,q}).
\end{displaymath}
Similarly, we can define
\begin{equation}\label{extension2}
j_*:\Gamma_{\Phi|_V}(V,\mathcal{E}\otimes\mathcal{D}_X^{\prime p,q})\rightarrow\Gamma_\Phi(X,\mathcal{E}\otimes\mathcal{D}_X^{\prime p,q}).
\end{equation}

Let $j^*:\Gamma(X,\mathcal{E}\otimes\mathcal{D}_X^{\prime p,q})\rightarrow\Gamma(V,\mathcal{E}\otimes\mathcal{D}_X^{\prime p,q})$ be the \emph{restriction} of
the sheaf $\mathcal{E}\otimes\mathcal{D}_X^{\prime p,q}$.
For any $S\in \Gamma(X,\mathcal{E}\otimes\mathcal{D}_X^{\prime p,q})$ and $U\in\mathfrak{U}$, if the restriction $S|_U=\sum\limits_{i=1}^{m}e_i^U\otimes T_i$
for some $T_1$, $\ldots$, $T_m\in \mathcal{D}^{\prime p,q}(U)$, then $(j^*S)|_{V\cap U}=\sum\limits_{i=1}^{m}e^U_i|_{V\cap U}\otimes T_i|_{V\cap U}$.

\subsubsection{\emph{\textbf{Pullbacks and pushforwards}}}
Let $f:Y\rightarrow X$ be a holomorphic map of  complex manifolds and $r=\textrm{dim}_{\mathbb{C}}Y-\textrm{dim}_{\mathbb{C}}X$. Put
$f^*\mathcal{E}=f^{-1}\mathcal{E}\otimes_{f^{-1}\mathcal{O}_X}\mathcal{O}_Y$ the inverse image of $\mathcal{E}$ by $f$. The adjunction morphism
$\mathcal{E}\rightarrow f_*f^*\mathcal{E}$ induces $f_U^*:\Gamma(U,\mathcal{E})\rightarrow \Gamma(f^{-1}(U),f^*\mathcal{E})$ for any open set $U\subseteq X$,
where $f_U:f^{-1}(U)\rightarrow U$ is the restriction of $f$ to $f^{-1}(U)$.

\textbf{Pullbacks.}
The pullback $\mathcal{A}_X^{p,q}\rightarrow f_*\mathcal{A}_Y^{p,q}$ induces a morphism of sheaves
\begin{displaymath}
\mathcal{E}\otimes\mathcal{A}_X^{p,q}\rightarrow \mathcal{E}\otimes f_*\mathcal{A}_Y^{p,q}=f_*(f^*\mathcal{E}\otimes\mathcal{A}_Y^{p,q}),
\end{displaymath}
hence induces a \emph{pullback} of $\mathcal{E}$-valued $(p,q)$-forms
\begin{equation}\label{pullback0}
f^*:\Gamma(X,\mathcal{E}\otimes\mathcal{A}_X^{p,q})\rightarrow\Gamma(Y,f^*\mathcal{E}\otimes\mathcal{A}_Y^{p,q}).
\end{equation}
Suppose that  $U$  is an $\mathcal{E}$-free  open set in $X$ and $e_1$, $\ldots$, $e_m$ is  a basis of $\Gamma(U,\mathcal{E})$ as an $\mathcal{O}_X(U)$-module.
Obviously, $f_U^*e_1$, $\ldots$, $f_U^*e_m$ is  a basis of $\Gamma(f^{-1}(U),f^*\mathcal{E})$ as an $\mathcal{O}_Y(f^{-1}(U))$-module.
For an $\mathcal{E}$-valued $(p,q)$-form $\omega$, set $\omega|_U=\sum\limits_{i=1}^{m}e_i\otimes \alpha_i$ for some $\alpha_1$, $\ldots$, $\alpha_m\in
\mathcal{A}^{p,q}(U)$.
Then
\begin{equation}\label{pullback-rep}
(f^*\omega)|_{f^{-1}(U)}=\sum_{i=1}^{m}f_U^*e_i\otimes f_U^*\alpha_i.
\end{equation}
We have
\begin{lem}\label{pullback-support}
$\emph{supp}f^*\omega\subseteq f^{-1}(\emph{supp}\omega)$.
\end{lem}
\begin{proof}
For any $y\in Y- f^{-1}(\textrm{supp}\omega)$, $f(y)\in X-\textrm{supp}\omega$.
There exists an $\mathcal{E}$-free open neighborhood $V$ of $f(y)$ such that $V\subseteq X- \textrm{supp}\omega$, and then $\omega|_V=0$. By the local
representation (\ref{pullback-rep}) of $f^*\omega$, $(f^*\omega)|_{f^{-1}(V)}=0$, i.e., $\textrm{supp}f^*\omega\cap f^{-1}(V)=\emptyset$. So $y$ is not in
$\textrm{supp}f^*\omega$. We proved the lemma.
\end{proof}

By Lemma \ref{pullback-support}, the pullback (\ref{pullback0})  gives
\begin{equation}\label{pull}
f^*:\Gamma_\Phi(X,\mathcal{E}\otimes\mathcal{A}_X^{p,q})\rightarrow\Gamma_{f^{-1}\Phi}(Y,f^*\mathcal{E}\otimes\mathcal{A}_Y^{p,q})
\end{equation}
for a paracompactifying family  $\Phi$  of supports on $X$.

\textbf{Pushforwards.}
Assume that $S$ is an $f^*\mathcal{E}$-valued $(p,q)$-current on $Y$  satisfying that $f|_{\textrm{supp}S}:\textrm{supp}S\rightarrow X$ is \emph{proper}.
Let $\mathfrak{U}$ be an  $\mathcal{E}$-free  covering of $X$ and  $e^U_1$, $\ldots$, $e^U_m$  a basis of $\Gamma(U,\mathcal{E})$ as an
$\mathcal{O}_X(U)$-module for any $U\in\mathfrak{U}$.
For $U\in\mathfrak{U}$, $S$ can be written as $\sum\limits_{i=1}^{m}f_U^*e^U_i\otimes T_i$ on $f^{-1}(U)$, where $T_1$, $\ldots$, $T_m\in \mathcal{D}^{\prime
p,q}(f^{-1}(U))$.
By Lemma \ref{support}, $\textrm{supp}T_i\subseteq\textrm{supp}S\cap f^{-1}(U)$, and then $f_U|_{\textrm{supp}T_i}:\textrm{supp}T_i\rightarrow U$ is proper.
So $f_{U*}T_i$ is well defined.
Define  an $\mathcal{E}$-valued $(p-r,q-r)$-current
\begin{equation}\label{pushout}
\widetilde{S}_U=\sum_{i=1}^{m}e^U_i\otimes f_{U*}T_i
\end{equation}
on $U$.
For $U$, $U^\prime\in\mathfrak{U}$ satisfying $U\cap U^\prime\neq\emptyset$, $\widetilde{S}_U=\widetilde{S}_{U^\prime}$ on $U\cap U^\prime$.
We obtain an $\mathcal{E}$-valued $(p-r,q-r)$-current on $X$ , denoted by $f_*S$, such that $(f_*S)|_U=\widetilde{S}_U$.
The definition of  $f_*S$ is independent of the choice of $\mathcal{E}$-free coverings.

\begin{lem}\label{pushout-support}
Suppose that $f|_{\emph{supp}S}:\emph{supp}S\rightarrow X$ is proper. Then $\emph{supp}f_*S\subseteq f(\emph{supp}S)$.
\end{lem}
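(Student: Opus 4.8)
The plan is to verify the inclusion by passing to complements, i.e. to show that every point of $X$ outside $f(\textrm{supp}\,S)$ lies outside $\textrm{supp}\,f_*S$. First I would record that, since $f|_{\textrm{supp}\,S}:\textrm{supp}\,S\to X$ is proper and both $\textrm{supp}\,S$ (a closed subset of the manifold $Y$) and $X$ are locally compact Hausdorff, this restriction is a closed map; hence $f(\textrm{supp}\,S)$ is closed in $X$. Therefore, given $x\in X-f(\textrm{supp}\,S)$, one can choose an $\mathcal{E}$-free open neighborhood $U$ of $x$ with $U\cap f(\textrm{supp}\,S)=\emptyset$, equivalently $f^{-1}(U)\cap \textrm{supp}\,S=\emptyset$, i.e. $S|_{f^{-1}(U)}=0$.

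Next, since the construction of $f_*S$ does not depend on the chosen $\mathcal{E}$-free covering, I would take an $\mathcal{E}$-free covering $\mathfrak{U}$ containing this particular $U$ and use the defining local formula $\widetilde{S}_U=\sum_{i=1}^{m}e_i^U\otimes f_{U*}T_i$, where $S|_{f^{-1}(U)}=\sum_{i=1}^{m}f_U^*e_i^U\otimes T_i$ with $T_1,\dots,T_m\in\mathcal{D}^{\prime p,q}(f^{-1}(U))$. By Lemma \ref{support} applied on $f^{-1}(U)$ we get $\textrm{supp}\,T_i\subseteq \textrm{supp}\,S\cap f^{-1}(U)=\emptyset$, so each $T_i=0$; hence $f_{U*}T_i=0$ and $(f_*S)|_U=\widetilde{S}_U=0$. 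Thus $x\notin \textrm{supp}\,f_*S$, and since $x\in X-f(\textrm{supp}\,S)$ was arbitrary, $\textrm{supp}\,f_*S\subseteq f(\textrm{supp}\,S)$.

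There is essentially no hard step here; the only place the hypothesis is genuinely used is the closedness of $f(\textrm{supp}\,S)$, which is what guarantees the existence of the neighborhood $U$ on which $f_*S$ visibly vanishes. Once the local components $T_i$ are seen to be zero on $f^{-1}(U)$, the conclusion is immediate from the very definition of $f_*S$ together with the elementary fact that $f_{U*}$ sends the zero current to the zero current; alternatively one could invoke the classical estimate $\textrm{supp}(g_*T)\subseteq g(\textrm{supp}\,T)$ directly, but the reduction via the local presentation is the natural route given how $f_*S$ was constructed.
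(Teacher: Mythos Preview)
Your proof is correct and follows essentially the same approach as the paper: pick $x\in X\setminus f(\textrm{supp}\,S)$, choose an $\mathcal{E}$-free neighborhood $U$ disjoint from $f(\textrm{supp}\,S)$, and use the local definition of $f_*S$ to conclude $(f_*S)|_U=0$. You are simply more explicit than the paper in spelling out why $f(\textrm{supp}\,S)$ is closed and in invoking Lemma~\ref{support} to see that all $T_i$ vanish.
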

\begin{proof}
For any $x\in X- f(\textrm{supp}S)$, there exists an $\mathcal{E}$-free open neighborhood $U$ of $x$ such that $U\subseteq X- f(\textrm{supp}S)$. Clearly,
$f^{-1}(U)\cap\textrm{supp}S=\emptyset$, and then $S|_{f^{-1}(U)}=0$. By the definition (\ref{pushout}) of $f_*S$, $(f_*S)|_U=0$, i.e., $\textrm{supp}f_*S\cap
U=\emptyset$. Hence, $x$ is not in $\textrm{supp}f_*S$.  The lemma follows.
\end{proof}

Let $\Phi$ be a paracompactifying family  of supports on $X$.
Then
\begin{displaymath}
\Phi(c):=\{K\in f^{-1}\Phi|\mbox{ }f|_K:K\rightarrow X \mbox{ is proper}\}
\end{displaymath}
is paracompactifying on $Y$ by \cite[IV. 5.3 (b), 5.5]{Br}.
By Lemma \ref{pushout-support}, we get a \emph{pushforward}
\begin{equation}\label{pushout0}
f_*:\Gamma_{\Phi(c)}(Y,f^*\mathcal{E}\otimes\mathcal{D}_Y^{\prime p,q})\rightarrow\Gamma_\Phi(X,\mathcal{E}\otimes\mathcal{D}_X^{\prime p-r,q-r}).
\end{equation}
If $f$ is \emph{proper}, $\Phi(c)=f^{-1}\Phi$ and hence (\ref{pushout0}) is
\begin{displaymath}
f_*:\Gamma_{f^{-1}\Phi}(Y,f^*\mathcal{E}\otimes\mathcal{D}_Y^{\prime p,q})\rightarrow\Gamma_{\Phi}(X,\mathcal{E}\otimes\mathcal{D}_X^{\prime p-r,q-r}).
\end{displaymath}

Let $j:V\rightarrow X$ be the inclusion  of an open subset $V$ into $X$.
Clearly, $\Phi|_V\subseteq \Phi(c)$ in such case.
For $K\in \Phi(c)$, $K\cap S$ is compact for any compact set $S\subseteq X$, so $K$ is closed in $X$.
Since $K\in j^{-1}\Phi$, $K=L\cap V\subseteq L$ for some $L\in\Phi$, which implies  $K\in \Phi|_V$.
So  $\Phi(c)\subseteq \Phi|_V$.
Hence $\Phi(c)=\Phi|_V$.
The pushforward $j_*$ is just \emph{the extension by zero} of sections of the sheaf $\mathcal{E}\otimes\mathcal{D}_X^{\prime p,q}$, i.e., (\ref{extension2}).

We easily check that
\begin{displaymath}
\xymatrix{
\Gamma_{\Phi|_V}(V,\mathcal{E}\otimes\mathcal{A}_X^{p,q})\ar[r]^{\quad j_{*}} & \Gamma_{\Phi}(X,\mathcal{E}\otimes\mathcal{A}_X^{p,q})\ar[r]^{j^*}&
\Gamma_{j^{-1}\Phi}(V,\mathcal{E}\otimes\mathcal{A}_X^{p,q})
},
\end{displaymath}
\begin{equation}\label{push-pull-1}
\xymatrix{
\Gamma_{\Phi|_V}(V,\mathcal{E}\otimes\mathcal{D}_X^{\prime p,q})\ar[r]^{\quad j_{*}} & \Gamma_{\Phi}(X,\mathcal{E}\otimes\mathcal{D}_X^{\prime
p,q})\ar[r]^{j^*}& \Gamma_{j^{-1}\Phi}(V,\mathcal{E}\otimes\mathcal{D}_X^{\prime p,q})
}
\end{equation}
are both inclusions and
\begin{prop}\label{com1}
Let $f:Y\rightarrow X$ be a proper holomorphic map of  complex manifolds and let $\mathcal{E}$ be a locally free sheaf of $\mathcal{O}_X$-modules of finite
rank on $X$.
For an open set $V\subseteq X$, denote by $f_V:f^{-1}(V)\rightarrow V$  the restriction of $f$ to $f^{-1}(V)$ and by  $j:V\rightarrow X$, $j':f^{-1}(V)\rightarrow Y$ the inclusions.
Assume that $\Phi$ is a paracompactifying family of supports on $X$.
Then $j'_*f_V^*=f^*j_*$ on $\Gamma_{\Phi|_V}(V,\mathcal{E}\otimes\mathcal{A}_X^{\bullet,\bullet})$ and $f_{V*}j'^*=j^*f_*$ on
$\Gamma_{f^{-1}\Phi}(Y,f^*\mathcal{E}\otimes\mathcal{D}_Y^{\prime\bullet,\bullet})$.
\end{prop}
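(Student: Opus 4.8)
The plan is to reduce both identities, which concern global sections, to local statements on $\mathcal{E}$-free charts, and from there to the classical facts that for smooth forms the pullback commutes with extension by zero, and that for currents the pushforward by a proper map commutes with restriction to an open subset. First I fix an $\mathcal{E}$-free covering $\mathfrak{U}$ of $X$ with bases $e_1^U,\dots,e_m^U$ of $\Gamma(U,\mathcal{E})$ for $U\in\mathfrak{U}$. As noted above, $f_U^*e_1^U,\dots,f_U^*e_m^U$ is a basis of $\Gamma(f^{-1}(U),f^*\mathcal{E})$, so $\{f^{-1}(U)\mid U\in\mathfrak{U}\}$ is an $f^*\mathcal{E}$-free covering of $Y$; similarly $\{V\cap U\}$ and $\{f^{-1}(V\cap U)\}$ are, respectively, $\mathcal{E}|_V$- and $(f^*\mathcal{E})|_{f^{-1}(V)}$-free coverings of $V$ and $f^{-1}(V)$, and under the canonical isomorphism $(f_V)^*(\mathcal{E}|_V)\cong(f^*\mathcal{E})|_{f^{-1}(V)}$ the pulled-back bases correspond to the restrictions of the $f_U^*e_i^U$. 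Since $j_*$ (extension by zero) and $f_*$ (pushforward) are independent of the chosen $\mathcal{E}$-free covering, and the pullback is given in any such chart by (\ref{pullback-rep}), it suffices to verify each identity after restriction to every $f^{-1}(U)$, where all operations act coefficient-wise on the $e_i^U$ and the $e_i^U$'s cancel out. Note also that $f$ proper forces $f_V$, $f_U$ and $f_{V\cap U}$ to be proper (preimages of compacta are compacta), so all four pushforwards in sight are defined, and $(f^{-1}\Phi)|_{f^{-1}(V)}=(f_V)^{-1}(\Phi|_V)$ by Proposition \ref{inverse-paracompact}(2), so the support families on the two sides of each equation match.

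For $j'_*f_V^*=f^*j_*$ I take $\omega\in\Gamma_{\Phi|_V}(V,\mathcal{E}\otimes\mathcal{A}_X^{p,q})$ and write $\omega|_{V\cap U}=\sum_i(e_i^U|_{V\cap U})\otimes\alpha_i$ with $\alpha_i\in\mathcal{A}^{p,q}(V\cap U)$, whose extension by zero to $U$ I denote $\widetilde\alpha_i$. By (\ref{pullback-rep}), $(f^*j_*\omega)|_{f^{-1}(U)}=\sum_i(f_U^*e_i^U)\otimes f_U^*\widetilde\alpha_i$, whereas computing $f_V^*\omega$ by (\ref{pullback-rep}) on the chart $V\cap U$ and then extending the resulting coefficients by zero from $f^{-1}(V\cap U)$ to $f^{-1}(U)$ gives $(j'_*f_V^*\omega)|_{f^{-1}(U)}=\sum_i(f_U^*e_i^U)\otimes\widetilde{f_{V\cap U}^*\alpha_i}$. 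So the claim becomes the untwisted identity $f_U^*\widetilde\alpha_i=\widetilde{f_{V\cap U}^*\alpha_i}$ in $\mathcal{A}^{p,q}(f^{-1}(U))$: both forms restrict to $f_{V\cap U}^*\alpha_i$ on $f^{-1}(V\cap U)$, and both vanish on the open set $f^{-1}(U)-f^{-1}(\mathrm{supp}\,\alpha_i)$ — the left-hand one because $\widetilde\alpha_i$ vanishes on $U-\mathrm{supp}\,\alpha_i$, the right-hand one by Lemma \ref{pullback-support} — and since $\mathrm{supp}\,\alpha_i\subseteq V\cap U$ these two open sets cover $f^{-1}(U)$, so the forms agree.

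For $f_{V*}j'^*=j^*f_*$ I take $S\in\Gamma_{f^{-1}\Phi}(Y,f^*\mathcal{E}\otimes\mathcal{D}_Y^{\prime p,q})$ and write $S|_{f^{-1}(U)}=\sum_i(f_U^*e_i^U)\otimes T_i$ with $T_i\in\mathcal{D}^{\prime p,q}(f^{-1}(U))$. Then (\ref{pushout}) gives $(f_*S)|_U=\sum_ie_i^U\otimes f_{U*}T_i$, hence $(j^*f_*S)|_{V\cap U}=\sum_i(e_i^U|_{V\cap U})\otimes(f_{U*}T_i)|_{V\cap U}$; on the other hand $(j'^*S)|_{f^{-1}(V\cap U)}=\sum_i f_{V\cap U}^*(e_i^U|_{V\cap U})\otimes(T_i|_{f^{-1}(V\cap U)})$, so (\ref{pushout}) applied to $f_{V\cap U}$ yields $(f_{V*}j'^*S)|_{V\cap U}=\sum_i(e_i^U|_{V\cap U})\otimes(f_{V\cap U})_*(T_i|_{f^{-1}(V\cap U)})$. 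Thus everything reduces to the untwisted equality $(f_{U*}T)|_{V\cap U}=(f_{V\cap U})_*(T|_{f^{-1}(V\cap U)})$ for a current $T$ on $f^{-1}(U)$, which I would prove by testing against an arbitrary test form $\phi$ with compact support in $V\cap U$: writing also $\phi$ for its extension by zero to $U$, one has $\langle(f_{U*}T)|_{V\cap U},\phi\rangle=\langle f_{U*}T,\phi\rangle=\langle T,f_U^*\phi\rangle$, and since $\mathrm{supp}(f_U^*\phi)\subseteq f^{-1}(\mathrm{supp}\,\phi)\subseteq f^{-1}(V\cap U)$, this equals $\langle T|_{f^{-1}(V\cap U)},f_{V\cap U}^*\phi\rangle=\langle(f_{V\cap U})_*(T|_{f^{-1}(V\cap U)}),\phi\rangle$.

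I expect the only real point of care to be this current identity: making the fibre-square compatibility $j^*f_*=f_{V*}j'^*$ precise at the level of currents (rather than merely of cohomology), and checking that all the intervening pushforwards $f_{U*}$, $f_{V\cap U*}$, $f_{V*}$ are well defined — both handled, as above, by the properness of $f$. Everything else is the routine bookkeeping of the local representations of forms and currents already set up in Sect. 3.1.
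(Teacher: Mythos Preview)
Your proof is correct and is exactly the kind of argument the paper has in mind: the paper does not actually give a proof of this proposition, stating it as something one ``easily checks'' from the local representations set up in Sect.~3.1, and your reduction to the untwisted identities $f_U^*\widetilde{\alpha}_i=\widetilde{f_{V\cap U}^*\alpha_i}$ and $(f_{U*}T)|_{V\cap U}=(f_{V\cap U})_*(T|_{f^{-1}(V\cap U)})$ via the $\mathcal{E}$-free charts is precisely that check.
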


We have the Mayer-Vietoris sequences as follows.
\begin{prop}\label{short-exact}
Let $X$ be a complex manifold and $X=U\cup V$ for open sets $U$, $V$.
Denote the corresponding inclusions by $j_1:U\rightarrow X$, $j_2:V\rightarrow X$, $j^{\prime}_1:U\cap V\rightarrow V$, $j^{\prime}_2:U\cap V\rightarrow U$
respectively.
Assume that $\Phi$ is a paracompactifying family of supports on $X$.
Then
\begin{displaymath}
\tiny{\xymatrix{
0\ar[r]^{} &\Gamma_{\Phi|_{U\cap V}}(U\cap V,\mathcal{E}\otimes\mathcal{A}_X^{p,q})\ar[r]^{(j^{\prime}_{2*},j^{\prime}_{1*})\quad\qquad} &
\Gamma_{\Phi|_U}(U,\mathcal{E}\otimes\mathcal{A}_X^{p,q})\oplus \Gamma_{\Phi|_V}(V,\mathcal{E}\otimes\mathcal{A}_X^{p,q})\ar[r]^{\qquad\qquad\quad
j_{1*}-j_{2*}}& \Gamma_{\Phi}(X,\mathcal{E}\otimes\mathcal{A}_X^{p,q})\ar[r]^{} & 0
}},
\end{displaymath}
\begin{displaymath}
\tiny{\xymatrix{
0\ar[r]^{} &\Gamma_{\Phi|_{U\cap V}}(U\cap V,\mathcal{E}\otimes\mathcal{D}_X^{\prime p,q})\ar[r]^{(j^{\prime}_{2*},j^{\prime}_{1*})\quad\qquad} &
\Gamma_{\Phi|_U}(U,\mathcal{E}\otimes\mathcal{D}_X^{\prime p,q})\oplus \Gamma_{\Phi|_V}(V,\mathcal{E}\otimes\mathcal{D}_X^{\prime p,q})\ar[r]^{\qquad\qquad\quad
j_{1*}-j_{2*}}& \Gamma_{\Phi}(X,\mathcal{E}\otimes\mathcal{D}_X^{\prime p,q})\ar[r]^{} & 0
}}
\end{displaymath}
are exact sequences for any $p$, $q$.
\end{prop}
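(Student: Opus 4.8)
The plan is to establish exactness of the two displayed Mayer–Vietoris sequences simultaneously, since the arguments for forms and for currents are identical. Write $\mathcal{F}$ for either sheaf $\mathcal{E}\otimes\mathcal{A}_X^{p,q}$ or $\mathcal{E}\otimes\mathcal{D}_X^{\prime p,q}$; in both cases $\mathcal{F}$ is a $\Phi$-soft (hence $\Phi$-acyclic) sheaf of $\mathcal{O}_X$-modules, by the remarks in Section 2.1. I would first recall that the maps $j_{1*}$, $j_{2*}$, $j'_{1*}$, $j'_{2*}$ are the extensions by zero constructed in Section 3.1.2, which are injective (the two arrows in \eqref{push-pull-1} and its form-analogue are inclusions), and that a section of $\mathcal{F}$ over an open set, extended by zero, keeps its support, which lies in $\Phi$ whenever the original support did. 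Thus the leftmost arrow $(j'_{2*},j'_{1*})$ is injective, and the composite $(j_{1*}-j_{2*})\circ(j'_{2*},j'_{1*})$ sends $\sigma\mapsto j_{1*}j'_{2*}\sigma-j_{2*}j'_{1*}\sigma$; both terms are the extension by zero of $\sigma$ from $U\cap V$ directly to $X$ (extension by zero is functorial for open inclusions, $j_{1*}j'_{2*}=j_{2*}j'_{1*}$ as maps from $U\cap V$ to $X$), so the composite is zero.

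Next I would check exactness in the middle. Given $(\omega_U,\omega_V)\in\Gamma_{\Phi|_U}(U,\mathcal{F})\oplus\Gamma_{\Phi|_V}(V,\mathcal{F})$ with $j_{1*}\omega_U=j_{2*}\omega_V$ in $\Gamma_\Phi(X,\mathcal{F})$, one restricts this global identity to $U$ and to $V$: on $U\cap V$ it gives $\omega_U|_{U\cap V}=\omega_V|_{U\cap V}$, while on $U-(U\cap V)$ the section $j_{2*}\omega_V$ vanishes, so $\omega_U$ is supported in $U\cap V$, i.e. $\operatorname{supp}\omega_U\subseteq U\cap V$ and likewise $\operatorname{supp}\omega_V\subseteq U\cap V$. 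Hence $\omega_U$ and $\omega_V$ agree and both come, by extension by zero, from a single $\sigma\in\Gamma_{\Phi|_{U\cap V}}(U\cap V,\mathcal{F})$ with $\operatorname{supp}\sigma\in\Phi$ (a closed subset of an element of $\Phi$), and $(j'_{2*},j'_{1*})\sigma=(\omega_U,\omega_V)$. The only slightly delicate point is bookkeeping with the local trivializations: since the extension by zero of Section 3.1.2 is defined via an $\mathcal{E}$-free covering and shown there to be independent of the choice, all the identities above can be verified locally on an $\mathcal{E}$-free basis, where a section is just an $m$-tuple of ordinary $(p,q)$-forms or currents, and Lemma \ref{support} controls the supports of the components.

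It remains to prove surjectivity of $j_{1*}-j_{2*}:\Gamma_{\Phi|_U}(U,\mathcal{F})\oplus\Gamma_{\Phi|_V}(V,\mathcal{F})\to\Gamma_\Phi(X,\mathcal{F})$; this is the main obstacle and the only place where $\Phi$-softness is genuinely used. Given $\omega\in\Gamma_\Phi(X,\mathcal{F})$ with $K:=\operatorname{supp}\omega\in\Phi$, I would split $K$ into a part "inside $U$" and a part "inside $V$" using the paracompactifying hypothesis: because $\Phi$ is paracompactifying, there are closed sets $A,B\in\Phi$ with $A\subseteq U$, $B\subseteq V$, $A\cup B\supseteq K$ (shrink the open cover $\{U,V\}$ of the paracompact, hence normal, element $K$ of $\Phi$ to a closed cover, and use property (4) to keep the pieces in $\Phi$). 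Then choose a smooth partition of unity $\rho_U+\rho_V=1$ subordinate to $\{U,V\}$ — here $\mathcal{F}$ being a sheaf of $\mathcal{C}_X^\infty$-modules is what makes $\rho_U\omega$ and $\rho_V\omega$ make sense — so that $\rho_U\omega$ extends by zero to a section $\omega_U\in\Gamma(U,\mathcal{F})$ with support in $\operatorname{supp}\rho_U\cap K$, which is a closed subset of an element of $\Phi$ contained in $U$, hence in $\Phi|_U$; similarly for $\omega_V$. One checks $j_{1*}\omega_U+j_{2*}\omega_V=\omega$, so taking $(\omega_U,-\omega_V)$ (matching the sign convention in $j_{1*}-j_{2*}$) exhibits $\omega$ as an image. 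Assembling the injectivity, the kernel-equals-image computation, and this surjectivity argument gives exactness of both sequences, completing the proof; as usual the verification is carried out on an $\mathcal{E}$-free basis and then globalized, the gluing being immediate since all constructions were shown in Section 3.1.2 to be independent of the trivialization.
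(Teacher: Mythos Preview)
Your proof is correct and follows essentially the same approach as the paper's: injectivity and vanishing of the composite are immediate, middle exactness comes from the support analysis forcing $\operatorname{supp}\omega_U=\operatorname{supp}\omega_V\subseteq U\cap V$, and surjectivity is obtained by multiplying by a smooth partition of unity $\{\rho_U,\rho_V\}$ subordinate to $\{U,V\}$ and observing that $\operatorname{supp}(\rho_U\omega)$ is closed in $X$ and contained in $\operatorname{supp}\omega\in\Phi$. Two minor remarks: the auxiliary closed sets $A,B\in\Phi$ you construct are never actually used and can be dropped (the paper does not introduce them), and your phrase ``$\rho_U\omega$ extends by zero to a section $\omega_U\in\Gamma(U,\mathcal{F})$'' is backwards --- you mean to \emph{restrict} the global section $\rho_U\omega$ to $U$, after which its extension by zero back to $X$ recovers $\rho_U\omega$.
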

\begin{proof}
The proofs of the two conclusions are similarly and we only give the proof of the first one.
Clearly, $(j^{\prime}_{2*},j^{\prime}_{1*})$ is injective and $(j_{1*}-j_{2*})\circ(j^{\prime}_{2*},j^{\prime}_{1*})=0$.
Suppose that  $\alpha_1\in \Gamma_{\Phi|_V}(V,\mathcal{E}\otimes\mathcal{A}_X^{p,q})$ and $\alpha_2\in
\Gamma_{\Phi|_U}(U,\mathcal{E}\otimes\mathcal{A}_X^{p,q})$ satisfy $j_{1*}\alpha_2-j_{2*}\alpha_1=0$.
Then $\textrm{supp}\alpha_1=\textrm{supp}\alpha_2\subseteq U\cap V$ is in $\Phi$.
Set $\alpha=\alpha_1|_{U\cap V}$.
Then $\textrm{supp}\alpha\in \Phi|_{U\cap V}$.
Moreover, $j^{\prime}_{i*}\alpha=\alpha_i$ for $i=1$, $2$.
Hence $\textrm{ker} (j_{1*}-j_{2*})\subseteq \textrm{Im} (j^{\prime}_{2*}-j^{\prime}_{1*})$.
Let $\{\rho_U,\mbox{ }\rho_V\}$ be a partition of unity subordinate to $\{U,\mbox{ }V\}$.
For any $\beta\in \Gamma_{\Phi}(X,\mathcal{E}\otimes\mathcal{A}_X^{p,q})$, $\textrm{supp} (\rho_U\cdot \beta)\subseteq U$,
hence $\textrm{supp} (\rho_U\cdot \beta)|_U=\textrm{supp} (\rho_U\cdot \beta)$.
Notice that $\textrm{supp} (\rho_U\cdot \beta)$ is closed in $X$ and $\textrm{supp} (\rho_U\cdot \beta)\subseteq\textrm{supp} \beta \in\Phi$, so $\textrm{supp}
(\rho_U\cdot \beta)|_U\in \Phi|_U$.
Moreover, $j_{1*}((\rho_U\cdot \beta)|_U)=\rho_U\cdot \beta$.
Similarly, $\textrm{supp} (\rho_V\cdot \beta)|_V\in \Phi|_V$ and $j_{2*}((\rho_V\cdot \beta)|_V)=\rho_V\cdot \beta$.
Then $\beta=(j_{1*},\mbox{ }j_{2*})\left((\rho_U\cdot \beta)|_U,-(\rho_V\cdot \beta)|_V\right)$.
Hence $j_{1*}-j_{2*}$ is surjective.
We finish the proof.
\end{proof}

\subsubsection{\emph{\textbf{Twisted Dolbeault cohomology}}}
We still denote by $\bar{\partial}$ the differentials $1\otimes \bar{\partial}:\mathcal{E}\otimes\mathcal{A}_X^{\bullet,\bullet}\rightarrow
\mathcal{E}\otimes\mathcal{A}_X^{\bullet,\bullet+1}$ and $1\otimes \bar{\partial}:\mathcal{E}\otimes\mathcal{D}_X^{\prime \bullet,\bullet}\rightarrow
\mathcal{E}\otimes\mathcal{D}_X^{\prime \bullet,\bullet+1}$.
Let $\Phi$ be a paracompactifying family of supports on $X$.
Then $\mathcal{E}\otimes\Omega_X^p$ has two $\Phi$-soft resolutions
\begin{displaymath}
\xymatrix{
0\ar[r] &\mathcal{E}\otimes\Omega_X^p\ar[r]^{i}
&\mathcal{E}\otimes\mathcal{A}_{X}^{p,0}\ar[r]^{\quad\bar{\partial}}&\cdots\ar[r]^{\bar{\partial}\quad}&\mathcal{E}\otimes\mathcal{A}_{X}^{p,n}\ar[r]&0
},
\end{displaymath}
\begin{displaymath}
\xymatrix{
0\ar[r] &\mathcal{E}\otimes\Omega_X^p\ar[r]^{i} &\mathcal{E}\otimes\mathcal{D}_{X}^{\prime
p,0}\ar[r]^{\quad\bar{\partial}}&\cdots\ar[r]^{\bar{\partial}\quad}&\mathcal{E}\otimes\mathcal{D}_{X}^{\prime p,n}\ar[r]&0
},
\end{displaymath}
where $\textrm{dim}_{\mathbb{C}}X=n$.
So
\begin{displaymath}
H_\Phi^q(X,\mathcal{E}\otimes\Omega_X^p)\cong H^q(\Gamma_\Phi(X,\mathcal{E}\otimes\mathcal{A}_{X}^{p,\bullet}))\cong
H^q(\Gamma_\Phi(X,\mathcal{E}\otimes\mathcal{D}_{X}^{\prime p,\bullet})).
\end{displaymath}
The later two are uniformly called the \emph{twisted Dolbeault cohomology with supports in $\Phi$} and denoted by  $H_{\Phi}^{p,q}(X,\mathcal{E})$.
Assume that $E$ is the holomorphic vector bundle associated to $\mathcal{E}$.
Then $H^{p,q}(X,\mathcal{E})$ coincides with the bundle-valued Dolbeault cohomology $H^{p,q}(X,E)$ (see \cite[V. Proposition 11.5]{Dem}).
Clearly, all operators defined in Sects 3.1.1-3.1.3 commutate with $\bar{\partial}$, hence induce the corresponding morphisms at the level of cohomology.

For a complex manifold $X$, denote by $\mathcal{M}od(\mathcal{O}_X)$ the  category of sheaves of $\mathcal{O}_X$-modules on $X$.
\begin{prop}\label{compatible-pullback}
Let $f:Y\rightarrow X$ be a flat holomorphic map \emph{(}e.g., holomorphic submersion, open embedding\emph{)}   of complex manifolds, i.e., $\mathcal{O}_Y$ is a flat  $f^{-1}\mathcal{O}_X$-module sheaf.
Assume that $\mathcal{E}$ is a locally free sheaf of $\mathcal{O}_X$-modules of finite rank on $X$
and $\Phi$ is a paracompactifying family of supports on $X$.
Then the pullback $f^*:H^{p,q}_\Phi(X,\mathcal{E})\rightarrow H^{p,q}_{f^{-1}\Phi}(Y,f^*\mathcal{E})$ defined via \emph{(\ref{pull})} is compatible with the second type of pullback defined via \emph{(\ref{pullback2})}.
\end{prop}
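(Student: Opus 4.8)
The plan is to realise the twisted Dolbeault isomorphism $H^{p,q}_\Phi(X,\mathcal{E})\cong H^q_\Phi(X,\mathcal{E}\otimes\Omega_X^p)$ \emph{at the cochain level}, through the bicomplex obtained by applying the simplicial flabby resolution functor $(\cdot)^{[\bullet]}$ to each term of the twisted Dolbeault complex, and then to show that the form pullback (\ref{pull}) and the second type of pullback (\ref{pullback2}) are both induced by one and the same morphism between two such bicomplexes; passing to cohomology will then give the asserted commutative square.

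Concretely, I would consider $\mathcal{K}_X^{s,t}:=(\mathcal{E}\otimes\mathcal{A}_X^{p,t})^{[s]}$, with the simplicial differential in the $s$-direction and $(1\otimes\bar\partial)^{[s]}$ in the $t$-direction, apply $\Gamma_\Phi(X,-)$, and let $T_X$ be the associated total complex. Filtering $T_X$ by the Dolbeault degree $t$: each row $s\mapsto\Gamma_\Phi(X,(\mathcal{E}\otimes\mathcal{A}_X^{p,t})^{[s]})$ is the complex of global sections of the simplicial flabby resolution of the $\Phi$-acyclic sheaf $\mathcal{E}\otimes\mathcal{A}_X^{p,t}$, hence has cohomology $\Gamma_\Phi(X,\mathcal{E}\otimes\mathcal{A}_X^{p,t})$ concentrated in degree $0$, so the $s=0$ edge embeds as a quasi-isomorphic subcomplex $\Gamma_\Phi(X,\mathcal{E}\otimes\mathcal{A}_X^{p,\bullet})\hookrightarrow T_X$, giving $H^\bullet(T_X)\cong H^{p,\bullet}_\Phi(X,\mathcal{E})$. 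Filtering by $s$ instead: the rows of the twisted Dolbeault complex are exact with $\mathcal{H}^0=\mathcal{E}\otimes\Omega_X^p$, and $(\cdot)^{[s]}$ is exact, so by (\ref{exact-functor}) the column $t\mapsto\Gamma_\Phi(X,(\mathcal{E}\otimes\mathcal{A}_X^{p,t})^{[s]})$ has cohomology $\Gamma_\Phi(X,(\mathcal{E}\otimes\Omega_X^p)^{[s]})$ concentrated in degree $0$; hence the $\bar\partial$-closed part of the $t=0$ edge, which is exactly $(\mathcal{E}\otimes\Omega_X^p)^{[\bullet]}=(\ker(1\otimes\bar\partial))^{[\bullet]}$, embeds as a quasi-isomorphic subcomplex $\Gamma_\Phi(X,(\mathcal{E}\otimes\Omega_X^p)^{[\bullet]})\hookrightarrow T_X$, giving $H^\bullet(T_X)\cong H^\bullet_\Phi(X,\mathcal{E}\otimes\Omega_X^p)$. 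The composite of these two identifications is the Dolbeault isomorphism. Everything applies verbatim over $Y$ with $f^*\mathcal{E}$ and the family $f^{-1}\Phi$, which is paracompactifying by Proposition \ref{inverse-paracompact}(5), yielding $\mathcal{L}_Y^{s,t}:=(f^*\mathcal{E}\otimes\mathcal{A}_Y^{p,t})^{[s]}$, its total complex $T_Y$, and the analogous edge quasi-isomorphisms.

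The next step is the morphism of bicomplexes. The pullback of forms of Sect. 3.1.3 is the effect on sections of a $\bar\partial$-compatible sheaf morphism $\mathcal{E}\otimes\mathcal{A}_X^{p,t}\to f_*(f^*\mathcal{E}\otimes\mathcal{A}_Y^{p,t})$; combining its simplicial extension (Sect. 2.2) with $(\cdot)^{[s]}$, and using the support bounds $\textrm{supp}(f^*\omega)\subseteq f^{-1}(\textrm{supp}\,\omega)$ (Lemma \ref{pullback-support}) and $\textrm{supp}(f^*u)\subseteq f^{-1}(\textrm{supp}\,u)$ for simplicial cochains, one obtains a morphism of bicomplexes $\Theta\colon\Gamma_\Phi(X,\mathcal{K}_X^{\bullet,\bullet})\to\Gamma_{f^{-1}\Phi}(Y,\mathcal{L}_Y^{\bullet,\bullet})$. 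On the $s=0$ edge, $\mathrm{Tot}\,\Theta$ is precisely the form pullback (\ref{pull}). On the $\bar\partial$-closed part of the $t=0$ edge it is the cochain map underlying (\ref{pullback2}): this is where flatness enters — it makes $f^*$ exact, so $f^*(\mathcal{E}\otimes\Omega_X^p)$ is exactly $\ker f^*(1\otimes\bar\partial)$ inside $f^*(\mathcal{E}\otimes\mathcal{A}_X^{p,0})$, and the restriction of $\Theta$ to this kernel is the simplicial pullback $\Gamma_\Phi(X,(\mathcal{E}\otimes\Omega_X^p)^{[\bullet]})\to\Gamma_{f^{-1}\Phi}(Y,(f^*(\mathcal{E}\otimes\Omega_X^p))^{[\bullet]})$ followed by the map induced by the canonical morphism $f^*(\mathcal{E}\otimes\Omega_X^p)=f^*\mathcal{E}\otimes f^*\Omega_X^p\to f^*\mathcal{E}\otimes\Omega_Y^p$; and this last morphism is $1\otimes(f^*\Omega_X^p\to\Omega_Y^p)$, since in a local frame the pullback $\sum e_i\otimes\alpha_i\mapsto\sum f^*e_i\otimes f^*\alpha_i$ of holomorphic forms is exactly the analytic pullback used to build (\ref{pullback2}).

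Finally, since $\mathrm{Tot}\,\Theta$ intertwines the $s=0$ edge embeddings with the form pullback and the $\bar\partial$-closed $t=0$ edge embeddings with the cochain map of (\ref{pullback2}), passing to cohomology of $T_X$ and $T_Y$ and conjugating by the Dolbeault isomorphisms of the two sides yields the required commutative square, i.e. the compatibility. I expect the main obstacle to be the verification in the previous paragraph — that once the form pullback is transported onto the simplicial flabby resolutions it restricts on the $\bar\partial$-closed $(p,0)$-part exactly to the cochain map underlying (\ref{pullback2}); this is precisely where the flatness hypothesis is used, to make $f^*(\mathcal{E}\otimes\Omega_X^p)$ a subsheaf of $f^*(\mathcal{E}\otimes\mathcal{A}_X^{p,0})$, and it also requires identifying the adjunction morphism $f^*\Omega_X^p\to\Omega_Y^p$ with the analytic pullback of holomorphic forms. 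Everything else is the routine but somewhat lengthy bookkeeping of families of supports together with the functoriality of $(\cdot)^{[\bullet]}$ and of the pullbacks.
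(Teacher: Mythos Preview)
Your argument is correct and takes a genuinely different route from the paper. You interpolate between the two computations of $H^q_\Phi(X,\mathcal{E}\otimes\Omega_X^p)$ by a single explicit bicomplex $\Gamma_\Phi\bigl(X,(\mathcal{E}\otimes\mathcal{A}_X^{p,\bullet})^{[\bullet]}\bigr)$, show that the Dolbeault complex and the simplicial flabby resolution embed as the two edges of the total complex, build one morphism $\Theta$ of bicomplexes out of the form pullback together with the simplicial $f^*$ of Sect.~2.2, and read off both pullbacks as the restrictions of $\mathrm{Tot}\,\Theta$ to the two edges. The paper, by contrast, does not build any bicomplex: it chooses injective resolutions $\mathcal{I}^\bullet$, $\mathcal{J}^\bullet$ of $\mathcal{E}\otimes\Omega_X^p$ and $f^*\mathcal{E}\otimes\Omega_Y^p$, uses flatness to ensure $f^*\mathcal{I}^\bullet$ stays a resolution, and then invokes the lifting property of injectives (\cite[I, Theorem 6.2]{I}) to produce a map $\varphi:f^*\mathcal{I}^\bullet\to\mathcal{J}^\bullet$; the resulting ``hub'' map $\delta$ on cohomology is then shown, by running the same lifting argument once with $\mathcal{E}\otimes\mathcal{A}_X^{p,\bullet}$ and once with $(\mathcal{E}\otimes\Omega_X^p)^{[\bullet]}$, to agree with both pullbacks. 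Your approach is more hands-on and avoids injectives entirely, trading the abstract uniqueness-up-to-homotopy step for two short spectral-sequence degenerations; the paper's approach is shorter to write because the homotopy-uniqueness of lifts to injectives absorbs all the comparison work in one stroke. One small remark: your stated use of flatness (to identify $f^*(\mathcal{E}\otimes\Omega_X^p)$ with a kernel inside $f^*(\mathcal{E}\otimes\mathcal{A}_X^{p,0})$) is not really the crux --- the commutativity you need on the $t=0$ edge follows already from functoriality of the simplicial $f^*$ and the compatibility of the analytic pullback of $(p,0)$-forms with the sheaf map $f^*\Omega_X^p\to\Omega_Y^p$; in the paper's proof flatness is used more essentially, to guarantee that $f^*$ preserves the quasi-isomorphism to the injective resolution.
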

\begin{proof}
Let $\mathcal{I}^\bullet$ and $\mathcal{J}^\bullet$ be injective resolutions of $\mathcal{E}\otimes \Omega_X^{p}$ and $f^*\mathcal{E}\otimes \Omega_Y^{p}$ in $\mathcal{M}od(\mathcal{O}_X)$ and $\mathcal{M}od(\mathcal{O}_Y)$  respectively.
By \cite[I. Theorem 6.2]{I}, there exist quasi-isomorphisms $\mathcal{E}\otimes \mathcal{A}_X^{p,\bullet}\rightarrow \mathcal{I}^\bullet$ and $f^*\mathcal{E}\otimes \mathcal{A}_Y^{p,\bullet}\rightarrow \mathcal{J}^\bullet$, which are unique up to chain homotopy.
Any injective sheaf is flabby (\cite[II. Proposition 5.3]{Br}) and hence is $\Phi$-acyclic (\cite[II. Proposition 5.5]{Br}).
We have the isomorphisms
$H_{\Phi}^{p,q}(X,\mathcal{E})\tilde{\rightarrow}H^q(\Gamma_{\Phi}(Y,\mathcal{I}^\bullet))$
and $H_{f^{-1}\Phi}^{p,q}(Y,f^*\mathcal{E})\tilde{\rightarrow}H^q(\Gamma_{f^{-1}\Phi}(Y,\mathcal{J}^{\bullet}))$ by \cite[II. 4.2]{Br}.
Since $f$ is flat, $f^*:\mathcal{M}od(\mathcal{O}_X)\rightarrow \mathcal{M}od(\mathcal{O}_Y)$ is an exact functor,
so $f^*(\mathcal{E}\otimes \mathcal{A}_X^{p,\bullet})\rightarrow f^*\mathcal{I}^\bullet$ is a quasi-isomorphism of complexes of sheaves of $\mathcal{O}_Y$-modules.
By \cite[I. Theorem 6.2]{I},  there is a unique morphism $\varphi:f^*\mathcal{I}^\bullet \rightarrow \mathcal{J}^{\bullet}$ of complexes of sheaves of $\mathcal{O}_Y$-modules up to chain homotopy such that the diagram
\begin{displaymath}
\xymatrix{
  f^*(\mathcal{E}\otimes \mathcal{A}_X^{p,\bullet})\ar[d]_{}   \ar[r]^{}  & f^*\mathcal{E}\otimes \mathcal{A}_Y^{p,\bullet} \ar[d]^{} \\
 f^*\mathcal{I}^\bullet \ar[r]^{\varphi}  & \mathcal{J}^{\bullet}.}
\end{displaymath}
is commutative up to chain homotopy, where the upper map is induced by the pullback  $f^*\mathcal{A}_X^{\bullet,\bullet}\rightarrow \mathcal{A}_Y^{\bullet,\bullet}$.
On cohomologies, we have the commutative diagram
\begin{equation}\label{com-pullback}
\xymatrix{
  H_{\Phi}^{p,q}(X,\mathcal{E})\ar[d]_{\cong}   \ar[r]^{}  &H^q(\Gamma_{f^{-1}\Phi}(Y,f^*(\mathcal{E}\otimes \mathcal{A}_X^{p,\bullet})))\ar[d]_{}   \ar[r]^{}  & H_{f^{-1}\Phi}^{p,q}(Y,f^*\mathcal{E}) \ar[d]^{\cong} \\
 H^q(\Gamma_{\Phi}(Y,\mathcal{I}^\bullet)) \ar[r]^{}  & H^q(\Gamma_{f^{-1}\Phi}(Y,f^*\mathcal{I}^\bullet)) \ar[r]^{\quad H^q(\Gamma_{f^{-1}\Phi}(\varphi))}  & H^q(\Gamma_{f^{-1}\Phi}(Y,\mathcal{J}^{\bullet})),}
\end{equation}
where the maps in the two rows of the left square are induced by adjunctions $\mathcal{E}\otimes \mathcal{A}_X^{p,\bullet}\rightarrow f_*f^*(\mathcal{E}\otimes \mathcal{A}_X^{p,\bullet})$ and $\mathcal{I}^\bullet\rightarrow f_*f^*\mathcal{I}^\bullet$ respectively.
The composition of the two maps in the lower row of (\ref{com-pullback}) is induced by
$\mathcal{I}^\bullet\rightarrow f_*f^*\mathcal{I}^\bullet\rightarrow f_*\mathcal{J}^{\bullet}$, denoted by
\begin{displaymath}
\delta:H^q(\Gamma_{\Phi}(Y,\mathcal{I}^\bullet))\rightarrow H^q(\Gamma_{f^{-1}\Phi}(Y,\mathcal{J}^{\bullet})).
\end{displaymath}
Notice that $\delta$ is independent of the choice of $\varphi$, since $H^q(\Gamma_{f^{-1}\Phi}(\varphi))$ is.
Clearly,  the composition of the two maps in the upper row of (\ref{com-pullback}) is just the pullback $f^*$ defined by (\ref{pull}).
Hence  (\ref{pull}) is compatible with  $\delta$.
By similar arguments,
we can prove that
the pullback $f^*$ defined by (\ref{pullback2}) is compatible with  $\delta$ using $(\mathcal{E}\otimes \Omega_X^{p})^{[\bullet]}$ and $(f^*\mathcal{E}\otimes \Omega_Y^{p})^{[\bullet]}$ instead of $\mathcal{E}\otimes \mathcal{A}_X^{p,\bullet}$ and $f^*\mathcal{E}\otimes \mathcal{A}_Y^{p,\bullet}$ respectively.
We complete the proof.
\end{proof}

Suppose that $\Phi$ and $\Psi$ are paracompactifying families of supports on $X$ with $\Phi\subseteq \Psi$.
The inclusion $\Gamma_\Phi(X,\mathcal{E}\otimes\mathcal{A}_{X}^{p,\bullet})\hookrightarrow \Gamma_\Psi(X,\mathcal{E}\otimes\mathcal{A}_{X}^{p,\bullet})$ naturally induces  a morphism
$l:H^q(\Gamma_\Phi(X,\mathcal{E}\otimes\mathcal{A}_{X}^{p,\bullet}))\rightarrow H^q(\Gamma_\Psi(X,\mathcal{E}\otimes\mathcal{A}_{X}^{p,\bullet}))$.
Let $f:Y\rightarrow X$ be a holomorphic map of  complex manifolds.
If $f^{-1}\Phi=f^{-1}\Psi$, there is a commutative diagram
\begin{equation}\label{sub0}
\xymatrix{
   H_{\Phi}^{p,q}(X,\mathcal{E}) \ar[rr]^{l} \ar[dr]_{f^*}
                &  &    H_{\Psi}^{p,q}(X,\mathcal{E}) \ar[dl]^{f^*}    \\
                & H_{f^{-1}\Phi}^{p,q}(Y,f^*\mathcal{E}). }
\end{equation}
%\begin{equation}\label{sub}
%\xymatrix{
%  H_{\Phi}^{p,q}(X,\mathcal{E})\ar[d]_{l_{\Phi,\Psi}}  \ar[r]^{f^*\quad}  & H_{f^{-1}\Phi}^{p,q}(Y,f^*\mathcal{E}) \ar[d]^{l_{f^{-1}\Phi,f^{-1}\Psi}} \\
%  H_{\Psi}^{p,q}(X,\mathcal{E}) \ar[r]^{f^*\quad}  & H_{f^{-1}\Psi}^{p,q}(Y,f^*\mathcal{E}).}
%\end{equation}

\subsubsection{\emph{\textbf{Cup products}}}
Assume that $\mathcal{E}$, $\mathcal{F}$ are locally free sheaves of $\mathcal{O}_X$-modules of rank $m$, $n$ on $X$ respectively, $\mathfrak{U}$ is an
$\mathcal{E}$- and $\mathcal{F}$-free open covering of $X$ and $\Phi$, $\Psi$ is a paracompactifying family of supports on $X$.
Let $e^U_1$, $\ldots$, $e^U_m$ and $f^U_1$, $\ldots$, $f^U_n$ be bases of $\Gamma(U,\mathcal{E})$ and $\Gamma(U, \mathcal{F})$ as $\mathcal{O}_X(U)$-modules,
respectively.

For $S\in\Gamma_\Phi(X, \mathcal{E}\otimes\mathcal{D}_X^{\prime r,p})$, $\omega\in\Gamma_\Psi(X, \mathcal{F}\otimes\mathcal{A}_X^{s,q})$ and $U\in\mathfrak{U}$, $S$ and
$\omega$  are represented by $\sum\limits_{i=1}^{m}e^U_i\otimes T_i$ and $\sum\limits_{j=1}^{n}f^U_j\otimes \alpha_j$ on $U$ respectively, where
$T_i\in\mathcal{D}^{\prime r,p}(U)$ and $\alpha_j\in\mathcal{A}^{s,q}(U)$ for any $i$.
Then
\begin{displaymath}
\sum\limits_{\substack{1\leq i\leq m\\1\leq j\leq n}} e^U_i\otimes f^U_j\otimes (T_i\wedge\alpha_j)
\end{displaymath}
gives an $\mathcal{E}\otimes\mathcal{F}$-valued $(r+s,p+q)$-current on $U$. They are glued as a global section of
$\mathcal{E}\otimes\mathcal{F}\otimes\mathcal{D}_X^{\prime r+s,p+q}$ on $X$, which is independent of the choice of open coverings.
Denote it by $S\wedge\omega$.
We easily check that $\textrm{supp} (S\wedge \omega)\subseteq \textrm{supp} S \cap \textrm{supp} \omega$,
hence $S\wedge \omega\in \Gamma_{\Phi\cap\Psi}(X, \mathcal{E}\otimes\mathcal{F}\otimes\mathcal{D}_X^{\prime r+s,p+q})$.
Similarly, the $\mathcal{F}\otimes\mathcal{E}$-valued current
$\omega\wedge S$ and the $\mathcal{E}\otimes\mathcal{F}$-valued form $\psi\wedge \omega$  on $X$ can be defined well, where $\psi\in\Gamma_\Phi(X,
\mathcal{E}\otimes\mathcal{A}_X^{r,p})$.
Clearly,
$\bar{\partial}(S\wedge\omega)=\bar{\partial}S\wedge\omega+(-1)^{p+q}S\wedge \bar{\partial}\omega$.

Denote by $[\alpha]_\Phi$  the Dolbeault  class with support in $\Phi$ of  the $\bar{\partial}$-closed form or current $\alpha$.
For paracompactifying families $\Phi$, $\Psi$ of supports on $X$,
define a \emph{cup product}
\begin{equation}\label{cup3}
\cup:H_\Phi^{r,p}(X,\mathcal{E})\times H_\Psi^{s,q}(X,\mathcal{F})\rightarrow H_{\Phi\cap\Psi}^{r+s,p+q}(X,\mathcal{E}\otimes\mathcal{F})
\end{equation}
as $[\psi]_\Phi\cup[\omega]_\Psi=[\psi\wedge\omega]_{\Phi\cap\Psi}$ or $[S]_\Phi\cup[\omega]_\Psi=[S\wedge\omega]_{\Phi\cap\Psi}$.

\begin{rem}\label{compatible-cup product}
There is a  commutative diagram
\begin{displaymath}
\xymatrix{
  (\mathcal{E}\otimes \Omega_X^p)\otimes  (\mathcal{F}\otimes \Omega_X^q)\ar[d]_{(1\otimes i)\otimes (1\otimes i)}   \ar[r]^{\qquad\wedge}  & \mathcal{E}\otimes \mathcal{F}\otimes \Omega_X^{p+q} \ar[d]^{1\otimes 1\otimes i} \\
 (\mathcal{E}\otimes \mathcal{A}_X^{p,\bullet})\otimes  (\mathcal{F}\otimes \mathcal{A}_X^{q,\bullet}) \ar[r]^{\qquad\wedge}  & \mathcal{E}\otimes \mathcal{F}\otimes \mathcal{A}_X^{p+q,\bullet}.}
\end{displaymath}
By \cite[II. Th\'{e}or\`{e}me 6.6.1]{Go}, the cup product (\ref{cup3}) coincides with the second type of cup product (\ref{cup2}).
\end{rem}

Suppose that $f:Y\rightarrow X$ is a holomorphic map of complex manifolds.
Then $f^*(\psi\wedge\omega)=f^*\psi\wedge f^*\omega$.
In addition, let $T$ be an $f^*\mathcal{E}$-valued current on $Y$ satisfying that
$f|_{\textrm{supp}T}:\textrm{supp}T\rightarrow X$ is proper.
Then
\begin{equation}\label{pro-formula1}
f_*(T\wedge f^*\omega)=f_*T\wedge\omega.
\end{equation}
Indeed, it is the classical projection formula locally.
For $\varphi\in H_{\Phi(c)}^{\bullet,\bullet}(Y,f^*\mathcal{E})$ and $\eta\in H_{\Psi}^{\bullet,\bullet}(X,\mathcal{F})$,
$f_*(\varphi\cup f^*\eta)\in H_{\Phi(c)\cap f^{-1}\Psi}^{\bullet,\bullet}(X,\mathcal{E}\otimes \mathcal{F})$ and  $f_*\varphi\cup\eta\in
H_{\Phi\cap\Psi}^{\bullet,\bullet}(X,\mathcal{E}\otimes \mathcal{F})$.
By \cite[IV. 5.4 (7)]{Br},  $\Phi(c)\cap f^{-1}\Psi=(\Phi\cap\Psi)(c)$.
By (\ref{pro-formula1}),
\begin{equation}\label{pro-formula2}
f_*(\varphi\cup f^*\eta)=f_*\varphi\cup\eta.
\end{equation}

\begin{prop}\label{inj-surj}
Let $f:Y\rightarrow X$ be a proper surjective holomorphic map between  complex manifolds and let $\Phi$ be a paracompactifying family of supports on $X$.
Set $r=\emph{dim}_{\mathbb{C}}Y-\emph{dim}_{\mathbb{C}}X$ and assume that there exists a closed current $T\in \mathcal{D}^{\prime r,r}(Y)$ such that $f_*T\neq 0$.
Let  $\mathcal{E}$ be a locally free sheaf of  $\mathcal{O}_X$-modules of finite rank on $X$.
Then $f^*:H_\Phi^{\bullet,\bullet}(X,\mathcal{E})\rightarrow H_{f^{-1}\Phi}^{\bullet,\bullet}(Y,f^*\mathcal{E})$ is injective and  $f_*:H_{f^{-1}\Phi}^{\bullet,\bullet}(Y,f^*\mathcal{E})\rightarrow H_\Phi^{\bullet-r,\bullet-r}(X,\mathcal{E})$ is surjective.
In particular, if $X$ and $Y$ have the same dimensions, then
$f^*:H_\Phi^{\bullet,\bullet}(X,\mathcal{E})\rightarrow H_{f^{-1}\Phi}^{\bullet,\bullet}(Y,f^*\mathcal{E})$ is injective and  $f_*:H_{f^{-1}\Phi}^{\bullet,\bullet}(Y,f^*\mathcal{E})\rightarrow H_\Phi^{\bullet,\bullet}(X,\mathcal{E})$ is surjective.
\end{prop}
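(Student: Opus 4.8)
The plan is to deduce both statements from the projection formula (\ref{pro-formula2}) applied with the fixed current $T$. First I would prove injectivity of $f^*$. Take a class $\alpha\in H_\Phi^{\bullet,\bullet}(X,\mathcal{E})$ with $f^*\alpha=0$. Let $\beta=[T]_{clt_Y}\in H^{r,r}(Y,\mathbf{k})=H^{r,r}_{clt_Y}(Y,\underline{\mathbf{k}}_Y)$, the Dolbeault class of the given closed current $T$; pushing forward we get $f_*\beta=[f_*T]_{clt_X}$, which is nonzero by hypothesis. Here I should be slightly careful about which family of supports the class of $T$ lives in: since $f$ is proper, $f|_{\mathrm{supp}\,T}$ is automatically proper, so $f_*T$ is defined, and by Lemma \ref{pushout-support} its support lies in $f(\mathrm{supp}\,T)$, a closed set; the relevant families are $clt$, which are preserved under $f^{-1}$ and stable under the pushforward. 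Now by (\ref{pro-formula2}), $f_*\big(\beta\cup f^*\alpha\big)=f_*\beta\cup\alpha$ (up to reordering $\mathcal{E}\otimes\underline{\mathbf{k}}_X\cong\mathcal{E}$ and matching degrees). The left side is $f_*(\beta\cup 0)=0$, so $f_*\beta\cup\alpha=0$ in $H^{\bullet,\bullet}_{\Phi}(X,\mathcal{E})$. Since $f_*\beta=[f_*T]$ is represented by a nonzero closed $(r,r)$-current on $X$ and $X$ is connected, cupping with it is, degreewise, cupping with a nonzero scalar on the top and hence injective on each bidegree — more precisely, locally $f_*T$ is a nonzero constant multiple of a smooth form near a generic point, so $f_*\beta\cup\alpha=0$ forces $\alpha=0$. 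This needs a small argument: evaluate against a bump form supported where $f_*T$ is a nonzero multiple of a volume form.

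For surjectivity of $f_*$ I would run the dual argument. Given $\gamma\in H^{\bullet-r,\bullet-r}_\Phi(X,\mathcal{E})$, normalize so that (after scaling the current $T$) $f_*T$ represents a class $c$ with $c\cup(-)$ invertible; concretely, set $\delta=f^*\gamma\cup\beta\in H^{\bullet,\bullet}_{f^{-1}\Phi}(Y,f^*\mathcal{E})$, using $f^{-1}\Phi\cap clt_Y=f^{-1}\Phi$. Then (\ref{pro-formula2}) gives $f_*\delta=\gamma\cup f_*\beta$. So the image of $f_*$ contains $\gamma\cup f_*\beta$ for every $\gamma$; since $f_*\beta=[f_*T]$ and cupping with it is a bijection $H^{\bullet-r,\bullet-r}_\Phi(X,\mathcal{E})\to H^{\bullet-r,\bullet-r}_\Phi(X,\mathcal{E})$ (same reasoning as above, $X$ connected and $f_*T$ a nonzero closed current of top complementary bidegree along the fibre direction — which here is bidegree $(0,0)$ twisted up, i.e.\ a nonzero holomorphic-function-like factor), every $\gamma$ is already of the form $\gamma'\cup f_*\beta$, hence lies in the image. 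The special case ``$\dim X=\dim Y$'' is immediate: then $r=0$, a nonzero closed $(0,0)$-current with $f_*T\neq0$ exists (e.g.\ $T=1$ works since $f$ is surjective, $f_*1$ is the generic fibre count, a nonzero constant), and the shifts disappear.

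The main obstacle — and the only nonformal point — is justifying that cupping with the class of a nonzero closed current $f_*T$ of complementary bidegree is injective (equivalently bijective) on $H^{\bullet,\bullet}_\Phi(X,\mathcal{E})$ in each bidegree. This is where connectedness of $X$ is used. I would argue: $f_*T$ is a closed current of bidegree $(r,r)$ that is not identically zero, so there is an open set $W\subseteq X$ on which $f_*T$ is a smooth form, and shrinking $W$ we may assume $f_*T|_W$ is a nowhere-zero top-complementary form in the fibre directions; then for a class represented by $\psi$ with $\psi\wedge f_*T$ exact-with-supports-in-$\Phi$, testing against compactly supported forms inside $W$ and using the Poincaré-type pairing forces $\psi|_W$ to be exact there, and a connectedness/unique-continuation argument for $\bar\partial$-cohomology propagates this globally. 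Alternatively — and this is cleaner — one invokes that over a connected manifold the pairing with a nonzero class in $H^{r,r}$ induced by a closed current is faithful because the current, being nonzero, pairs nontrivially with some test form, and cup product in Dolbeault cohomology with paracompactifying supports is compatible with this pairing; combined with the ring structure this gives the needed injectivity/bijectivity. I would cite the analogous compact statement (e.g.\ as in \cite{M1}) and note the proof carries over verbatim in the supported setting thanks to Propositions \ref{short-exact} and the $\Phi$-acyclicity of the $\mathcal{A}^{p,q}$, $\mathcal{D}'^{p,q}$ established in Section 2.1.
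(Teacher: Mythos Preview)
Your overall strategy---use the projection formula (\ref{pro-formula2}) with the class $[T]$---is exactly the paper's, but you miscompute the bidegree of $f_*T$ and this derails the argument. The pushforward $f_*$ on currents shifts bidegree by $(-r,-r)$ (see (\ref{pushout0})), so since $T\in\mathcal{D}'^{r,r}(Y)$ we have $f_*T\in\mathcal{D}'^{0,0}(X)$, not $\mathcal{D}'^{r,r}(X)$ as you write in the last paragraph. A closed current of total degree~$0$ on the connected manifold $X$ is a constant; by hypothesis it is a \emph{nonzero} constant $c\in\mathbb{C}$. The projection formula then reads $f_*([T]\cup f^*\eta)=c\cdot\eta$ for every $\eta\in H_\Phi^{p,q}(X,\mathcal{E})$, which in one line gives both conclusions: $f^*$ has the left inverse $c^{-1}f_*([T]\cup\,\cdot\,)$, hence is injective, and $f_*$ hits every $\eta$ via $c^{-1}[T]\cup f^*\eta$, hence is surjective. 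This is precisely the paper's proof.

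Consequently your entire ``main obstacle'' paragraph---the open set $W$, the Poincar\'e-type pairing, the unique-continuation argument for $\bar\partial$-cohomology---is unnecessary and, as written, does not work (cupping with a genuinely $(r,r)$-class would not be an isomorphism in general). The only nonformal input is the elementary fact that a closed degree-$0$ current on a connected manifold is a constant; you hint at this (``cupping with a nonzero scalar'', ``holomorphic-function-like factor'') but never state it cleanly, and the surrounding text about $(r,r)$-currents obscures it. Your treatment of the equal-dimension case ($r=0$, $T=1$) is correct and is in fact the only case the paper later uses.
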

\begin{proof}
Since $c=f_*T$ is a closed  current of degree $0$, hence a constant.
By (\ref{pro-formula2}), $f_*([T]\cup f^*\eta)=c\cdot \eta$, where $[T]\in H^{r,r}(Y)$ and $\eta\in H_\Phi^{p,q}(X,\mathcal{E})$.
The proposition follows.
\end{proof}

\subsection{Local systems on smooth manifolds}
For a topology space $X$, \emph{a local system of $\mathbf{k}$- modules} on $X$ refers to a locally constant sheaf of $\mathbf{k}$-modules on $X$, or equivalently, a locally free sheaf of $\underline{\mathbf{k}}_X$-modules on $X$, where $\underline{\mathbf{k}}_X$ is the constant sheaf with stalk $\mathbf{k}$ on $X$.
Assume that $\mathcal{L}$ is a local system of $\mathbf{k}$-modules on $X$. An open subset $U$ of $X$ is said to be \emph{$\mathcal{L}$-constant}, if the restriction $\mathcal{L}|_U$ is a contant sheaf.
An open covering $\mathfrak{U}$ of $X$ is said to be \emph{$\mathcal{L}$-constant}, if all $U\in\mathfrak{U}$ are $\mathcal{L}$-constant.

For a smooth map $f:Y\rightarrow X$ of smooth manifolds and  local systems $\mathcal{L}$, $\mathcal{H}$ of $\mathbf{k}$-modules of finite ranks on $X$, all notions can be similarly defined as those in Section 3.1  and the corresponding results are also true, where we only need to  replace  $\mathcal{O}_X$,  $\mathcal{E}$, $\mathcal{E}$-free, $\mathcal{A}_X^{*,*}$, $\mathcal{D}_X^{\prime *,*}$, $f^*\mathcal{E}$, $H_\Phi^{*,*}(X,\mathcal{E})$ and $\mathcal{E}\otimes_{\mathcal{O}_X}\mathcal{F}$ with $\underline{\mathbf{k}}_X$, $\mathcal{L}$, $\mathcal{L}$-constant, $\mathcal{A}_X^{*}$, $\mathcal{D}_X^{\prime *}$, $f^{-1}\mathcal{L}$, $H_\Phi^*(X,\mathcal{L})$ and $\mathcal{L}\otimes_{\underline{\mathbf{k}}_X}\mathcal{H}$, respectively.
\emph{It is noteworthy that}, if the definitions of notions involve the currents, then the related manifolds must be \emph{oriented}.

Now, we list partial results as follows, which will be frequently used in Sect. 5.

1. Let $\mathcal{L}$ and $\mathcal{H}$ be local systems of $\mathbf{k}$-modules of finite rank on $X$.
Suppose that $f:Y\rightarrow X$ is a smooth map of oriented smooth manifolds. In addition, let $T$ be an $f^{-1}\mathcal{L}$-valued current on $Y$ satisfying that
$f|_{\textrm{supp}T}:\textrm{supp}T\rightarrow X$ is proper.
Then
\begin{equation}\label{pro-formula1-dR}
f_*(T\wedge f^*\omega)=f_*T\wedge\omega.
\end{equation}
For $\varphi\in H_{\Phi(c)}^{\bullet}(Y,f^{-1}\mathcal{L})$ and $\eta\in H_{\Psi}^{\bullet}(X,\mathcal{H})$,
\begin{equation}\label{pro-formula2-dR}
f_*(\varphi\cup f^*\eta)=f_*\varphi\cup\eta.
\end{equation}

2. Suppose that $\Phi$ and $\Psi$ are paracompactifying families of supports on $X$ with $\Phi\subseteq \Psi$.
The inclusion $\Gamma_\Phi(X,\mathcal{L}\otimes\mathcal{A}_{X}^{\bullet})\hookrightarrow \Gamma_\Psi(X,\mathcal{L}\otimes\mathcal{A}_{X}^{\bullet})$ naturally induces  a morphism
$l:H_{\Phi}^{p}(X,\mathcal{L})\rightarrow H_{\Psi}^{p}(X,\mathcal{L})$.
Let $f:Y\rightarrow X$ be a smooth map of  smooth manifolds.
If $f^{-1}\Phi=f^{-1}\Psi$, there is a commutative diagram
\begin{equation}\label{sub00}
\xymatrix{
   H_{\Phi}^{p}(X,\mathcal{L}) \ar[rr]^{l} \ar[dr]_{f^*}
                &  &    H_{\Psi}^{p}(X,\mathcal{L}) \ar[dl]^{f^*}    \\
                & H_{f^{-1}\Phi}^{p}(Y,f^{-1}\mathcal{L}). }
\end{equation}

\section{Twisted Dolbeault cohomology with supports}
\subsection{K\"{u}nneth theorems}
Suppose that $\mathcal{F}$ and $\mathcal{G}$ are coherent analytic sheaves   on complex manifolds $X$ and $Y$ respectively.
The \emph{analytic external tensor product} of $\mathcal{F}$ and $\mathcal{G}$ is defined as
\begin{displaymath}
\mathcal{F}\boxtimes\mathcal{G}=pr_1^*\mathcal{F}\otimes_{\mathcal{O}_{X\times Y}}pr_2^*\mathcal{G},
\end{displaymath}
where $pr_1$ and $pr_2$ are projections from $X\times Y$ onto $X$, $Y$, respectively.
The \emph{cartesian product}
\begin{equation}\label{cartesian}
\times:H^p_\Phi(X,\mathcal{F})\times H^q_{\Psi}(Y,\mathcal{G})\rightarrow H^{p+q}_{\Phi\times\Psi}(X\times Y,\mathcal{F}\boxtimes \mathcal{G})
\end{equation}
is defined as $pr_1^*(\bullet)\cup pr_2^*(\bullet)$.

Denote by $K^\bullet$ the associated simple complex of a double complex $K^{\bullet,\bullet}$.
For a double complex $K^{\bullet,\bullet}$, there are two spectral sequences
$_KE^{\bullet,\bullet}_r\Rightarrow\mbox{ } _KH^\bullet$ and $_K\widetilde{E}^{\bullet,\bullet}_r\Rightarrow\mbox{ }_K\widetilde{H}^\bullet$,
where $_KE^{p,q}_1=H^q(K^{p,\bullet})$, $_KE^{p,q}_2=H^p(E^{\bullet,q}_1)$,
$_K\widetilde{E}^{p,q}_1=H^p(K^{\bullet,q})$, $_K\widetilde{E}^{p,q}_2=H^q(E^{p,\bullet}_1)$,
$_KH^k=\mbox{ }_K\widetilde{H}^k=H^k(K^\bullet)$.

\begin{prop}\label{Kun}
Let $\mathcal{F}$, $\mathcal{G}$ be coherent analytic sheaves on complex manifolds $X$, $Y$ respectively and let $\Phi$ be a family of supports on $X$.
Suppose that $H^\bullet(Y,\mathcal{G})$ is finite dimensional.
Then $(\alpha^p\otimes \beta^q)_{p+q=k}\mapsto \sum\limits_{p+q=k}\alpha^p\times \beta^q$ gives an isomorphism
\begin{displaymath}
\bigoplus\limits_{p+q=k}H_\Phi^p(X,\mathcal{F})\otimes_{\mathbb{C}} H^q(Y,\mathcal{G})\tilde{\rightarrow} H_{\Phi\times Y}^k(X\times Y, \mathcal{F}\boxtimes \mathcal{G}).
\end{displaymath}
\end{prop}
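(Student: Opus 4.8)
The plan is to realize both sides as cohomologies of simple complexes built from flabby resolutions of the external tensor product and then run a spectral-sequence argument. First I would resolve $\mathcal{G}$ on $Y$ by a complex $\mathcal{G}^{[\bullet]}$ (the simplicial flabby resolution introduced in Sect. 2.2), so that $H^q(Y,\mathcal{G})=H^q(\Gamma(Y,\mathcal{G}^{[\bullet]}))$; similarly resolve $\mathcal{F}$ on $X$ by $\mathcal{F}^{[\bullet]}$, giving $H_\Phi^p(X,\mathcal{F})=H^p(\Gamma_\Phi(X,\mathcal{F}^{[\bullet]}))$. The external product $\mathcal{F}\boxtimes\mathcal{G}$ on $X\times Y$ has the double complex $pr_1^{-1}\mathcal{F}^{[\bullet]}\otimes pr_2^{-1}\mathcal{G}^{[\bullet]}$ (suitably interpreted; one can use a flabby resolution of the total complex, or directly the bicomplex $(\mathcal{F}\boxtimes\mathcal{G})^{[\bullet]}$ together with Lemma~\ref{higher-direct-image}). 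I want to form the double complex
\begin{displaymath}
K^{p,q}=\Gamma_{\Phi\times Y}\bigl(X\times Y,(pr_1^*\mathcal{F})^{[p]}\otimes (pr_2^*\mathcal{G})^{[q]}\bigr),
\end{displaymath}
whose associated simple complex computes $H^\bullet_{\Phi\times Y}(X\times Y,\mathcal{F}\boxtimes\mathcal{G})$, and identify its first spectral sequence.

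The key computational step is to show that the $q$-th row cohomology of $K^{\bullet,\bullet}$ is
\begin{displaymath}
{}_KE_1^{p,q}=H^q\bigl(K^{p,\bullet}\bigr)=\Gamma_\Phi(X,\mathcal{F}^{[p]})\otimes_{\mathbb{C}} H^q(Y,\mathcal{G}).
\end{displaymath}
For this I would fix $p$ and analyze the complex $K^{p,\bullet}$ as sections over $X\times Y$ with supports in $\Phi\times Y=pr_1^{-1}\Phi$ of a resolution that, fiberwise over $X$, is a flabby resolution of $\mathcal{G}$ on $Y$. Pushing forward along $pr_1$ (using Lemma~\ref{higher-direct-image} to describe $pr_{1*}$ of the simplicial flabby sheaves) turns the row into $\Gamma_\Phi(X,\text{(flabby)}^{[p]})$ applied to a complex of sheaves on $X$ whose cohomology sheaves are the constant-along-$X$ sheaves $\underline{H^q(Y,\mathcal{G})}$; here finite-dimensionality of $H^\bullet(Y,\mathcal{G})$ is used so that the cohomology sheaf is genuinely a finite free $\underline{\mathbb{C}}_X$-module and commutes with the tensor/section functors. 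Then formula (\ref{exact-functor}) — exactness of $\mathcal{H}\mapsto\Gamma_\Phi(X,\mathcal{H}^{[p]})$ — gives ${}_KE_1^{p,q}=\Gamma_\Phi(X,(\mathcal{H}^q)^{[p]})$ with $\mathcal{H}^q=\mathcal{F}\otimes_{\mathbb{C}}H^q(Y,\mathcal{G})$, which is $\Gamma_\Phi(X,\mathcal{F}^{[p]})\otimes_{\mathbb{C}}H^q(Y,\mathcal{G})$ since the tensor is by a finite-dimensional vector space.

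Taking $d_1$ (the horizontal differential, induced by $d$ on $\mathcal{F}^{[\bullet]}$) gives
\begin{displaymath}
{}_KE_2^{p,q}=H^p\bigl(\Gamma_\Phi(X,\mathcal{F}^{[\bullet]})\bigr)\otimes_{\mathbb{C}}H^q(Y,\mathcal{G})=H_\Phi^p(X,\mathcal{F})\otimes_{\mathbb{C}}H^q(Y,\mathcal{G}),
\end{displaymath}
again using that tensoring a complex of $\mathbb{C}$-vector spaces with a fixed finite-dimensional space is exact and hence commutes with cohomology. The spectral sequence degenerates at $E_2$ — there is no room for higher differentials once one checks the edge maps, or more cleanly one observes the second spectral sequence ${}_K\widetilde{E}$ also degenerates because rows/columns here are honest tensor products — yielding the additive isomorphism $\bigoplus_{p+q=k}H_\Phi^p(X,\mathcal{F})\otimes H^q(Y,\mathcal{G})\cong H^k_{\Phi\times Y}(X\times Y,\mathcal{F}\boxtimes\mathcal{G})$. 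Finally I would identify the abstract edge isomorphism with the explicit map $(\alpha^p\otimes\beta^q)\mapsto\sum pr_1^*\alpha^p\cup pr_2^*\beta^q$ by unwinding the definition of the cartesian product (\ref{cartesian}) through the cup product on simplicial flabby resolutions; this is a diagram chase using that $pr_1^*$, $pr_2^*$ and $\cup$ are the ones induced at cochain level on $\mathcal{F}^{[\bullet]}$, $\mathcal{G}^{[\bullet]}$.

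\textbf{Main obstacle.} The delicate point is the row-cohomology computation ${}_KE_1^{p,q}=\Gamma_\Phi(X,\mathcal{F}^{[p]})\otimes_{\mathbb{C}}H^q(Y,\mathcal{G})$: it requires simultaneously handling the push-forward $pr_{1*}$ of the simplicial flabby resolution (so Lemma~\ref{higher-direct-image} is essential), controlling the family of supports $\Phi\times Y=pr_1^{-1}\Phi$ under this push-forward, and using the finite-dimensionality hypothesis to pass the tensor product through both the cohomology-sheaf functor and $\Gamma_\Phi(X,(-)^{[p]})$. Everything else — resolutions, the simple-complex description of the target, and the compatibility of the edge map with $\times$ — is routine once this identification is in place.
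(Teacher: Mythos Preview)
Your strategy is essentially the paper's: compute via a spectral sequence of a bicomplex built from simplicial flabby resolutions, identify the $E_1$-page through the push-forward along $pr_1$ and the formula $R^q(pr_1)_*(\mathcal{F}\boxtimes\mathcal{G})=\mathcal{F}\otimes_{\mathbb{C}}H^q(Y,\mathcal{G})$ (this is where coherence and finite-dimensionality enter, via \cite[IX.~5.22(c)]{Dem}), and check that the resulting map is the cartesian product. The ingredients you single out---Lemma~\ref{higher-direct-image}, formula~(\ref{exact-functor}), finite-dimensionality---are exactly the ones the paper uses.

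Where the paper's execution differs is that it does \emph{not} work with your single bicomplex $K^{p,q}=\Gamma_{\Phi\times Y}\bigl(X\times Y,(pr_1^*\mathcal{F})^{[p]}\otimes(pr_2^*\mathcal{G})^{[q]}\bigr)$. Your assertion that the associated simple complex of this $K$ computes $H^\bullet_{\Phi\times Y}(X\times Y,\mathcal{F}\boxtimes\mathcal{G})$ is the soft spot: a tensor product of two flabby sheaves need not be flabby or $\Phi$-acyclic, so you would still owe an acyclicity argument, and your degeneration claim is correspondingly vague. The paper sidesteps this by introducing \emph{two} bicomplexes and a comparison map. On one side $C^{p,q}=\Gamma_\Phi(X,\mathcal{F}^{[p]})\otimes_{\mathbb{C}}\Gamma(Y,\mathcal{G}^{[q]})$, whose first spectral sequence has $_CE_2^{p,q}=H^p_\Phi(X,\mathcal{F})\otimes H^q(Y,\mathcal{G})$ and degenerates at $E_2$ by pure algebra over a field. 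On the other side $D^{p,q}=\Gamma_\Phi\bigl(X,(\pi_*((\mathcal{F}\boxtimes\mathcal{G})^{[q]}))^{[p]}\bigr)$, whose \emph{second} spectral sequence collapses (flabbiness of $(\mathcal{F}\boxtimes\mathcal{G})^{[q]}$ and \cite[IV.~5.2]{Br}) and identifies $_DH^k$ with $H^k_{\Phi\times Y}(X\times Y,\mathcal{F}\boxtimes\mathcal{G})$. Lemma~\ref{higher-direct-image} is then used not to compute a pushforward of your $K$, but to write down an explicit morphism $\varphi:C^{\bullet,\bullet}\to D^{\bullet,\bullet}$; the computation of $_DE_2^{p,q}$ via (\ref{exact-functor}) and the coherent base-change shows $\varphi$ is an isomorphism at $E_2$, hence on total cohomology, and degeneration of $_DE$ at $E_2$ is inherited from $_CE$. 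The payoff of the two-complex device is that one never needs acyclicity of a tensor of resolutions, and degeneration comes for free from the algebraic side $C$.
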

\begin{proof}
Let $C^{\bullet,\bullet}=\Gamma_\Phi(X,\mathcal{F}^{[\bullet]})\otimes_{\mathbb{C}}\Gamma(Y,\mathcal{G}^{[\bullet]})$ be the double complex associated to
the complexes $\Gamma_\Phi(X,\mathcal{F}^{[\bullet]})$ and $\Gamma(Y,\mathcal{G}^{[\bullet]})$.
By \cite[IV. (15.8)]{Dem},
$_CE_1^{p,q}=\Gamma_\Phi(X,\mathcal{F}^{[p]})\otimes_{\mathbb{C}} H^q(Y,\mathcal{G})$
and  $_CE_2^{p,q}=H^p_\Phi(X,\mathcal{F})\otimes_{\mathbb{C}} H^q(Y,\mathcal{G})$.
Let $\pi:X\times Y\rightarrow X$ be the first projection of $X\times Y$ onto $X$.
Set $D^{p,q}=\Gamma_\Phi\left(X,(\pi_*((\mathcal{F}\boxtimes \mathcal{G})^{[q]}))^{[p]}\right)$.
Then
\begin{displaymath}
\begin{aligned}
_DE^{p,q}_1=&H^q\left(\Gamma_\Phi(X,(\pi_*((\mathcal{F}\boxtimes \mathcal{G})^{[\bullet]}))^{[p]})\right)\\
%=&\Gamma_\Phi\left(X,\mathcal{H}^q\left((\pi_*(\mathcal{F}\boxtimes \mathcal{G})^{[\bullet]})^{[p]}\right)\right)\\
=&\Gamma_\Phi\left(X,\left(\mathcal{H}^q\left(\pi_*((\mathcal{F}\boxtimes \mathcal{G})^{[\bullet]})\right)\right)^{[p]}\right)\quad\mbox{ }( \mbox{by (\ref{exact-functor})})\\
=&\Gamma_\Phi\left(X,\left(R^q\pi_*(\mathcal{F}\boxtimes \mathcal{G})\right)^{[p]}\right)
\end{aligned}
\end{displaymath}
and  $_DE^{p,q}_2=H^p_\Phi(X,R^q\pi_*(\mathcal{F}\boxtimes \mathcal{G}))$.
Since  $H^\bullet(Y,\mathcal{G})$ is finite dimensional,
$R^q\pi_*(\mathcal{F}\boxtimes \mathcal{G})=\mathcal{F}\otimes_{\mathbb{C}} H^q(Y,\mathcal{G})$ by \cite[IX. 5.22 (c)]{Dem}.
So $_DE^{p,q}_2=H^p_\Phi(X,\mathcal{F}\otimes_{\mathbb{C}} H^q(Y,\mathcal{G}))=H^p_\Phi(X,\mathcal{F})\otimes_{\mathbb{C}} H^q(Y,\mathcal{G})$.

Define a morphism $\varphi:C^{\bullet,\bullet}\rightarrow D^{\bullet,\bullet}$ of double complexes as $f\otimes g \mapsto h$, where
\begin{displaymath}
\small{\begin{aligned}
&h(\xi_0,\ldots,\xi_p; (x_0,y_0),\ldots,(x_q,y_q))=S(f(\xi_0,\ldots,\xi_p))(x_q)\otimes 1\otimes g(y_0,\ldots, y_q)\otimes 1\\
\in &(\mathcal{F}\boxtimes \mathcal{G})_{(x_q,y_q)}=(\mathcal{F}_{x_q}\otimes_{\mathcal{O}_{X,x_q}}
\mathcal{O}_{X\times Y,(x_q,y_q)})\otimes_{\mathcal{O}_{X\times Y,(x_q,y_q)}}
(\mathcal{G}_{y_q}\otimes_{\mathcal{O}_{Y,y_q}} \mathcal{O}_{X\times Y,(x_q,y_q)})
\end{aligned}}
\end{displaymath}
for any $(\xi_0,\ldots,\xi_p; (x_0,y_0),\ldots,(x_q,y_q))\in X^{p+1}\times (X\times Y)^{q+1}$.
By Lemma \ref{higher-direct-image}, the map is defined well.
It induces the isomorphism $_CE_2^{p,q}\tilde{\rightarrow} _DE_2^{p,q}$ for any $p$, $q$,
hence induces the isomorphism $_CE_r^{p,q}\tilde{\rightarrow}\mbox{ }_DE_r^{p,q}$ for $r\geq 2$.
Clearly,  $_CE_r^{\bullet,\bullet}$ degenerates at $E_2$-page, so does $_DE_r^{\bullet,\bullet}$.
So $\varphi$ induces the isomorphism  $H^k(\varphi):\mbox{ }_CH^k\tilde{\rightarrow} _DH^k$.
By \cite[IV, (15.5)]{Dem},  $_CH^k=H^p_\Phi(X,\mathcal{F})\otimes_{\mathbb{C}} H^q(Y,\mathcal{G})$.
Moreover,
\begin{displaymath}
_D\widetilde{E}_1^{p,q}=\left\{
 \begin{array}{ll}
\Gamma_{\Phi\times Y}\left(X\times Y,(\mathcal{F}\boxtimes \mathcal{G})^{[q]}\right),&~p=0,\\
 &\\
 0,&~p\geq 1,
 \end{array}
 \right.
\end{displaymath}
\begin{displaymath}
_D\widetilde{E}_2^{p,q}=\left\{
 \begin{array}{ll}
H^{q}_{\Phi\times Y}\left(X\times Y,\mathcal{F}\boxtimes \mathcal{G}\right),&~p=0,\\
 &\\
 0,&~p\geq 1,
 \end{array}
 \right.
\end{displaymath}
where we use \cite[IV. 5.2]{Br}.
Hence $_D\widetilde{E}_r^{\bullet,\bullet}$ degenerates at $E_2$-page, so $_DH^k=H^{k}_{\Phi\times Y}\left(X\times Y,\mathcal{F}\boxtimes \mathcal{G}\right)$.
From the definition of $\varphi$, $H^{\bullet}(\varphi)$  is just the cartesian product.
We complete the proof.
\end{proof}

For bigraded vector spaces $K^{\bullet,\bullet}$ and $L^{\bullet,\bullet}$ over $\mathbb{C}$, the associated bigraded space
$K^{\bullet,\bullet}\otimes_{\mathbb{C}}L^{\bullet,\bullet}$ over $\mathbb{C}$ is defined as
\begin{displaymath}
(K^{\bullet,\bullet}\otimes_{\mathbb{C}}L^{\bullet,\bullet})^{p,q}=\bigoplus\limits_{\substack{k+l=p\\r+s=q}}K^{k,r}\otimes_{\mathbb{C}} L^{l,s}
\end{displaymath}
for any $p$, $q$.

Let  $X$ and $Y$ be two complex manifolds.
Suppose that $H^{\bullet,\bullet}(Y)$ is finite dimensional and $\Phi$ is a paracompacting family of supports on $X$.
Notice that $\Omega_{X\times Y}^p=\bigoplus\limits_{r+s=p}\Omega_{X}^r\boxtimes\Omega_{Y}^s$.
By Proposition  \ref{compatible-pullback} and Remark \ref{compatible-cup product},
\begin{equation}\label{Kun-Dol-cpt}
pr_1^*(\bullet)\cup pr_2^*(\bullet):H_{\Phi}^{\bullet,\bullet}(X)\otimes_{\mathbb{C}} H^{\bullet,\bullet}(Y)\rightarrow H_{\Phi\times Y}^{\bullet,\bullet}(X\times Y)
\end{equation}
coincides with the cartesian product (\ref{cartesian}), which is an isomorphism by Proposition  \ref{Kun}.

\subsection{Leray-Hirsch theorem}
Using Borel's spectral sequence, L. Cordero et al. \cite{CFGU} established  a version of Leray-Hirsch theorem for Dolbeault cohomology and  S. Rao et al.
\cite{RYY2} extend this result to the twisted cases with the similar way.
We obtained a version of these results with a different way \cite{M2,M3}.
Now, we further generalize the Leray-Hirsch theorems on the twisted Dolbeault cohomologies with supports.

\begin{thm}\label{L-H1}
Let $\pi:E\rightarrow X$ be a holomorphic fiber bundle over a complex manifold $X$ and let $\mathcal{E}$ be a locally free sheaf of
$\mathcal{O}_X$-modules of finite rank on $X$. Assume that there exist $e_i\in H^{\bullet,\bullet}(E)$ with degree $(u_i,v_i)$ for $1\leq i\leq r$ such that  their restrictions $e_1|_{E_x},\dots,e_r|_{E_x}$ freely linearly generate $H^{\bullet,\bullet}(E_x)$ for every $x\in X$.
Then
\begin{displaymath}
\sum\limits_{i=1}^r\pi^*(\bullet)\cup e_i:\bigoplus\limits_{i=1}^rH_\Phi^{\bullet-u_i,\bullet-v_i}(X,\mathcal{E}) \rightarrow H_{\pi^{-1}\Phi}^{\bullet,\bullet}(E,\pi^*\mathcal{E})
\end{displaymath}
is an isomorphism for  a paracompactifying family $\Phi$  of supports on $X$.
\end{thm}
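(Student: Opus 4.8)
The plan is to fix, for each $i$, a $\bar{\partial}$-closed representative $\varepsilon_i\in\Gamma(E,\mathcal{A}_E^{u_i,v_i})$ of the class $e_i$, so that the asserted map is the one induced in cohomology by the morphism of complexes
\[
\Theta\colon\bigoplus_{i=1}^r\Gamma_{\Phi}(X,\mathcal{E}\otimes\mathcal{A}_X^{\bullet-u_i,\bullet-v_i})\longrightarrow\Gamma_{\pi^{-1}\Phi}(E,\pi^{*}\mathcal{E}\otimes\mathcal{A}_E^{\bullet,\bullet}),\qquad(\alpha_i)_i\longmapsto\sum_{i=1}^r\pi^{*}\alpha_i\wedge\varepsilon_i,
\]
which lands in the correct support family by Lemma \ref{pullback-support} and commutes with $\bar{\partial}$ because the $\varepsilon_i$ are closed. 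I would then prove $\Theta$ is a quasi-isomorphism by the gluing principle, Lemma \ref{glued}, applied on the base $X$. For an open $U\subseteq X$ let $\mathcal{P}(U)$ be the statement that the analogous morphism $\Theta_U$ built from $\pi|_{\pi^{-1}(U)}$, $\mathcal{E}|_U$ and the classes $e_i|_{\pi^{-1}(U)}$ is a quasi-isomorphism; this is meaningful since one checks directly that $(\pi^{-1}\Phi)|_{\pi^{-1}(U)}=(\pi|_{\pi^{-1}(U)})^{-1}(\Phi|_U)$ and that $\Phi|_U$ is paracompactifying (Proposition \ref{inverse-paracompact}).

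For the local condition I would take $\mathfrak{U}$ to be a basis consisting of opens over which $\pi$ is trivial and $\mathcal{E}$ is free; finite intersections of such opens are again of this form, so it suffices to handle $U\in\mathfrak{U}$. Choosing a trivialization $\pi^{-1}(U)\cong U\times F$ over $U$ (under which $\pi|_{\pi^{-1}(U)}$ becomes $pr_1$) and a point $x_0\in U$, the classes $f_j:=e_j|_{E_{x_0}}$ form a basis of $H^{\bullet,\bullet}(F)\cong H^{\bullet,\bullet}(E_{x_0})$, which is in particular finite dimensional; since $\mathcal{E}|_U$ is free, the Künneth isomorphism \eqref{Kun-Dol-cpt} applies summand by summand and shows that $(\beta_j)_j\mapsto\sum_j pr_1^{*}\beta_j\cup pr_2^{*}f_j$ is an isomorphism $\bigoplus_j H_{\Phi|_U}^{\bullet-u_j,\bullet-v_j}(U,\mathcal{E}|_U)\xrightarrow{\ \sim\ }H_{(\pi^{-1}\Phi)|_{\pi^{-1}(U)}}^{\bullet,\bullet}(\pi^{-1}(U),\pi^{*}\mathcal{E})$. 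Expanding $e_i|_{\pi^{-1}(U)}=\sum_j pr_1^{*}a_{ij}\cup pr_2^{*}f_j$ with $a_{ij}\in H^{u_i-u_j,v_i-v_j}(U)$, one sees that $\Theta_U$ in cohomology is this Künneth isomorphism precomposed with the endomorphism $(\alpha_i)_i\mapsto(\sum_i\alpha_i\cup a_{ij})_j$ of $\bigoplus_i H_{\Phi|_U}^{\bullet-u_i,\bullet-v_i}(U,\mathcal{E}|_U)$, so $\mathcal{P}(U)$ reduces to the invertibility of the matrix $(a_{ij})$. Restricting the expansion to a fibre $E_x$ for any $x\in U$ and using $e_i|_{E_x}=f_i$ forces $a_{ii}=1$ and $a_{ij}=0$ whenever $(u_i,v_i)=(u_j,v_j)$ and $i\neq j$; since moreover $a_{ij}$ can be nonzero only when $u_i\geq u_j$ and $v_i\geq v_j$, ordering the $e_i$ by non-decreasing $u_i+v_i$ exhibits $(a_{ij})$ as block triangular with identity diagonal blocks, hence as $\mathrm{Id}$ plus a nilpotent, hence invertible. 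This gives $\mathcal{P}(U)$.

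For the disjoint condition, if $\{U_n\}$ is a disjoint family of opens, then $\Theta$ over $\bigsqcup_n U_n$ decomposes compatibly on source and target as the ``restricted product'' of the $\Theta_{U_n}$ inside $\prod_n(\cdot)$ — tuples whose union of supports lies in $\Phi$ — so $\mathcal{P}(U_n)$ for all $n$ yields $\mathcal{P}(\bigsqcup_n U_n)$ from the behaviour of $\Gamma_\Phi$ and Dolbeault cohomology under disjoint unions. For the Mayer--Vietoris condition with $X=U\cup V$, restricting the fixed $\varepsilon_i$ to $\pi^{-1}(U)$, $\pi^{-1}(V)$ and $\pi^{-1}(U\cap V)$ makes $\Theta$ a morphism from the short exact sequence of complexes of Proposition \ref{short-exact} (taken in the finitely many degrees indexed by the $e_i$ and summed) to its analogue for the bundle over $U\cup V$; the only identities to check are that $\pi^{*}$ commutes with extension by zero and that $(\tilde{j}_*\beta)\wedge\varepsilon_i=\tilde{j}_*(\beta\wedge\varepsilon_i|)$ for an open inclusion $\tilde{j}$, both of which are local and immediate from the definitions in Section 3. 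Passing to the long exact sequences and applying the five lemma, $\mathcal{P}(U)$, $\mathcal{P}(V)$, $\mathcal{P}(U\cap V)$ imply $\mathcal{P}(U\cup V)$, and Lemma \ref{glued} then yields $\mathcal{P}(X)$, which is the theorem.

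The step I expect to be the crux is the local case — more precisely, passing from ``$e_i|_{E_x}$ freely generates $H^{\bullet,\bullet}(E_x)$ on \emph{every} fibre'' to the invertibility of the transition matrix $(a_{ij})$ over $H^{\bullet,\bullet}(U)$. This is exactly where the hypothesis is used, and it relies both on having Künneth available with supports (which is what forces the paracompactifying hypothesis on $\Phi$) and on the weight bookkeeping via $u_i+v_i$ that makes $(a_{ij})$ unipotent; the remaining two conditions of the gluing principle are routine given Proposition \ref{short-exact} and the compatibilities of pullback and cup product recorded in Section 3.
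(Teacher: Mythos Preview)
Your proof is correct and follows the same architecture as the paper's: reduce via the gluing principle (Lemma~\ref{glued}) to a local statement over opens where the bundle trivializes and $\mathcal{E}$ is free, verify the Mayer--Vietoris condition via the short exact sequence of Proposition~\ref{short-exact}, and settle the local case by comparison with the K\"unneth isomorphism~\eqref{Kun-Dol-cpt}. The only substantive difference is in that local step: the paper replaces the $e_i$ by auxiliary classes $e_i' = (\varphi_U^{-1})^* pr_2^* i_o^* e_i$ pulled back from a single fibre and argues from a commutative diagram comparing the two spans, whereas you expand $e_i|_{\pi^{-1}(U)}$ directly via K\"unneth and show the resulting transition matrix $(a_{ij})$ is unipotent by a degree filtration. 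Your matrix argument is arguably the more transparent of the two --- it makes explicit why passing between the $e_i$ and the fibre-adapted classes costs nothing, a point the paper's diagram leaves somewhat implicit --- and the only thing you should state explicitly is that the basis $\mathfrak{U}$ consists of \emph{connected} opens, so that $H^{0,0}(U)=\mathbb{C}$ and the restriction-to-fibre argument really forces the same-bidegree block of $(a_{ij})$ to be the identity rather than merely an invertible matrix of locally constant functions.
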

\begin{proof}
Let $t_i$ be a $\bar{\partial}$-closed form of degree $(u_i,v_i)$ in $\mathcal{A}^{\bullet,\bullet}(E)$, such that $e_i=[t_i]$ for $1\leq i\leq r$. For any open
set $U\subseteq X$, set $E_U=\pi^{-1}(U)$ and $B^{\bullet,\bullet}(U)=\bigoplus\limits_{i=1}^{r}\mathcal{A}_{\Phi|_U}^{\bullet-u_i,\bullet-v_i}(U)$.
For any $p$, the $q$-th cohomology of the complex $B^{p,\bullet}(U)$  is $D^{p,q}(U)=\bigoplus\limits_{i=1}^{r}H_{\Phi|_U}^{p-u_i,q-v_i}(U)$.
The morphism
\begin{displaymath}
f^{p}_U=\sum\limits_{i=1}^r\pi^*(\bullet)\wedge t_i:B^{p,\bullet}(U)\rightarrow \mathcal{A}_{(\pi|_{E_U})^{-1}(\Phi|_{U})}^{p,\bullet}(E_U)=\mathcal{A}_{(\pi^{-1}\Phi)|_{E_U}}^{p,\bullet}(E_U)
\end{displaymath}
of complexes is defined well by Proposition \ref{inverse-paracompact} $(3)$, which induces a morphism
\begin{displaymath}
F^{p,q}_U=\sum\limits_{i=1}^r\pi^*(\bullet)\cup e_i:D^{p,q}(U)\rightarrow G^{p,q}(U):=H_{(\pi^{-1}\Phi)|_{E_U}}^{p,q}(E_U).
\end{displaymath}
Denoted by $\mathcal{P}(U)$ the statement that $F^{p,q}_U$ are isomorphisms for all $p$, $q$.
The theorem is equivalent to say  that $\mathcal{P}(X)$ holds.
We only need to check the three conditions in Lemma \ref{glued}.
Clearly, $\mathcal{P}$  satisfies the disjoint condition.

We fix some notations. For the inclusion $j:U\subseteq V$ of open sets in $X$, denote by $\tilde{j}:E_U\subseteq E_V$ the corresponding inclusion and denote by
$j_*:\mathcal{A}_{\Phi|_U}^{\bullet,\bullet}(U)\rightarrow \mathcal{A}_{\Phi|_V}^{\bullet,\bullet}(V)$,
$\tilde{j}_*:\mathcal{A}_{(\pi^{-1}\Phi)|_{E_U}}^{\bullet,\bullet}(E_U)\rightarrow \mathcal{A}_{(\pi^{-1}\Phi)|_{E_V}}^{\bullet,\bullet}(E_V)$,
$J=(j_*,\ldots,j_*):B^{\bullet,\bullet}(U)\rightarrow B^{\bullet,\bullet}(V)$ the extensions by zero.
Now, we go back to the proof. Fix an integer $p$.
For open sets $U$, $V\subseteq X$,
let $j_1:U\cap V\rightarrow U$, $j_2:U\cap V\rightarrow V$, $j_3:U\rightarrow U\cup V$, $j_4:V\rightarrow U\cup V$ be inclusions.
By Propositions \ref{short-exact} and \ref{com1},
there is a commutative diagram
\begin{displaymath}
\tiny{\xymatrix{
0\ar[r]&B^{p,\bullet}(U\cap V)\ar[d]^{f^{p}_{U\cap V}}\quad \ar[r]^{(J_1,J_2)\quad\quad} & \quad B^{p,\bullet}(U)\oplus
B^{p,\bullet}(V)\ar[d]^{(f^{p}_U,f^{p}_V)}\quad\ar[r]^{\quad\quad J_3-J_4}& \quad B^{p,\bullet}(U\cup V) \ar[d]^{f^{p}_{U\cup V}}\ar[r]& 0\\
0\ar[r]&\mathcal{A}_{(\pi^{-1}\Phi)|_{E_{U\cap V}}}^{p,\bullet}(E_{U\cap V})     \quad  \ar[r]^{(\tilde{j}_{1*},\tilde{j}_{2*})\qquad\qquad}& \quad
\mathcal{A}_{(\pi^{-1}\Phi)|_{E_U}}^{p,\bullet}(E_U)\oplus \mathcal{A}_{(\pi^{-1}\Phi)|_{E_V}}^{p,\bullet}(E_V)    \quad\ar[r]^{\qquad\qquad
\tilde{j}_{3*}-\tilde{j}_{4*}} & \quad \mathcal{A}_{(\pi^{-1}\Phi)|_{E_{U\cup V}}}^{p,\bullet}(E_{U\cup V})     \ar[r]& 0}}
\end{displaymath}
of  exact sequences of complexes.
Therefore, we have a commutative diagram
\begin{displaymath}
\small{\xymatrix{
    \cdots\ar[r]&D^{p,q}(U\cap V) \ar[d]^{F^{p,q}_{U\cap V}} \ar[r]& D^{p,q}(U)\oplus D^{p,q}(V)
    \ar[d]^{(F^{p,q}_U,F^{p,q}_V)}\ar[r]&  D^{p,q}(U\cup V)\ar[d]^{F^{p,q}_{U\cup V}}\ar[r]&D^{p,q+1}(U\cap V)\ar[d]^{F^{p,q+1}_{U\cap V}}\ar[r]&\cdots\\
 \cdots\ar[r]& G^{p,q}(U\cap V)\ar[r]&G^{p,q}(U)\oplus G^{p,q}(V)       \ar[r]& G^{p,q}(U\cup V)  \ar[r]&G^{p,q+1}(U\cap V)    \ar[r] & \cdots}}
\end{displaymath}
of long exact sequences.
If $F^{p,q}_U$, $F^{p,q}_V$ and $F^{p,q}_{U\cap V}$ are isomorphisms for all $p$, $q$, then so are $F^{p,q}_{U\cup V}$ for
all $p$, $q$ by the five-lemma. Hence $\mathcal{P}$  satisfies the Mayer-Vietoris condition.

To check the local condition, we first verify the following claim:

$(\lozenge)$ Assume that   $U$ is an $\mathcal{E}$-free open set of $X$ satisfying that $E_U$ is holomorphically trivial, then $\mathcal{P}(U)$ holds.

Without generality, assume that $\mathcal{E}|_U=\mathcal{O}_U$.
Suppose that $F$ is the general fiber of $E$  and $\varphi_U: U\times F\rightarrow E_U$ is a holomorphic trivialization. Let $pr_1$ and $pr_2$ be projections
from $U\times F$ to $U$ and $F$ respectively, which satisfy $\pi\circ \varphi_U=pr_1$.
Given a point $o\in U$, set $j_o:F\rightarrow U\times F$ as $f\mapsto (o,f)$.
Clearly, $pr_2\circ j_o=id_F$ and $i_o:=\varphi_U\circ j_o$ is the embedding $F\hookrightarrow E_U$ of the fiber $E_o\cong F$ over $o$ into $E_U$.
Set $e_i^{\prime}=(\varphi_U^{-1})^*pr_2^*i_o^*e_i\in H^{u_i,v_i}(E_U)$, $1\leq i\leq r$.
Then $i_o^*e_i^{\prime}=i_o^*e_i$ for any $i$.
Since $i_o^*e_1$, $\ldots$, $i_o^*e_r$ is linearly independent,  mapping $e_i$ to $e'_i$ for $1\leq i\leq r$ give an isomorphism
$\textrm{span}_{\mathbb{C}}\{e_1, \ldots, e_r\}\tilde{\rightarrow}\textrm{span}_{\mathbb{C}}\{e_1^{\prime}, \ldots, e_r^{\prime}\}$.
For any $p$, $q$, we have a commutative diagram
\begin{equation}\label{com}
\xymatrix{
 (H_{\Phi|_U}^{\bullet,\bullet}(U)\otimes_{\mathbb{C}} \textrm{span}_{\mathbb{C}}\{e_1, \ldots, e_r\})^{p,q}\ar[d]^{\cong}  \ar[dr]_{\cong}^{id\otimes i_o^*}&\\
 (H_{\Phi|_U}^{\bullet,\bullet}(U)\otimes_{\mathbb{C}} \textrm{span}_{\mathbb{C}}\{e_1^{\prime}, \ldots, e_r^{\prime}\})^{p,q}\ar[d]^{\pi^*(\bullet)\cup
 \bullet} \ar[r]^{\qquad id\otimes i_o^*}& (H_{\Phi|_U}^{\bullet,\bullet}(U)\otimes_{\mathbb{C}} H^{\bullet,\bullet}(F))^{p,q} \ar[d]^{pr_1^*(\bullet)\cup
 pr_2^*(\bullet)}\\
 H_{(\pi^{-1}\Phi)|_{E_U}}^{p,q}(E_U) \ar[r]_{\cong}^{\varphi_U^*}&  H_{\Phi|_U\times Y}^{p,q}(U\times F). }
\end{equation}
%where $\textrm{span}_{\mathbb{C}}\{e_1, \ldots, e_r\}$ and $\textrm{span}_{\mathbb{C}}\{e_1^{\prime}, \ldots, e_r^{\prime}\}$ are viewed as bigraded subspaces
%of $H^{*,*}_{(\pi^{-1}\Phi)|_{E_U}}(E_U)$.
By the assumption, $i_o^*:\textrm{span}_{\mathbb{C}}\{e_1, \ldots, e_r\}\rightarrow H^{\bullet,\bullet}(F)$ is an isomorphism,
so is $i_o^*:\textrm{span}_{\mathbb{C}}\{e_1^{\prime}, \ldots,e_r^{\prime}\}\rightarrow H^{\bullet,\bullet}(F)$.
%the restriction of $i_o^*$ to $\textrm{span}_{\mathbb{C}}\{e_1, \ldots, e_r\}$, hence to $\textrm{span}_{\mathbb{C}}\{e_1^{\prime}, \ldots,
%e_r^{\prime}\}$, is an isomorphism.
%By the assumption,  $F$ is compact.
By (\ref{Kun-Dol-cpt}), $pr_1^*(\bullet)\cup pr_2^*(\bullet)$ is isomorphic,  so is $\pi^*(\bullet)\cup \bullet$ in (\ref{com}).
Mapping $(\alpha_1,\ldots,\alpha_r)$ to $\sum\limits_{i=1}^r\alpha_i\otimes e_i$ gives a morphism
\begin{equation}\label{id}
\bigoplus\limits_{i=1}^{r}H_{\Phi|_U}^{p-u_i,q-v_i}(U)\rightarrow (H_{\Phi|_U}^{\bullet,\bullet}(U)\otimes_{\mathbb{C}}
\textrm{span}_{\mathbb{C}}\{e_1, \ldots, e_r\})^{p,q},
\end{equation}
which is clearly isomorphic.
Then $F^{p,q}_U$ is the composition of (\ref{id}) and the  two vertical maps in the first column of  (\ref{com}), hence an isomorphism.
We proved $(\lozenge)$.
Let $\mathfrak{U}$ be an $\mathcal{E}$-free basis of the topology of $X$ such that $E_U$ is holomorphically trivial for any $U\in\mathfrak{U}$.
For $U_1$, $\ldots$, $U_l\in\mathfrak{U}$,  $E_{U_1\cap\ldots\cap U_l}$ is holomorphically trivial, then $(\lozenge)$ asserts that
$\mathcal{P}(\bigcap\limits_{i=1}^lU_i)$ is an isomorphism. Hence $\mathcal{P}$ satisfies the local condition.

We complete the proof.
\end{proof}

\subsection{Projective bundle formulae}
We successively define $P_{r-1}$, $P_{r-2}$, $P_{r-3}$, ..., $P_1$, $P_0$  by recursion relations
\begin{equation}\label{Polynomial-ind}
P_i(T_1,....,T_{r-1})= \left\{
 \begin{array}{ll}
(-1)^r\sum\limits_{k=1}^{r-1-i}T_{k}P_{k+i}(T_1,....,T_{r-1}),&~0\leq i<r-1\\
 &\\
 (-1)^{r-1},&~i=r-1,
 \end{array}
 \right.
\end{equation}
for $0\leq i\leq r-1$. For example,
\begin{displaymath}
P_{r-1}=(-1)^{r-1}, \mbox{ }P_{r-2}=-T_1, \mbox{ }P_{r-3}=(-1)^{r-1}T^2_1-T_2,  \mbox{ }...
\end{displaymath}
Clearly, $P_{i}(T_1,....,T_{r-1})\in \mathbb{Z}[T_1,....,T_{r-1}]$.
By (\ref{Polynomial-ind}), it is  easy to prove the following two lemmata by the induction.
We will only give the details of the proof of Lemma \ref{poly3}.
\begin{lem}\label{poly2}
Suppose that
\begin{displaymath}
P_{i}(T_1,....,T_{r-1})=\sum_{d_1,\cdots,d_{r-1}\geq 0} a_{i,d_1,\cdots,d_{r-1}}T^{d_1}_1...T^{d_{r-1}}_{r-1}.
\end{displaymath}
For any nonzero  $a_{i,d_1,\cdots,d_{r-1}}$, $\sum\limits_{k=1}^{r-1}kd_k=r-1-i$.
%we have
%\begin{displaymath}
%\sum_{k=1}^{r-1}kd_k=r-1-i.
%\end{displaymath}
\end{lem}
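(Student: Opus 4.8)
The plan is to realize each $P_i$ as a weighted-homogeneous polynomial and read off the claimed identity as the statement of homogeneity. Equip the ring $\mathbb{Z}[T_1,\ldots,T_{r-1}]$ with the grading in which $T_k$ has weight $k$, so that the monomial $T_1^{d_1}\cdots T_{r-1}^{d_{r-1}}$ has weight $\sum_{k=1}^{r-1}kd_k$. Then the assertion of the lemma is exactly that, for every $0\leq i\leq r-1$, the polynomial $P_i$ is homogeneous of weight $r-1-i$ with respect to this grading.

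First I would dispose of the base case $i=r-1$: by (\ref{Polynomial-ind}) we have $P_{r-1}=(-1)^{r-1}$, a nonzero constant, which is homogeneous of weight $0=r-1-(r-1)$, as required. Next I would run a downward induction on $i$. Fix $i$ with $0\leq i<r-1$ and assume that $P_j$ is homogeneous of weight $r-1-j$ for every $j$ with $i<j\leq r-1$. By the recursion (\ref{Polynomial-ind}),
\begin{displaymath}
P_i=(-1)^r\sum_{k=1}^{r-1-i}T_k\,P_{k+i}.
\end{displaymath}
For each index $k$ occurring here we have $k\geq 1$, hence $k+i>i$, and $k\leq r-1-i$, hence $k+i\leq r-1$; thus the inductive hypothesis applies to $P_{k+i}$, which is homogeneous of weight $r-1-(k+i)$. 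Multiplying by $T_k$, of weight $k$, the summand $T_k P_{k+i}$ is homogeneous of weight $(r-1-k-i)+k=r-1-i$. Since a $\mathbb{Z}$-linear combination of homogeneous polynomials all of the same weight $r-1-i$ is again homogeneous of that weight (or zero), $P_i$ is homogeneous of weight $r-1-i$. This closes the induction, and unwinding the definition of the grading gives precisely the conclusion: any nonzero $a_{i,d_1,\cdots,d_{r-1}}$ forces $\sum_{k=1}^{r-1}kd_k=r-1-i$.

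The only point requiring any attention is to make sure the downward induction is well founded, i.e.\ that the recursion (\ref{Polynomial-ind}) for $P_i$ invokes only the polynomials $P_{k+i}$ with strictly larger index $k+i>i$, all of which lie in the admissible range $\{1,\ldots,r-1\}$; this is immediate from the bounds $1\leq k\leq r-1-i$. No genuine obstacle arises, so the argument above is complete.
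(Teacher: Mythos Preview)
Your proof is correct and follows exactly the approach the paper indicates: the paper states that Lemma~\ref{poly2} is ``easy to prove \ldots\ by the induction'' from the recursion (\ref{Polynomial-ind}) and gives no further details, and your downward induction on $i$, using the weighted grading $\deg T_k=k$, is precisely the intended argument.
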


\begin{lem}\label{poly3}
Set $T_0=(-1)^{r-1}$. For $k\in\{0, 1, ..., r-1\}$, put
\begin{displaymath}
H_{k}(T_1,...,T_{r-1})=\sum_{i=r-1}^{r-1+k}T_{i-(r-1)}P_{i-k}(T_1,...,T_{r-1}).
\end{displaymath}
Then
\begin{displaymath}\label{Kronecker}
H_{k}(T_1,...,T_{r-1})= \left\{
 \begin{array}{ll}
1,&~k=0\\
 &\\
 0,&~1\leq k\leq r-1.
 \end{array}
 \right.
\end{displaymath}
\end{lem}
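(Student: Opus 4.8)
The plan is to evaluate $H_k$ directly from the defining recursion (\ref{Polynomial-ind}), with no essential use of induction. First I would re-index the defining sum by setting $j=i-(r-1)$, which rewrites it as
\begin{displaymath}
H_k(T_1,\ldots,T_{r-1})=\sum_{j=0}^{k}T_j\,P_{r-1-k+j}(T_1,\ldots,T_{r-1}),
\end{displaymath}
the convention $T_0=(-1)^{r-1}$ being in force. The case $k=0$ is then immediate, since the sum collapses to $T_0P_{r-1}=(-1)^{r-1}\cdot(-1)^{r-1}=1$.

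For $1\le k\le r-1$ I would peel off the $j=0$ summand, writing $H_k=T_0P_{r-1-k}+S$ with $S:=\sum_{j=1}^{k}T_jP_{r-1-k+j}$. The crucial point is that the index $i=r-1-k$ satisfies $0\le i<r-1$, so the first branch of (\ref{Polynomial-ind}) applies; after renaming its summation index to $l$, it reads
\begin{displaymath}
P_{r-1-k}=(-1)^r\sum_{l=1}^{k}T_l\,P_{r-1-k+l}=(-1)^rS.
\end{displaymath}
Multiplying through by $(-1)^r$ gives $S=(-1)^rP_{r-1-k}$, whence
\begin{displaymath}
H_k=T_0P_{r-1-k}+S=\bigl((-1)^{r-1}+(-1)^r\bigr)P_{r-1-k}=0,
\end{displaymath}
since $(-1)^{r-1}=-(-1)^r$.

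I do not anticipate any real obstacle. The only point deserving attention is that the recursion one invokes is the first branch of (\ref{Polynomial-ind}), valid precisely for indices $i$ with $0\le i<r-1$, and this range is matched exactly by the hypothesis $1\le k\le r-1$ on the index occurring in $H_k$, with the bookkeeping convention $T_0=(-1)^{r-1}$ supplying the $j=0$ term. So the whole argument is just a re-indexing followed by one application of (\ref{Polynomial-ind}) and a sign cancellation; should one wish to treat Lemmas \ref{poly2} and \ref{poly3} uniformly, the identity $S=(-1)^rP_{r-1-k}$ can instead be phrased as the inductive step.
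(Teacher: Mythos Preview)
Your proof is correct and, in fact, cleaner than the paper's. The paper proceeds by induction on $k$: it separates off the top term $T_kP_{r-1}=(-1)^{r-1}T_k$, applies the recursion (\ref{Polynomial-ind}) to \emph{each} of the remaining $P_{i-l}$ with $i<r-1+l$, exchanges the resulting double sum, and recognizes the inner sums as $H_{l-s}$ for $1\le s\le l$, which vanish by the inductive hypothesis (except $H_0=1$, cancelling the $(-1)^{r-1}T_l$ term). You instead peel off the \emph{bottom} term $T_0P_{r-1-k}$ and observe that the recursion (\ref{Polynomial-ind}) applied once, to $P_{r-1-k}$ itself, exhibits the remainder $S$ as exactly $(-1)^rP_{r-1-k}$; the sign cancellation $(-1)^{r-1}+(-1)^r=0$ then finishes. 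Your route avoids induction and the sum interchange entirely, at the cost of being specific to this lemma (the paper's inductive framing is what one would naturally discover if treating the $H_k$ as a sequence, and it parallels the inductive structure of Lemma~\ref{poly2}).
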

\begin{proof}
For $k=0$, the lemma holds clearly. For $l>0$, assume that the lemma holds for any $k<l$ . Then
\begin{displaymath}
\begin{aligned}
H_{l}%= &\sum_{i=r-1}^{r-1+l}T_{i-(r-1)}P_{i-l}\\
=&(-1)^{r-1}T_{l}+(-1)^r\sum_{i=r-1}^{r-2+l}T_{i-(r-1)}\left(\sum_{s=1}^{r-1-i+l}T_{s}P_{s+i-l}\right)\quad\mbox{ }( \mbox{by (\ref{Polynomial-ind})})\\
=&(-1)^{r-1}T_{l}+(-1)^r\sum_{s=1}^{l}T_{s}\left(\sum_{i=r-1}^{r-1+l-s}T_{i-(r-1)}P_{i-(l-s)}\right)\quad\mbox{ }(\mbox{exchange sums})\\
=&(-1)^{r-1}T_{l}+(-1)^r\sum_{s=1}^{l}T_{s}H_{l-s}\qquad\qquad\qquad\qquad\qquad\quad\mbox{   }(\mbox{by the definition of $H_{k}$})\\
=&0\qquad\qquad\qquad\qquad \qquad\qquad \qquad \qquad\qquad\qquad\qquad \quad\mbox{ }\mbox{ }(\mbox{by the inductive assumption}).
\end{aligned}
\end{displaymath}
We complete the proof.
\end{proof}

Suppose that $\pi:\mathbb{P}(E)\rightarrow X$ is the projective vector bundle associated to a holomorphic bundle $E$ of rank $r$ over a complex manifold $X$,
$\mathcal{E}$ is a locally free sheaf  of $\mathcal{O}_X$-modules of finite rank
and $\Phi$ is a paracompactifying family of supports.
Let $t\in \mathcal{A}^{1,1}({\mathbb{P}(E)})$ be a Chern form of the universal line bundle $\mathcal{O}_{\mathbb{P}(E)}(-1)$ over ${\mathbb{P}(E)}$.
Notice that $dt=0$, i.e., $\partial t=\bar{\partial} t=0$.
The morphism $\sum\limits_{i=0}^{r-1}\pi^*(\bullet)\wedge t^i$ defines
\begin{equation}\label{proj-bundle1-cpt-form}
\bigoplus\limits_{i=0}^{r-1}\Gamma_\Phi\left(X,\mathcal{E}\otimes \mathcal{A}_X^{\bullet-i,\bullet-i}\right)\rightarrow
\Gamma_{\pi^{-1}\Phi}\left(\mathbb{P}(E),\pi^*\mathcal{E}\otimes\mathcal{A}_{\mathbb{P}(E)}^{\bullet,\bullet}\right).
\end{equation}
By Lemma \ref{poly2},
\begin{displaymath}
G_{t,\Phi}^{-i}(\bullet)=\sum_{j=0}^{r-1-i}P_{i+j}(\pi_*t^r,\mbox{ }....,\mbox{ }\pi_*t^{2r-2})\wedge\pi_*(t^j\wedge \bullet)
\end{displaymath}
defines
$\Gamma_{\pi^{-1}\Phi}\left(\mathbb{P}(E),\pi^*\mathcal{E}\otimes\mathcal{A}_{\mathbb{P}(E)}^{\bullet,\bullet}\right)\rightarrow
\Gamma_\Phi\left(X,\mathcal{E}\otimes \mathcal{A}_X^{\bullet-i,\bullet-i}\right)$
for $0\leq i\leq r-1$.
The morphism
$(G_{t,\Phi}^{0}(\bullet),\mbox{ }G_{t,\Phi}^{-1}(\bullet),\mbox{ }...,\mbox{ }G_{t,\Phi}^{-r+1}(\bullet))$
defines
\begin{equation}\label{proj-bundle2-cpt-form}
\Gamma_{\pi^{-1}\Phi}\left(\mathbb{P}(E),\pi^*\mathcal{E}\otimes\mathcal{A}_{\mathbb{P}(E)}^{\bullet,\bullet}\right)\rightarrow
\bigoplus\limits_{i=0}^{r-1}\Gamma_\Phi\left(X,\mathcal{E}\otimes \mathcal{A}_X^{\bullet-i,\bullet-i}\right).
\end{equation}
Denote (\ref{proj-bundle1-cpt-form}) and (\ref{proj-bundle2-cpt-form}) by  $\mu_{\Phi}^{\mathcal{E}}$ and  $\tau_{\Phi}^{\mathcal{E}}$ respectively.
Now we prove
\begin{equation}\label{inverse}
\tau_{\Phi}^{\mathcal{E}}\circ\mu_{\Phi}^{\mathcal{E}}=id.
\end{equation}
Clearly, $\pi_*t^i=0$, for $0\leq i\leq r-2$.
Moreover, $\pi_*t^{r-1}=(-1)^{r-1}$. Actually, $\pi_*t^{r-1}$ is a $d$-closed smooth $0$-form on $X$, hence a constant.  For any $x\in X$,
\begin{displaymath}
\pi_*t^{r-1}= \int_{\mathbb{P}(E_x)}t^{r-1}|_{\mathbb{P}(E_x)}%=\int_{\mathbb{P}(E_y)}[\frac{i}{2\pi}\Theta(\mathcal{O}_{\mathbb{P}(E_y)}(-1))]^{r-1}
=\int_{\mathbb{P}^{r-1}}c_1(\mathcal{O}_{\mathbb{P}^{r-1}}(-1))^{r-1}=(-1)^{r-1}.
\end{displaymath}
For $\alpha^{p-i,q-i}\in \Gamma_\Phi\left(X,\mathcal{E}\otimes \mathcal{A}_X^{\bullet-i,\bullet-i}\right)$, $0\leq i\leq r-1$, we have
\begin{equation}\label{b}
\pi_*(t^j\wedge \mu_{\Phi}^{\mathcal{E}}(\alpha^{p,q},\alpha^{p-1,q-1},...,\alpha^{p-r+1,q-r+1}))=\sum_{l=r-1-j}^{r-1}\pi_*t^{j+l}\wedge\alpha^{p-l,q-l}
\end{equation}
by (\ref{pro-formula1}).
Then
\begin{displaymath}
\begin{aligned}
&G_{t,\Phi}^{-i}\circ\mu_{\Phi}^{\mathcal{E}}(\alpha^{p,q},\alpha^{p-1,q-1},...,\alpha^{p-r+1,q-r+1})\\
=&\sum_{j=0}^{r-1-i}\sum_{l=r-1-j}^{r-1}P_{i+j}(\pi_*t^r,....,\pi_*t^{2r-2})\wedge\pi_*t^{j+l}\wedge\alpha^{p-l,q-l}\qquad\mbox{ }( \mbox{by (\ref{b})})\\
=&\sum_{l=i}^{r-1}\left[\sum_{j=r-1-l}^{r-1-i}P_{i+j}(\pi_*t^r,....,\pi_*t^{2r-2})\wedge\pi_*h^{j+l}\right]\wedge\alpha^{p-l,q-l}\quad\mbox{ }\mbox{ }(\mbox{exchange
sums})\\
=&\sum_{l=i}^{r-1}H_{l-i}(\pi_*t^r,....,\pi_*t^{2r-2})\wedge\alpha^{p-l,q-l}\qquad\qquad\qquad\qquad\qquad\mbox{   }(\mbox{by the definition of $H_{k}$})\\
=&\alpha^{p-l,q-l},\qquad\qquad\qquad\qquad\qquad\qquad\qquad\qquad\qquad\qquad\qquad\mbox{ }\mbox{ }( \mbox{by Lemma \ref{poly3}})
\end{aligned}
\end{displaymath}
i.e.,
\begin{equation}\label{composit-projection}
G_{t,\Phi}^{-i}\circ\mu_{\Phi}^{\mathcal{E}}=pr_i
\end{equation}
for any $0\leq i\leq r-1$, where $pr_i:\bigoplus\limits_{i=0}^{r-1}\Gamma_\Phi\left(X,\mathcal{E}\otimes \mathcal{A}_X^{\bullet-i,\bullet-i}\right)\rightarrow
\Gamma_\Phi\left(X,\mathcal{E}\otimes \mathcal{A}_X^{\bullet-i,\bullet-i}\right)$ is the $i$-th projection.
So $\tau_{\Phi}^{\mathcal{E}}\circ\mu_{\Phi}^{\mathcal{E}}=id$.

Denote by $h\in H^{1,1}({\mathbb{P}(E)})$ the Dolbeault class of  $t$ and denote by $G_{h,\Phi}^{-i}(\bullet)$
the morphism
$H_{\pi^{-1}\Phi}^{\bullet,\bullet}(\mathbb{P}(E),\pi^*\mathcal{E})\rightarrow H_{\Phi}^{\bullet-i,\bullet-i}(X,\mathcal{E})$)
induced by $G_{t,\Phi}^{-i}(\bullet)$ for $0\leq i\leq r-1$.
Denote by   $\mu_{Dol,\Phi}^{\mathcal{E}}$ and $\tau_{Dol,\Phi}^{\mathcal{E}}$ the morphisms on twisted Dolbeault cohomologies induced by
$\mu_{\Phi}^{\mathcal{E}}$  and  $\tau_{\Phi}^{\mathcal{E}}$ respectively.
We have
\begin{prop}\label{proj-bun}
$\mu_{Dol,\Phi}^{\mathcal{E}}$ and  $\tau_{Dol,\Phi}^{\mathcal{E}}$ are inverse isomorphisms.
\end{prop}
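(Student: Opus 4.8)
The plan is to derive the proposition from the Leray--Hirsch theorem (Theorem \ref{L-H1}) together with the one-sided identity $\tau_{\Phi}^{\mathcal{E}}\circ\mu_{\Phi}^{\mathcal{E}}=id$ already proved at the level of complexes in (\ref{inverse}). First I would observe that, by definition, $\mu_{Dol,\Phi}^{\mathcal{E}}$ and $\tau_{Dol,\Phi}^{\mathcal{E}}$ are the maps induced by $\mu_{\Phi}^{\mathcal{E}}$ and $\tau_{\Phi}^{\mathcal{E}}$ on $\bar{\partial}$-cohomology, so applying the cohomology functor to (\ref{inverse}) gives $\tau_{Dol,\Phi}^{\mathcal{E}}\circ\mu_{Dol,\Phi}^{\mathcal{E}}=id$. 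Hence it is enough to prove that $\mu_{Dol,\Phi}^{\mathcal{E}}$ is an isomorphism: composing $\tau_{Dol,\Phi}^{\mathcal{E}}\circ\mu_{Dol,\Phi}^{\mathcal{E}}=id$ on the right with $(\mu_{Dol,\Phi}^{\mathcal{E}})^{-1}$ then forces $\tau_{Dol,\Phi}^{\mathcal{E}}=(\mu_{Dol,\Phi}^{\mathcal{E}})^{-1}$, so the two are inverse isomorphisms.

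To see that $\mu_{Dol,\Phi}^{\mathcal{E}}$ is an isomorphism I would identify it with a Leray--Hirsch map. On cohomology the operator $\sum_{i=0}^{r-1}\pi^*(\bullet)\wedge t^i$ defining (\ref{proj-bundle1-cpt-form}) becomes $\sum_{i=0}^{r-1}\pi^*(\bullet)\cup h^i$, where $h=[t]\in H^{1,1}(\mathbb{P}(E))$, so $h^i$ has bidegree $(i,i)$. It remains to check the fiberwise hypothesis of Theorem \ref{L-H1} for the holomorphic fiber bundle $\pi:\mathbb{P}(E)\to X$ with the classes $e_{i+1}:=h^i$ and bidegrees $(u_{i+1},v_{i+1})=(i,i)$ for $0\le i\le r-1$. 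For each $x\in X$ the fiber is $\mathbb{P}(E_x)\cong\mathbb{P}^{r-1}$ since $E$ has rank $r$, and because $\mathcal{O}_{\mathbb{P}(E)}(-1)$ restricts on it to $\mathcal{O}_{\mathbb{P}^{r-1}}(-1)$ we get $h^i|_{\mathbb{P}(E_x)}=c_1(\mathcal{O}_{\mathbb{P}^{r-1}}(-1))^i$. As $H^{p,q}(\mathbb{P}^{r-1})$ is one-dimensional for $p=q\in\{0,\dots,r-1\}$ and zero otherwise, with $c_1(\mathcal{O}_{\mathbb{P}^{r-1}}(-1))^i$ generating the $(i,i)$-part, the restrictions $h^0|_{\mathbb{P}(E_x)},\dots,h^{r-1}|_{\mathbb{P}(E_x)}$ freely linearly generate $H^{\bullet,\bullet}(\mathbb{P}(E_x))$. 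Theorem \ref{L-H1} then yields that $\mu_{Dol,\Phi}^{\mathcal{E}}$ is an isomorphism, and combining with the previous paragraph we conclude the proof.

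I do not expect a serious obstacle here. The only points requiring a little care are to use (\ref{inverse}) in the correct direction — exploiting that a one-sided inverse of an isomorphism is automatically two-sided, rather than attempting to verify $\mu_{Dol,\Phi}^{\mathcal{E}}\circ\tau_{Dol,\Phi}^{\mathcal{E}}=id$ directly, which would entail a longer manipulation with the polynomials $P_i$ and the pushforwards $\pi_*t^j$ — and to keep straight the two clashing uses of the letter $E$ (the rank-$r$ vector bundle on $X$ in the present context versus the total space of a fiber bundle in the statement of Theorem \ref{L-H1}), so that the degree shifts by $t^i$ in the definitions of $\mu_{\Phi}^{\mathcal{E}}$ and $\tau_{\Phi}^{\mathcal{E}}$ match the $(u_i,v_i)$ in Theorem \ref{L-H1}.
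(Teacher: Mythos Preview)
Your proposal is correct and follows essentially the same approach as the paper: the paper's proof also verifies that $1,h,\dots,h^{r-1}$ restrict to a free basis of $H^{\bullet,\bullet}(\mathbb{P}(E_x))$ for each $x$, invokes Theorem~\ref{L-H1} to conclude that $\mu_{Dol,\Phi}^{\mathcal{E}}$ is an isomorphism, and then appeals to (\ref{inverse}) to deduce that $\tau_{Dol,\Phi}^{\mathcal{E}}$ is its inverse. Your write-up is simply a more explicit unfolding of the same argument.
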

\begin{proof}
For every $x\in X$, $1$, $h$,\ldots, $h^{r-1}$ restricted to the fibre $\pi^{-1}(x)=\mathbb{P}(E_x)$ freely linearly generate $H^{\bullet,\bullet}(\mathbb{P}(E_x))$.  By
Theorem \ref{L-H1},  $\mu_{Dol,\Phi}^{\mathcal{E}}$ is an isomorphism.
By (\ref{inverse}), we easily conclude it.
\end{proof}

\subsection{Blow-up formulae}
%Recall two sheaves defined in \cite{RYY} and their properties. For more details, we refer to \cite[Sect. 4.2]{RYY2}  for compact cases or \cite[Sect. 4]{M2} for general cases.

Let $X$ be a complex manifold and $i:Y\rightarrow X$ the inclusion of a  complex submanifold $Y$ into $X$.
Suppose that $\Phi$ is a paracompactifying family of supports on $X$.
For any $p$, $q$, set
$\mathcal{F}_{X,Y}^{p,q}=\textrm{ker}(\mathcal{A}_X^{p,q}\rightarrow i_*\mathcal{A}_Y^{p,q})$.
There is an exact sequence of sheaves
\begin{equation}\label{relative}
0\rightarrow\mathcal{F}_{X,Y}^{p,q}\rightarrow\mathcal{A}_X^{p,q}\rightarrow i_*\mathcal{A}_Y^{p,q}\rightarrow 0
\end{equation}
for any $p$ (\cite[Sect. 4.2]{RYY2} or \cite[Sect. 4]{M2}).
Define
$\mathcal{F}_{X,Y}^p=\textrm{ker}(\bar{\partial}:\mathcal{F}_{X,Y}^{p,0}\rightarrow\mathcal{F}_{X,Y}^{p,1})$.
There is an exact sequence
\begin{equation}\label{resolution}
\xymatrix{
0\ar[r] &\mathcal{F}_{X,Y}^p\ar[r]^{i} &\mathcal{F}_{X,Y}^{p,0}\ar[r]^{\bar{\partial}}
&\mathcal{F}_{X,Y}^{p,1}\ar[r]^{\mbox{ }\bar{\partial}}&\cdots\ar[r]^{\bar{\partial}\mbox{ }}&\mathcal{F}_{X,Y}^{p,n}\ar[r]&0,
}
\end{equation}
see \cite[Sect. 4.2]{RYY2} or \cite[Sect. 4]{M2}.
Since $\mathcal{F}_{X,Y}^p$ is a sheaf of $\mathcal{C}^\infty_X$-modules, it is $\Phi$-soft by \cite[II. 9.16]{Br},
so (\ref{resolution}) is a resolution of $\Phi$-soft sheaves of $\mathcal{F}_{X,Y}^p$.

Suppose that $\mathcal{E}$ is a locally free sheaf  of $\mathcal{O}_X$-modules of finite rank on $X$. We get an exact sequence of sheaves
\begin{displaymath}
\xymatrix{
 0\ar[r]&\mathcal{E}\otimes\mathcal{F}_{X,Y}^{p,q} \ar[r]& \mathcal{E}\otimes\mathcal{A}_{X}^{p,q} \ar[r]^{}& i_*(i^*\mathcal{E}\otimes\mathcal{A}_{Y}^{p,q})
 \ar[r]& 0}
\end{displaymath}
by  (\ref{relative}) and the projection formula of sheaves.
Since $\mathcal{E}\otimes\mathcal{F}_{X,Y}^{p,q}$ is $\Phi$-soft,
\begin{equation}\label{relative-exact}
\small{\xymatrix{
 0\ar[r]&\Gamma_\Phi(X,\mathcal{E}\otimes\mathcal{F}_{X,Y}^{p,q}) \ar[r]& \Gamma_\Phi(X,\mathcal{E}\otimes\mathcal{A}_{X}^{p,q}) \ar[r]^{i^*}&
 \Gamma_{\Phi|_Y}(Y,i^*\mathcal{E}\otimes\mathcal{A}_{Y}^{p,q}) \ar[r]& 0}}
\end{equation}
is exact.

Go back to the cases of complex blow-ups.
Suppose that  $\pi:\widetilde{X}\rightarrow X$ is the complex blow-up of a complex manifold $X$ along a complex submanifold $Y$ of complex codimension $r$ with the exceptional divisor $E$.
By \cite[Lemma 4.2]{M2},
\begin{displaymath}
R^q\pi_*(\pi^*\mathcal{E}\otimes\mathcal{F}_{\widetilde{X},E}^{p})=\mathcal{E}\otimes R^q\pi_*\mathcal{F}^p_{\widetilde{X},E}=\left\{
 \begin{array}{ll}
\mathcal{E}\otimes\mathcal{F}^p_{X,Y},&~q=0,\\
 &\\
 0,&~q\geq1.
 \end{array}
 \right.
\end{displaymath}
By \cite[IV. 6.1]{Br}, $\pi^*$ induces an isomorphism
\begin{displaymath}
H_{\Phi}^q(X,\mathcal{E}\otimes\mathcal{F}_{X,Y}^p)\cong H_{\pi^{-1}\Phi}^q(\widetilde{X},\pi^*\mathcal{E}\otimes\mathcal{F}_{\widetilde{X},E}^p).
\end{displaymath}
For a given $p$, we have a commutative diagram of exact sequences of complexes
\begin{displaymath}
\small{\xymatrix{
 0\ar[r]&\Gamma_\Phi(X,\mathcal{E}\otimes\mathcal{F}_{X,Y}^{p,\bullet})\ar[d]^{\pi^*} \ar[r]&
 \Gamma_\Phi(X,\mathcal{E}\otimes\mathcal{A}_{X}^{p,\bullet})\ar[d]^{\pi^*} \ar[r]^{i_Y^*}&
 \Gamma_{\Phi|_Y}(Y,i_Y^*\mathcal{E}\otimes\mathcal{A}_{Y}^{p,\bullet}) \ar[d]^{(\pi|_E)^*}\ar[r]& 0\\
 0\ar[r]&\Gamma_{\pi^{-1}\Phi}(\widetilde{X},\pi^*\mathcal{E}\otimes\mathcal{F}_{\widetilde{X},E}^{p,\bullet})    \ar[r]^{}&
 \Gamma_{\pi^{-1}\Phi}(\widetilde{X},\pi^*\mathcal{E}\otimes\mathcal{A}_{\widetilde{X}}^{p,\bullet})  \ar[r]^{i_E^*\quad} &
 \Gamma_{(\pi^{-1}\Phi)|_E}(E,i_E^*\pi^*\mathcal{E}\otimes\mathcal{A}_{E}^{p,\bullet})    \ar[r]& 0. }}
\end{displaymath}
It induces a commutative diagram of long exact sequences
\begin{displaymath}
\tiny{\xymatrix{
    \cdots H_\Phi^q(X,\mathcal{E}\otimes\mathcal{F}_{X,Y}^{p}) \ar[d]^{\cong} \ar[r]& H_{\Phi}^{p,q}(X,\mathcal{E}) \ar[d]^{\pi^*}\ar[r]^{i_Y^*}&
    H_{\Phi|_Y}^{p,q}(Y,i_Y^*\mathcal{E})\ar[d]^{(\pi|_E)^*}\ar[r]&H_\Phi^{q+1}(X,\mathcal{E}\otimes\mathcal{F}_{X,Y}^{p})\ar[d]^{\cong}\cdots\\
 \cdots  H_{\pi^{-1}\Phi}^q(\widetilde{X},\pi^*\mathcal{E}\otimes\mathcal{F}_{\widetilde{X},E}^{p})\ar[r]&
 H_{\pi^{-1}\Phi}^{p,q}(\widetilde{X},\pi^*\mathcal{E})       \ar[r]^{i_E^*\quad}& H_{({\pi^{-1}\Phi})|_E}^{p,q}(E,i_E^*\pi^*\mathcal{E})     \ar[r] &
 H_{\pi^{-1}\Phi}^{q+1}(\widetilde{X},\pi^*\mathcal{E}\otimes\mathcal{F}_{\widetilde{X},E}^{p})\cdots.}}
\end{displaymath}
By  Proposition \ref{inj-surj},   $\pi^*$ is injective.  By Proposition \ref{proj-bun}, $(\pi|_E)^*$ is injective. By the snake-lemma, $i_E^*$
induces an isomorphism
$\textrm{coker}\pi^*\tilde{\rightarrow}\textrm{coker}(\pi|_E)^*$. We  get a commutative diagram of exact sequences
\begin{equation}\label{commutative1}
\xymatrix{
 0\ar[r]&H_\Phi^{p,q}(X,\mathcal{E})\ar[d]^{i_Y^*} \ar[r]^{\pi^*}& H_{\pi^{-1}\Phi}^{p,q}(\widetilde{X},\pi^*\mathcal{E})\ar[d]^{i_E^*} \ar[r]&
 \textrm{coker}\pi^* \ar[d]^{\cong}\ar[r]& 0\\
 0\ar[r]&H_{\Phi|_Y}^{p,q}(Y,i_Y^*\mathcal{E})       \ar[r]^{(\pi|_E)^*\quad}& H_{({\pi^{-1}\Phi})|_E}^{p,q}(E,i_E^*\pi^*\mathcal{E})   \ar[r]^{} &  \textrm{coker}
 (\pi|_E)^*    \ar[r]& 0, }
\end{equation}
for any $p$, $q$.

Denote (\ref{b-u-m1}) and (\ref{b-u-m2}) by $\psi_{Dol,\Phi}^{\mathcal{E}}$ and $\phi_{Dol,\Phi}^{\mathcal{E}}$ respectively.

\subsubsection{$\phi_{Dol,\Phi}^{\mathcal{E}}$ is an isomorphism}

By (\ref{pro-formula2}), $\pi_*\pi^*=id$, namely, the upper row of (\ref{commutative1}) is  a splitting sequence.
So $(\pi_*,i_E^*)$ gives an isomorphism
\begin{displaymath}
H_{\pi^{-1}\Phi}^{p,q}(\widetilde{X},\pi^*\mathcal{E})\tilde{\rightarrow} H_\Phi^{p,q}(X,\mathcal{E})\oplus \textrm{coker}(\pi|_E)^*.
\end{displaymath}
By Proposition \ref{proj-bun}, it is
\begin{displaymath}
H_{\pi^{-1}\Phi}^{p,q}(\widetilde{X},\pi^*\mathcal{E})\rightarrow H_\Phi^{p,q}(X,\mathcal{E})\oplus
\bigoplus_{i=1}^{r-1}H_{\Phi|_Y}^{p-i,q-i}(Y,i_Y^*\mathcal{E})
\end{displaymath}
\begin{equation}\label{rep0}
\quad\quad\quad\quad\quad\alpha\mapsto(\pi_*\alpha,\alpha^{p-1,q-1},\ldots,\alpha^{p-r+1,q-r+1}),
\end{equation}
where   $\alpha^{p-i,q-i}\in H_{\Phi|_Y}^{p-i,q-i}(Y,i_Y^*\mathcal{E})$ for  $0\leq i\leq r-1$ satisfy
\begin{displaymath}
i_E^*\alpha =\sum_{i=0}^{r-1}h^i\cup (\pi|_E)^*\alpha^{p-i,q-i}.
\end{displaymath}
%for $\alpha\in H_{\pi^{-1}\Phi}^{p,q}(\widetilde{X},\pi^*\mathcal{E})$.
By Proposition \ref{proj-bun}, $\alpha^{p-i,q-i}=G_{h,\Phi}^{-i}\circ i_E^*(\alpha)$ for  $0\leq i\leq r-1$.
So (\ref{rep0}) is just $\phi_{Dol,\Phi}^{\mathcal{E}}$.

\begin{rem}
On compact complex manifolds, the expression (\ref{rep0}) was  first obtained  in \cite[Sect. 5.2]{RYY2} for $\Phi=clt_X$.
The compactness of $X$ is necessary there, since  the finiteness of dimensions of cohomologies was used.
\end{rem}

\subsubsection{$\psi_{Dol,\Phi}^{\mathcal{E}}$ is an isomorphism}
\begin{lem}\label{key0}
Let $\pi:\widetilde{X}\rightarrow X$ be the complex blow-up of $X$ along a complex submanifold $Y$ with the exceptional divisor $E$.
Assume that $Y$ has a  holomorphically contractible neighborhood in $X$.
Then the composite map
\begin{displaymath}
\xymatrix{
H_{(\pi^{-1}\Phi)|_E}^{\bullet,\bullet}(E)\ar[r]^{i_{E*}} & H_{\pi^{-1}\Phi}^{\bullet+1,\bullet+1}(\widetilde{X})\ar[r]^{i_E^*}&
H_{(\pi^{-1}\Phi)|_E}^{\bullet+1,\bullet+1}(E)
}
\end{displaymath}
is $h\cup\bullet$, where  $h$ is the Dolbeault class of a Chern form of the universal line bundle $\mathcal{O}_E(-1)$ over $E$ and $i_E:E\rightarrow
\widetilde{X}$ is the inclusion.
\end{lem}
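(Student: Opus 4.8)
The plan is to reduce to the model of the blow-up near the exceptional divisor furnished by the hypothesis, and then to combine the Lelong--Poincar\'{e} formula with a smooth Thom form supported in an arbitrarily small tube around $E$.

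First I would use the hypothesis: a holomorphically contractible neighbourhood of $Y$ can be identified with a neighbourhood of the zero section of the normal bundle $N_{Y/X}\to Y$, so that a neighbourhood $W$ of $E$ in $\widetilde{X}$ becomes a neighbourhood of the zero section of the bundle $q\colon\mathrm{Tot}(\mathcal{O}_E(-1))\to E$, with $E$ the zero section $\sigma\colon E\hookrightarrow\mathrm{Tot}(\mathcal{O}_E(-1))$, with $i_E$ corresponding to $\sigma$, and with $\pi$ to the contraction of the zero section; here one uses $N_{E/\widetilde{X}}\cong\mathcal{O}_E(-1)$. Fix a Hermitian metric on $\mathcal{O}_E(-1)$ with first Chern form $\Theta$, so that $h=[\Theta]$, and let $\rho$ be the corresponding norm-square function on $\mathrm{Tot}(\mathcal{O}_E(-1))$, which cuts out the zero section. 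Choosing a smooth $\chi$ with $\chi\equiv1$ near $0$ and $\mathrm{supp}\,\chi\subseteq[0,C]$, I set
\[
\Psi:=q^*\Theta+\frac{i}{2\pi}\partial\bar{\partial}\big((1-\chi(\rho))\log\rho\big).
\]
Since $(1-\chi(\rho))\log\rho$ is smooth (it vanishes near $E$), $\Psi$ is a smooth $d$-closed $(1,1)$-form; it equals $q^*\Theta$ near $E$ and, by the Lelong--Poincar\'{e} identity $[E]=q^*\Theta+\frac{i}{2\pi}\partial\bar{\partial}\log\rho$, vanishes outside the tube $\{\rho\le C\}$. Moreover $\sigma^*\Psi=\Theta$, and $\Psi-[E]=\bar{\partial}S$, where $S:=\frac{i}{2\pi}\partial(\chi(\rho)\log\rho)$ is a $(1,0)$-current supported in $\{\rho\le C\}$.

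Next I would carry out the computation on classes. Take a $\bar{\partial}$-closed $\alpha$ representing a class in $H^{\bullet,\bullet}_{(\pi^{-1}\Phi)|_E}(E)$; then $i_{E*}\alpha=q^*\alpha\wedge[E]$ as currents on $W$ (extended by zero to $\widetilde{X}$), because $\sigma^*q^*\alpha=\alpha$ and the projection formula holds. Since $q$ is holomorphic, $\bar{\partial}q^*\alpha=q^*\bar{\partial}\alpha=0$, so
\[
i_{E*}\alpha-q^*\alpha\wedge\Psi=q^*\alpha\wedge\bar{\partial}S=\pm\,\bar{\partial}(q^*\alpha\wedge S),
\]
whence $[i_{E*}\alpha]=[q^*\alpha\wedge\Psi]$ in $H^{\bullet+1,\bullet+1}_{\pi^{-1}\Phi}(\widetilde{X})$; as the latter is represented by a smooth form, the pullback $i_E^*$ applies to it as restriction, giving
\[
i_E^*\big([i_{E*}\alpha]\big)=\big[\sigma^*(q^*\alpha\wedge\Psi)\big]=[\alpha\wedge\Theta]=h\cup[\alpha],
\]
which is the claim. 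The currents model of $H^{\bullet,\bullet}_{(\pi^{-1}\Phi)|_E}(E)$ is handled by the same argument taking $\alpha$ a $\bar{\partial}$-closed current.

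The step requiring real attention is the support bookkeeping: one must check that $q^*\alpha\wedge S$ and $q^*\alpha\wedge\Psi$ lie in $\Gamma_{\pi^{-1}\Phi}$, so that $[i_{E*}\alpha]=[q^*\alpha\wedge\Psi]$ holds in the cohomology with supports in $\pi^{-1}\Phi$ and not merely in full-support cohomology. This is where paracompactness enters: if $\mathrm{supp}\,\alpha=K\in(\pi^{-1}\Phi)|_E$, then $\pi(K)$ is closed in $X$ (properness of $\pi$) and belongs to $\Phi$, so by the paracompactifying property $\pi(K)$ has a closed neighbourhood $M\in\Phi$, and for $C$ small the currents in question are supported in $\{\rho\le C\}\cap q^{-1}(K)$, a closed subset of $\pi^{-1}(M)$, hence in $\pi^{-1}\Phi$. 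One also has to fix the identification of $W$ (say $W=\{\rho<C_0\}$ with $C<C_0$) precisely enough that $\{\rho\le C\}$ is closed in $\widetilde{X}$ and $\Psi$, being supported there, extends by zero. The remaining ingredients---the Lelong--Poincar\'{e} formula, the projection formula (\ref{pro-formula1}), and the compatibility of the forms and currents models of Dolbeault cohomology with supports---are already in place.
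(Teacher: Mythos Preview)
Your approach is a natural and more direct alternative to the paper's, but as written it has two genuine gaps.

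\textbf{The tubular model is not furnished by the hypothesis.} In this paper, ``$Y$ has a holomorphically contractible neighborhood'' means only that there is a holomorphic retraction $\tau\colon U\to Y$ with $\tau\circ l_Y=\mathrm{id}_Y$ (see the first line of the paper's proof); it does \emph{not} assert a biholomorphism of $U$ with a neighborhood of the zero section of $N_{Y/X}$, and a holomorphic retraction of $Y$ does not in general induce a holomorphic retraction $q\colon W\to E$ of the exceptional divisor, let alone a tubular neighborhood. Your construction of $\Psi$ (via $q^*\Theta$ and the fibre norm $\rho$) and the identity $i_{E*}\alpha=q^*\alpha\wedge[E]$ both require such a $q$, so your argument does not establish the lemma as stated. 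The paper circumvents this by never retracting onto $E$: it first proves the unsupported identity $l_E^*l_{E*}\alpha=h\cup\alpha$ on $H^{\bullet,\bullet}(E)$ using only that $l_E^*\colon H^{\bullet,\bullet}(\widetilde U)\to H^{\bullet,\bullet}(E)$ is surjective (which the retraction $\tau$ guarantees), and then, for the supported statement, expands $\sigma=\sum h^i\cup(\pi|_E)^*\alpha_i$ via the projective-bundle formula and extends each $(\pi|_E)^*u_i$ to $\widetilde U$ as $(\pi|_{\widetilde U})^*\tau^*u_i$.

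\textbf{The ``for $C$ small'' support step is not uniform.} Even granting a holomorphic tubular neighborhood, when $K=\mathrm{supp}\,\alpha$ is non-compact (which is allowed for general paracompactifying $\Phi$), an open set containing $K$ need not contain any tube $q^{-1}(K)\cap\{\rho\le C\}$ with \emph{constant} $C>0$: the radius at which $\pi^{-1}(\mathring M)$ is exited can tend to $0$ along $K$. So the asserted inclusion $q^{-1}(K)\cap\{\rho\le C\}\subset\pi^{-1}(M)$ is unjustified. The paper avoids any smallness choice by introducing the auxiliary family $\Omega=j^{-1}\pi^{-1}\Phi\cap(\pi|_{\widetilde U})^{-1}\tau^{-1}(\Phi|_Y)$ and using $l_E^{-1}(j^{-1}\pi^{-1}\Phi)=l_E^{-1}\Omega=l_E^{-1}\bigl((\pi|_{\widetilde U})^{-1}\tau^{-1}(\Phi|_Y)\bigr)=(\pi^{-1}\Phi)|_E$, which lets one transport classes between support families via the diagram (\ref{sub0}) with no tube at all. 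If you want to salvage your route, you would need a cutoff $\chi(\rho/\epsilon)$ with a suitable positive function $\epsilon$ on $E$, and to verify the resulting $\Psi$ is still smooth, closed, and restricts to $\Theta$; this is extra work the paper's argument does not need.
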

\begin{proof}
Let  $U$ be a neighborhood of  $Y$ with a holomorphic map $\tau: U\rightarrow Y$ such that $\tau\circ l_Y=id_Y$, where $l_Y:Y\rightarrow U$  is the inclusion.
Denote by $l_E:E\rightarrow \widetilde{U}$  and by $j:\widetilde{U}\rightarrow \widetilde{X}$ the inclusions.
Set $\widetilde{U}=\pi^{-1}(U)$.
By $($\ref{commutative1}$)$, $l_E^*$ induces a surjective map
\begin{displaymath}
H^{p,q}(\widetilde{U})\rightarrow H^{p,q}(E)/(\pi|_E)^*H^{p,q}(Y).
\end{displaymath}
Since $(\pi|_E)^*H^{p,q}(Y)=l_E^*(\pi|_{\widetilde{U}})^*\tau^*H^{p,q}(Y)\subseteq \textrm{Im} l_E^*$,
$l_E^*:H^{p,q}(\widetilde{U})\rightarrow H^{p,q}(E)$ is surjective.
For any $\alpha\in H^{p,q}(E)$, $\alpha=l_E^*\beta$ for some $\beta\in H^{p,q}(\widetilde{U})$.
Denote by $T_E$ the current on $\widetilde{U}$ defined by the integral along the divisor $E$. Clearly, $l_{E*}(1)=T_E$.
By the Lelong-Poincar\'{e} equation  (see \cite[p. 271 (13.2), (13.5)]{Dem}),
\begin{displaymath}
T_E=\frac{i}{2\pi}\Theta(\mathcal{O}_{\widetilde{U}}(E))+\bar{\partial}\partial T
\end{displaymath}
for some $T\in\mathcal{D}^{\prime 0,0}(\widetilde{U})$, where $\Theta(\mathcal{O}_{\widetilde{U}}(E))$ denotes a Chern form of $\mathcal{O}_{\widetilde{U}}(E)$.
By (\ref{pro-formula2}), $l_{E*}\alpha=l_{E*}l^*_E\beta=l_{E*}(1)\cup\beta$.
So
\begin{equation}\label{sel-int}
l_E^*l_{E*}\alpha=l_E^*[\frac{i}{2\pi}\Theta(\mathcal{O}_{\widetilde{U}}(E))]\cup\alpha
=h\cup\alpha,
\end{equation}
where  we used the fact that $\mathcal{O}_E(-1)=\mathcal{O}_{\widetilde{U}}(E)|_E$.

For any $\sigma\in H_{(\pi^{-1}\Phi)|_E}^{\bullet,\bullet}(E)=H_{(\pi|_E)^{-1}(\Phi|_Y)}^{\bullet,\bullet}(E)$, there exists $\alpha_i\in
H_{\Phi|_Y}^{\bullet-i,\bullet-i}(Y)$ for $0\leq i\leq r-1$ such that $\sigma=\sum\limits_{i=0}^{r-1}h^i\cup (\pi|_E)^*\alpha_i$ by Proposition \ref{proj-bun}.
Let $u_i\in \mathcal{A}_{\Phi|_Y}^{\bullet-i,\bullet-i}(Y)$ be a representative of $\alpha_i$.
Set $\Omega=j^{-1}\pi^{-1}\Phi\cap (\pi|_{\widetilde{U}})^{-1}\tau^{-1}(\Phi|_Y)$.
By Proposition \ref{inverse-paracompact} $(3)-(6)$, $\Omega$ is a paracompactifying family of supports on $\widetilde{U}$.
By Proposition \ref{inverse-paracompact} $(1)$ $(2)$,
\begin{equation}\label{pullsupports}
(\pi|_E)^{-1}(\Phi|_Y)=l_E^{-1}\left(j^{-1}\pi^{-1}\Phi\right)=l_E^{-1}\left((\pi|_{\widetilde{U}})^{-1}\tau^{-1}(\Phi|_Y)\right)=l_E^{-1}\Omega.
\end{equation}
Since $\textrm{supp} \left(t^i\wedge (\pi|_E)^*u_i\right)\in (\pi|_E)^{-1}(\Phi|_Y)$,
\begin{displaymath}
\textrm{supp} \left(l_{E*}(t^i\wedge (\pi|_E)^*u_i)\right)\in  j^{-1}\pi^{-1}\Phi\cap (\pi|_{\widetilde{U}})^{-1}\tau^{-1}(\Phi|_Y)=\Omega.
\end{displaymath}
%By (\ref{pro-formula1}),
%\begin{equation}\label{pro-form}
%l_{E*}(t^i\wedge (\pi|_E)^*u_i)=l_{E*}t^i\wedge (\pi|_{\widetilde{U}})^{\ast}\tau^{\ast}u_i.
%\end{equation}
Then
\begin{displaymath}
\begin{aligned}
l_E^*[l_{E*}(t^i\wedge (\pi|_E)^*u_i)]_{j^{-1}\pi^{-1}\Phi}
=&l_E^*[l_{E*}(t^i\wedge (\pi|_E)^*u_i)]_{\Omega}\qquad\qquad\qquad\quad\mbox{ }\mbox{ }\mbox{ }( \mbox{by (\ref{pullsupports}) and (\ref{sub0})})\\
=&l_E^*[l_{E*}(t^i\wedge (\pi|_E)^*u_i)]_{(\pi|_{\widetilde{U}})^{-1}\tau^{-1}(\Phi|_Y)}\quad\mbox{ }\mbox{ }\mbox{ }\mbox{ }( \mbox{by (\ref{pullsupports}) and (\ref{sub0})})\\
=&l_E^*[l_{E*}t^i\wedge (\pi|_{\widetilde{U}})^{\ast}\tau^{\ast}u_i]_{(\pi|_{\widetilde{U}})^{-1}\tau^{-1}(\Phi|_Y)}\qquad\mbox{ }( \mbox{by (\ref{pro-formula1})})\\
=&l_E^*\left(l_{E*}h^i\cup [(\pi|_{\widetilde{U}})^{\ast}\tau^{\ast}u_i]_{(\pi|_{\widetilde{U}})^{-1}\tau^{-1}(\Phi|_Y)}\right)\\
=&h^{i+1}\cup (\pi|_E)^*\alpha_i.\qquad\qquad\qquad\qquad\qquad\mbox{ }\mbox{ }\mbox{ }( \mbox{by (\ref{sel-int})})
\end{aligned}
\end{displaymath}
Since the support of $l_{E*}(t^i\wedge (\pi|_E)^*u_i)$ is closed in $\widetilde{X}$, $j_*l_{E*}(t^i\wedge (\pi|_E)^*u_i)$ is defined well and is just
$i_{E*}(t^i\wedge (\pi|_E)^*u_i)$.
By (\ref{push-pull-1}),% and (\ref{pro-form}),
\begin{equation}\label{mid}
j^*i_{E*}(t^i\wedge (\pi|_E)^*u_i)=l_{E*}(t^i\wedge (\pi|_E)^*u_i).
\end{equation}
Then
\begin{displaymath}
\begin{aligned}
i_E^*i_{E*}\left(h^i\cup (\pi|_E)^*\alpha_i\right)=&i_E^*i_{E*}[t^i\wedge (\pi|_E)^*u_i]_{(\pi^{-1}\Phi)|_{E}}\\
=&l_E^*[j^*i_{E*}(t^i\wedge (\pi|_E)^*u_i)]_{j^{-1}\pi^{-1}\Phi}\\
=&l_E^*[l_{E*}(t^i\wedge (\pi|_E)^*u_i)]_{j^{-1}\pi^{-1}\Phi}\qquad\qquad\mbox{ }( \mbox{by (\ref{mid})})\\
=&h^{i+1}\cup (\pi|_E)^*\alpha_i.%\qquad\qquad\qquad\quad\mbox{ }\mbox{ }\mbox{ }( \mbox{by (\ref{sel-int})})
\end{aligned}
\end{displaymath}
So $i_E^*i_{E*}\sigma=\sum\limits_{i=0}^{r-1}h^{i+1}\cup (\pi|_E)^*\alpha_i=h\cup \sigma$.
\end{proof}

With the similar proof of \cite[Proposition 4.5]{M2}, we have
\begin{lem}\label{special}
Suppose that $Y$ has a  holomorphically contractible neighborhood in $X$. Then  $\psi_{Dol,\Phi}^{\mathcal{O}_{X}}$ is an isomorphism.
\end{lem}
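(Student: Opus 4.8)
The plan is to deduce this from the fact that $\phi_{Dol,\Phi}^{\mathcal{O}_{X}}$ is already known to be an isomorphism (Sect. 4.4.1, valid for an arbitrary paracompactifying $\Phi$ and with no contractibility hypothesis on $Y$). It then suffices to prove the identity $\phi_{Dol,\Phi}^{\mathcal{O}_{X}}\circ\psi_{Dol,\Phi}^{\mathcal{O}_{X}}=\textrm{id}$, for then $\psi_{Dol,\Phi}^{\mathcal{O}_{X}}=(\phi_{Dol,\Phi}^{\mathcal{O}_{X}})^{-1}$ is automatically an isomorphism. Since $\phi_{Dol,\Phi}^{\mathcal{O}_{X}}=(\pi_*,\,G_{h,\Phi}^{-1}\circ i_E^*,\,\ldots,\,G_{h,\Phi}^{-r+1}\circ i_E^*)$ and $\psi_{Dol,\Phi}^{\mathcal{O}_{X}}(\beta,\alpha_1,\ldots,\alpha_{r-1})=\pi^*\beta+\sum_{i=1}^{r-1}i_{E*}\big(h^{i-1}\cup(\pi|_E)^*\alpha_i\big)$, the whole proof amounts to evaluating these $r$ components on an arbitrary element of $H_{\Phi}^{\bullet,\bullet}(X)\oplus\bigoplus_{i=1}^{r-1}H_{\Phi|_Y}^{\bullet-i,\bullet-i}(Y)$.

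First I would handle the $\pi_*$-component. One has $\pi_*\pi^*\beta=\beta$ because $\pi_*\pi^*=\textrm{id}$ (already used in Sect. 4.4.1). For the exceptional terms, the commuting square $\pi\circ i_E=i_Y\circ(\pi|_E)$ gives $\pi_*\circ i_{E*}=i_{Y*}\circ(\pi|_E)_*$, and the projection formula (\ref{pro-formula1}) gives $(\pi|_E)_*\big(h^{i-1}\cup(\pi|_E)^*\alpha_i\big)=\big((\pi|_E)_*h^{i-1}\big)\cup\alpha_i$; as the fibre of $\pi|_E:E=\mathbb{P}(N_{Y/X})\to Y$ is $\mathbb{P}^{r-1}$ and $0\le i-1\le r-2$, fibre integration kills $t^{i-1}$, so $(\pi|_E)_*h^{i-1}=0$. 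Hence the $\pi_*$-component of $\phi_{Dol,\Phi}^{\mathcal{O}_{X}}\circ\psi_{Dol,\Phi}^{\mathcal{O}_{X}}$ is the projection onto the $\beta$-factor. For the components $G_{h,\Phi}^{-j}\circ i_E^*$ with $1\le j\le r-1$, the same square gives $i_E^*\pi^*=(\pi|_E)^*i_Y^*$, while Lemma \ref{key0} gives $i_E^*\circ i_{E*}=h\cup(\bullet)$, so
\[
i_E^*\psi_{Dol,\Phi}^{\mathcal{O}_{X}}(\beta,\alpha_1,\ldots,\alpha_{r-1})=(\pi|_E)^*(i_Y^*\beta)+\sum_{i=1}^{r-1}h^i\cup(\pi|_E)^*\alpha_i,
\]
i.e. $\sum_{i=0}^{r-1}h^i\cup(\pi|_E)^*\gamma_i$ with $\gamma_0=i_Y^*\beta$ and $\gamma_i=\alpha_i$ for $i\ge1$, which is the image of $(\gamma_0,\ldots,\gamma_{r-1})$ under the projective bundle morphism $\mu_{Dol,\Phi|_Y}^{\mathcal{O}_Y}$ of Sect. 4.3 attached to $\pi|_E$. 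By (\ref{composit-projection}) and Proposition \ref{proj-bun}, $G_{h,\Phi}^{-j}$ then returns the $j$-th coordinate $\gamma_j=\alpha_j$. Assembling the $r$ components, $\phi_{Dol,\Phi}^{\mathcal{O}_{X}}\circ\psi_{Dol,\Phi}^{\mathcal{O}_{X}}=\textrm{id}$, and we conclude.

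The part requiring the most care is the support bookkeeping: one must check that $\pi^*$, $i_{E*}$, $\pi_*$, the cup products and the projection formula (\ref{pro-formula1}) are all legitimate among the families $\Phi$ on $X$, $\pi^{-1}\Phi$ on $\widetilde{X}$, $\Phi|_Y$ on $Y$ and $(\pi^{-1}\Phi)|_E=(\pi|_E)^{-1}(\Phi|_Y)$ on $E$ — this is where Proposition \ref{inverse-paracompact} and the paracompactness assumptions enter — and, crucially, that Lemma \ref{key0} applies with precisely these families; this is the only place where the hypothesis that $Y$ has a holomorphically contractible neighbourhood in $X$ is used. Granting this, every identity above is the cohomology-level consequence of a corresponding identity of forms and currents already established in Sects. 3 and 4, so no further computation is required.
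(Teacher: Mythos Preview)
Your proof is correct and takes a genuinely different route from the paper's. The paper argues directly: it shows $\psi_{Dol,\Phi}^{\mathcal{O}_X}$ is injective by pulling back along $i_E^*$, invoking Lemma \ref{key0} and Proposition \ref{proj-bun} to kill the $\beta_i$'s, and then Proposition \ref{inj-surj} to kill $\alpha$; surjectivity is obtained from the diagram (\ref{commutative1}) by subtracting off the exceptional part and recognising the remainder as lying in $\pi^*H_\Phi^{\bullet,\bullet}(X)$. At no point does the paper's proof of Lemma \ref{special} invoke the isomorphism $\phi_{Dol,\Phi}^{\mathcal{O}_X}$ from Sect.~4.4.1.

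Your approach instead leverages that $\phi_{Dol,\Phi}^{\mathcal{O}_X}$ is already an isomorphism (established in Sect.~4.4.1 with no contractibility hypothesis) and computes $\phi_{Dol,\Phi}^{\mathcal{O}_X}\circ\psi_{Dol,\Phi}^{\mathcal{O}_X}=\textrm{id}$; this is exactly the computation carried out later in the proof of Proposition \ref{relation}, specialised to $\mathcal{E}=\mathcal{O}_X$ where Lemma \ref{key0} supplies the needed self-intersection identity (\ref{key2}). What you gain is economy and a bit more: you not only get that $\psi_{Dol,\Phi}^{\mathcal{O}_X}$ is an isomorphism but simultaneously that $\phi_{Dol,\Phi}^{\mathcal{O}_X}$ is its inverse, i.e.\ you answer Question \ref{prob.1} affirmatively in this special case as a byproduct. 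The paper's direct argument, on the other hand, is self-contained in the sense that it does not rely on Sect.~4.4.1, and it keeps the two halves of Theorem \ref{1.2} logically independent before Proposition \ref{relation} ties them together.
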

\begin{proof}
Suppose that $\pi^*\alpha+\sum\limits_{i=1}^{r-1}i_{E*}\left(h^{i-1}\cup(\pi|_E)^*\beta_i\right)=0$, where $\alpha\in H_{\Phi}^{\bullet,\bullet}(X)$ and
$\beta_i\in H_{\Phi|_Y}^{\bullet-i,\bullet-i}(Y)$ for $1\leq i\leq r-1$.
Pull it back  by $i_E^*$, we get
\begin{displaymath}
(\pi|_E)^*i_Y^*\alpha+\sum_{i=1}^{r-1}h^{i}\cup(\pi|_E)^*\beta_i=0
\end{displaymath}
by Lemma \ref{key0}.
By Proposition \ref{proj-bun}, $\beta_i=0$ for all $i$.
So $\pi^*\alpha=0$.
By Proposition \ref{inj-surj}, $\pi^*$ is injective, which implies that $\alpha=0$.
Then $\psi_{Dol,\Phi}^{\mathcal{O}_{X}}$ is injective.

For any $\gamma\in H_{\pi^{-1}\Phi}^{\bullet,\bullet}(\widetilde{X})$, by Proposition \ref{proj-bun}, there exist $\beta_i\in
H_{\Phi|_Y}^{\bullet-i,\bullet-i}(Y)$ for $0\leq i\leq r-1$ such that $i_E^*\gamma=\sum\limits_{i=0}^{r-1}h^i\cup(\pi|_E)^*\beta_i$.
By Lemma \ref{key0},
\begin{displaymath}
i_E^*\left[\gamma-\sum_{i=1}^{r-1}i_{E*}\left(h^{i-1}\cup(\pi|_E)^*\beta_i\right)\right]=(\pi|_E)^*\beta,
\end{displaymath}
which is zero in $\textrm{coker}(\pi|_E)^*$.
By  (\ref{commutative1}),
\begin{displaymath}
\gamma-\sum_{i=1}^{r-1}i_{E*}\left(h^{i-1}\cup(\pi|_E)^*\beta_i\right)=\pi^*\alpha,
\end{displaymath}
for some $\alpha\in H_{\Phi}^{\bullet,\bullet}(X)$.
So $\psi_{Dol,\Phi}^{\mathcal{O}_{X}}$  is surjective.

We complete the proof.
\end{proof}

%\subsubsection{A proof of Theorem \ref{1.2}}
Set
\begin{displaymath}
\mathcal{F}^{\bullet,\bullet}=(\mathcal{E}\otimes\mathcal{A}_X^{\bullet,\bullet})\oplus
\bigoplus_{i=1}^{r-1}i_{Y*}(i_Y^*\mathcal{E}\otimes\mathcal{A}_Y^{\bullet,\bullet})[-i,-i].
\end{displaymath}
and $\mathcal{G}^{\bullet,\bullet}=\pi^*\mathcal{E}\otimes\mathcal{D}_{\widetilde{X}}^{\prime \bullet,\bullet}$.
Let $t\in\mathcal{A}^{1,1}(E)$ be a Chern form of the universal line bundle $\mathcal{O}_E(-1)$ over $E$.
Set $\widetilde{U}=\pi^{-1}(U)$ for any open subset $U\subseteq X$.
Notice that $(\pi^{-1}\Phi)|_{\widetilde{U}}=(\pi|_{\widetilde{U}})^{-1}(\Phi|_U)$.
Define a morphism
\begin{displaymath}
\small{\begin{aligned}
&\Gamma_{\Phi|_U}(U,\mathcal{F}^{\bullet,\bullet})=\Gamma_{\Phi|_U}(U,\mathcal{E}\otimes\mathcal{A}_X^{\bullet,\bullet})\oplus \bigoplus_{i=1}^{r-1}\Gamma_{\Phi|_{Y\cap
U}}(Y\cap U,i_Y^*\mathcal{E}\otimes\mathcal{A}_Y^{\bullet-i,\bullet-i})\\
\rightarrow&
\Gamma_{(\pi|_{\widetilde{U}})^{-1}(\Phi|_U)}(\widetilde{U},\pi^*\mathcal{E}\otimes\mathcal{D}_{\widetilde{X}}^{\prime \bullet,\bullet})
=\Gamma_{(\pi^{-1}\Phi)|_{\widetilde{U}}}(\widetilde{U},\mathcal{G}^{\bullet,\bullet})
\end{aligned}}
\end{displaymath}
of double complexes as
\begin{displaymath}
\psi_{\Phi|_U}^{\mathcal{E}|_{U}}=(\pi|_{\widetilde{U}})^*+\sum\limits_{i=1}^{r-1}i_{E\cap \widetilde{U}*}\circ (t^{i-1}|_{E\cap \widetilde{U}}\wedge)\circ
(\pi|_{E\cap \widetilde{U}})^*,
\end{displaymath}
where $i_{E\cap \widetilde{U}}:E\cap \widetilde{U}\rightarrow \widetilde{U}$ is the inclusion.
On cohomologies, it induces a morphism
\begin{displaymath}
\small{\begin{aligned}
L^{\bullet,\bullet}(U):=H_{\Phi|_U}^{\bullet,\bullet}(U,\mathcal{E})\oplus \bigoplus_{i=1}^{r-1}H_{\Phi|_{Y\cap U}}^{\bullet-i,\bullet-i}(Y\cap
U,i_Y^*\mathcal{E})\rightarrow
K^{\bullet,\bullet}(\widetilde{U}):=H_{(\pi^{-1}\Phi)|_{\widetilde{U}}}^{\bullet,\bullet}(\widetilde{U},\pi^*\mathcal{E}),
\end{aligned}}
\end{displaymath}
which is just $\psi_{Dol,\Phi|_U}^{\mathcal{E}|_{U}}$.
Briefly set $F_U^{\bullet,\bullet}= \psi_{Dol,\Phi|_U}^{\mathcal{E}|_{U}}$.
Denote by $\mathcal{P}(U)$ the statement that $F_U^{\bullet,\bullet}$ is an  isomorphism.
The theorem is equivalent to say that $\mathcal{P}(X)$ holds.

Fisrt, we fix some notations.
For the inclusion $j:U\rightarrow V$ of open sets in $X$,
denote by $\tilde{j}:\widetilde{U}\hookrightarrow\widetilde{V}$ and  $j^{\prime}:U\cap Y\rightarrow V\cap Y$
the corresponding inclusions.
Denote by $J=(j_*,j^{\prime}_*,\ldots,j^{\prime}_*):\Gamma_{\Phi|_U}(U,\mathcal{F}^{\bullet,\bullet})\rightarrow \Gamma_{\Phi|_V}(V,\mathcal{F}^{\bullet,\bullet})$ the
extension by zero.
Now, we go back to the proof. Fix an integer $p$.
For any open sets $U$, $V\subseteq X$,
let $j_1:U\cap V\rightarrow U$, $j_2:U\cap V\rightarrow V$, $j_3:U\rightarrow U\cup V$, $j_4:V\rightarrow U\cup V$ be inclusions.
By Propositions \ref{short-exact} and \ref{com1},
there is a commutative diagram of exact sequences of complexes
\begin{displaymath}
\tiny{\xymatrix{
0\ar[r]&\Gamma_{\Phi|_{U\cap V}}(U\cap V,\mathcal{F}^{p,\bullet})\ar[d]^{\psi_{\Phi|_{U\cap V}}^{\mathcal{E}|_{U\cap V}}}\quad \ar[r]^{(J_1,J_2)\quad\quad} &
\quad\Gamma_{\Phi|_{U}}(U,\mathcal{F}^{p,\bullet})\oplus
\Gamma_{\Phi|_V}(V,\mathcal{F}^{p,\bullet})\ar[d]^{(\psi_{\Phi|_U}^{\mathcal{E}|_{U}},\psi_{\Phi|_V}^{\mathcal{E}|_{V}})}\quad\ar[r]^{\quad\quad J_3-J_4}&
\quad\Gamma_{\Phi|_{U\cup V}}(U\cup V,\mathcal{F}^{p,\bullet}) \ar[d]^{\psi_{\Phi|_{U\cup V}}^{\mathcal{E}|_{U\cup V}}}\ar[r]& 0\\
0\ar[r]&\Gamma_{(\pi^{-1}\Phi)|_{\widetilde{U}\cap \widetilde{V}}}(\widetilde{U}\cap \widetilde{V},\mathcal{G}^{p,\bullet})     \quad
\ar[r]^{(\tilde{j}_{1*},\tilde{j}_{2*})\qquad\qquad}& \quad \Gamma_{(\pi^{-1}\Phi)|_{\widetilde{U}}}(\widetilde{U},\mathcal{G}^{p,\bullet})\oplus \Gamma_{(\pi^{-1}\Phi)|_{\widetilde{V}}}(
\widetilde{V},\mathcal{G}^{p,\bullet})    \quad\ar[r]^{\qquad\qquad \tilde{j}_{3*}-\tilde{j}_{4*}} & \quad\Gamma_{(\pi^{-1}\Phi)|_{\widetilde{U}\cup
\widetilde{V}}}(\widetilde{U}\cup
\widetilde{V},\mathcal{G}^{p,\bullet})     \ar[r]& 0.}}
\end{displaymath}
Therefore, we have a commutative diagram
\begin{displaymath}
\small{\xymatrix{
    \cdots L^{p,q}(U\cap V) \ar[d]^{F^{p,q}_{U\cap V}} \ar[r]& L^{p,q}(U)\oplus L^{p,q}(V)
    \ar[d]^{(F^{p,q}_U,F^{p,q}_V)}\ar[r]&  L^{p,q}(U\cup V)\ar[d]^{F^{p,q}_{U\cup V}}\ar[r]&L^{p,q+1}(U\cap V) \ar[d]^{F^{p,q+1}_{U\cap V}}\ar[r]&\cdots\\
 \cdots K^{p,q}(\widetilde{U}\cap \widetilde{V})\ar[r]&K^{p,q}(\widetilde{U})\oplus
 K^{p,q}(\widetilde{V})       \ar[r]&
 K^{p,q}(\widetilde{U}\cup \widetilde{V})     \ar[r] &K^{p,q+1}(\widetilde{U}\cap \widetilde{V})\ar[r]&\cdots}}
\end{displaymath}
%\begin{displaymath}
%\tiny{\xymatrix{
%    \cdots L^{p,q-1}(U\cup V)\ar[d]^{F^{p,q-1}_{U\cup V}}\ar[r]&L^{p,q}(U\cap V) \ar[d]^{F^{p,q}_{U\cap V}} \ar[r]& L^{p,q}(U)\oplus L^{p,q}(V)
%    \ar[d]^{(F^{p,q}_U,F^{p,q}_V)}\ar[r]&  L^{p,q}(U\cup V)\ar[d]^{F^{p,q}_{U\cup V}}\ar[r]&\cdots\\
% \cdots H_{(\pi^{-1}\Phi)|_{\widetilde{U}\cup \widetilde{V}}}^{p,q-1}(\widetilde{U}\cup \widetilde{V},\pi^*\mathcal{E})     \ar[r] & H_{(\pi^{-1}\Phi)|_{\widetilde{U}\cap \widetilde{V}}}^{p,q}(\widetilde{U}\cap
% \widetilde{V},\pi^*\mathcal{E})\ar[r]&H_{(\pi^{-1}\Phi)|_{\widetilde{U}}}^{p,q}(\widetilde{U},\pi^*\mathcal{E})\oplus
% H_{(\pi^{-1}\Phi)|_{\widetilde{V}}}^{p,q}(\widetilde{V},\pi^*\mathcal{E})       \ar[r]&
% H_{(\pi^{-1}\Phi)|_{\widetilde{U}\cup \widetilde{V}}}^{p,q}(\widetilde{U}\cup \widetilde{V},\pi^*\mathcal{E})     \ar[r] &\cdots}}
%\end{displaymath}
of long exact sequences. If $F^{p,q}_U$, $F^{p,q}_V$ and $F^{p,q}_{U\cap V}$ are isomorphisms for all $p$, $q$, then  $F^{p,q}_{U\cup V}$ are also isomorphisms for all $p$, $q$ by the five-lemma.
 Thus $\mathcal{P}$  satisfies the Mayer-Vietoris condition in Lemma \ref{glued}.
Let $\mathfrak{U}$ be an $\mathcal{E}$-free basis of the topology of $X$ such that every $U\in \mathfrak{U}$ is Stein.
For $U_1$, $\ldots$, $U_l\in\mathfrak{U}$,  $\bigcap\limits_{i=1}^lU_i$ is $\mathcal{E}$-free and $Y\cap \bigcap\limits_{i=1}^lU_i$ is Stein.
By \cite[Theorem 3.3.3]{Fo},  any Stein complex submanifold  has a holomorphically contractible neighborhood.
By Lemma \ref{special}, $F_{U_1\cap\ldots\cap U_l}^{\bullet,\bullet}=\psi_{Dol,\Phi|_{U_1\cap\ldots\cap U_l}}^{\mathcal{E}|_{U_1\cap\ldots\cap U_l}}$ is an isomorphism, so $\mathcal{P}$ satisfies the local condition in Lemma \ref{glued}.
Obviously, $\mathcal{P}$ satisfies the disjoint condition in Lemma \ref{glued}.
By Lemma \ref{glued}, $\mathcal{P}(X)$ holds.

We complete the proof.

\begin{rem}
In general, $clt_X|_U\neq clt_U$  for an open set $U\subseteq X$.
We used a type of the Mayer-Vietoris sequences (Proposition \ref{short-exact}) here
and used another type of the ones in  the proof of \cite[Theorem 1.2]{M2} for $\Phi=clt_X$,
where the later one seems difficult to be used to prove the case with supports in a paracompactifying family $\Phi$.
\end{rem}

\subsubsection{Relationship of \emph{(\ref{b-u-m1})} and \emph{(\ref{b-u-m2})}}
A natural question is:
\begin{quest}\label{prob.1}
\emph{Are  $\psi_{Dol,\Phi}^{\mathcal{E}}$ and $\phi_{Dol,\Phi}^{\mathcal{E}}$  inverse to each other?}
\end{quest}

This question has an affirmative answer in the following case.
\begin{prop}\label{relation}
Suppose that
\begin{equation}\label{key2}
i_E^*i_{E*}\sigma=h\cup\sigma
\end{equation}
holds for any $\sigma\in H_{(\pi^{-1}\Phi)|_E}^{\bullet,\bullet}(E,i_E^*\pi^*\mathcal{E})$.
Then  $\psi_{Dol,\Phi}^{\mathcal{E}}$ and  $\phi_{Dol,\Phi}^{\mathcal{E}}$  are inverse isomorphisms.
\end{prop}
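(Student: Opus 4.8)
The plan is to take advantage of the fact that both maps are already known to be isomorphisms at this point --- $\phi_{Dol,\Phi}^{\mathcal{E}}$ by the discussion in Section~4.4.1 and $\psi_{Dol,\Phi}^{\mathcal{E}}$ by Section~4.4.2 --- so that it suffices to verify the single identity $\phi_{Dol,\Phi}^{\mathcal{E}}\circ\psi_{Dol,\Phi}^{\mathcal{E}}=\mathrm{id}$; composing with $(\phi_{Dol,\Phi}^{\mathcal{E}})^{-1}$ on the left then forces $\psi_{Dol,\Phi}^{\mathcal{E}}=(\phi_{Dol,\Phi}^{\mathcal{E}})^{-1}$, so the two are mutually inverse. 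Fix $(\alpha,\beta_1,\ldots,\beta_{r-1})$ with $\alpha\in H_\Phi^{\bullet,\bullet}(X,\mathcal{E})$ and $\beta_i\in H_{\Phi|_Y}^{\bullet-i,\bullet-i}(Y,i_Y^*\mathcal{E})$, and set $\gamma=\psi_{Dol,\Phi}^{\mathcal{E}}(\alpha,\beta_1,\ldots,\beta_{r-1})=\pi^*\alpha+\sum_{i=1}^{r-1}i_{E*}\bigl(h^{i-1}\cup(\pi|_E)^*\beta_i\bigr)$. For the first component of $\phi_{Dol,\Phi}^{\mathcal{E}}(\gamma)$, note that $\pi\circ i_E=i_Y\circ(\pi|_E)$, so by functoriality of the pushforward of currents $\pi_*\circ i_{E*}=i_{Y*}\circ(\pi|_E)_*$; combined with the projection formula (\ref{pro-formula2}) this gives $\pi_*i_{E*}\bigl(h^{i-1}\cup(\pi|_E)^*\beta_i\bigr)=i_{Y*}\bigl((\pi|_E)_*h^{i-1}\cup\beta_i\bigr)$, which vanishes since $(\pi|_E)_*h^{i-1}=0$ for $1\le i\le r-1$ (the computation of $\pi_*t^{k}$ for $0\le k\le r-2$ recorded just before (\ref{b})). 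As $\pi_*\pi^*=\mathrm{id}$ by (\ref{pro-formula2}), we conclude $\pi_*\gamma=\alpha$.

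For the remaining components, $\pi\circ i_E=i_Y\circ(\pi|_E)$ again yields $i_E^*\pi^*\alpha=(\pi|_E)^*i_Y^*\alpha$, while the hypothesis (\ref{key2}) gives $i_E^*i_{E*}\bigl(h^{i-1}\cup(\pi|_E)^*\beta_i\bigr)=h\cup\bigl(h^{i-1}\cup(\pi|_E)^*\beta_i\bigr)=h^{i}\cup(\pi|_E)^*\beta_i$. Hence, putting $\gamma_0:=i_Y^*\alpha$ and $\gamma_i:=\beta_i$ for $1\le i\le r-1$,
\[
i_E^*\gamma=(\pi|_E)^*i_Y^*\alpha+\sum_{i=1}^{r-1}h^{i}\cup(\pi|_E)^*\beta_i=\sum_{i=0}^{r-1}h^{i}\cup(\pi|_E)^*\gamma_i .
\]
Applying $G_{h,\Phi}^{-k}$ and invoking Proposition~\ref{proj-bun} for the projective bundle $\pi|_E\colon E=\mathbb{P}(N_{Y/X})\to Y$ --- exactly as in the derivation of (\ref{rep0}), where by (\ref{composit-projection}) the map $G_{h,\Phi}^{-k}$ recovers the $k$-th coefficient of such an expansion --- we obtain $G_{h,\Phi}^{-k}\circ i_E^*(\gamma)=\gamma_k=\beta_k$ for $1\le k\le r-1$. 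Therefore $\phi_{Dol,\Phi}^{\mathcal{E}}(\gamma)=(\alpha,\beta_1,\ldots,\beta_{r-1})$, i.e.\ $\phi_{Dol,\Phi}^{\mathcal{E}}\circ\psi_{Dol,\Phi}^{\mathcal{E}}=\mathrm{id}$, which together with the first paragraph completes the proof.

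I do not expect any genuine obstacle inside this argument: once (\ref{key2}) is granted, its sole role is to collapse the cross-terms $i_E^*i_{E*}(\,\cdot\,)$ into pure powers of $h$, which places $i_E^*\gamma$ in precisely the form on which the projective-bundle inverse $G_{h,\Phi}^{-\bullet}$ acts as the coordinate projections; everything else is bookkeeping with the projection formula and the commutative square of the blow-up. The substantive content lies entirely in showing that the self-intersection identity (\ref{key2}) actually holds, which is the business of the self-intersection formulae (Proposition~\ref{key0} and its generalization) and is addressed separately.
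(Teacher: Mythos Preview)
Your proof is correct and follows essentially the same route as the paper: both verify $\phi_{Dol,\Phi}^{\mathcal{E}}\circ\psi_{Dol,\Phi}^{\mathcal{E}}=\mathrm{id}$ by computing $i_E^*\gamma$ via the hypothesis (\ref{key2}) and then applying (\ref{composit-projection}), concluding by the already-established fact that $\psi_{Dol,\Phi}^{\mathcal{E}}$ (or equivalently $\phi_{Dol,\Phi}^{\mathcal{E}}$) is an isomorphism. The only difference is that you spell out the verification of the first component $\pi_*\gamma=\alpha$ explicitly, whereas the paper leaves this routine step implicit.
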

\begin{proof}
For $\alpha^{p,q}\in H_{\Phi}^{p,q}(X,\mathcal{E})$ and $\beta^{p-i,q-i}\in H_{\Phi|_Y}^{p-i,q-i}(Y,i_Y^*\mathcal{E})$, $1\leq i\leq r-1$,
\begin{displaymath}
\begin{aligned}
&i_E^*(\psi_{Dol,\Phi}^{\mathcal{E}}(\alpha^{p,q},\beta^{p-1,q-1},\ldots,\beta^{p-r+1,q-r+1})\\
=&\mu_{Dol,\Phi}^{\mathcal{E}}(i_Y^*\alpha^{p,q},\beta^{p-1,q-1},\ldots,\beta^{p-r+1,q-r+1})
\end{aligned}
\end{displaymath}
%\begin{displaymath}
%\small{
%\begin{aligned}
%i_E^*(\psi_{Dol,\Phi}^{\mathcal{E}}(\alpha^{p,q},\beta^{p-1,q-1},\ldots,\beta^{p-r+1,q-r+1})
%=\mu_{Dol,\Phi}^{\mathcal{E}}(i_Y^*\alpha^{p,q},\beta^{p-1,q-1},\ldots,\beta^{p-r+1,q-r+1})
%\end{aligned}}
%\end{displaymath}
by (\ref{key2}).
Hence
\begin{displaymath}
G_{h,\Phi}^{-i}\circ i_E^*\left(\psi_{Dol,\Phi}^{\mathcal{E}}(\alpha^{p,q},\beta^{p-1,q-1},\ldots,\beta^{p-r+1,q-r+1}\right)=\beta^{p-i,q-i}
\end{displaymath}
by (\ref{composit-projection}), which implies that $\phi_{Dol,\Phi}^{\mathcal{E}}\circ\psi_{Dol,\Phi}^{\mathcal{E}}=id$,
i.e.,  $\phi_{Dol,\Phi}^{\mathcal{E}}$  is the inverse isomorphism of $\psi_{Dol,\Phi}^{\mathcal{E}}$.
\end{proof}

\begin{prop}\label{key}
Let $\Phi$ be a paracompactifying family of supports on $X$.
Assume that  one of the following conditions is satisfied\emph{:}

$(1)$ $X$ and $Y$ are compact complex manifolds satisfying the $\partial\bar{\partial}$-lemma and $\Phi=clt_X$,

$(2)$ $Y$ has a  holomorphically contractible neighborhood in $X$,

$(3)$ $E$ has a  holomorphically contractible neighborhood in $\widetilde{X}$.\\
Then $i_E^*i_{E*}\sigma=h\cup\sigma$ for any $\sigma\in  H_{(\pi^{-1}\Phi)|_E}^{\bullet,\bullet}(E)$.
\end{prop}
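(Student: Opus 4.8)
All three cases run on the same engine: if $E$ has a retraction $r$ from a neighbourhood onto itself, then every class $\sigma$ on $E$ is $r^*\sigma$ restricted, the Lelong--Poincar\'{e} equation replaces the integration current $i_{E*}(1)=T_E$ by a smooth Chern form $\theta$ of $\mathcal O_{\widetilde X}(E)$, and the projection formula gives $i_E^*i_{E*}\sigma=i_E^*(T_E\cup r^*\sigma)=i_E^*(\theta\cup r^*\sigma)=(i_E^*\theta)\cup\sigma=h\cup\sigma$, since $\mathcal O_E(-1)=\mathcal O_{\widetilde X}(E)|_E$ and $r\circ i_E=\mathrm{id}_E$ (up to the usual extension-by-zero and support bookkeeping of the preliminaries). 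When $r$ is holomorphic this takes place directly in twisted Dolbeault cohomology with supports, which is exactly the situation of cases $(2)$ and $(3)$. Case $(2)$ is precisely Lemma~\ref{key0}, so there is nothing to do; the plan is thus (i) to reduce case $(3)$ to case $(2)$, and (ii) to treat case $(1)$, where only a \emph{smooth} retraction exists, by transporting the identity through de Rham cohomology via the $\partial\bar\partial$-lemma.

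\textbf{Case $(3)$.} Let $W\subseteq\widetilde X$ be a holomorphically contractible neighbourhood of $E$ with holomorphic retraction $\rho\colon W\to E$. As $\pi$ is proper, $U:=X\setminus\pi(\widetilde X\setminus W)$ is open, and from $E=\pi^{-1}(Y)\subseteq W$ one gets $\pi(\widetilde X\setminus W)\subseteq X\setminus Y$, hence $Y\subseteq U$, $\pi^{-1}(U)\subseteq W$, and $\pi$ restricts to the blow-up of $U$ along $Y$ with exceptional divisor $E$. I would then let $\tau\colon U\to Y$ equal $\mathrm{id}_Y$ on $Y$ and $\pi\circ\rho\circ(\pi|_{\pi^{-1}(U)})^{-1}$ on $U\setminus Y$: this is holomorphic away from $Y$ and locally bounded near $Y$ (its values stay near $Y$, since $(\pi|_{\pi^{-1}(U)})^{-1}$ moves points toward $E$, $\rho$ fixes $E$, and $\pi$ collapses $E$ to $Y$), so the Riemann extension theorem makes it holomorphic on $U$, and a limit argument along the fibres of $\pi|_E$ gives $\tau\circ l_Y=\mathrm{id}_Y$. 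Thus $Y$ has a holomorphically contractible neighbourhood in $X$, and case $(3)$ follows from Lemma~\ref{key0}.

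\textbf{Case $(1)$.} Now $X$ and $Y$ are compact, satisfy the $\partial\bar\partial$-lemma, and $\Phi=clt_X$, so $\sigma\in H^{p,q}_{\bar\partial}(E)$. Then $E=\mathbb P(N_{Y/X})$ and $\widetilde X$ also satisfy the $\partial\bar\partial$-lemma (projective bundle over a $\partial\bar\partial$-manifold, resp. blow-up of a $\partial\bar\partial$-manifold along a $\partial\bar\partial$-submanifold; cf. \cite{RYY2,St2}). On a compact $\partial\bar\partial$-manifold there is a canonical injection $H^{p,q}_{\bar\partial}\hookrightarrow H^{p+q}_{dR}(-;\mathbb C)$, compatible with holomorphic pullbacks, with cup products, and with the Gysin pushforward $i_{E*}$ (a $\bar\partial$-closed $(p,q)$-current on $E$ is $\bar\partial$-cohomologous, by the $\partial\bar\partial$-lemma for currents on $E$, to a $d$-closed one, and pushforward of currents only shifts the total degree). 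Under this injection $h=c_1(\mathcal O_E(-1))$ maps to the real $(1,1)$-class $c_1(N_{E/\widetilde X})$, while the topological self-intersection formula $i_E^*i_{E*}\beta=c_1(N_{E/\widetilde X})\cup\beta$ holds for all $\beta\in H^\bullet_{dR}(E)$. Chasing these compatibilities, together with injectivity of $H^{p+1,q+1}_{\bar\partial}(E)\hookrightarrow H^{p+q+2}_{dR}(E)$, yields $i_E^*i_{E*}\sigma=h\cup\sigma$.

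\textbf{Main obstacle.} The Dolbeault computation itself is the one already carried out in Lemma~\ref{key0}; the work lies in the two reductions. In case $(3)$ the delicate step is the removable-singularity argument extending $\tau$ across $Y$ and checking $\tau\circ l_Y=\mathrm{id}_Y$. In case $(1)$ the subtle point is that the inclusion $H^{p,q}_{\bar\partial}\hookrightarrow H^{p+q}_{dR}$ intertwines the Dolbeault and de Rham Gysin maps, which rests on the $\partial\bar\partial$-lemma for currents on $E$ and $\widetilde X$; note that the naive Dolbeault argument would require $\sigma$ to be the restriction of a Dolbeault class from a neighbourhood of $E$ in $\widetilde X$, which in general fails, which is why we pass through de Rham cohomology.
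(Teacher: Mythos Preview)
Your treatment of cases $(1)$ and $(2)$ matches the paper: case $(2)$ is Lemma~\ref{key0} verbatim, and for case $(1)$ both you and the paper pass to de Rham cohomology via the Hodge decomposition on $\partial\bar\partial$-manifolds and invoke the smooth self-intersection formula (Proposition~\ref{key} in Section~5).

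For case $(3)$ you take a genuinely different route. The paper does \emph{not} reduce to case $(2)$; instead it repeats the computation of Lemma~\ref{key0} directly with the holomorphic retraction $\tau\colon W\to E$ in place of a retraction onto $Y$: one sets $\Omega=j^{-1}\pi^{-1}\Phi\cap\tau^{-1}((\pi^{-1}\Phi)|_E)$, writes $l_{E*}u=l_{E*}(1)\wedge\tau^*u$ via the projection formula, and reads off $l_E^*l_{E*}[u]=h\cup[u]$ from the Lelong--Poincar\'e equation, exactly as before. Your reduction, by contrast, manufactures a holomorphic retraction of a neighbourhood of $Y$ onto $Y$ out of $\rho\colon W\to E$: on $U\setminus Y$ you set $\tau=\pi|_E\circ\rho\circ(\pi|_{\widetilde X\setminus E})^{-1}$, check continuity across $Y$ (any sequence $x_n\to y$ has $\pi^{-1}(x_n)$ accumulating in $E_y$ by properness, whence $\rho$ fixes the limit and $\pi$ sends it to $y$), and then extend holomorphically by Riemann. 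This is correct and economical---once the extension is justified, Lemma~\ref{key0} applies verbatim---whereas the paper's direct argument avoids the extension step at the cost of rerunning the support-family bookkeeping. Your phrase ``locally bounded'' should really be ``extends continuously'', since the target is a manifold rather than $\mathbb{C}$; the subsequence argument you sketch is what actually establishes this.
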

\begin{proof}
If $X$ and $Y$ satisfy the $\partial\bar{\partial}$-lemma, so do $E=\mathbb{P}(N_{Y/X})$ and $\widetilde{X}$  (see \cite[Corollary 3]{ASTT}, \cite[Corollary
26]{St2} or \cite[Theorems 1.1, 1.2]{M4}).
Let $t\in \mathcal{A}^{1,1}(E)$ be a Chern form of $\mathcal{O}_E(-1)$.
Then $h=[t]_{Dol}$ and $c_1(\mathcal{O}_E(-1))=[t]_{dR}$.
Since $\mathcal{O}_E(-1)=\mathcal{O}_X(E)|_E$, $[t]_{dR}=[E]|_E$.
By Proposition \ref{key} (see Sect. 5.1), $i_E^*i_{E*}\sigma=[t]_{dR}\cup\sigma$ for any $\sigma\in H^\bullet(E,\mathbb{C})$.
The $\partial\bar{\partial}$-manifolds satisfy  Hodge decompositions,  which concludes the first case.
The second case is just Lemma \ref{key0}.
With some modification of the proof of Lemma \ref{key0}, we prove the third case as follows:

Let  $W$ be a neighborhood of  $E$ with a holomorphic map $\tau: W\rightarrow E$ such that $\tau\circ l_E=id_E$,  where $l_E:E\rightarrow W$  is the inclusion.
Denote by   $j:W\rightarrow \widetilde{X}$ the inclusion.
Let $u\in \Gamma_{(\pi^{-1}\Phi)|_E}(E,\mathcal{A}_E^{\bullet,\bullet})$ be any $\bar{\partial}$-closed form.
%Since the support of $l_{E*}u$ is closed in $\widetilde{X}$, $j_*l_{E*}u$ is defined well and is just $i_{E*}u$.
Set $\Omega=j^{-1}\pi^{-1}\Phi\cap \tau^{-1}((\pi^{-1}\Phi)|_E)$.
By Proposition \ref{inverse-paracompact}, $\Omega$ is a paracompactifying family of supports on $W$ and
\begin{equation}\label{pullsupports2}
(\pi^{-1}\Phi)|_E=l_E^{-1}\left(j^{-1}\pi^{-1}\Phi\right)=l_E^{-1}\left(\tau^{-1}((\pi^{-1}\Phi)|_E)\right)=l_E^{-1}\Omega.
\end{equation}
Since $\textrm{supp}u\in (\pi^{-1}\Phi)|_E$,
$\textrm{supp} \left(l_{E*}u\right)\in  j^{-1}\pi^{-1}\Phi\cap \tau^{-1}((\pi^{-1}\Phi)|_E)=\Omega$.
By (\ref{pullsupports2}) and (\ref{sub0}),
\begin{equation}\label{mid0}
l_E^*[l_{E*}u]_{j^{-1}\pi^{-1}\Phi}=l_E^*[l_{E*}u]_{\Omega}=l_E^*[l_{E*}u]_{\tau^{-1}((\pi^{-1}\Phi)|_E)}.
\end{equation}
By (\ref{pro-formula1}), $l_{E*}u=l_{E*}(1)\wedge \tau^{\ast}u$.
So
\begin{equation}\label{mid1}
l_E^*[l_{E*}u]_{\tau^{-1}((\pi^{-1}\Phi)|_E)}
=l_E^*\left([l_{E*}(1)]\cup [\tau^{\ast}u]_{\tau^{-1}((\pi^{-1}\Phi)|_E)}\right)
=h\cup [u]_{(\pi^{-1}\Phi)|_{E}}.
\end{equation}
%\begin{displaymath}
%\begin{aligned}
%l_E^*[l_{E*}(1)\wedge \tau^{\ast}u]_{j^{-1}\pi^{-1}\Phi}
%=&l_E^*[l_{E*}(1)\wedge \tau^{\ast}u]_{\Omega}\qquad\qquad\qquad\quad\mbox{ }\mbox{ }( \mbox{by (\ref{sub})})\\
%=&l_E^*[l_{E*}(1)\wedge \tau^{\ast}u]_{\tau^{-1}((\pi^{-1}\Phi)|_E)}\qquad\mbox{ }( \mbox{by (\ref{sub})})\\
%=&l_E^*\left([l_{E*}(1)]\cup [\tau^{\ast}u]_{\tau^{-1}((\pi^{-1}\Phi)|_E)}\right)\\
%=&h\cup \sigma.\qquad\qquad\qquad\qquad\qquad\mbox{ }\mbox{ }\mbox{ }( \mbox{by (\ref{sel-int})})
%\end{aligned}
%\end{displaymath}
%By (\ref{push-pull-1}) and (\ref{pro-formula1}),
%\begin{equation}\label{mid}
%j^*i_{E*}u=l_{E*}(1)\wedge \tau^{\ast}u.
%\end{equation}
Then
\begin{displaymath}
\begin{aligned}
i_E^*i_{E*}[u]_{(\pi^{-1}\Phi)|_{E}}=&l_E^*[j^*i_{E*}u]_{j^{-1}\pi^{-1}\Phi}\\
=&l_E^*[l_{E*}u]_{j^{-1}\pi^{-1}\Phi}\qquad\qquad\mbox{ }( \mbox{by (\ref{push-pull-1})})\\
=&h\cup [u]_{(\pi^{-1}\Phi)|_{E}}.\qquad\quad\mbox{ }\mbox{ }\mbox{ }\mbox{ }\mbox{ }( \mbox{by (\ref{mid0}) and (\ref{mid1})})
\end{aligned}
\end{displaymath}
\end{proof}

\subsection{Applications}
\subsubsection{Conjugate local systems}
Let $(V,\cdot)$ be a complex vector space with  the scalar multiplication $\cdot$.
The action $c\ast v=\bar{c}\cdot v$ for any $c\in \mathbb{C}$ and $v\in V$ defines the \emph{conjugate vector space}  $(V,\ast)$  of $(V,\cdot)$.
We shortly write $(V,\cdot)$ and $(V,\ast)$ as $V$ and $\overline{V}$ respectively.
Notice that $\overline{V}$ has the same underlying space with $V$ and the identity map  $V\rightarrow \overline{V}$ is a \emph{real} isomorphism.

Let $\mathcal{L}$ be a local system of $\mathbb{C}$-modules of finite rank on $X$.
%Set $\overline{\mathcal{V}}:=\coprod\limits_{x\in X}\overline{\mathcal{V}_x}$ and
Set $\Gamma(U,\overline{\mathcal{L}})=\overline{\Gamma(U,\mathcal{L})}$ for all open set $U\subseteq X$, which define a local system of $\mathbb{C}$-modules of
finite rank on $X$.
Suppose that a trivialization $\mathcal{L}|_U\tilde{\rightarrow} U\times \mathbb{C}^r$ is defined as $\sum\limits_{i=1}^rc_i\cdot v_i(x)\mapsto (x, c_1,\ldots,
c_r)$, where $v_1$, \ldots, $v_r$ is a basis of $\Gamma(U,\mathcal{L})$.
Then $\sum\limits_{i=1}^rc_i\ast v_i(x)\mapsto (x, c_1,\ldots, c_r)$ give a trivialization $\overline{\mathcal{L}}|_U\tilde{\rightarrow} U\times \mathbb{C}^r$.
For any open set $U\subseteq X$, $s\mapsto s$ for all $s\in \Gamma(U,\mathcal{L})$ define a real isomorphism
$\Gamma(U,\mathcal{L})\rightarrow\Gamma(U,\overline{\mathcal{L}})$, which gives an isomorphism $\mathcal{L}\tilde{\rightarrow} \overline{\mathcal{L}}$ of
\emph{real} local systems.

Suppose that $X$ is a  complex manifold.
Then $\mathcal{L}\otimes_{\underline{\mathbb{C}}_X}\mathcal{A}_X^{p,q}=(\mathcal{L}\otimes_{\underline{\mathbb{C}}_X}\mathcal{O}_X)\otimes_{\mathcal{O}_X}\mathcal{A}_X^{p,q}$ and
$\mathcal{L}\otimes_{\underline{\mathbb{C}}_X}\mathcal{D}_X^{\prime p,q}=(\mathcal{L}\otimes_{\underline{\mathbb{C}}_X}\mathcal{O}_X)\otimes_{\mathcal{O}_X}\mathcal{D}_X^{\prime p,q}$,
where $\mathcal{L}\otimes_{\underline{\mathbb{C}}_X}\mathcal{O}_X$ is a locally free sheaf of $\mathcal{O}_X$-modules.
For a holomorphic map $f:Y\rightarrow X$ of complex manifolds,
$f^{-1}\mathcal{L}\otimes_{\underline{\mathbb{C}}_X}\mathcal{A}_Y^{p,q}=f^*(\mathcal{L}\otimes_{\underline{\mathbb{C}}_X}\mathcal{O}_X)\otimes_{\mathcal{O}_Y}\mathcal{A}_Y^{p,q}$
and $f^{-1}\mathcal{L}\otimes_{\underline{\mathbb{C}}_X}\mathcal{D}_Y^{\prime
p,q}=f^*(\mathcal{L}\otimes_{\underline{\mathbb{C}}_X}\mathcal{O}_X)\otimes_{\mathcal{O}_Y}\mathcal{D}_Y^{\prime p,q}$.
Mapping $\sum\limits_{i=1}^rv_i\otimes \alpha_i$ to $\sum\limits_{i=1}^rv_i\otimes \bar{\alpha}_i$ define
$\mathcal{L}\otimes_{\underline{\mathbb{C}}_X}\mathcal{A}_X^{p,q}\rightarrow \overline{\mathcal{L}}\otimes_{\underline{\mathbb{C}}_X}\mathcal{A}_X^{q,p}$
and $\mathcal{L}\otimes_{\underline{\mathbb{C}}_X}\mathcal{D}_X^{\prime p,q}\rightarrow \overline{\mathcal{L}}\otimes_{\underline{\mathbb{C}}_X}\mathcal{D}_X^{\prime q,p}$,
which are said to be the \emph{complex conjugation maps}.

\subsubsection{Projective bundle and blow-up formulae on the $E_1$-level}
We recall some notions of the double complex and  related structures, see \cite[Sect. 2]{St2} for more details. All the double complexes here are assumed to be
\emph{of vector spaces over $\mathbb{C}$} and \emph{bounded}.

Let $(K^{\bullet,\bullet},\partial_1,\partial_2)$ be a double complex  with two endomorphisms $\partial_1$ , $\partial_2$ of bidegree $(1,0)$ and $(0,1)$, which
satisfy that $\partial_i\circ\partial_i=0$ for $i=1$, $2$ and $\partial_1\circ\partial_2+\partial_2\circ\partial_1=0$.
For convenience, we briefly write it as $K^{\bullet,\bullet}$ sometimes.
Let $\partial^{p,q}_1:K^{p,q}\rightarrow K^{p+1,q}$ and $\partial^{p,q}_2:K^{p,q}\rightarrow K^{p,q+1}$ be the restrictions of $\partial_1$ and $\partial_2$
respectively.
Recall the following constructions.

$\bullet$ The \emph{row} and \emph{column cohomologies}
\begin{displaymath}
H^{p,q}_{\partial_1}(K^{\bullet,\bullet})=H^{p}(K^{\bullet,q},\partial_1)\mbox{  } \mbox{  and  } \mbox{  }
H^{p,q}_{\partial_2}(K^{\bullet,\bullet})=H^{q}(K^{p,\bullet},\partial_2).
\end{displaymath}

$\bullet$ The \emph{Bott-Chern} and \emph{Aeppli cohomologies}
\begin{displaymath}
H^{p,q}_{BC}(K^{\bullet,\bullet})=\frac{\textrm{ker}\partial^{p,q}_1\cap\textrm{ker}\partial^{p,q}_2}{\textrm{im}\partial^{p-1,q}_1\circ\partial^{p-1,q-1}_2}
\mbox{  }\mbox{ and }\mbox{  }
H^{p,q}_{A}(K^{\bullet,\bullet})=\frac{\textrm{ker}\partial^{p,q+1}_1\circ\partial^{p,q}_2}{\textrm{im}\partial^{p-1,q}_1+\textrm{im}\partial^{p,q-1}_2}.
\end{displaymath}
A morphism of double complexes is called an \emph{$E_1$-isomorphism}, if it induces an isomorphism on both row and column cohomologies.
J. Stelzig obtained the following result.
\begin{thm}[{\cite[Corollary 13]{St2}\cite[Lemma 1.3]{St1}}]\label{Stelzig}
Any $E_1$-isomorphism of double complexes induces  isomorphisms on  Bott-Chern and Aeppli cohomologies respectively.
\end{thm}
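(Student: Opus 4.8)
The plan is to reduce the statement to Stelzig's structure theory for bounded double complexes of vector spaces. Recall that any such $K^{\bullet,\bullet}$ decomposes, uniquely up to isomorphism and reordering of summands, as a direct sum of indecomposables, each of which is either a \emph{square} (a $2\times 2$ block of one-dimensional spaces all of whose structure maps are isomorphisms) or a \emph{zigzag} (a connected chain of one-dimensional pieces joined alternately by $\partial_1$- and $\partial_2$-isomorphisms, the single dot being the length-one zigzag). Since $K^{\bullet,\bullet}\mapsto H_{\partial_1}^{\bullet,\bullet}$, $H_{\partial_2}^{\bullet,\bullet}$, $H_{BC}^{\bullet,\bullet}$, $H_{A}^{\bullet,\bullet}$ are all additive functors, I would first record them on indecomposables: a short direct check shows a square is acyclic for all four theories, while for a zigzag $Z$ each of the four cohomologies is one-dimensional and supported at an ``extremal corner'' of $Z$ ($H_{BC}$ at the top/right end, $H_A$ at the bottom/left end, $H_{\partial_1}$ and $H_{\partial_2}$ at the horizontal resp.\ vertical ends, all coinciding for the dot). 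The technical fact to extract from this bookkeeping — this is the heart of \cite[Cor.~13]{St2} (and \cite[Lem.~1.3]{St1}) — is that, for zigzags, the pair $\bigl(H_{\partial_1}^{\bullet,\bullet},H_{\partial_2}^{\bullet,\bullet}\bigr)$ together with the positions of its components already determines the isomorphism type, and consequently that a morphism between finite direct sums of zigzags which induces isomorphisms on both row and column cohomology is itself an isomorphism of double complexes.

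Granting this, I would finish as follows. Let $f\colon K^{\bullet,\bullet}\to L^{\bullet,\bullet}$ be an $E_1$-isomorphism. By the structure theorem write $K\cong K_{\mathrm{sq}}\oplus K_{\mathrm{zz}}$ and $L\cong L_{\mathrm{sq}}\oplus L_{\mathrm{zz}}$, with $K_{\mathrm{sq}},L_{\mathrm{sq}}$ sums of squares and $K_{\mathrm{zz}},L_{\mathrm{zz}}$ sums of zigzags. As squares are acyclic for $\partial_1$- and $\partial_2$-cohomology, the composite $g\colon K_{\mathrm{zz}}\hookrightarrow K\xrightarrow{\,f\,}L\twoheadrightarrow L_{\mathrm{zz}}$ still induces isomorphisms on $H_{\partial_1}$ and on $H_{\partial_2}$, so by the fact above $g$ is an isomorphism of double complexes. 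But squares are also acyclic for Bott--Chern and Aeppli cohomology, whence $H_{BC}(K)=H_{BC}(K_{\mathrm{zz}})$, $H_{BC}(L)=H_{BC}(L_{\mathrm{zz}})$ and similarly for $H_A$; and since $f$ and $g$ differ only by the square summands, which are invisible to $H_{BC}$ and $H_A$, the maps $H_{BC}(f)$ and $H_A(f)$ are identified with the isomorphisms $H_{BC}(g)$, $H_A(g)$. This yields the theorem.

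I expect the main obstacle to be exactly that extracted fact, i.e.\ verifying that zigzag summands and morphisms between them are genuinely detected by row and column cohomology: this rests on the decomposition theorem for \emph{bounded} double complexes (which I would invoke as a black box from \cite{St2}, noting that it needs no finite-dimensionality hypothesis and hence applies to our complexes of forms and currents on possibly non-compact manifolds), followed by an elementary but fiddly case analysis over the zigzag shapes, the subtle point being that squares are simultaneously transparent to all four cohomology theories, so one must pass to the zigzag parts first and observe that ``$E_1$-isomorphism'' is precisely what forces $f$ to restrict to an isomorphism there. As a remark I would mention the equivalent packaging via mapping cones — $f$ is an $E_1$-isomorphism iff its cone is $E_1$-exact iff, by the structure theorem, the cone is a direct sum of squares, hence $BC$- and $A$-acyclic — but I would keep the additive-functor argument above as the main line.
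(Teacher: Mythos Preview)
The paper does not give its own proof of this statement: it is quoted verbatim from Stelzig's work and used as a black box (note the citation in the theorem heading and the absence of any proof environment after it). So there is no in-paper argument to compare against.

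Your sketch is essentially Stelzig's own route via the decomposition theorem for bounded double complexes, so in that sense it matches the intended source. One comment on your ``main line'': the step where you conclude that the induced map $g\colon K_{\mathrm{zz}}\to L_{\mathrm{zz}}$ is an \emph{isomorphism of double complexes} is the only place that needs real care. The structure theorem immediately gives that $K_{\mathrm{zz}}$ and $L_{\mathrm{zz}}$ are abstractly isomorphic (same zigzag multiplicities), but promoting the specific morphism $g$ to an isomorphism is an extra argument; it is true, but it is exactly the ``fiddly case analysis'' you allude to, and it is not entirely formal once several zigzag shapes overlap in bidegree. By contrast, the mapping-cone formulation you relegate to a remark is how Stelzig actually packages the result and is genuinely shorter: $f$ is an $E_1$-isomorphism $\Leftrightarrow$ $\mathrm{cone}(f)$ has trivial row and column cohomology $\Leftrightarrow$ (by the structure theorem) $\mathrm{cone}(f)$ is a direct sum of squares $\Rightarrow$ $\mathrm{cone}(f)$ is $H_{BC}$- and $H_A$-acyclic $\Rightarrow$ $H_{BC}(f)$ and $H_A(f)$ are isomorphisms. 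This bypasses the need to control $g$ itself. I would swap the two: lead with the cone argument, and keep the zigzag bookkeeping as intuition.
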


Let $X$ be a complex manifold and let $\mathcal{L}$ be a local system of $\mathbb{C}$-modules of finite rank on $X$.
Set $A_{\Phi}^{p,q}(X,\mathcal{L})=\Gamma_{\Phi}(X,\mathcal{L}\otimes_{\underline{\mathbb{C}}_X}\mathcal{A}_X^{p,q})$ and $\mathcal{D}_{\Phi}^{\prime
p,q}(X,\mathcal{L})=\Gamma_{\Phi}(X,\mathcal{L}\otimes_{\underline{\mathbb{C}}_X}\mathcal{D}_X^{\prime p,q})$.
Denote by  $A_{\Phi}^{\bullet,\bullet}(X,\mathcal{L})$  and  $\mathcal{D}_{\Phi}^{\prime\bullet,\bullet}(X,\mathcal{L})$ the double complexes
$(A_{\Phi}^{\bullet,\bullet}(X,\mathcal{L}),\partial,\bar{\partial})$  and
$(\mathcal{D}_{\Phi}^{\prime\bullet,\bullet}(X,\mathcal{L}),\partial,\bar{\partial})$.
On column cohomologies, the inclusion $i:A_{\Phi}^{\bullet,\bullet}(X,\mathcal{L})\rightarrow \mathcal{D}_{\Phi}^{\prime\bullet,\bullet}(X,\mathcal{L})$ induces
an isomorphism $H_{\bar{\partial}}^q(A_{\Phi}^{p,\bullet}(X,\mathcal{L}))\tilde{\rightarrow} H_{\bar{\partial}}^q(\mathcal{D}_{\Phi}^{\prime
p,\bullet}(X,\mathcal{L}))$, which are just $H^{p,q}_\Phi(X,\mathcal{L}\otimes_{\underline{\mathbb{C}}_X}\mathcal{O}_X)$.
Consider the commutative diagram
\begin{displaymath}
\xymatrix{
 H_{\bar{\partial}}^q(A_\Phi^{p,\bullet}(X,\overline{\mathcal{L}})) \ar[d]_{} \ar[r]^{}& H_{\bar{\partial}}^q(\mathcal{D}_\Phi^{\prime
 p,\bullet}(X,\overline{\mathcal{L}}))\ar[d]^{}\\
 H_{\partial}^p(A_\Phi^{\bullet,q}(X,\mathcal{L}))      \ar[r]^{}& H_{\partial}^p(\mathcal{D}_\Phi^{\prime\bullet,q}(X,\mathcal{L})),  }
\end{displaymath}
where the horizontal maps are induced by the inclusion $i$ and the vertical maps are defined by complex conjugation maps.
The vertical maps are real  isomorphisms and the upper  map is an (complex) isomorphism, so the lower  map is a real  isomorphism.
Since it is a complex linear map, the lower  map is an (complex) isomorphism, i.e., an isomorphism on row cohomologies.
Hence the inclusion $i:A_{\Phi}^{\bullet,\bullet}(X,\mathcal{L})\rightarrow \mathcal{D}_{\Phi}^{\prime\bullet,\bullet}(X,\mathcal{L})$ is an $E_1$-isomorphism.
By Theorem \ref{Stelzig}, the  Bott-Chern cohomologies of $A_{\Phi}^{\bullet,\bullet}(X,\mathcal{L})$  and
$\mathcal{D}_{\Phi}^{\prime\bullet,\bullet}(X,\mathcal{L})$ coincide with each other  via the inclusion, which are both denoted by
$H_{BC,\Phi}^{p,q}(X,\mathcal{L})$.
Similarly, we write their Aeppli cohomologies as $H_{A,\Phi}^{p,q}(X,\mathcal{L})$.

Let $\pi:\mathbb{P}(E)\rightarrow X$ be the projective bundle associated to a holomorphic bundle $E$ of rank $r$ over a complex manifold $X$ and let
$\mathcal{L}$ be a local system  of $\mathbb{C}$-modules of finite rank on $X$.
Denote by $t\in \mathcal{A}^{1,1}({\mathbb{P}(E)})$ a Chern form of the universal line bundle $\mathcal{O}_{\mathbb{P}(E)}(-1)$ over ${\mathbb{P}(E)}$.
By Proposition \ref{proj-bun},
$\mu_{Dol,\Phi}^{\mathcal{L}\otimes_{\underline{\mathbb{C}}_X}\mathcal{O}_X}$ and $\mu_{Dol,\Phi}^{\overline{\mathcal{L}}\otimes_{\underline{\mathbb{C}}_X}\mathcal{O}_X}$ are  isomorphisms.
By similar arguments with those of $i$,
$\mu_\Phi^{\mathcal{L}\otimes_{\underline{\mathbb{C}}_X}\mathcal{O}_X}:\bigoplus\limits_{i=1}^{r-1}A_{\Phi}^{\bullet,\bullet}(X,\mathcal{L})[-i,-i]\rightarrow \mathcal{A}_{\pi^{-1}\Phi}^{\bullet,\bullet}(\mathbb{P}(E),\pi^{-1}\mathcal{L})$
is an $E_1$-isomorphism.
Similarly, $\tau_\Phi^{\mathcal{L}\otimes_{\underline{\mathbb{C}}_X}\mathcal{O}_X}$ is an $E_1$-isomorphism.
Denote by $\mu^{\mathcal{L}}_{BC,\Phi}$,  $\tau^{\mathcal{L}}_{BC,\Phi}$ the morphisms on  Bott-Chern cohomologies  and
by  $\mu^{\mathcal{L}}_{A,\Phi}$, $\tau^{\mathcal{L}}_{A,\Phi}$ the morphisms on Aeppli cohomologies
induced by $\mu_{\Phi}^{\mathcal{L}\otimes_{\underline{\mathbb{C}}_X}\mathcal{O}_X}$ and $\tau_{\Phi}^{\mathcal{L}\otimes_{\underline{\mathbb{C}}_X}\mathcal{O}_X}$.
By Theorem \ref{Stelzig}, $\mu^{\mathcal{L}}_{BC,\Phi}$,  $\tau^{\mathcal{L}}_{BC,\Phi}$,  $\mu^{\mathcal{L}}_{A,\Phi}$, $\tau^{\mathcal{L}}_{A,\Phi}$ are
isomorphisms.
As Proposition \ref{proj-bun}, we easily check that $\mu^{\mathcal{L}}_{BC,\Phi}$ and  $\tau^{\mathcal{L}}_{BC,\Phi}$,
$\mu^{\mathcal{L}}_{A,\Phi}$ and $\tau^{\mathcal{L}}_{A}$ are inverse isomorphisms.

We summarize these results as follows.
\begin{prop}\label{projbun-E_1}
$\mu_{\Phi}^{\mathcal{L}\otimes_{\underline{\mathbb{C}}_X}\mathcal{O}_X}$ and
$\tau_{\Phi}^{\mathcal{L}\otimes_{\underline{\mathbb{C}}_X}\mathcal{O}_X}$  are $E_1$-isomorphisms.
Moreover,  $\mu^{\mathcal{L}}_{BC,\Phi}$ and  $\tau^{\mathcal{L}}_{BC,\Phi}$,  $\mu^{\mathcal{L}}_{A,\Phi}$ and $\tau^{\mathcal{L}}_{A,\Phi}$ are inverse
isomorphisms.
\end{prop}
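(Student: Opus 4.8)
The plan is to derive Proposition \ref{projbun-E_1} from Proposition \ref{proj-bun}, the on-the-nose identity (\ref{inverse}), Theorem \ref{Stelzig}, and the conjugation device already used above for the inclusion $i$.

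\textbf{Step 1 ($E_1$-isomorphism).} First I would check that $\mu_\Phi^{\mathcal{L}\otimes_{\underline{\mathbb{C}}_X}\mathcal{O}_X}$ induces an isomorphism on column cohomologies: this is exactly Proposition \ref{proj-bun} applied to the locally free $\mathcal{O}_X$-module $\mathcal{E}=\mathcal{L}\otimes_{\underline{\mathbb{C}}_X}\mathcal{O}_X$, since $H^q_{\bar{\partial}}(A_\Phi^{p,\bullet}(X,\mathcal{L}))=H^{p,q}_\Phi(X,\mathcal{E})$ and $\mu_\Phi^{\mathcal{E}}$ induces $\mu_{Dol,\Phi}^{\mathcal{E}}$ there. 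For row cohomologies I would run the conjugation square: the complex conjugation maps give real isomorphisms between $H^p_{\partial}(A_\Phi^{\bullet,q}(X,\mathcal{L}))$ and the corresponding $\bar{\partial}$-cohomology of $A_\Phi^{\bullet,\bullet}(X,\overline{\mathcal{L}})$ (up to the bidegree swap $(p,q)\mapsto(q,p)$), and these squares intertwine the row-cohomology map induced by $\mu_\Phi^{\mathcal{L}\otimes\mathcal{O}_X}$ with the column-cohomology map induced by $\mu_\Phi^{\overline{\mathcal{L}}\otimes\mathcal{O}_X}$, the latter being an isomorphism by the column case just established (applied to $\overline{\mathcal{L}}$). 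Hence the row-cohomology map is a real isomorphism; being complex linear, it is a complex isomorphism, so $\mu_\Phi^{\mathcal{L}\otimes\mathcal{O}_X}$ is an $E_1$-isomorphism. For $\tau_\Phi^{\mathcal{L}\otimes\mathcal{O}_X}$ I would either repeat the same conjugation argument, or --- more economically --- use that (\ref{inverse}) gives $\tau_\Phi^{\mathcal{E}}\circ\mu_\Phi^{\mathcal{E}}=\mathrm{id}$ as morphisms of double complexes, so on each of row and column cohomology $\tau$ is the inverse of the isomorphism induced by $\mu$, hence itself an isomorphism; thus $\tau_\Phi^{\mathcal{L}\otimes\mathcal{O}_X}$ is an $E_1$-isomorphism as well.

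\textbf{Step 2 (Bott--Chern, Aeppli, and inverses).} Feeding the two $E_1$-isomorphisms of Step 1 into Theorem \ref{Stelzig}, the induced maps $\mu^{\mathcal{L}}_{BC,\Phi}$, $\tau^{\mathcal{L}}_{BC,\Phi}$ on Bott--Chern cohomology and $\mu^{\mathcal{L}}_{A,\Phi}$, $\tau^{\mathcal{L}}_{A,\Phi}$ on Aeppli cohomology are all isomorphisms. For the inverse relations I would note that the identity $\tau_\Phi^{\mathcal{E}}\circ\mu_\Phi^{\mathcal{E}}=\mathrm{id}$ from (\ref{inverse}) holds already at the level of the double complexes (all the constituent operations $\pi^*$, $\pi_*$, and $t^i\wedge(-)$ commute with both $\partial$ and $\bar{\partial}$), hence descends to Bott--Chern and to Aeppli cohomology, giving $\tau^{\mathcal{L}}_{BC,\Phi}\circ\mu^{\mathcal{L}}_{BC,\Phi}=\mathrm{id}$ and $\tau^{\mathcal{L}}_{A,\Phi}\circ\mu^{\mathcal{L}}_{A,\Phi}=\mathrm{id}$; since both factors are isomorphisms, they are mutually inverse.

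\textbf{Main obstacle.} There is no serious difficulty --- the proposition really is a summary of what precedes it --- but the one point deserving care is the compatibility of the conjugation mechanism with $\mu$ and $\tau$: one must verify that the horizontal arrows of the conjugation square genuinely are the $\mu$- (resp. $\tau$-) maps attached to $\overline{\mathcal{L}}$. This reduces to the routine facts that $\pi^*$ and fibre integration $\pi_*$ commute with the complex conjugation maps, that the Chern form $t$ may be chosen to be a real $(1,1)$-form (so that the classes $\pi_* t^k$ are real), and that the polynomials $P_i$ governing $G_{t,\Phi}^{-i}$ have real (indeed integer) coefficients; once these are recorded, Steps 1 and 2 go through immediately.
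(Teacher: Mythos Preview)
Your proposal is correct and follows essentially the same approach as the paper: column isomorphism via Proposition \ref{proj-bun}, row isomorphism via the conjugation device applied to $\overline{\mathcal{L}}$, Theorem \ref{Stelzig} for Bott--Chern and Aeppli, and the chain-level identity (\ref{inverse}) for the inverse relations. Your write-up is in fact more explicit than the paper's sketch --- in particular your ``main obstacle'' (reality of $t$, integrality of the $P_i$, compatibility of $\pi^*$ and $\pi_*$ with conjugation) spells out exactly the verification the paper hides under ``by similar arguments with those of $i$''.
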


Let $\pi:\widetilde{X}\rightarrow X$ be the complex blow-up of a  complex manifold $X$ along a complex submanifold $Y$ with the exceptional divisor $E$.
Suppose that $\mathcal{E}$ is a locally free sheaf of $\mathcal{O}_X$-module of finite rank on $X$.
Denote by  $i_E:E\rightarrow \widetilde{X}$ the inclusion.
Let $t\in \mathcal{A}^{1,1}(E)$ be a Chern form of the universal line bundle $\mathcal{O}_{E}(-1)$ on $E={\mathbb{P}(N_{Y/X})}$.
The morphism $\pi^*+\sum\limits_{i=1}^{r-1}i_{E*}\left(t^{i-1}\wedge(\pi|_E)^*(\bullet)\right)$ defines
\begin{equation}\label{blow-up1-cpt-form}
\small{
\begin{aligned}
\Gamma_{\Phi}(X,,\mathcal{E}\otimes \mathcal{A}_X^{\bullet,\bullet})\oplus \bigoplus\limits_{i=1}^{r-1}\Gamma_{\Phi|_Y}(Y,i_Y^{\ast}\mathcal{E}\otimes
\mathcal{A}_Y^{\bullet-i,\bullet-i})\rightarrow \Gamma_{\pi^{-1}\Phi}(\widetilde{X},\pi^{\ast}\mathcal{E}\otimes \mathcal{D}_{\widetilde{X}}^{\prime
\bullet,\bullet}).
\end{aligned}}
\end{equation}
The morphism
$\left(\pi_*(\bullet),\mbox{ }G_{t,\Phi}^{-1}\circ i_E^*(\bullet),\mbox{ }...,\mbox{ }G_{t,\Phi}^{-r+1}\circ i_E^*(\bullet)\right)$ defines
\begin{equation}\label{blow-up2-cpt-form}
\small{
\begin{aligned}
\Gamma_{\pi^{-1}\Phi}(\widetilde{X},\pi^{\ast}\mathcal{E}\otimes \mathcal{A}_{\widetilde{X}}^{\bullet,\bullet}) \rightarrow \Gamma_{\Phi}(X,,\mathcal{E}\otimes
\mathcal{D}_X^{\prime\bullet,\bullet})\oplus \bigoplus\limits_{i=1}^{r-1}\Gamma_{\Phi|_Y}(Y,i_Y^{\ast}\mathcal{E}\otimes \mathcal{A}_Y^{\bullet-i,\bullet-i}).
\end{aligned}}
\end{equation}
Denote  (\ref{blow-up1-cpt-form}) and (\ref{blow-up2-cpt-form}) by $\psi_{\Phi}^{\mathcal{E}}$ and  $\phi_{\Phi}^{\mathcal{E}}$  respectively.
Denote by $\psi^{\mathcal{L}}_{BC,\Phi}$,  $\phi^{\mathcal{L}}_{BC,\Phi}$ the morphisms on  Bott-Chern cohomologies  and
denote by  $\psi^{\mathcal{L}}_{A,\Phi}$, $\phi^{\mathcal{L}}_{A,\Phi}$ the morphisms on Aeppli cohomologies
induced by $\psi_{\Phi}^{\mathcal{L}\otimes_{\underline{\mathbb{C}}_X}\mathcal{O}_X}$ and $\phi_{\Phi}^{\mathcal{L}\otimes_{\underline{\mathbb{C}}_X}\mathcal{O}_X}$.
As Proposition \ref{projbun-E_1}, we have the following result by Theorems \ref{1.2} and \ref{Stelzig}.
\begin{prop}\label{blowup-E_1}
$\psi_\Phi^{\mathcal{L}\otimes_{\underline{\mathbb{C}}_X}\mathcal{O}_X}$ and
$\phi_\Phi^{\mathcal{L}\otimes_{\underline{\mathbb{C}}_X}\mathcal{O}_X}$ are $E_1$-isomorphisms.
Moreover, $\psi^{\mathcal{L}}_{BC,\Phi}$,  $\phi^{\mathcal{L}}_{BC,\Phi}$, $\psi^{\mathcal{L}}_{A,\Phi}$, $\phi^{\mathcal{L}}_{A,\Phi}$ are isomorphisms.
\end{prop}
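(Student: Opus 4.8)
The plan is to follow the two-step strategy used for Proposition \ref{projbun-E_1}: first show that $\psi_\Phi^{\mathcal{L}\otimes_{\underline{\mathbb{C}}_X}\mathcal{O}_X}$ and $\phi_\Phi^{\mathcal{L}\otimes_{\underline{\mathbb{C}}_X}\mathcal{O}_X}$ induce isomorphisms on both the column and the row cohomologies of the double complexes involved, and then invoke Theorem \ref{Stelzig}. First I would check that both maps really are morphisms of double complexes: $\pi^*$ preserves bidegree and commutes with $\partial$ and $\bar\partial$, $i_{E*}$ raises bidegree by $(1,1)$ and commutes with $\partial$ and $\bar\partial$, and the Chern form $t$ may be taken to be a real $(1,1)$-form with $\partial t=\bar\partial t=0$, so $t^{i-1}\wedge(\bullet)$ commutes with $\partial$ and $\bar\partial$ up to sign.

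For the column cohomologies (the $\bar\partial$-cohomologies), observe that $\pi^*\mathcal{E}\otimes\mathcal{A}_{\widetilde{X}}^{p,\bullet}$ and $\pi^*\mathcal{E}\otimes\mathcal{D}_{\widetilde{X}}^{\prime p,\bullet}$ are both $\Phi$-soft resolutions of $\pi^*\mathcal{E}\otimes\Omega_{\widetilde{X}}^p$, so the column cohomology of the current-valued target of $\psi_\Phi^{\mathcal{E}}$ (resp. of the form-valued source of $\phi_\Phi^{\mathcal{E}}$) is the twisted Dolbeault cohomology $H_{\pi^{-1}\Phi}^{\bullet,\bullet}(\widetilde{X},\pi^*\mathcal{E})$. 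With $\mathcal{E}=\mathcal{L}\otimes_{\underline{\mathbb{C}}_X}\mathcal{O}_X$ the maps induced on column cohomology are then exactly $\psi_{Dol,\Phi}^{\mathcal{E}}$ and $\phi_{Dol,\Phi}^{\mathcal{E}}$, which are isomorphisms by Theorem \ref{1.2}.

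For the row cohomologies (the $\partial$-cohomologies), I would use the complex conjugation maps of Sect. 4.5.1. The map $\mathcal{L}\otimes_{\underline{\mathbb{C}}_X}\mathcal{A}_X^{p,q}\to\overline{\mathcal{L}}\otimes_{\underline{\mathbb{C}}_X}\mathcal{A}_X^{q,p}$ and its analogues on $\widetilde{X}$ and $Y$ and on currents are real isomorphisms swapping the two degrees and carrying $\partial$ to $\bar\partial$; since $\pi$ and $i_E$ are holomorphic, $\overline{\pi^{-1}\mathcal{L}}=\pi^{-1}\overline{\mathcal{L}}$, and $t$ is real, these conjugations commute with $\psi_\Phi^{\mathcal{L}\otimes\mathcal{O}_X}$, $\phi_\Phi^{\mathcal{L}\otimes\mathcal{O}_X}$ and the corresponding maps attached to $\overline{\mathcal{L}}$. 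Exactly as in the proof that the inclusion $i$ is an $E_1$-isomorphism, a commutative square then identifies the map induced on row cohomology with the conjugate of the map induced on column cohomology for $\overline{\mathcal{L}}$; the latter is an isomorphism by the previous paragraph, and since the row-cohomology map is complex linear it is therefore a (complex) isomorphism. Hence $\psi_\Phi^{\mathcal{L}\otimes_{\underline{\mathbb{C}}_X}\mathcal{O}_X}$ and $\phi_\Phi^{\mathcal{L}\otimes_{\underline{\mathbb{C}}_X}\mathcal{O}_X}$ are $E_1$-isomorphisms; Theorem \ref{Stelzig}, together with the identification (also from Sect. 4.5.1) of the Bott-Chern and Aeppli cohomologies of the form complexes with those of the current complexes, then shows that $\psi^{\mathcal{L}}_{BC,\Phi}$, $\phi^{\mathcal{L}}_{BC,\Phi}$, $\psi^{\mathcal{L}}_{A,\Phi}$, $\phi^{\mathcal{L}}_{A,\Phi}$ are isomorphisms.

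The step I expect to be the main obstacle is the bookkeeping in the conjugation argument: keeping the conjugation maps on the shifted summands over $Y$ and on the current-valued complex over $\widetilde{X}$ simultaneously compatible with $\pi^*$, $i_{E*}$ and $t^{i-1}\wedge(\bullet)$ so that genuinely commutative squares appear, and then extracting complex linearity to pass from ``real isomorphism'' to ``isomorphism'' on row cohomology. Everything else is a transcription of Theorem \ref{1.2} and of the argument preceding Proposition \ref{projbun-E_1}.
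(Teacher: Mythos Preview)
Your proposal is correct and follows essentially the same approach as the paper: the paper's proof of Proposition~\ref{blowup-E_1} is the single sentence ``As Proposition~\ref{projbun-E_1}, we have the following result by Theorems~\ref{1.2} and~\ref{Stelzig},'' and what you have written is precisely an unpacking of that sentence---column cohomology via Theorem~\ref{1.2}, row cohomology via the conjugation trick applied to $\overline{\mathcal{L}}$, then Stelzig's theorem. The bookkeeping worry you flag (compatibility of conjugation with $\pi^*$, $i_{E*}$, and $t^{i-1}\wedge(\bullet)$) is real but routine, exactly parallel to the case of $\mu_\Phi$ handled just before Proposition~\ref{projbun-E_1}.
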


\begin{rem}
On compact complex manifolds, S. Yang, X.-D. Yang \cite[Theorem 1.2]{YY} and J. Stelzig \cite[Corollary 12, Theorem 23]{St1} \cite[Proposition 4, Theorem
8]{St2}  showed the existence  of the isomorphism $H_{BC}^{p,q}(X)\oplus \bigoplus\limits_{i=1}^{r-1}H_{BC}^{p-i,q-i}(Y)\cong H_{BC}^{p,q}(\widetilde{X})$,
%\begin{displaymath}
%H_{BC}^{p,q}(X)\oplus \bigoplus_{i=1}^{r-1}H_{BC}^{p-i,q-i}(Y)\cong H_{BC}^{p,q}(\widetilde{X})
%\end{displaymath}
which was not expressed explicitly. %was originally \cite[Conjecture 1.9]{RYY} and was not expressed explicitly.
\end{rem}

\begin{quest}\label{bc-inverse-quest}
\emph{Are $\psi^{\mathcal{L}}_{BC,\Phi}$ and $\phi^{\mathcal{L}}_{BC,\Phi}$ (resp. $\psi^{\mathcal{L}}_{A,\Phi}$ and $\phi^{\mathcal{L}}_{A,\Phi}$) inverse to
each other? }
\end{quest}
Analogue to Proposition \ref{relation},
if $i_E^*i_{E*}(\bullet)=h_{BC}\cup\bullet$ (resp. $i_E^*i_{E*}(\bullet)=h_{A}\cup\bullet$)  holds on  $H_{BC,(\pi^{-1}\Phi)|_E}^{\bullet,\bullet}(E,(\pi_E)^{-1}(\mathcal{L}|_Y))$
(resp. $H_{A,(\pi^{-1}\Phi)|_E}^{\bullet,\bullet}(E,(\pi_E)^{-1}(\mathcal{L}|_Y))$),
$\psi^{\mathcal{L}}_{BC,\Phi}$ and $\phi^{\mathcal{L}}_{BC,\Phi}$ (resp. $\psi^{\mathcal{L}}_{A,\Phi}$ and $\phi^{\mathcal{L}}_{A,\Phi}$) are inverse isomorphisms.

\subsubsection{Hypercohomologies of truncated twisted holomorphic de Rham complexes}
For a complex $\mathcal{F}^\bullet$  of sheaves on a topological space $X$,
set $\mathbb{H}_\Phi^k(X,\mathcal{F}^\bullet)=R^k\Gamma_\Phi(\mathcal{F}^\bullet)$, which is called the \emph{$k$-th hypercohomology with  supports in $\Phi$}.
Using the results in the earlier version \cite{M3} of the present paper, we generalized several classical results for the hypercohomologies of truncated twisted
holomorphic de Rham complexes in \cite{M5}.
Based on the present results, we can extend them to the ones with supports in paracompactifying families.
We  only write out the blow-up formula and give a detailed proof here.

Let $X$ be an $n$-dimensional complex manifold  and
let $\mathcal{L}$ be a local system of $\mathbb{C}$-modules of finite rank  on $X$.
Given any integers $s$ and $t$, the \emph{truncated twisted holomorphic de Rham complex} $\Omega_X^{[s,t]}(\mathcal{L})$ is defined as the zero complex if $s>
t$ and as the complex
\begin{equation}\label{truncated twisted}
\xymatrix{
0\ar[r] &\mathcal{L}\otimes_{\underline{\mathbb{C}}_X}\Omega_X^s\ar[r]^{\partial} &\mathcal{L}\otimes_{\underline{\mathbb{C}}_X}\Omega_X^{s+1}\ar[r]^{
\qquad\partial}&\cdots\ar[r]^{\partial\qquad}&\mathcal{L}\otimes_{\underline{\mathbb{C}}_X}\Omega_X^t\ar[r]&0
}
\end{equation}
if $s\leq t$, where $\mathcal{L}\otimes\Omega_X^k$ is placed in degree $k$ for $s\leq k\leq t$ and zeros are placed in other degrees.
In particular, $\Omega_X^{[0,n]}(\mathcal{L})=\mathcal{L}\otimes_{\underline{\mathbb{C}}_X}\Omega_X^{\bullet}$ is the twisted holomorphic de Rham complex on $X$ and
$\Omega_X^{[p,p]}(\mathcal{L})=(\mathcal{L}\otimes_{\underline{\mathbb{C}}_X}\Omega_X^{p\bullet})[-p]$, where $\Omega_X^{p\bullet}$ denote the complex with $\Omega_X^{p}$ in degree $0$
and zeros in other degrees.

Set
\begin{displaymath}
\mathcal{S}^{p,q}_{X}(\mathcal{L},s,t)=\left\{
 \begin{array}{ll}
\mathcal{L}\otimes_{\underline{\mathbb{C}}_X}\mathcal{A}^{p,q}_{X},&~s\leq p\leq t\\
 &\\
 0,&~\textrm{others}.
 \end{array}
 \right.
\end{displaymath}
\begin{displaymath}
d^{p,q}_1=\left\{
 \begin{array}{ll}
\partial,&~s\leq p< t\\
 &\\
 0,&~\textrm{others}
 \end{array}
 \right.
\textrm{, }\quad
d^{p,q}_2=\left\{
 \begin{array}{ll}
\bar{\partial},&~s\leq p\leq t\\
 &\\
 0,&~\textrm{others}.
 \end{array}
 \right.
\end{displaymath}
Then $(\mathcal{S}^{\bullet,\bullet}_{X}(\mathcal{L},s,t),d_1,d_2)$ is a double complex of sheaves, which is  shortly denoted by
$\mathcal{S}^{\bullet,\bullet}_{X}(\mathcal{L},s,t)$.
Let $\mathcal{S}^{\bullet}_{X}(\mathcal{L},s,t)$  be the simple complex associated to $\mathcal{S}^{\bullet,\bullet}_{X}(\mathcal{L},s,t)$.
For any $p\in\mathbb{Z}$, $\Omega_X^{[s,t]}(\mathcal{L})^p\rightarrow (\mathcal{S}^{p,\bullet}_{X}(\mathcal{L},s,t),d^{p,\bullet}_2)$ given by the inclusion is
a resolution of $\Omega_X^{[s,t]}(\mathcal{L})^p$.
By \cite[Lemma 8.5]{V}, the inclusion gives a quasi-isomorphism $\Omega_X^{[s,t]}(\mathcal{L})\rightarrow \mathcal{S}^{\bullet}_{X}(\mathcal{L},s,t)$ of
complexes of sheaves.
Suppose that $\Phi$ is a paracompactifying family of supports on $X$.
Set $S_\Phi^{p,q}(X,\mathcal{L},s,t)=\Gamma_\Phi(X,\mathcal{S}^{p,q}_{X}(\mathcal{L},s,t))$ and
$S_\Phi^{p}(X,\mathcal{L},s,t)=\Gamma_\Phi(X,\mathcal{S}^{p}_{X}(\mathcal{L},s,t))$.
The sheaf $\mathcal{C}_X^\infty$  is $\Phi$-soft (\cite[II. 9.4]{Br}), so are
$\mathcal{S}^{p}_{X}(\mathcal{L},s,t)$ by \cite[II. 9.16]{Br}.
Hence $\mathcal{S}^{p}_{X}(\mathcal{L},s,t)$  are $\Phi$-acyclic by \cite[II. 9.11]{Br}.
By \cite[Proposition 8.12]{V}, the hypercohomology
\begin{equation}\label{computation1}
\mathbb{H}_\Phi^k(X,\Omega_X^{[s,t]}(\mathcal{L}))\cong H^k(S_\Phi^{\bullet}(X,\mathcal{L},s,t))
\end{equation}
for any $k\in\mathbb{Z}$.
For example, $\mathbb{H}_\Phi^k(X,\Omega_X^{[0,n]}(\mathcal{L}))\cong H_{\Phi}^k(X,\mathcal{L})$ and $\mathbb{H}_\Phi^k(X,\Omega_X^{[p,p]}(\mathcal{L}))\cong
H_\Phi^{p,k-p}(X,\mathcal{L}\otimes_{\underline{\mathbb{C}}_X}\mathcal{O}_X)$.

Similarly, we can define $\mathcal{T}^{\bullet,\bullet}_{X}(\mathcal{L},s,t)$, $\mathcal{T}^{\bullet}_{X}(\mathcal{L},s,t)$,
$T_\Phi^{\bullet,\bullet}(X,\mathcal{L},s,t)$ and $T_\Phi^{\bullet}(X,\mathcal{L},s,t)$, where
\begin{displaymath}
\mathcal{T}^{p,q}_{X}(\mathcal{L},s,t)=\left\{
 \begin{array}{ll}
\mathcal{L}\otimes_{\underline{\mathbb{C}}_X}\mathcal{D}^{\prime p,q}_{X},&~s\leq p\leq t\\
 &\\
 0,&~\textrm{others}.
 \end{array}
 \right.
\end{displaymath}
The inclusion gives a quasi-isomorphism  $\Omega_X^{[s,t]}(\mathcal{L})\rightarrow \mathcal{T}^{\bullet}_{X}(\mathcal{L},s,t)$ of complexes of sheaves.
There is an isomorphism
\begin{equation}\label{computation2}
\mathbb{H}_\Phi^k(X,\Omega_X^{[s,t]}(\mathcal{L}))\cong H_\Phi^k(T^{\bullet}(X,\mathcal{L},s,t))
\end{equation}
for any $k\in\mathbb{Z}$.

The following result extends \cite[Question 10]{CY} and gives it a positive answer.
\begin{thm}
Let $\pi:\widetilde{X}\rightarrow X$ be the complex blow-up of a  complex manifold $X$ along a  complex submanifold $Y$ and $\mathcal{L}$ a local system of
$\mathbb{C}$-modules of finite rank on $X$.
Denote by $i_Y:Y\rightarrow X$  the inclusion and set $r=\emph{codim}_{\mathbb{C}}Y\geq 2$.
Suppose that $\Phi$ is a paracompactifying family of supports on $X$.
Then there exists an isomorphism
\begin{displaymath}
\small{
\begin{aligned}
\mathbb{H}_{\pi^{-1}\Phi}^{k}(\widetilde{X},\Omega_{\widetilde{X}}^{[s,t]}(\pi^{-1}\mathcal{L}))\cong\mathbb{H}_\Phi^k(X,\Omega_X^{[s,t]}(\mathcal{L}))\oplus
\bigoplus_{i=1}^{r-1}\mathbb{H}_{\Phi|_Y}^{k-2i}(Y,\Omega_Y^{[s-i,t-i]}(\mathcal{L}|_Y)),
\end{aligned}}
\end{displaymath}
for any $k$, $s$, $t$.
\end{thm}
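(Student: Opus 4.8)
The plan is to mimic the strategy used for twisted Dolbeault cohomology (Theorem~\ref{1.2}) but now at the level of the double complexes $\mathcal{S}^{\bullet,\bullet}$ (or $\mathcal{T}^{\bullet,\bullet}$), so that the blow-up isomorphism is induced by an explicit morphism of complexes rather than constructed abstractly. First I would observe that for each fixed first index $p$ with $s\le p\le t$, the piece $\mathcal{S}^{p,\bullet}_{\widetilde X}(\pi^{-1}\mathcal{L},s,t)$ is exactly $\pi^{-1}\mathcal{L}\otimes_{\underline{\mathbb{C}}_{\widetilde X}}\mathcal{A}_{\widetilde X}^{p,\bullet}$, i.e.\ the resolution that computes $H^{p,\bullet}_{\pi^{-1}\Phi}(\widetilde X,\pi^{-1}\mathcal{L}\otimes\mathcal{O})$. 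Hence the maps $\psi^{\mathcal{E}}_\Phi$ of Sect.~4.5.2 (with $\mathcal{E}=\mathcal{L}\otimes_{\underline{\mathbb{C}}_X}\mathcal{O}_X$), namely $\pi^*+\sum_{i=1}^{r-1} i_{E*}\circ(t^{i-1}\wedge)\circ(\pi|_E)^*$, restrict in each row-degree $p$ to a morphism between the $p$-th rows of the relevant double complexes, shifting the $Y$-summands by $(-i,-i)$ in bidegree; since the exceptional divisor $E=\mathbb{P}(N_{Y/X})$ has complex codimension $1$ in $\widetilde X$ the pushforward $i_{E*}$ raises bidegree by $(1,1)$, which combined with the form $t^{i-1}\wedge(\pi|_E)^*$ sends the row of $\mathcal{S}^{\bullet,\bullet}_Y(\mathcal{L}|_Y,s-i,t-i)$ in position $p-i$ into the row of $\mathcal{S}^{\bullet,\bullet}_{\widetilde X}(\pi^{-1}\mathcal{L},s,t)$ in position $p$. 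One checks this is compatible with the horizontal differential $\partial$ (because $\pi^*$, $i_{E*}$ and wedging with the $\partial$-closed form $t$ all commute with $\partial$, using $\partial t=0$), so it is genuinely a morphism of the associated simple complexes.

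The key step is then to apply Proposition~\ref{blowup-E_1}: the morphism $\psi_\Phi^{\mathcal{L}\otimes_{\underline{\mathbb{C}}_X}\mathcal{O}_X}$ is an $E_1$-isomorphism of double complexes, meaning it induces isomorphisms on both row and column cohomologies. Restricting attention to the sub-double-complexes cut out by $s\le p\le t$ (and zero elsewhere), the same argument shows the induced morphism
\begin{displaymath}
A^{\bullet,\bullet}_\Phi(X,\Omega^{[s,t]})\oplus\bigoplus_{i=1}^{r-1}A^{\bullet,\bullet}_{\Phi|_Y}(Y,\Omega^{[s-i,t-i]})[-i,-i]\longrightarrow \mathcal{D}^{\prime\bullet,\bullet}_{\pi^{-1}\Phi}(\widetilde X,\Omega^{[s,t]})
\end{displaymath}
is still an $E_1$-isomorphism, because the column cohomology in each fixed row $p$ is exactly the projective-bundle/blow-up statement for $H^{p,\bullet}$ already contained in Theorem~\ref{1.2}, and the row cohomology is computed from these column groups by a spectral-sequence argument identical to the one in Proposition~\ref{projbun-E_1}. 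An $E_1$-isomorphism of bounded double complexes induces an isomorphism on the cohomology of the associated simple complexes (this is the elementary half of Stelzig's machinery, or one can invoke the degeneration of the relevant spectral sequences exactly as in the proof of Proposition~\ref{Kun}). Combining this with the identifications \eqref{computation1} and \eqref{computation2},
\begin{displaymath}
\mathbb{H}_{\pi^{-1}\Phi}^k(\widetilde X,\Omega_{\widetilde X}^{[s,t]}(\pi^{-1}\mathcal{L}))\cong H^k(S^{\bullet}_{\pi^{-1}\Phi}(\widetilde X,\pi^{-1}\mathcal{L},s,t)),
\end{displaymath}
and likewise for $X$ and $Y$, yields the claimed decomposition.

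The main obstacle, and the place where I would spend the most care, is verifying that the cohomology-sheaf computation underlying the column-cohomology isomorphism goes through for the \emph{truncated} complex and for a general paracompactifying family $\Phi$ simultaneously: one needs that $R^q\pi_*\big(\pi^{-1}\mathcal{L}\otimes\mathcal{F}^p_{\widetilde X,E}\big)$ vanishes for $q\ge 1$ and equals $\mathcal{L}\otimes\mathcal{F}^p_{X,Y}$ for $q=0$ in each truncation-degree $p$ with $s\le p\le t$ (which follows from \cite[Lemma 4.2]{M2} applied row by row), and then that the Leray-type argument of Sect.~4.4 identifying $\mathrm{coker}\,\pi^*$ with $\mathrm{coker}\,(\pi|_E)^*$ via $i_E^*$ is compatible with the horizontal differential $\partial$ after truncation. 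The self-intersection formula (Proposition~\ref{key}, conditions (2) and (3), via the Stein/holomorphically-contractible-neighbourhood local model and the gluing principle Lemma~\ref{glued}) is what makes the row-differential compatibility hold; I would run the gluing argument exactly as in the proof following Lemma~\ref{special}, with $\mathcal{S}^{\bullet,\bullet}$ in place of $\mathcal{A}^{\bullet,\bullet}$, checking the local, disjoint and Mayer--Vietoris conditions, the local one reducing to the Stein case where $Y$ admits a holomorphically contractible neighbourhood by \cite[Theorem 3.3.3]{Fo}.
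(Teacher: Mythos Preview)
Your approach is essentially the same as the paper's: define a morphism of double complexes (the truncated $\mathcal{S}^{\bullet,\bullet}$ or $\mathcal{T}^{\bullet,\bullet}$), show it induces an isomorphism on the $E_1$-page (column cohomology) via Theorem~\ref{1.2}, and conclude by the standard spectral-sequence comparison for bounded double complexes. The only cosmetic difference is that the paper uses the map $\phi$ in the other direction, namely $(\pi_*,\,G_{t,\Phi}^{-1}\circ i_E^*,\,\ldots,\,G_{t,\Phi}^{-r+1}\circ i_E^*):K^{\bullet,\bullet}\to L^{\bullet,\bullet}$, rather than your $\psi$; either works since Theorem~\ref{1.2} asserts both are isomorphisms.

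Your last paragraph, however, is unnecessary and reflects a confusion about what remains to be checked. For each fixed $p$ with $s\le p\le t$, the $p$-th column of the truncated double complex is \emph{literally} the complex computing $H^{p,\bullet}_{\Phi}(X,\mathcal{L}\otimes_{\underline{\mathbb{C}}_X}\mathcal{O}_X)$ (and similarly for $\widetilde X$, $Y$), and for $p\notin[s,t]$ it is zero. Truncation in the $p$-direction does not touch the $\bar\partial$-differential at all. Hence the column-cohomology isomorphism is an immediate \emph{application} of Theorem~\ref{1.2} for each $p$ separately; there is no need to rerun the $R^q\pi_*$ computation, the $\mathrm{coker}\,\pi^*\cong\mathrm{coker}\,(\pi|_E)^*$ identification, the self-intersection formula, or the gluing argument of Lemma~\ref{glued}. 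Those ingredients live inside the proof of Theorem~\ref{1.2}, which you are entitled to invoke as a black box. Likewise, you do not need the full Stelzig notion of $E_1$-isomorphism (both row \emph{and} column): an isomorphism on column cohomology alone, together with boundedness, already forces an isomorphism on $H^k$ of the associated simple complexes by the usual spectral-sequence comparison theorem. The paper's proof is correspondingly short: compute $_KE_1$ and $_LE_1$, cite Theorem~\ref{1.2}, done.
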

\begin{proof}
Fix two integers $s$ and $t$.
Consider the complexes
$K^{\bullet,\bullet}(s,t)=S_{\pi^{-1}\Phi}^{\bullet,\bullet}(\widetilde{X},\pi^{-1}\mathcal{L},s,t)$ and
\begin{displaymath}
\small{
\begin{aligned}
L^{\bullet,\bullet}(s,t)=T_\Phi^{\bullet,\bullet}(X,\mathcal{L},s,t)\oplus\bigoplus_{i=1}^{r-1}T_{\Phi|_Y}^{\bullet,\bullet}(Y,\mathcal{L}|_Y,s-i,t-i)[-i,-i].
\end{aligned}}
\end{displaymath}
We have the first pages
\begin{displaymath}
\small{
\begin{aligned}
_{K}E_1^{p,q}=&H^{q}(S_{\pi^{-1}\Phi}^{p,\bullet}(\widetilde{X},\pi^{-1}\mathcal{L},s,t))\\
=&\left\{
 \begin{array}{ll}
H_{\pi^{-1}\Phi}^{p,q}\left(\widetilde{X},\pi^{*}(\mathcal{L}\otimes_{\underline{\mathbb{C}}_X}\mathcal{O}_{X})\right),&~s\leq p\leq t\\
 &\\
 0,&~\textrm{others},
 \end{array}
 \right.
\end{aligned}}
\end{displaymath}
\begin{displaymath}
\small{
\begin{aligned}
_{L}E_1^{p,q}=& H^q(T_\Phi^{p,\bullet}(X,\mathcal{L},s,t))\oplus\bigoplus_{i=1}^{r-1}H^{q-i}(T_{\Phi|_Y}^{p-i,\bullet}(Y,\mathcal{L}|_Y,s-i,t-i)) \\
=&\left\{
 \begin{array}{ll}
H_\Phi^{p,q}(X,\mathcal{L}\otimes_{\underline{\mathbb{C}}_X}\mathcal{O}_X)\oplus\bigoplus\limits_{i=1}^{r-1}H_{\Phi|_Y}^{p-i,q-i}(Y,i_Y^*(\mathcal{L}\otimes_{\underline{\mathbb{C}}_X}\mathcal{O}_X)),&~s\leq p\leq t\\
 &\\
 0,&~\textrm{others}
 \end{array}
 \right.
\end{aligned}}
\end{displaymath}
of the spectral sequences associated to $K^{\bullet,\bullet}$, $L^{\bullet,\bullet}$ respectively.
Let $t\in \mathcal{A}^{1,1}(E)$ be a first Chern form of the universal line bundle $\mathcal{O}_{E}(-1)$ on $E\cong{\mathbb{P}(N_{Y/X})}$ and let
$i_E:E\rightarrow \widetilde{X}$ be the inclusion.
By Theorem \ref{1.2}, the morphism
\begin{displaymath}
(\pi_*,\mbox{ }G_{t,\Phi}^{-1}\circ i_E^*,\mbox{ }...,\mbox{ }G_{t,\Phi}^{-r+1}\circ i_E^*):K^{\bullet,\bullet}\rightarrow L^{\bullet,\bullet}
\end{displaymath}
induces an isomorphism  $_KE_1^{\bullet,\bullet}\rightarrow _LE_1^{\bullet,\bullet}$  at $E_1$-pages, hence induces an isomorphism
$H^{k}(K^{\bullet})\rightarrow H^{k}(L^{\bullet})$ for any $k$, where $K^\bullet$ and $L^\bullet$ are the associated complex to $K^{\bullet,\bullet}$ and
$L^{\bullet,\bullet}$ respectively.
By (\ref{computation1}) and  (\ref{computation2}),
$H^{k}(K^{\bullet})\cong \mathbb{H}_{\pi^{-1}\Phi}^{k}(\widetilde{X},\Omega_{\widetilde{X}}^{[s,t]}(\pi^{-1}\mathcal{L}))$ and
\begin{displaymath}
\begin{aligned}
H^{k}(L^{\bullet})%=&H^k(S^{\bullet}(X,\mathcal{E},s,t))\oplus\bigoplus_{i=1}^{r-1}H^k(S^{\bullet}(Y,i_Y^*\mathcal{E},s-i,t-i)[-2i])\\
\cong&\mathbb{H}_\Phi^k(X,\Omega_X^{[s,t]}(\mathcal{L}))\oplus \bigoplus_{i=1}^{r-1}\mathbb{H}_{\Phi|_Y}^{k-2i}(Y,\Omega_Y^{[s-i,t-i]}(\mathcal{L}|_Y)),
\end{aligned}
\end{displaymath}
from which the theorem  follows.
\end{proof}

\section{Twisted de Rham cohomology with supports}
\subsection{General results}
Suppose that $\mathcal{F}$ and $\mathcal{G}$ are  sheaves of $\mathbf{k}$-modules  on topological spaces $X$ and $Y$ respectively.
The \emph{external tensor product} of $\mathcal{F}$ and $\mathcal{G}$ is defined as
\begin{displaymath}
\mathcal{F}\boxtimes\mathcal{G}=pr_1^{-1}\mathcal{F}\otimes_{\underline{\mathbf{k}}_{X\times Y}}pr_2^{-1}\mathcal{G},
\end{displaymath}
where $pr_1$ and $pr_2$ are projections from $X\times Y$ onto $X$, $Y$, respectively.
\begin{prop}\label{Kun2}
Let $\mathcal{L}$, $\mathcal{H}$ be  local systems of $\mathbf{k}$-modules of finite ranks on smooth manifolds $X$, $Y$ respectively and let $\Phi$ be a family of supports on $X$.
Suppose that $H^\bullet(Y,\mathcal{H})$ has finite dimension.
Then $(\alpha^p\otimes \beta^q)_{p+q=k}\mapsto \sum\limits_{p+q=k}\alpha^p\times \beta^q$ gives an isomorphism
\begin{displaymath}
\bigoplus\limits_{p+q=k}H_\Phi^p(X,\mathcal{L})\otimes_{\mathbf{k}} H^q(Y,\mathcal{H})\rightarrow H_{\Phi\times Y}^k(X\times Y, \mathcal{L}\boxtimes \mathcal{H}).
\end{displaymath}
\end{prop}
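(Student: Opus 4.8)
The plan is to transcribe, essentially verbatim, the proof of Proposition \ref{Kun} into the topological/de Rham setting, using the simplicial flabby resolutions $(-)^{[\bullet]}$ of Section 2.2 (which were set up for arbitrary topological spaces and arbitrary families of supports), and replacing the analytic external tensor product by $\mathcal{L}\boxtimes\mathcal{H}=pr_1^{-1}\mathcal{L}\otimes_{\underline{\mathbf{k}}_{X\times Y}}pr_2^{-1}\mathcal{H}$, the tensor product of $\mathcal{O}_X$-modules by $\otimes_{\mathbf{k}}$, and the cartesian product (\ref{cartesian}) by $pr_1^*(\bullet)\cup pr_2^*(\bullet)$. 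Write $\pi=pr_1:X\times Y\rightarrow X$. I would form the double complex $C^{\bullet,\bullet}=\Gamma_\Phi(X,\mathcal{L}^{[\bullet]})\otimes_{\mathbf{k}}\Gamma(Y,\mathcal{H}^{[\bullet]})$, the double complex $D^{p,q}=\Gamma_\Phi\bigl(X,(\pi_*((\mathcal{L}\boxtimes\mathcal{H})^{[q]}))^{[p]}\bigr)$, and the morphism $\varphi:C^{\bullet,\bullet}\rightarrow D^{\bullet,\bullet}$ of double complexes sending $f\otimes g$ to the map $h$ with $h(\xi_0,\dots,\xi_p;(x_0,y_0),\dots,(x_q,y_q))=S(f(\xi_0,\dots,\xi_p))(x_q)\otimes g(y_0,\dots,y_q)\in\mathcal{L}_{x_q}\otimes_{\mathbf{k}}\mathcal{H}_{y_q}=(\mathcal{L}\boxtimes\mathcal{H})_{(x_q,y_q)}$; this $\varphi$ is well defined by Lemma \ref{higher-direct-image} exactly as in the analytic case.

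For the first spectral sequence, the algebraic K\"unneth computations are formal over the field $\mathbf{k}$ (every $\mathbf{k}$-module is flat), so $_CE_1^{p,q}=\Gamma_\Phi(X,\mathcal{L}^{[p]})\otimes_{\mathbf{k}}H^q(Y,\mathcal{H})$, $_CE_2^{p,q}=H^p_\Phi(X,\mathcal{L})\otimes_{\mathbf{k}}H^q(Y,\mathcal{H})$, and $_CH^k=\bigoplus_{p+q=k}H^p_\Phi(X,\mathcal{L})\otimes_{\mathbf{k}}H^q(Y,\mathcal{H})$. For the second, Lemma \ref{higher-direct-image} together with (\ref{exact-functor}) gives $_DE_1^{p,q}=\Gamma_\Phi\bigl(X,(R^q\pi_*(\mathcal{L}\boxtimes\mathcal{H}))^{[p]}\bigr)$, and using $R^q\pi_*(\mathcal{L}\boxtimes\mathcal{H})\cong\mathcal{L}\otimes_{\mathbf{k}}H^q(Y,\mathcal{H})$ one gets $_DE_2^{p,q}=H^p_\Phi(X,\mathcal{L})\otimes_{\mathbf{k}}H^q(Y,\mathcal{H})$. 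Then $\varphi$ induces an isomorphism on $E_2$-pages; both sequences degenerate at $E_2$ (the $C$-side by the K\"unneth theorem over a field, the $D$-side because its row-first page $_D\widetilde{E}_1^{p,q}$ vanishes for $p\geq 1$ by \cite[IV. 5.2]{Br}), hence $\varphi$ induces an isomorphism $_CH^k\tilde{\rightarrow}\mbox{ }_DH^k$. Running the row-first spectral sequence of $D^{\bullet,\bullet}$ identifies $_DH^k$ with $H^k_{\Phi\times Y}(X\times Y,\mathcal{L}\boxtimes\mathcal{H})$, using $pr_1^{-1}\Phi=\Phi\times Y$; and unwinding the definition of $\varphi$ shows the composite $_CH^k\rightarrow\mbox{ }_DH^k$ is exactly $pr_1^*(\bullet)\cup pr_2^*(\bullet)$, i.e.\ the stated map $(\alpha^p\otimes\beta^q)_{p+q=k}\mapsto\sum_{p+q=k}\alpha^p\times\beta^q$.

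The one step that is not a mechanical translation is the higher-direct-image identity $R^q\pi_*(\mathcal{L}\boxtimes\mathcal{H})\cong\mathcal{L}\otimes_{\mathbf{k}}H^q(Y,\mathcal{H})$ (the topological counterpart of \cite[IX. 5.22 (c)]{Dem}), and this is where the finite-dimensionality of $H^\bullet(Y,\mathcal{H})$ is genuinely used. I would argue: the claim is local on $X$ and compatible with restriction, and $\mathcal{L}$ is a local system of finite rank, so one may reduce to $\mathcal{L}=\underline{\mathbf{k}}_X^{\oplus\rk\mathcal{L}}$, where it becomes $R^q\pi_*pr_2^{-1}\mathcal{H}\cong\underline{H^q(Y,\mathcal{H})}_X$. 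This is a base-change statement for the projection $X\times Y\rightarrow X$ whose stalk at $x$ is $H^q(\{x\}\times Y,\mathcal{H})=H^q(Y,\mathcal{H})$; it holds because $X$ and $Y$ are manifolds (hence locally compact Hausdorff and locally contractible) and $H^\bullet(Y,\mathcal{H})$ is finite-dimensional, so one may invoke the K\"unneth theorem for sheaf cohomology over locally compact spaces from \cite{Br}, or argue directly via the Leray spectral sequence of $\pi$ together with the fact that $pr_2^{-1}$ of a flabby sheaf is acyclic for $\pi_*$ and is fibrewise flabby. Finite-dimensionality is precisely what prevents $R^q\pi_*$ from failing to commute with this external tensor product; everything else is the verbatim translation of Proposition \ref{Kun}.
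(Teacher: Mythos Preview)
Your proposal is correct and follows essentially the same route as the paper: the same double complexes $C^{\bullet,\bullet}$, $D^{\bullet,\bullet}$, the same morphism $\varphi$ (with stalk formula $S(f(\xi_0,\dots,\xi_p))(x_q)\otimes g(y_0,\dots,y_q)$), and the same spectral-sequence comparison reducing everything to the identification $R^q\pi_*(\mathcal{L}\boxtimes\mathcal{H})\cong\mathcal{L}\otimes_{\mathbf{k}}H^q(Y,\mathcal{H})$.

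The only substantive difference is in how that last identity is justified. You gesture at the K\"unneth theorem for sheaf cohomology in \cite{Br} or a Leray/acyclicity argument; the paper instead gives a short direct \v{C}ech computation: take a good cover $\mathfrak{U}$ of $Y$ by $\mathcal{H}$-constant open sets, and for any $\mathcal{L}$-constant open $V\subseteq X$ observe that $\check{C}^\bullet(V\times\mathfrak{U},\mathcal{L}\boxtimes\mathcal{H})=\Gamma(V,\mathcal{L})\otimes_{\mathbf{k}}\check{C}^\bullet(\mathfrak{U},\mathcal{H})$, whence $H^q(V\times Y,\mathcal{L}\boxtimes\mathcal{H})=\Gamma(V,\mathcal{L})\otimes_{\mathbf{k}}H^q(Y,\mathcal{H})$ and the sheafification yields the claim. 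This is more concrete than your appeal to general machinery and uses only that $Y$ is a smooth manifold (hence admits good covers); in particular it does not need any finite-dimensionality hypothesis at this step, so your remark that ``finite-dimensionality is precisely what prevents $R^q\pi_*$ from failing to commute'' is slightly off---the finite-dimensionality is used (as in Proposition~\ref{Kun}) when passing from $_DE_1^{p,q}=\Gamma_\Phi(X,(\mathcal{L}\otimes_{\mathbf{k}}H^q(Y,\mathcal{H}))^{[p]})$ to $_DE_2^{p,q}=H^p_\Phi(X,\mathcal{L})\otimes_{\mathbf{k}}H^q(Y,\mathcal{H})$, i.e.\ to pull the constant vector space $H^q(Y,\mathcal{H})$ out of $H^p_\Phi(X,-)$.
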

\begin{proof}
Let $\mathfrak{U}$ be a good covering of $Y$ such that $U$ is $\mathcal{H}$-constant for any $U\in \mathfrak{U}$.
Assume that $V\subseteq X$ is an $\mathcal{L}$-constant open subset of $X$.
The cartesian product gives an isomorphism $\Gamma(V,\mathcal{L})\otimes_{\mathbb{C}} \Gamma(U,\mathcal{H})\cong \Gamma(V\times U,\mathcal{L}\boxtimes\mathcal{H})$ for  any $\mathcal{H}$-constant open set $U\subseteq Y$.
Then the $\check{C}$ech complex
$\check{C}^\bullet(V\times \mathfrak{U}, \mathcal{L}\boxtimes\mathcal{H})=\Gamma(V,\mathcal{L})\otimes_{\mathbb{C}} \check{C}^\bullet(\mathfrak{U}, \mathcal{H})$.
Since $V\times \mathfrak{U}$ and $\mathfrak{U}$ are acyclic with respect to $\mathcal{L}\boxtimes\mathcal{H}$ and $\mathcal{H}$,
\begin{displaymath}
\small{
\begin{aligned}
H^p(V\times Y,\mathcal{L}\boxtimes\mathcal{H})=H^p(\check{C}^\bullet(V\times \mathfrak{U}, \mathcal{L}\boxtimes\mathcal{H}))
=\Gamma(V,\mathcal{L})\otimes_{\mathbb{C}} H^p(\check{C}^p(\mathfrak{U}, \mathcal{H}))=\Gamma(V,\mathcal{L})\otimes_{\mathbb{C}} H^p(Y,\mathcal{H}).
\end{aligned}}
\end{displaymath}
So $\mathcal{L}\otimes_{\mathbb{C}} H^p(Y,\mathcal{H})$ is the sheaf associated to the presheaf $W\mapsto H^p(W\times Y,\mathcal{L}\boxtimes\mathcal{H})$ for any open set  $W\subseteq X$.
Namely,
\begin{equation}\label{*****}
R^p\pi_*(\mathcal{L}\boxtimes\mathcal{H})=\mathcal{L}\otimes_{\mathbb{C}} H^p(Y,\mathcal{H}),
\end{equation}
where $\pi:X\times Y\rightarrow X$ is the first projection of $X\times Y$ onto $X$.

Define $\Gamma_\Phi(X,\mathcal{L}^{[p]})\otimes_{\mathbb{C}}\Gamma(Y,\mathcal{H}^{[q]})\rightarrow \Gamma_\Phi\left(X,(\pi_*((\mathcal{L}\boxtimes \mathcal{H})^{[q]}))^{[p]}\right)$ as $f\otimes g\mapsto h$, where
\begin{displaymath}
\small{\begin{aligned}
&h(\xi_0,\ldots,\xi_p; (x_0,y_0),\ldots,(x_q,y_q))=S(f(\xi_0,\ldots,\xi_p))(x_q)\otimes g(y_0,\ldots, y_q)\\
\in &(\mathcal{F}\boxtimes \mathcal{G})_{(x_q,y_q)}=\mathcal{F}_{x_q}\otimes_{\mathbf{k}}\mathcal{G}_{y_q}
\end{aligned}}
\end{displaymath}
for any $(\xi_0,\ldots,\xi_p; (x_0,y_0),\ldots,(x_q,y_q))\in X^{p+1}\times (X\times Y)^{q+1}$.
Using $(\ref{*****})$ instead of \cite[IX. 5.22 (c)]{Dem}, we easily prove this theorem as Theorem \ref{Kun}.
\end{proof}

With the similar proof of Theorem \ref{L-H1}, we have
\begin{thm}\label{L-H2}
Let $\pi:E\rightarrow X$ be a smooth fiber bundle  over a smooth manifold $X$ and let $\mathcal{L}$ be a local system  of
$\mathbf{k}$-modules of finite rank on $X$.
Assume that there exist $e_i \in H_{dR}^{\bullet}(E,\mathbf{k})$ with degree $u_i$ for $1\leq i\leq r$ such that  their
restrictions $e_1|_{E_x},\dots,e_r|_{E_x}$ freely linearly generate $H_{dR}^{\bullet}(E_x,\mathbf{k})$ for every $x\in X$.
Then
\begin{displaymath}
\sum\limits_{i=1}^r\pi^*(\bullet)\cup e_i:\bigoplus\limits_{i=1}^rH_\Phi^{\bullet-u_i}(X,\mathcal{L}) \tilde{\rightarrow} H_{\pi^{-1}\Phi}^{\bullet}(E,\pi^{-1}\mathcal{L})
\end{displaymath}
is an isomorphism for  any paracompactifying family $\Phi$  of supports on $X$.
\end{thm}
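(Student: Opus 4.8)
The plan is to transcribe the proof of Theorem \ref{L-H1} verbatim, replacing the Dolbeault complex $\mathcal{A}_X^{p,\bullet}$ by the de Rham complex $\mathcal{A}_X^{\bullet}$, the bidegrees $(u_i,v_i)$ by the degrees $u_i$, and Proposition \ref{Kun} by Proposition \ref{Kun2}; since only spaces of smooth forms (and no currents) intervene, no orientation hypothesis is needed. First I would fix closed $\mathbf{k}$-valued $u_i$-forms $t_i$ on $E$ with $e_i=[t_i]$ for $1\le i\le r$. For open $U\subseteq X$ set $E_U=\pi^{-1}(U)$ and $B^{\bullet}(U)=\bigoplus_{i=1}^r\Gamma_{\Phi|_U}(U,\mathcal{L}\otimes\mathcal{A}_X^{\bullet-u_i})$, whose $q$-th cohomology is $D^q(U)=\bigoplus_{i=1}^rH_{\Phi|_U}^{q-u_i}(U,\mathcal{L}|_U)$. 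The morphism $f_U=\sum_{i=1}^r\pi^*(\bullet)\wedge t_i$ maps $B^{\bullet}(U)$ to $\Gamma_{(\pi^{-1}\Phi)|_{E_U}}(E_U,\pi^{-1}\mathcal{L}\otimes\mathcal{A}_E^{\bullet})$ — it is well defined, the supports being controlled exactly as in the proof of Theorem \ref{L-H1} — and induces $F_U=\sum_{i=1}^r\pi^*(\bullet)\cup e_i:D^{\bullet}(U)\to G^{\bullet}(U):=H_{(\pi^{-1}\Phi)|_{E_U}}^{\bullet}(E_U,\pi^{-1}\mathcal{L})$. Writing $\mathcal{P}(U)$ for the statement that $F_U$ is an isomorphism in every degree, the theorem asserts $\mathcal{P}(X)$, which I would deduce from the gluing principle (Lemma \ref{glued}).

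The disjoint condition is immediate. For the Mayer-Vietoris condition, the twisted de Rham analogue of Proposition \ref{short-exact} (valid by the transfer principle of Section 3.2) together with the commutation of extensions by zero with pullbacks (the analogue of Proposition \ref{com1}) yield, for open $U,V\subseteq X$, a commutative diagram of short exact sequences of complexes relating $B^{\bullet}$ on $U\cap V$, on $U$ and $V$, and on $U\cup V$ with the corresponding twisted form complexes over $E_{U\cap V}$, $E_U$, $E_V$, $E_{U\cup V}$. Passing to the induced long exact sequences and invoking the five-lemma gives $\mathcal{P}(U),\mathcal{P}(V),\mathcal{P}(U\cap V)\Rightarrow\mathcal{P}(U\cup V)$.

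For the local condition I would establish the claim that $\mathcal{P}(U)$ holds whenever $U$ is $\mathcal{L}$-constant and $E_U$ is smoothly trivial. Fixing a trivialization $\varphi_U:U\times F\tilde{\rightarrow}E_U$ with $\pi\circ\varphi_U=pr_1$, a point $o\in U$, and the fiber inclusion $i_o:F\hookrightarrow E_U$, one replaces $e_i$ by $e_i'=(\varphi_U^{-1})^*pr_2^*i_o^*e_i$; then $i_o^*e_i'=i_o^*e_i$, and $i_o^*$ identifies $\textrm{span}_{\mathbf{k}}\{e_1',\dots,e_r'\}$ with $H_{dR}^{\bullet}(F,\mathbf{k})$, which by hypothesis is isomorphic to $\textrm{span}_{\mathbf{k}}\{e_1,\dots,e_r\}$ and in particular finite dimensional. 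Applying Proposition \ref{Kun2} with $Y=F$ and $\mathcal{H}=\underline{\mathbf{k}}_F$ — together with the fact, obtained as in Remark \ref{compatible-cup product} via \cite{Go}, that the cartesian product is realized on forms by $pr_1^*(\bullet)\cup pr_2^*(\bullet)$ — shows that $pr_1^*(\bullet)\cup pr_2^*(\bullet):H_{\Phi|_U}^{\bullet}(U,\mathcal{L}|_U)\otimes_{\mathbf{k}}H_{dR}^{\bullet}(F,\mathbf{k})\to H_{(\pi^{-1}\Phi)|_{E_U}}^{\bullet}(E_U,\pi^{-1}\mathcal{L})$ is an isomorphism. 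A commutative diagram strictly parallel to \eqref{com} then presents $F_U$ as a composite of isomorphisms, proving the claim. Finally, choosing an $\mathcal{L}$-constant basis $\mathfrak{U}$ of the topology of $X$ over each member of which $E$ is trivial, $E$ remains trivial over every finite intersection of members of $\mathfrak{U}$, so the claim yields the local condition and Lemma \ref{glued} gives $\mathcal{P}(X)$.

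The only point genuinely requiring care is confirming that the three imported ingredients transpose verbatim to the twisted, support-sensitive de Rham setting: the Mayer-Vietoris sequence of Proposition \ref{short-exact}, the compatibility of Proposition \ref{com1}, and the identification of the K\"{u}nneth isomorphism of Proposition \ref{Kun2} with $pr_1^*(\bullet)\cup pr_2^*(\bullet)$ on forms. This is precisely what the substitution dictionary of Section 3.2 guarantees, and no flatness hypothesis is needed since $\mathbf{k}$ is a field.
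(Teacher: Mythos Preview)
Your proposal is correct and is precisely what the paper intends: the paper itself offers no separate proof of Theorem \ref{L-H2}, writing only ``With the similar proof of Theorem \ref{L-H1}, we have'', and your transcription---replacing $\mathcal{A}_X^{p,\bullet}$ by $\mathcal{A}_X^{\bullet}$, $\mathcal{E}$-free by $\mathcal{L}$-constant, and Proposition~\ref{Kun} by Proposition~\ref{Kun2}, with the gluing principle of Lemma~\ref{glued} as the skeleton---is exactly that similar proof. Your explicit checks that the Mayer--Vietoris sequence, the compatibility of extensions by zero with pullbacks, and the identification of the K\"{u}nneth isomorphism with the form-level cartesian product all transpose via the dictionary of Section~3.2 are the right points to flag, and your observation that no orientation hypothesis is needed (only forms, no currents) is accurate.
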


Let $\pi:\mathbb{P}(E)\rightarrow X$ be the complex projectivization of a complex  vector bundle $E$ of complex rank $r$ over a smooth manifold $X$ and
let $t\in \mathcal{A}^{2}({\mathbb{P}(E)})$ be a Chern form of the universal line bundle $\mathcal{O}_{\mathbb{P}(E)}(-1)$ over ${\mathbb{P}(E)}$.
Then $\mathbb{P}(E)$ is an orientable fiber bundle and $t^{r-1}$ represents a orientation of $\mathbb{P}(E)$  (see \cite[VII., 7.4]{GHV} for definitions).
Define the pushforward $\pi_*$ as \emph{the integral over the fiber}, refer to \cite[VII., 7.12]{GHV}.
Notice that $X$ is \emph{not necessarily orientable} here.
As those in Sect. 4.3, $\pi_*t^i=0$  for $0\leq i\leq r-2$ and $\pi_*t^{r-1}=(-1)^{r-1}$.
By \cite[VII., Proposition X (3)]{GHV},  $\pi_*:\mathcal{A}^{\bullet}(\mathbb{P}(E))\rightarrow\mathcal{A}^{\bullet-2r}(X)$  is a morphism of complexes.

Suppose that $\mathcal{L}$ is a local system of $\mathbf{k}$-modules of finite rank  on $X$
and $\Phi$ is a paracompactifying family of supports of $X$.
Analogue to Sect. 3.1.3, we can further define
$\pi_*:\Gamma_{\pi^{-1}\Phi}(\mathbb{P}(E),\pi^{-1}\mathcal{L}\otimes\mathcal{A}_{\mathbb{P}(E)}^{\bullet})
\rightarrow\Gamma_{\Phi}(X,\mathcal{L}\otimes\mathcal{A}_X^{\bullet-2r})$.
By \cite[VII., Proposition IX]{GHV},
\begin{displaymath}
\pi_*(\alpha\wedge \pi^*\beta)=\pi_*\alpha\wedge\beta
\end{displaymath}
for $\alpha\in \Gamma_{\pi^{-1}\Phi}(\mathbb{P}(E),\pi^{-1}\mathcal{L}\otimes\mathcal{A}_{\mathbb{P}(E)}^{\bullet})$
and $\beta\in\Gamma_{\Phi}(X,\mathcal{L}\otimes\mathcal{A}_X^{\bullet})$.

Put $h=c_1(\mathcal{O}_{\mathbb{P}(E)}(-1))\in H_{dR}^2({\mathbb{P}(E)},\mathbf{k})$ the first Chern class of $\mathcal{O}_{\mathbb{P}(E)}(-1)$.
Denote  by $\mu_{dR,\Phi}^{\mathcal{L}}$ the morphism
\begin{displaymath}
\sum\limits_{i=0}^{r-1}\pi^*(\bullet)\cup h^i:\bigoplus\limits_{i=0}^{r-1}H_{\Phi}^{\bullet-2i}(X,\mathcal{L})\rightarrow H_{\pi^{-1}\Phi}^\bullet(\mathbb{P}(E),\pi^{-1}\mathcal{L})
\end{displaymath}
and by $\tau_{dR,\Phi}^{\mathcal{L}}$  the morphism
\begin{displaymath}
\small{
\begin{aligned}
(G_{h,\Phi}^{0}(\bullet),\mbox{ }G_{h,\Phi}^{-1}(\bullet),\mbox{ }...,\mbox{ }G_{h,\Phi}^{-r+1}(\bullet)): H_{\pi^{-1}\Phi}^\bullet(\mathbb{P}(E),\pi^{-1}\mathcal{L})\rightarrow
\bigoplus\limits_{i=0}^{r-1}H_{\Phi}^{\bullet-2i}(X,\mathcal{L}),
\end{aligned}}
\end{displaymath}
where
\begin{displaymath}
\small{
\begin{aligned}
G_{h,\Phi}^{-i}(\bullet)=\sum_{j=0}^{r-1-i}P_{i+j}(\pi_*h^r,\mbox{ }....,\mbox{ }\pi_*h^{2r-2})\cup\pi_*(h^j\cup \bullet):H_{\pi^{-1}\Phi}^{\bullet}(\mathbb{P}(E),\pi^{-1}\mathcal{L})\rightarrow H_{\Phi}^{\bullet-2i}(X,\mathcal{L})
\end{aligned}}
\end{displaymath}
%\begin{displaymath}
%H_{\pi^{-1}\Phi}^{\bullet}(\mathbb{P}(E),\pi^{-1}\mathcal{L})\rightarrow H_{\Phi}^{\bullet-2i}(X,\mathcal{L})
%\end{displaymath}
for $0\leq i\leq r-1$.
As Proposition \ref{proj-bun}, we have
\begin{prop}\label{3}
$\mu_{dR,\Phi}^{\mathcal{L}}$ and  $\tau_{dR,\Phi}^{\mathcal{L}}$ are inverse isomorphisms.
\end{prop}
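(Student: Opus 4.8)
The plan is to mimic the proof of Proposition \ref{proj-bun}, transferring the two essential ingredients from the Dolbeault setting to the de Rham setting. First I would establish that $\mu_{dR,\Phi}^{\mathcal{L}}$ is an isomorphism: for each $x\in X$, the restrictions $1, h, \ldots, h^{r-1}$ to the fibre $\pi^{-1}(x)=\mathbb{P}(E_x)\cong\mathbb{P}^{r-1}$ freely linearly generate $H_{dR}^{\bullet}(\mathbb{P}^{r-1},\mathbf{k})$ (this is the classical cohomology ring of complex projective space, valid over $\mathbf{k}=\mathbb{R}$ or $\mathbb{C}$, noting that $c_1(\mathcal{O}(-1))$ is a generator). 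Hence Theorem \ref{L-H2}, applied with $e_i=h^{i-1}$ of degree $u_i=2(i-1)$ for $1\le i\le r$, gives that $\mu_{dR,\Phi}^{\mathcal{L}}=\sum_{i=0}^{r-1}\pi^*(\bullet)\cup h^i$ is an isomorphism.

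Second, I would prove the identity $\tau_{dR,\Phi}^{\mathcal{L}}\circ\mu_{dR,\Phi}^{\mathcal{L}}=\mathrm{id}$ at the level of forms, exactly as in the computation (\ref{inverse})--(\ref{composit-projection}) in Sect. 4.3. The key inputs are: the projection formula $\pi_*(\alpha\wedge\pi^*\beta)=\pi_*\alpha\wedge\beta$ (already recorded in the excerpt, from \cite[VII., Proposition IX]{GHV}); the vanishing $\pi_*t^i=0$ for $0\le i\le r-2$ and the normalization $\pi_*t^{r-1}=(-1)^{r-1}$ (already stated, following the fibre-integration argument $\int_{\mathbb{P}^{r-1}}c_1(\mathcal{O}(-1))^{r-1}=(-1)^{r-1}$); and the combinatorial identities for the polynomials $P_i$ and $H_k$ from Lemmas \ref{poly2} and \ref{poly3}. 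Running the same chain of equalities as in the display computing $G_{t,\Phi}^{-i}\circ\mu_{\Phi}^{\mathcal{E}}$ — expand $\mu_\Phi^{\mathcal{L}}(\alpha^{\bullet})$, apply the projection formula to get (\ref{b})'s analogue, exchange the order of summation, invoke the definition of $H_k$, and finish with Lemma \ref{poly3} — yields $G_{h,\Phi}^{-i}\circ\mu_{dR,\Phi}^{\mathcal{L}}=pr_i$ for each $0\le i\le r-1$, hence $\tau_{dR,\Phi}^{\mathcal{L}}\circ\mu_{dR,\Phi}^{\mathcal{L}}=\mathrm{id}$ on the form level, and therefore on cohomology.

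Combining the two steps: $\mu_{dR,\Phi}^{\mathcal{L}}$ is an isomorphism and $\tau_{dR,\Phi}^{\mathcal{L}}$ is a left inverse of it, so $\tau_{dR,\Phi}^{\mathcal{L}}=(\mu_{dR,\Phi}^{\mathcal{L}})^{-1}$ and both are inverse isomorphisms. The statement says ``As Proposition \ref{proj-bun}'', so I would simply write: \emph{For every $x\in X$, the classes $1,h,\ldots,h^{r-1}$ restricted to $\pi^{-1}(x)=\mathbb{P}(E_x)$ freely linearly generate $H_{dR}^{\bullet}(\mathbb{P}(E_x),\mathbf{k})$; by Theorem \ref{L-H2}, $\mu_{dR,\Phi}^{\mathcal{L}}$ is an isomorphism. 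The identity $\tau_{dR,\Phi}^{\mathcal{L}}\circ\mu_{dR,\Phi}^{\mathcal{L}}=\mathrm{id}$ follows from the projection formula, the values of $\pi_*h^i$, and Lemmas \ref{poly2} and \ref{poly3}, exactly as in Sect. 4.3. Hence the conclusion.} The only point requiring genuine care — and thus the main (minor) obstacle — is checking that the fibre-integration normalization and the sign conventions for $\pi_*$ in the GHV framework (integration over the fibre, with $X$ not necessarily orientable) are the same ones that make the $P_i$/$H_k$ bookkeeping go through unchanged; but this is already addressed in the paragraph preceding the statement, where $\pi_*t^i=0$ and $\pi_*t^{r-1}=(-1)^{r-1}$ are recorded, so no new work is needed.
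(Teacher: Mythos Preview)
Your proposal is correct and follows exactly the approach the paper intends: the paper introduces Proposition~\ref{3} with ``As Proposition~\ref{proj-bun}, we have'' and gives no separate proof, relying on precisely the two ingredients you identify --- Theorem~\ref{L-H2} for the isomorphism and the form-level identity $\tau\circ\mu=\mathrm{id}$ from Sect.~4.3 (which carries over verbatim using the fibre-integration facts $\pi_*t^i=0$ for $i\le r-2$, $\pi_*t^{r-1}=(-1)^{r-1}$ and Lemmas~\ref{poly2}, \ref{poly3}). Your write-up matches the paper's argument.
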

%\begin{proof}
%For every $x\in X$, $1$, $h$, \ldots, $h^{r-1}$ restricted to the fibre $\pi^{-1}(x)=\mathbb{P}(E_x)$ is a basis of $H_{dR}^*(\mathbb{P}(E_x))$.
%By Theorem \ref{L-H2}, we proved the conclusion.
%\end{proof}

Now, we prove the \emph{self-intersection formula}.
\begin{prop}\label{key}
Let $Y$ be an oriented  submanifold in an oriented  smooth manifold $X$ with codimension $r$ and $\mathcal{L}$ a local system of $\mathbf{k}$-modules of finite rank  on $X$.
Denote by $i:Y\rightarrow X$  the inclusion and by $[Y]\in H_{dR}^r(X,\mathbf{k})$  the fundamental class of $Y$ in $X$.
Suppose that $\Phi$ is a paracompactifying family of supports on $X$.
Then the composite map
\begin{displaymath}
\xymatrix{
H_{\Phi|_Y}^{\bullet}(Y,\mathcal{L}|_Y)\ar[r]^{i_{*}} & H_{\Phi}^{\bullet+r}(X,\mathcal{L})\ar[r]^{i^*}&
H_{\Phi|_Y}^{\bullet+r}(Y,\mathcal{L}|_Y)
}
\end{displaymath}
is just $[Y]|_Y\cup\bullet$.
Moreover, if  the normal bundle $N_{Y/X}$ of $Y$ in $X$ has a complex vector bundle structure,
then $[Y]|_Y=c_r(N_{Y/X})$ is the $r$-th Chern class of $N_{Y/X}$.
\end{prop}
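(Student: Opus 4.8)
\emph{Proof proposal.} The plan is to imitate the proof of Lemma \ref{key0}, with the tubular neighborhood theorem in place of the blow-up chart and the Thom class in place of the Lelong--Poincar\'{e} equation. First I would fix, by the tubular neighborhood theorem, an open neighborhood $W$ of $Y$ in $X$ together with a smooth retraction $\tau\colon W\to Y$ such that $\tau\circ l=id_Y$, where $l\colon Y\to W$ is the zero-section inclusion; thus $W$ is diffeomorphic to the total space of $N_{Y/X}$, the map $l$ is a proper embedding, and, writing $j\colon W\to X$ for the open inclusion, $i=j\circ l$. As in Lemma \ref{key0} I would introduce $\Omega=j^{-1}\Phi\cap\tau^{-1}(\Phi|_Y)$, which by Proposition \ref{inverse-paracompact}(3)--(6) is a paracompactifying family of supports on $W$, and which by Proposition \ref{inverse-paracompact}(1),(2) satisfies $\Phi|_Y=l^{-1}(j^{-1}\Phi)=l^{-1}(\tau^{-1}(\Phi|_Y))=l^{-1}\Omega$, just as in $(\ref{pullsupports})$. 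For a closed form $u$ representing $\sigma$ one has $\mathrm{supp}\,l_*u\subseteq\mathrm{supp}\,u$, which is closed in $X$, so $i_*u=j_*(l_*u)$ is an extension by zero with $j^*j_*(l_*u)=l_*u$; hence, after the support bookkeeping done via the sub-family diagram $(\ref{sub00})$, the computation of $i^*i_*\sigma$ reduces to evaluating $l^*\circ l_*$ on $W$.

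To evaluate $l^*l_*$ I would use the projection formula. Since $l^*\tau^*=id$ we have $u=l^*(\tau^*u)$, so $(\ref{pro-formula1-dR})$ (applicable because $l$ is proper) gives $l_*u=l_*(1)\wedge\tau^*u=T_Y\wedge\tau^*u$, where $T_Y$ denotes the current of integration along $Y\subseteq W$; note $\mathrm{supp}(T_Y\wedge\tau^*u)\subseteq Y\cap\tau^{-1}(\mathrm{supp}\,u)=\mathrm{supp}\,u$. As $X$ and $Y$ are oriented, $N_{Y/X}$ is an oriented real vector bundle of rank $r$, and under the identification $W\cong N_{Y/X}$ the current $T_Y=l_*(1)$ represents the Thom class; by the Thom isomorphism, coming from fiber integration over $\tau$, it is cohomologous to a smooth closed Thom form $\theta$, say $T_Y=\theta+d\zeta$ with $\mathrm{supp}\,\zeta$ contained in a closed disk bundle inside $W$ (hence in $\Omega$). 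Then $T_Y\wedge\tau^*u-\theta\wedge\tau^*u=d(\zeta\wedge\tau^*u)$ has support in $\Omega$, so $i^*i_*\sigma$ is represented by the \emph{smooth form} $l^*(\theta\wedge\tau^*u)=l^*\theta\wedge u$, i.e.\ $i^*i_*\sigma=[l^*\theta]\cup\sigma$ in $H^{\bullet+r}_{\Phi|_Y}(Y,\mathcal L|_Y)$. Finally the smooth closed form $j_*\theta$ on $X$ is cohomologous to the current $j_*T_Y$, which represents the fundamental class $[Y]$; therefore $[l^*\theta]=i^*[j_*\theta]=i^*[Y]=[Y]|_Y$, which proves the first assertion. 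For the ``moreover'' part, when $N_{Y/X}$ carries a complex vector bundle structure one chooses $\theta$ compatibly with it, so that $l^*\theta$ represents the top Chern class and $[Y]|_Y=c_r(N_{Y/X})$.

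The main obstacle will be the support bookkeeping rather than any topological input: one must check that the Thom form $\theta$ and the primitive $\zeta$ can be chosen so that each product and $d$-image occurring above ($T_Y\wedge\tau^*u$, $\theta\wedge\tau^*u$, $\zeta\wedge\tau^*u$) has support in the paracompactifying family $\Omega$ on $W$, so that the projection formula $(\ref{pro-formula1-dR})$, the extension by zero $j_*$, the restrictions $j^*$ and $l^*$, and the sub-family comparison $(\ref{sub00})$ may all be applied legitimately inside $H^{\bullet}_{\Omega}(W,j^{-1}\mathcal L)$; this is arranged, exactly as in Lemma \ref{key0}, by taking $W$ to be a disk bundle and invoking Proposition \ref{inverse-paracompact}. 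The remaining ingredients --- that the current of integration along the zero section represents the Thom class, and that its restriction to the zero section is the Euler class (the top Chern class when a complex structure is present) --- are standard facts about fiber integration, cf.\ \cite{GHV}.
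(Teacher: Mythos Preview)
Your overall strategy---tubular neighborhood, projection formula, Thom class---matches the paper's, but you have skipped the one genuinely delicate point, and it is \emph{not} a support issue. When you write ``$l_*u=l_*(1)\wedge\tau^*u$'' you are using the projection formula $(\ref{pro-formula1-dR})$ with $\omega=\tau^*u$, which is a $\tau^{-1}(\mathcal{L}|_Y)$-valued form on $W$; the resulting current $l_*(1)\wedge\tau^*u$ therefore lives in $\Gamma\big(W,\tau^{-1}(\mathcal{L}|_Y)\otimes\mathcal{D}_W^{\prime\,\bullet}\big)$. But the pushforward you actually need for $i_*=j_*\circ l_*$ is the one landing in $\Gamma\big(W,\mathcal{L}|_W\otimes\mathcal{D}_W^{\prime\,\bullet}\big)$. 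The two target sheaves $\tau^{-1}(\mathcal{L}|_Y)=g^{-1}(\mathcal{L}|_W)$ (with $g=l\circ\tau$) and $\mathcal{L}|_W$ are \emph{different} local systems on $W$; your identity $l_*u=l_*(1)\wedge\tau^*u$ is an equation between objects in different categories and does not literally make sense.

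The paper confronts this head-on: it introduces two distinct pushforwards ${}^\prime l_*$ (into $\mathcal{L}|_U$-valued currents) and ${}^{\prime\prime}l_*$ (into $g^{-1}(\mathcal{L}|_U)$-valued currents), then proves via a local argument (Claims~1 and~2) that after choosing a smooth representative $v''$ of $[{}^{\prime\prime}l_*u]_\Omega$ one can manufacture a smooth representative $v'$ of $[{}^\prime l_*u]_\Omega$ with $l^*v'=l^*v''$. This is done by picking an $\mathcal{L}$-trivializing cover $\{U_\alpha\}$ of a suitably shrunk tubular neighborhood with $U_\alpha\subseteq g^{-1}(U_\alpha)$ and $Y\cap U_\alpha=Y\cap g^{-1}(U_\alpha)$, so that on each chart the transfer from $g^*_\alpha e_i^\alpha$-coefficients to $e_i^\alpha$-coefficients is transparent. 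Only after this comparison can one legitimately conclude $i^*i_*\sigma=l^*[{}^\prime l_*u]_{j^{-1}\Phi}=l^*[{}^{\prime\prime}l_*u]_{\tau^{-1}(\Phi|_Y)}=[Y]|_Y\cup\sigma$. Your Thom-form substitution $T_Y=\theta+d\zeta$ is a clean way to produce the smooth representative $v''$, but it does nothing to bridge the two coefficient sheaves. If you want to avoid the paper's chart-by-chart argument, you must instead supply and justify a canonical isomorphism $\mathcal{L}|_W\cong\tau^{-1}(\mathcal{L}|_Y)$ (using that $g\simeq\mathrm{id}_W$ through fiber-contractible homotopies) and check that under it ${}^\prime l_*$ and ${}^{\prime\prime}l_*$ coincide; this is true but is exactly the content you have omitted. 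The ``moreover'' clause is handled as in the paper via \cite[Proposition~12.4 and (20.10.6)]{BT}.
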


\begin{proof}
First, we have the following claim.
\vspace{2mm}

\noindent \textbf{Claim 1.} There exist an open neighborhood $U$ of $Y$ in $X$, a smooth map $\tau:U\rightarrow Y$ and  an open covering  $\mathfrak{U}=\{U_\alpha\}$  of $U$ satisfying  that $\mathcal{L}|_{U_\alpha}$ is constant, $U_\alpha \subseteq W_\alpha:=g^{-1}(U_\alpha)$ and $Y\cap W_\alpha=Y\cap U_\alpha$, where $l:Y\rightarrow U$ is the inclusion and $g=l\circ\tau:U\rightarrow U$.
\vspace{2mm}

Let $N$  be a tubular neighborhood of $Y$ in $X$.
Denote by $\tau$ the projection of the vector bundle $N$ onto $Y$ and by $l:Y\rightarrow N$ the inclusion.
Then $\tau\circ l=id_Y$.
For any $y\in Y$, choose an open neighborhood $V_y\subseteq N$ of $y$ such that $\mathcal{L}|_{V_y}$ is constant.
Set $U_y=\tau^{-1}(V_y\cap Y)\cap V_y$, which is an open neighborhood of $y$.
Then
\begin{equation}\label{set2}
l\circ\tau(U_y)\subseteq l(V_y\cap Y)\subseteq V_y\cap \tau^{-1}(V_y\cap Y)=U_y.
\end{equation}
%, i.e., $U_y\subseteq (i\circ\tau)^{-1}(U_y)$.
Set $U=\bigcup\limits_{y\in Y}U_y$.
Clearly, $Y\subseteq U$.
Still denote by $l:Y\rightarrow U$ the inclusion and by $\tau:U\rightarrow Y$ the projection.
Set $g=l\circ\tau:U\rightarrow U$.
By (\ref{set2}), $g(U_y)\subseteq U_y$, i.e., $U_y\subseteq g^{-1}(U_y)$.
Evidently, $g\circ l=l$.
For any $x\in Y\cap g^{-1}(U_y)$, $x=l(x)=g\circ l(x)=g(x)\in U_y$, so $x\in Y\cap U_y$.
Hence $Y\cap g^{-1}(U_y)=Y\cap U_y$.
Then $U$, $\tau$ and $\{U_y|y\in Y\}$ satisfy the conditions in this claim.
\vspace{2mm}

Now, we choose $U$, $\tau$ and $\mathfrak{U}$ as the ones in Claim 1.
Denote by  $j:U\rightarrow X$ the inclusion.
Set $\Omega=j^{-1}\Phi\cap \tau^{-1}(\Phi|_Y)$.
By Proposition \ref{inverse-paracompact} $(1)$ $(2)$,  $l^{-1}(j^{-1}\Phi)=\Phi|_Y$, $l^{-1}(\tau^{-1}(\Phi|_Y))=\Phi|_Y$ and $l^{-1}\Omega=l^{-1}(j^{-1}\Phi)\cap l^{-1}(\tau^{-1}(\Phi|_Y))=\Phi|_Y$.
Since $l$ is proper and $l^{-1}(\mathcal{L}|_U)=l^{-1}(g^{-1}(\mathcal{L}|_U))=\mathcal{L}|_Y$,
$l$ induces four pushforwards $l_*$ satisfying the commutative diagrams
\begin{displaymath}
\xymatrix@R=0.5cm{
                &        \mbox{ } \Gamma_{\Omega}(U,\mathcal{L}|_U\otimes\mathcal{D}_U^{\prime\bullet+r}) \ar@{^{(}->}[dd]^{}     \\
  \Gamma_{\Phi|_Y}(Y,\mathcal{L}|_Y\otimes\mathcal{A}_Y^{\bullet}) \mbox{ }\ar[ur]^{l_*} \mbox{ }\ar[dr]_{l_*}                 \\
                &       \mbox{ }\quad  \Gamma_{j^{-1}\Phi}(U,\mathcal{L}|_U\otimes\mathcal{D}_U^{\prime\bullet+r}),                 }
\end{displaymath}
\begin{displaymath}
\xymatrix@R=0.5cm{
                &        \mbox{ } \Gamma_{\Omega}(U,g^{-1}(\mathcal{L}|_U)\otimes\mathcal{D}_U^{\prime\bullet+r}) \ar@{^{(}->}[dd]^{}     \\
  \Gamma_{\Phi|_Y}(Y,\mathcal{L}|_Y\otimes\mathcal{A}_Y^{\bullet}) \mbox{ }\ar[ur]^{l_*} \mbox{ }\ar[dr]_{l_*}                 \\
                &       \mbox{ }\quad  \Gamma_{\tau^{-1}(\Phi|_Y)}(U,g^{-1}(\mathcal{L}|_U)\otimes\mathcal{D}_U^{\prime\bullet+r}),                 }
\end{displaymath}
where   the vertical maps are inclusions.
For clearness, the pushforwards  in the first, second diagrams are written as $^\prime l_*$,  $^{\prime\prime} l_*$ respectively.

For any $U_\alpha\in \mathfrak{U}$, denote by $g_{\alpha}:W_\alpha=g^{-1}(U_\alpha)\rightarrow U_\alpha$  the restriction of $g$.
Since $\mathcal{L}|_{U_\alpha}$ is a constant local system,
$(g^{-1}(\mathcal{L}|_U))|_{W_\alpha}=g_{\alpha}^{-1}(\mathcal{L}|_{U_\alpha})$ is constant with the same rank with $\mathcal{L}|_{U_\alpha}$.
Suppose that $e^\alpha_1$, $\ldots$, $e^\alpha_m$ is a basis of $\Gamma(U_\alpha,\mathcal{L})$.
Then $g_{\alpha}^*e^\alpha_1$, $\ldots$, $g_{\alpha}^*e^\alpha_m$ is a basis of $\Gamma(W_\alpha,g^{-1}(\mathcal{L}|_U))$.
Let $u\in \Gamma_{\Phi|_Y}(Y,\mathcal{L}|_Y\otimes\mathcal{A}_Y^{\bullet})$ be any closed form.
Denote by  $l_{V}:Y\cap V\rightarrow V$ the restriction of $l$ for any open set $V\subseteq U$.
On $Y\cap U_\alpha$, $u=\sum\limits_{i=1}^m l_{U_\alpha}^*e_i^\alpha\otimes u_i^\alpha$
with  $u^\alpha_i\in \Gamma(Y\cap U_\alpha,\mathcal{A}_Y^\bullet)$ for $i=1$, \ldots $m$.
Meanwhile, $u=\sum\limits_{i=1}^m l_{W_\alpha}^*(g^*_{\alpha}e_i^\alpha)\otimes u_i^\alpha$ on $Y\cap W_\alpha=Y\cap U_\alpha$,
since $l_{U_\alpha}=g_{\alpha}\circ l_{W_\alpha}$.
Then
\begin{displaymath}
^\prime l_*u=\sum\limits_{i=1}^m e_i^\alpha\otimes l_{U_\alpha*}u_i^\alpha \mbox{ on } U_\alpha,
\end{displaymath}
\begin{displaymath}
^{\prime\prime} l_*u=\sum\limits_{i=1}^m g_{\alpha}^*e_i^\alpha\otimes l_{W_\alpha*}u_i^\alpha \mbox{ on } W_\alpha.
\end{displaymath}
By Proposition \ref{inverse-paracompact}, $\Omega$ is paracompactifying,
so we may assume that $^{\prime\prime} l_*u=v^{\prime\prime}+dT^{\prime\prime}$ for a closed $v^{\prime\prime}\in\Gamma_{\Omega}(U,g^{-1}(\mathcal{L}|_U)\otimes\mathcal{A}_U^{\bullet+r})$
and $T^{\prime\prime}\in\Gamma_{\Omega}(U,g^{-1}(\mathcal{L}|_U)\otimes\mathcal{D}_U^{\prime\bullet+r-1})$.
On $W_\alpha$, set $v^{\prime\prime}=\sum\limits_{i=1}^m g_{\alpha}^*e_i^\alpha\otimes v_i^{\prime\prime\alpha}$ and
$T^{\prime\prime}=\sum\limits_{i=1}^m g_{\alpha}^*e_i^\alpha\otimes T_i^{\prime\prime\alpha}$,
where $v_i^{\prime\prime\alpha}\in\Gamma(W_\alpha,\mathcal{A}^{\bullet+r}_U)$ and $T_i^{\prime\prime\alpha}\in\Gamma(W_\alpha,\mathcal{D}^{\prime\bullet+r-1}_U)$.
Set $v_i^{\prime\alpha}=v_i^{\prime\prime\alpha}|_{U_\alpha}$ and $T_i^{\prime\alpha}=T_i^{\prime\prime\alpha}|_{U_\alpha}$.
Then
\vspace{2mm}

\noindent \textbf{Claim 2.} $\{\sum\limits_{i=1}^m e_i^\alpha\otimes v_i^{\prime\alpha} \mbox{ on } U_\alpha|\mbox{ }U_\alpha\in\mathfrak{U}\}$ and
$\{\sum\limits_{i=1}^m e_i^\alpha\otimes T_i^{\prime\alpha} \mbox{ on }U_\alpha|\mbox{ }U_\alpha\in\mathfrak{U}\}$ respectively  piece together to give
$v^{\prime}\in\Gamma_{\Omega}(U,\mathcal{L}|_U\otimes\mathcal{A}_U^{\bullet+r})$ and $T^{\prime}\in\Gamma_{\Omega}(U,\mathcal{L}|_U\otimes\mathcal{D}_U^{\prime\bullet+r-1})$
satisfying that $^\prime l_*u=v^{\prime}+dT^{\prime}$.
\vspace{2mm}

Assume that $e_i^\alpha=\sum\limits_{j=1}^mc^{\alpha\beta}_{ij}e_j^\beta$ on $U_\alpha\cap U_\beta$ for the matrix  $\left(c^{\alpha\beta}_{ij}\right)_{1\leq i,j\leq m}\in GL_{m}(\mathbb{R})$.
Clearly, $g_\alpha^*e_i^\alpha=\sum\limits_{j=1}^mc^{\alpha\beta}_{ij}g_\beta^*e_j^\beta$ on $W_\alpha\cap W_\beta$.
Let $\left(d^{\alpha\beta}_{ij}\right)_{1\leq i,j\leq m}$ be the inverse matrix of $\left(c^{\alpha\beta}_{ij}\right)_{1\leq i,j\leq m}$.
Since $v^{\prime\prime}$ is a global form on $U$,
$v_i^{\prime\prime\alpha}=\sum\limits_{j=1}^md^{\alpha\beta}_{ji}v_j^{\prime\prime\beta}$ on $W_\alpha\cap W_\beta$,
which implies that $v_i^{\prime\alpha}=\sum\limits_{j=1}^md^{\alpha\beta}_{ji}v_j^{\prime\beta}$ on $U_\alpha\cap U_\beta$.
Hence $\{\sum\limits_{i=1}^m e_i^\alpha\otimes v_i^{\prime\alpha} \mbox{ on } U_\alpha|\mbox{ }U_\alpha\in\mathfrak{U}\}$ define
$v^{\prime}\in\Gamma(U,\mathcal{L}|_U\otimes\mathcal{A}_U^{\bullet+r})$ well.
Moreover,
\begin{displaymath}
\textrm{supp} v^{\prime}\cap U_\alpha= \bigcup\limits_{i=1}^m \textrm{supp} v_i^{\prime\alpha}=\bigcup\limits_{i=1}^m (\textrm{supp} v_i^{\prime\prime\alpha}\cap U_\alpha)=\textrm{supp} v^{\prime\prime}\cap U_\alpha,
\end{displaymath}
so $\textrm{supp} v^{\prime}=\textrm{supp} v^{\prime\prime}\in\Omega$,
i.e., $v^{\prime}\in\Gamma_{\Omega}(U,\mathcal{L}|_U\otimes\mathcal{A}_U^{\bullet+r})$.
Similarly, $T^{\prime}\in\Gamma_{\Omega}(U,\mathcal{L}|_U\otimes\mathcal{D}_U^{\prime\bullet+r-1})$ is defined well.
Since $U_\alpha\subseteq W_\alpha$, $l_{U_\alpha*}u^\alpha_i=(l_{W_\alpha*}u^\alpha_i)|_{U_\alpha}$ for any $i$.
Notice that $l_{W_\alpha*}u_i^\alpha=v_i^{\prime\prime\alpha}+dT_i^{\prime\prime\alpha}$.
So $l_{U_\alpha*}u_i^\alpha=v_i^{\prime\alpha}+dT_i^{\prime\alpha}$ for all $i$ and $\alpha$,
which implies that $^\prime l_*u=v^{\prime}+dT^{\prime}$.
The claim follows.
\vspace{2mm}

By (\ref{sub00}),
\begin{equation}\label{step-1}
l^*[^{\prime}l_{*}u]_{j^{-1}\Phi}
=l^*[^{\prime}l_{*}u]_{\Omega},
\end{equation}
\begin{equation}\label{step-2}
l^*[^{\prime\prime}l_{*}u]_{\tau^{-1}(\Phi|_Y)}
=l^*[^{\prime\prime}l_{*}u]_{\Omega}.
\end{equation}
Since $l^*_{U_\alpha}v_i^{\prime\alpha}=l^*_{W_\alpha}v_i^{\prime\prime\alpha}$ and $l_{U_\alpha}=g_\alpha\circ l_{W_\alpha}$,
\begin{displaymath}
l^*v^{\prime}=\sum\limits_{i=1}^m l^*_{U_\alpha}e_i^\alpha\otimes l^*_{U_\alpha}v_i^{\prime\alpha}
=\sum\limits_{i=1}^ml^*_{W_\alpha}g_\alpha^*e_i^\alpha\otimes l^*_{W_\alpha}v_i^{\prime\prime\alpha} =l^*v^{\prime\prime}
\end{displaymath}
on $Y\cap U_\alpha=Y\cap W_\alpha$ for all $\alpha$, hence  $l^*v^{\prime}=l^*v^{\prime\prime}$.
So
\begin{equation}\label{step-3}
l^*[^{\prime}l_{*}u]_{\Omega}
=[l^*v^{\prime}]_{\Phi|_Y}
=[l^*v^{\prime\prime}]_{\Phi|_Y}
=l^*[^{\prime\prime}l_{*}u]_{\Omega}.
\end{equation}
By (\ref{step-1})-(\ref{step-3}),
\begin{equation}\label{step-4}
l^*[^{\prime}l_{*}u]_{j^{-1}\Phi}=l^*[^{\prime\prime}l_*u]_{\tau^{-1}(\Phi|_Y)}.
\end{equation}
By (\ref{pro-formula1-dR}),
\begin{equation}\label{sub}
^{\prime\prime}l_*u=l_*(l^*\tau^*u)=l_{*}(1)\wedge \tau^{\ast}u\in\Gamma_{\tau^{-1}(\Phi|_Y)}(U,g^{-1}(\mathcal{L}|_U)\otimes\mathcal{D}_U^{\prime\bullet+r}).
\end{equation}
Notice that $l_*(1)$ is just the current on $U$ defined by the integral along $Y$.
We have $[l_*(1)]=[Y]|_U$.
Then
\begin{equation}\label{step-5}
\begin{aligned}
l^*[^{\prime\prime}l_{*}u]_{\tau^{-1}(\Phi|_Y)}
=&l^*[l_{*}(1)\wedge \tau^{\ast}u]_{\tau^{-1}(\Phi|_Y)}\qquad\qquad\mbox{ }( \mbox{by (\ref{sub})})\\
=&l^*\left([l_{*}(1)]\cup [\tau^{\ast}u]_{\tau^{-1}(\Phi|_Y)}\right)\\
=&[Y]|_Y\cup [u]_{\Phi|_Y}.%\qquad\qquad\qquad\qquad\qquad\mbox{ }\mbox{ }\mbox{ }( \mbox{by (\ref{sel-int})})
\end{aligned}
\end{equation}
Since the support of $l_{*}u$ is closed in $X$, $j_*l_{*}u$ is defined well and is just $i_{*}u$.
By (\ref{pro-formula1-dR}),
\begin{equation}\label{mid}
j^*i_{*}u=^{\prime}l_{*}u\in \Gamma_{j^{-1}\Phi}(U,\mathcal{L}|_U\otimes\mathcal{D}_U^{\prime\bullet+r}).
\end{equation}
Then
\begin{displaymath}
\begin{aligned}
i^*i_{*}[u]_{\Phi|_{Y}}
=&l^*[j^*i_{*}u]_{j^{-1}\Phi}\\
=&l^*[^{\prime}l_{*}u]_{j^{-1}\Phi}\qquad\qquad\mbox{ }( \mbox{by (\ref{mid})})\\
=&[Y]|_Y\cup [u]_{\Phi|_{Y}}.\qquad\quad( \mbox{by (\ref{step-4}) and (\ref{step-5})})
\end{aligned}
\end{displaymath}

Suppose that $N_{Y/X}$ has a complex vector bundle structure and denote by $e(N_{Y/X})$ the Euler class of $N_{Y/X}$.
By \cite[(20.10.6)]{BT}, $e(N_{Y/X})=c_r(N_{Y/X})$ and by \cite[Proposition 12.4]{BT},  $e(N_{Y/X})=[Y]|_Y$.
So $c_r(N_{Y/X})=[Y]|_Y$.
We complete the proof.
\end{proof}

\subsection{Generalized blow-ups}
For convenience,  still denote by $X$ the zero section of  the vector bundle $F$ over $X$.
\subsubsection{Generalized blow-ups}
Recall a McDuff's construction \cite[Definition 2.2]{Mc} with a slight modification as follows:
For a submanifold  $Y$ of a smooth manifold $X$, assume that \emph{the normal bundle $N=N_{Y/X}$ is equipped with a complex vector bundle structure}.
Let $U\subseteq N$ be an open or a closed neighbourhood of the zero section $Y$ of $N$ and let $l:U\rightarrow X$ be a smooth embedding  of $U$ onto an open or a closed neighbourhood $W$ of $Y$ with $l|_Y=\textrm{id}_Y$.
Such $U$ and $l$ always exist (but not unique) by the tubular neighborhood theorem.
%Denote by $\mathbb{P}(N)$ the complex projectivization of $N$ and by $\mathcal{O}_{\mathbb{P}(N)}(-1)$ the universal line bundle.
%Denote by $Y$ and $\mathbb{P}(N)$ the zero sections of  $N$ and $\mathcal{O}_{\mathbb{P}(N)}(-1)$ respectively.
Set $N_0=N-Y$ and $\mathcal{O}_{\mathbb{P}(N)}(-1)_0=\mathcal{O}_{\mathbb{P}(N)}(-1)-\mathbb{P}(N)$.
There is a commutative diagram
\begin{equation}\label{blow-up-vb}
\xymatrix{
   \mathcal{O}_{\mathbb{P}(N)}(-1)_0\ar[d]^{\cong} \ar@{^{(}->}[r] & \mathcal{O}_{\mathbb{P}(N)}(-1)\ar[d]^{\varphi}\ar[r]^{\quad\psi}& \mathbb{P}(N)\ar[d]^{p}\\
 N_0   \ar@{^{(}->}[r] & N\ar[r]_{\tau}& Y},
\end{equation}
where $\psi$ and $\varphi$ are induced by the projections from $\mathcal{O}_{\mathbb{P}(N)}(-1)\subseteq\mathbb{P}(N)\times N$ onto $\mathbb{P}(N)$ and $N$ respectively.
Set $\widetilde{U}=\varphi^{-1}(U)\subseteq \mathcal{O}_{\mathbb{P}(N)}(-1)$.
The composition of $\varphi|_{\widetilde{U}-\mathbb{P}(N)}$ and $l|_{U-Y}$ gives a diffeomorphism $\phi:\widetilde{U}-\mathbb{P}(N)\tilde{\rightarrow} U-Y \tilde{\rightarrow} W-Y$.
Define
\begin{displaymath}
\widetilde{X}=(X-Y)\cup_{\phi}\widetilde{U},
\end{displaymath}
where  $W-Y\subseteq X-Y$ is identified with $\widetilde{U}-\mathbb{P}(N)\subseteq \widetilde{U}$  via  $\phi$.
Gluing the inclusion $X-Y\hookrightarrow X$ and $l\circ \varphi|_{\widetilde{U}}:\widetilde{U}\rightarrow U\rightarrow X$ via $\phi$, we get a smooth map $\pi: \widetilde{X}\rightarrow X$.
The smooth manifold $\widetilde{X}$ and the map $\pi: \widetilde{X}\rightarrow X$ \emph{depends on} the choices of the complex vector bundle structure of $N_{Y/X}$ and the embedding $l:U\rightarrow X$.
We say that $\pi:\widetilde{X}\rightarrow X$ is the \emph{generalized blow-up} of $X$ along $Y$ associated to the  complex vector bundle structure of $N_{Y/X}$ and the embedding $l:U\rightarrow X$.
Moreover, $E=\pi^{-1}(Y)$ is said to be the \emph{exceptional divisor}.
Clearly, symplectic blow-ups \cite[Definition 2.2]{Mc} and locally conformal symplectic blow-ups \cite[Definition 3.4]{YYZ} are generalized blow-ups.

The following lemma may be well known for experts and we don't find the references.
For readers' convenience, we will give a proof.
\begin{lem}\label{NB-vb}
Let  $F$ be a smooth  vector bundle  over a smooth  manifold $X$.
Then the normal bundle $N_{X/F}$ is isomorphic to $F$ over $X$.
\end{lem}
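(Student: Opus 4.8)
The plan is to construct a canonical isomorphism between $F$ and $N_{X/F}$ out of the vertical subbundle of $TF$ along the zero section. Write $\pi:F\rightarrow X$ for the bundle projection and $s_0:X\rightarrow F$ for the zero section, and identify $X$ with its image $s_0(X)$, so that by definition $N_{X/F}=(TF|_X)/ds_0(TX)$. Since $\pi\circ s_0=\mathrm{id}_X$, the differential $ds_0$ is a right inverse of $d\pi$ along the zero section, so for each $x\in X$ the sequence
\[
0\rightarrow V_{0_x}F\rightarrow T_{0_x}F\xrightarrow{d\pi}T_xX\rightarrow 0
\]
is split exact, where $VF:=\ker d\pi$ is the vertical subbundle of $TF$. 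Hence $ds_0(T_xX)$ is a linear complement to $V_{0_x}F$ in $T_{0_x}F$, and the quotient projection restricts to an isomorphism $V_{0_x}F\tilde{\rightarrow}N_{X/F,x}$.

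Next I would identify $V_{0_x}F$ with the fiber $F_x$: the fiber $F_x=\pi^{-1}(x)$ is a finite-dimensional real vector space, hence a smooth submanifold of $F$ whose tangent space at any of its points is canonically $F_x$ itself, and its inclusion into $F$ induces an isomorphism $F_x=T_{0_x}F_x\tilde{\rightarrow}V_{0_x}F$ (both sides having dimension equal to the rank of $F$). Composing with the isomorphism of the previous step yields, for every $x\in X$, a canonical isomorphism $F_x\tilde{\rightarrow}N_{X/F,x}$, i.e.\ a bijective bundle map $\Psi:F\rightarrow N_{X/F}$ covering $\mathrm{id}_X$.

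It remains to verify that $\Psi$ is smooth, so that it is an isomorphism of smooth vector bundles. I would check this in a local trivialization $F|_U\cong U\times\mathbb{R}^k$ over an open set $U\subseteq X$: there the zero section is $U\times\{0\}$, one has $TF|_{U\times\{0\}}\cong TU\oplus(U\times\mathbb{R}^k)$ with $VF$ the second summand and $ds_0(TU)$ the first, and every identification used above becomes the obvious constant one, so $\Psi$ restricted over $U$ is visibly a smooth bundle map. Since smoothness is a local property and the construction does not depend on the chosen trivialization, this produces a globally defined isomorphism $F\cong N_{X/F}$ of smooth vector bundles over $X$.

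The argument is entirely routine: it uses only the splitting of $T_{0_x}F$ furnished by the zero section together with the canonical identification of a vector space with its own tangent spaces, so there is no genuine obstacle. The only point deserving mild attention is arranging the two linear-algebra identifications so that their smooth dependence on $x$ is manifest, which is immediate once one passes to a bundle chart; alternatively one could simply define $\Psi$ chart by chart and check compatibility on overlaps.
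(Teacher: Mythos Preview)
Your argument is correct and follows the same underlying idea as the paper---namely that the normal directions along the zero section are precisely the vertical (fiber) directions, which are canonically $F$---but you package it more invariantly. You build a canonical map $F\to N_{X/F}$ via the vertical subbundle and the splitting afforded by the zero section, and only afterwards verify smoothness in a chart; the paper instead works in charts from the outset, reads off that $N_{X/F}|_{U_\alpha}$ is spanned by the $\partial/\partial v_\alpha^j$, and concludes by observing that the induced transition functions of $N_{X/F}$ coincide with the transition functions $g_{\alpha\beta}$ of $F$. Your version has the advantage of producing a manifestly canonical isomorphism, while the paper's version is the chart-by-chart alternative you mention in your final sentence; neither buys anything the other cannot, and both are entirely standard.
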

\begin{proof}
Let $U_\alpha$ be a coordinate chart of $X$ with a trivialization $F|_{U_\alpha}\cong U_\alpha \times \mathbb{R}^r$.
Denote by $g_{\alpha\beta}$ the translation functions of the vector bundle $F$ over $U_\alpha\cap U_\beta$.
Suppose that $x_\alpha^1,\ldots, x_\alpha^n$ and $v_\alpha^1,\ldots,v_\alpha^r$ are coordinates of $U_\alpha$ and $\mathbb{R}^r$ respectively.
Then $(v_\beta^1,\ldots, v_\beta^r)=(v_\alpha^1,\ldots, v_\alpha^r)\cdot g_{\alpha\beta}$.
The tangent bundles $\textrm{T}F|_{F|_{U_\alpha}}$ and $\textrm{T}X|_{U_\alpha}$ are linearly generated by $\frac{\partial}{\partial x_\alpha^1}$, \ldots, $\frac{\partial}{\partial x_\alpha^n}$, $\frac{\partial}{\partial v_\alpha^1}$, \ldots, $\frac{\partial}{\partial v_\alpha^r}$ and $\frac{\partial}{\partial x_\alpha^1}$, \ldots, $\frac{\partial}{\partial x_\alpha^n}$ respectively.
Hence $N_{X/F}|_{U_\alpha}$ is linearly generated by $\frac{\partial}{\partial v_\alpha^1}$, \ldots, $\frac{\partial}{\partial v_\alpha^r}$ modulo $\textrm{T}X|_{U_\alpha}$, which gives a natural trivialization $N_{X/F}|_{U_\alpha}\tilde{\rightarrow} U_\alpha\times \mathbb{R}^r$.
Under such trivializations, the transition functions of $N_{X/F}$  is just $g_{\alpha\beta}$ on $U_\alpha\cap U_\beta$. So $N_{Y/F}\cong F$
\end{proof}

\begin{lem}\label{orientable}
Let $F$ be a smooth fiber bundle over an oriented smooth manifold $X$.
Assume that there exists an open covering $\mathfrak{U}$ with trivializations $h_U:F|_U\tilde{\rightarrow} U \times Z$  for all $U\in \mathfrak{U}$ satisfying that:

$(i)$ $Z$ is a complex manifold

$(ii)$ For any $U$, $V\in \mathfrak{U}$ and any $x\in U\cap V$, $pr_2\circ h_{UV}(x,\cdot):Z\rightarrow Z$ is holomorphic, where $h_{UV}=h_V\circ h_U^{-1}:(U\cap V) \times Z\tilde{\rightarrow} (V\cap U) \times Z$ and $pr_2:(V\cap U) \times Z\rightarrow Z$ is the
second projection.\\
Then $F$ is orientable.
In particular,  a complex vector bundle $F$ on an oriented smooth manifold  and its complex projectivization $\mathbb{P}(F)$  are both orientable.
\end{lem}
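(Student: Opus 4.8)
The plan is to put an orientation on the total space $F$ by hand, using the orientation of $X$ in the base directions and the canonical orientation that any complex manifold carries in the fibre directions, and then to verify that the transition diffeomorphisms $h_{UV}$ preserve these orientations. First I would note that, by the definition of a trivialization of a fibre bundle, each $h_U\colon F|_U\to U\times Z$ satisfies $pr_1\circ h_U=\pi|_{F|_U}$; hence every $h_{UV}=h_V\circ h_U^{-1}\colon (U\cap V)\times Z\to(U\cap V)\times Z$ preserves the first factor, so it has the form $h_{UV}(x,z)=(x,\phi_x(z))$ with $\phi_x:=pr_2\circ h_{UV}(x,\cdot)$. By hypothesis $(ii)$, $\phi_x\colon Z\to Z$ is holomorphic, and it is a diffeomorphism because $h_{UV}$ is; in particular $\phi_x$ is a holomorphic automorphism of the complex manifold $Z$.

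Next I would build an atlas of $F$ from the charts $h_U^{-1}(\widetilde U\times\widetilde Z)$, where $\widetilde U\subseteq U$ runs over coordinate charts of $X$ compatible with its fixed orientation and $\widetilde Z\subseteq Z$ over holomorphic coordinate charts of $Z$, each carrying its complex orientation. A change of chart of this atlas is a composite of a coordinate change of $X$ in the base variables --- orientation-preserving since $X$ is oriented --- and, in the fibre variables, a holomorphic coordinate change of $Z$ composed with some $\phi_x$, which is orientation-preserving because a holomorphic map between complex manifolds of equal dimension has real Jacobian determinant $\lvert\det_{\mathbb C}\rvert^{2}$, positive when the map is a diffeomorphism. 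Crucially the fibre coordinates do not feed back into the base coordinates, so the real Jacobian of a transition is block lower-triangular,
\[
\begin{pmatrix} I & 0\\ \ast & D_z\phi_x\end{pmatrix},
\]
with determinant $\det_{\mathbb R}(D_z\phi_x)=\lvert\det_{\mathbb C}(D_z\phi_x)\rvert^{2}>0$. Hence every transition of this atlas has positive Jacobian determinant, so $F$ is orientable.

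For the two stated consequences it then suffices to exhibit the data required by the lemma. If $F$ is a complex vector bundle of rank $r$, take $Z=\mathbb C^{r}$ with its standard local trivializations; the fibrewise transition $z\mapsto g_{UV}(x)z$ is $\mathbb C$-linear, hence holomorphic, so $F$ is orientable. Applying this to $\mathbb P(F)$, take $Z=\mathbb P^{r-1}$ with the induced trivializations; the fibrewise transition is the projective-linear transformation induced by $g_{UV}(x)\in GL_r(\mathbb C)$, which is a biholomorphism of $\mathbb P^{r-1}$, so $\mathbb P(F)$ is orientable as well.

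The only point requiring care --- rather than a genuine obstacle --- is that the fibre transition $\phi_x$ depends on the base point $x$, so one cannot merely invoke ``the product of two orientations is an orientation''; instead one must observe that this dependence contributes only the off-diagonal block $\ast$ and therefore does not affect the sign of the Jacobian determinant. Everything else reduces to the elementary fact that $\det_{\mathbb R}=\lvert\det_{\mathbb C}\rvert^{2}$ for holomorphic maps.
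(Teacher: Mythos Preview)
Your approach is correct and essentially identical to the paper's: both construct an atlas on $F$ from oriented base charts crossed with holomorphic fibre charts, observe that the transition Jacobian is block lower-triangular because the fibre coordinates do not feed back into the base, and use $\det_{\mathbb R}=\lvert\det_{\mathbb C}\rvert^{2}$ on the fibre block (the paper just writes out the Cauchy--Riemann equations and the $2r\times 2r$ block determinant explicitly). One minor slip: the upper-left block of your displayed matrix should be the base-coordinate Jacobian rather than $I$, so the total determinant is $\det(J_{\mathrm{base}})\cdot\lvert\det_{\mathbb C}(D_z\phi_x)\rvert^{2}>0$---this is exactly what the paper computes, and your verbal argument already accounts for it.
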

\begin{proof}
Let $\{V_\mu|\mu\in I\}$ be an open covering  of $Z$ such that $V_\mu$ are holomorphic coordinate charts for all $\mu\in I$.
Without loss of generality, assume that $\mathfrak{U}=\{U_\alpha|\alpha\in J\}$  consists of  coordinate charts of $X$.
Since $X$ is orientable, we can choose coordinates  $x_\alpha^1$, \ldots, $x_\alpha^n$ for $U_\alpha$  such that  $\textrm{det}\left(\frac{\partial x_\beta^i}{\partial x_\alpha^j}\right)_{1 \leq i \leq n \atop 1 \leq j \leq n} > 0$ on $U_\alpha\cap U_\beta$ for any $\alpha,\mbox{ }\beta\in J$.
Let $z_\mu^1=u_\mu^1+\sqrt{-1}v_\mu^1$, \ldots, $z_\mu^r=u_\mu^r+\sqrt{-1}v_\mu^r$ be the holomorphic coordinates of $V_\mu$.
Set $h_{\alpha\beta}=h_{U_\alpha U_\beta}$.
By the assumption $(ii)$,  $\frac{\partial (z_\lambda^k\circ h_{\alpha\beta})}{\partial\bar{z}_\mu^l}=0$, i.e.,
$\frac{\partial (u_\lambda^k\circ h_{\alpha\beta})}{\partial u_\mu^l}=\frac{\partial (v_\lambda^k\circ h_{\alpha\beta})}{\partial v_\mu^l}$ and $\frac{\partial (u_\lambda^k\circ h_{\alpha\beta})}{\partial v_\mu^l}=-\frac{\partial (v_\lambda^k\circ h_{\alpha\beta})}{\partial u_\mu^l}$.
Notice that $x_\beta^i\circ h_{\alpha\beta}$ is only dependent on $x_\alpha^1$, \ldots, $x_\alpha^n$, hence $\frac{\partial (x_\beta^i\circ h_{\alpha\beta})}{\partial u_\mu^j}=0$ and  $\frac{\partial (x_\beta^i\circ h_{\alpha\beta})}{\partial v_\mu^j}=0$.
Consider the coordinate charts $\left(h_{U_\alpha}^{-1}(U_\alpha\times V_\mu),x^1_\alpha,\ldots, x^n_\alpha,u^1_\mu,\ldots,u^r_\mu,v^1_\mu,\ldots,v^r_\mu\right)$ for $\alpha\in J$, $\mu\in I$ of $F$.
The Jacobi of $h_{\alpha\beta}$ is
\begin{displaymath}
\begin{aligned}
&\textrm{det}\left(\begin{array}{lll}
\left(\frac{\partial (x_\beta^i\circ h_{\alpha\beta})}{\partial x_\alpha^j}\right)_{1 \leq i \leq n \atop 1 \leq j \leq n} & \qquad\textrm{O}_{n\times r} & \qquad\textrm{O}_{n\times r} \\
\left(\frac{\partial (u_\lambda^k\circ h_{\alpha\beta})}{\partial x_\alpha^j}\right)_{1 \leq k \leq r \atop 1 \leq j \leq n} & \left(\frac{\partial (u_\lambda^k\circ h_{\alpha\beta})}{\partial u_\mu^l}\right)_{1 \leq k \leq r \atop 1 \leq l \leq r} & \left(\frac{\partial (u_\lambda^k\circ h_{\alpha\beta})}{\partial v_\mu^l}\right)_{1 \leq k \leq r \atop 1 \leq l \leq r} \\
\left(\frac{\partial (v_\lambda^k\circ h_{\alpha\beta})}{\partial x_\alpha^j}\right)_{1 \leq k \leq r \atop 1 \leq j \leq n} & \left(\frac{\partial (v_\lambda^k\circ h_{\alpha\beta})}{\partial u_\mu^l}\right)_{1 \leq k \leq r \atop 1 \leq l \leq r} & \left(\frac{\partial (v_\lambda^k\circ h_{\alpha\beta})}{\partial v_\mu^l}\right)_{1 \leq k \leq r \atop 1 \leq l \leq r}
\end{array}\right)\\
=&\textrm{det}\left(\frac{\partial x_\beta^i}{\partial x_\alpha^j}\right)_{1 \leq i \leq n \atop 1 \leq j \leq n}
\cdot 2^{2r} \cdot
\textrm{det}\left(\begin{array}{lll}
\quad\left(\frac{\partial (u_\lambda^k\circ h_{\alpha\beta})}{\partial z_\mu^l}\right)_{1 \leq k \leq r \atop 1 \leq l \leq r} & \qquad\textrm{O}_{r\times r} \\
-\left(\frac{\partial (u_\lambda^k\circ h_{\alpha\beta})}{\partial v_\mu^l}\right)_{1 \leq k \leq r \atop 1 \leq l \leq r} & \left(\overline{\frac{\partial (u_\lambda^k\circ h_{\alpha\beta})}{\partial z_\mu^l}}\right)_{1 \leq k \leq r \atop 1 \leq l \leq r}
\end{array}\right)\\
=& \textrm{det}\left(\frac{\partial x_\beta^i}{\partial x_\alpha^j}\right)_{1 \leq i \leq n \atop 1 \leq j \leq n}
\cdot
\left|\textrm{det}
\left(\frac{\partial (z_\lambda^k\circ h_{\alpha\beta})}{\partial z_\mu^l}\right)_{1 \leq k \leq r \atop 1 \leq l \leq r}\right|^2 > 0,
\end{aligned}
\end{displaymath}
where we use the fact that
$\textrm{det}
\left(\begin{array}{lll}
A & B \\
-B & A
\end{array}\right)
=\left| \textrm{det}(A-iB) \right|^2$
for any $r\times r$ matrices $A$, $B$.
Hence $F$ is orientable.
\end{proof}

Now, we consider the properties of generalized blow-ups.
\begin{prop}\label{blow-up-properties}
Let $\pi:\widetilde{X}\rightarrow X$ be a generalized blow-up of $X$ along $Y$  associated to a complex  vector bundle structure of $N_{Y/X}$ and an embedding $l:U\rightarrow X$. Denote by $E$ its exceptional divisor.
They satisfy the properties:

$(1)$ For any open set $V\subseteq X$, $\pi:\pi^{-1}(V)\rightarrow V$ is the generalized blow-up of $V$ along $Y\cap V$  associated to the complex  vector bundle structure of $N_{Y\cap V/V}=N_{Y/X}|_{Y\cap V}$ induced by $N_{Y/X}$ and the embedding $l^{\prime}:l^{-1}(V)\rightarrow V$ induced by $l$.
In particular, $\pi|_{\widetilde{X}-E}:\widetilde{X}-E\rightarrow X-Y$ is a diffeomorphism.

$(2)$ The map $\pi$ is surjective and proper.

$(3)$ $E$ is a submanifold  of  $\widetilde{X}$ with codimension $2$ and $\pi|_E:E\rightarrow Y$ is diffeomorphic to the complex projectivization $\mathbb{P}(N_{Y/X})$ over $Y$.

$(4)$ The normal bundle $N_{E/\widetilde{X}}$ is isomorphic to $\mathcal{O}_{E}(-1)$ as smooth vector bundles over $E=\mathbb{P}(N_{Y/X})$ and hence $N_{E/\widetilde{X}}$ has a natural complex vector bundle structure induced by $\mathcal{O}_{E}(-1)$.

$(5)$ Assume that $U^{\prime}\subseteq U$ is an open or a closed neighbourhood of the zero section of $N_{Y/X}$. Let $\pi^{\prime}:\widetilde{X}^{\prime}\rightarrow X$ be the blow-up of $X$ along $Y$  associated to the given complex vector bundle structure of $N_{Y/X}$ and the embedding   $l|_{U^{\prime}}:U^{\prime}\rightarrow X$. Then $\widetilde{X}^{\prime}$ is diffeomorphic to $\widetilde{X}$ over $X$.

$(6)$ If $X$ and $Y$ are orientable, then $\widetilde{X}$ and $E$ are orientable.
\end{prop}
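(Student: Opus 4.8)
The plan is to verify the six properties by unwinding the construction $\widetilde{X}=(X-Y)\cup_\phi\widetilde{U}$, reducing each assertion to a statement about the model space $\mathcal{O}_{\mathbb{P}(N)}(-1)$, the projection $\varphi$ of (\ref{blow-up-vb}), and the gluing diffeomorphism $\phi$. Properties $(1)$, $(3)$, $(4)$ are formal consequences of this unwinding. For $(1)$ the point is that every datum entering the construction is natural under restriction to an open subset $V\subseteq X$: $N_{Y\cap V/V}=N_{Y/X}|_{Y\cap V}$, the set $l^{-1}(V)$ is a neighbourhood of the zero section of this restricted normal bundle, and $\varphi^{-1}(l^{-1}(V))=\widetilde{U}\cap\pi^{-1}(V)$; hence $\pi^{-1}(V)\to V$ is literally assembled by the same recipe from $V$, $Y\cap V$, $N_{Y/X}|_{Y\cap V}$ and $l^{-1}(V)\to V$. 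Applying this with $V=X-Y$ (and $Y\cap V=\emptyset$), $\widetilde{X}-E=\pi^{-1}(X-Y)$ is the generalized blow-up of $X-Y$ along the empty set, that is, $X-Y$ itself with $\pi$ the identity, so $\pi|_{\widetilde{X}-E}$ is a diffeomorphism onto $X-Y$. For $(3)$, since $\pi$ is the inclusion on $X-Y$ and $\pi=l\circ\varphi|_{\widetilde{U}}$ on $\widetilde{U}$, one gets $E=\pi^{-1}(Y)=\varphi^{-1}(l^{-1}(Y))=\varphi^{-1}(Y)=\mathbb{P}(N)$, the zero section of the complex line bundle $\mathcal{O}_{\mathbb{P}(N)}(-1)$, which is an embedded submanifold of real codimension $2$ of $\widetilde{U}$, hence of $\widetilde{X}$; by the left square of (\ref{blow-up-vb}), $\varphi$ restricted to the zero section is the bundle projection $p$, and $l|_Y=\mathrm{id}_Y$, so $\pi|_E=p\colon\mathbb{P}(N_{Y/X})\to Y$. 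Property $(4)$ is then immediate from Lemma \ref{NB-vb}: the normal bundle of $E$ in $\widetilde{X}$ coincides with the normal bundle of the zero section $\mathbb{P}(N)$ in its open neighbourhood $\widetilde{U}\subseteq\mathcal{O}_{\mathbb{P}(N)}(-1)$, which is $\mathcal{O}_E(-1)$.

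For $(5)$ with $U'\subseteq U$, I would set $\widetilde{U}'=\varphi^{-1}(U')\subseteq\widetilde{U}$ and define a map $\widetilde{X}'\to\widetilde{X}$ that is the identity on the $X-Y$ piece and the inclusion $\widetilde{U}'\hookrightarrow\widetilde{U}$ on the other piece; these agree on the overlap because $\phi'$ is the restriction of $\phi$, and the resulting map is a smooth bijection over $\mathrm{id}_X$ which is a local diffeomorphism, hence a diffeomorphism over $X$. For $(2)$, surjectivity is immediate from $\pi(\widetilde{X}-E)=X-Y$ and $\pi(E)=Y$. For properness I would use that $\varphi\colon\mathcal{O}_{\mathbb{P}(N)}(-1)\to N$ is proper, being fibrewise the blow-up of $\mathbb{C}^r$ at the origin (a closed map whose fibres are a point or $\mathbb{P}^{r-1}$), so $\varphi|_{\widetilde{U}}\colon\widetilde{U}\to U$ is proper and, $l\colon U\to W$ being a homeomorphism, $l\circ\varphi|_{\widetilde{U}}\colon\widetilde{U}\to W$ is proper. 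By $(5)$ we may assume $U$ is a closed tubular neighbourhood, so that $X-\mathrm{int}\,W$ and $\widetilde{U}$ are closed subsets of $\widetilde{X}$ covering it; for $K\subseteq X$ compact, $\pi^{-1}(K)$ is the union of $K-\mathrm{int}\,W$ (a closed subset of $K$, hence compact) and $(\varphi|_{\widetilde{U}})^{-1}(l^{-1}(K\cap W))$ (compact, since $K\cap W$ is compact and $\varphi|_{\widetilde{U}}$, $l$ are proper), hence compact.

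For $(6)$, $E=\mathbb{P}(N_{Y/X})$ is orientable by Lemma \ref{orientable}, since $N_{Y/X}$ is a complex vector bundle over the orientable manifold $Y$. For $\widetilde{X}$, the open set $\widetilde{X}-E\cong X-Y$ is orientable as an open subset of the oriented $X$, and the open neighbourhood $\widetilde{U}$ of $E$ is orientable because it is an open subset of the complex line bundle $\mathcal{O}_{\mathbb{P}(N)}(-1)$ over the orientable base $\mathbb{P}(N)$, again by Lemma \ref{orientable}. Taking $U$ to be a disk bundle of $N_{Y/X}$ (permissible by $(5)$), the overlap $W-Y\cong\widetilde{U}-\mathbb{P}(N)$ is a punctured disk bundle over the connected $Y$, hence connected, and $\widetilde{U}$, being the preimage under the proper surjection $\varphi$ of the connected $U$, is connected as well; so one may orient $\widetilde{X}-E$ by the orientation inherited from $X$ and then pick the unique orientation of $\widetilde{U}$ agreeing with it on $W-Y$, and these glue to an orientation of $\widetilde{X}$.

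I expect the only genuinely delicate points to be the properness assertion in $(2)$ — which needs both that the model map $\varphi$ is proper and an honest decomposition of $\widetilde{X}$ into pieces on which preimages of compacta are manifestly compact, best arranged via a closed tubular neighbourhood — and the orientation bookkeeping in $(6)$, where connectedness of the overlap $W-Y$ is precisely what allows the two local orientations to be matched. Everything else is a straightforward unravelling of the gluing construction and the diagram (\ref{blow-up-vb}).
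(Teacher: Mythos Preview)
Your proposal is correct and follows essentially the same route as the paper: properties $(1)$, $(3)$, $(4)$, $(5)$ are unwound from the gluing and Lemma~\ref{NB-vb} exactly as you do; for $(2)$ the paper, instead of invoking $(5)$ to pass to a closed $U$, chooses an auxiliary tubular neighbourhood $T\subseteq W$ with metric disc bundles $T_{1/2}\subset\overline{T}_1\subseteq W$ and writes $\pi^{-1}(K)$ as the image of $\bigl(K\cap(X-T_{1/2})\bigr)\sqcup(l\circ\varphi|_{\widetilde{U}})^{-1}(K\cap\overline{T}_1)$, which is the same compactness input (properness of $\varphi$) in slightly different packaging; for $(6)$ the paper argues, as you do, via Lemma~\ref{orientable} and matching orientations across $\phi$.

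One small correction in your $(6)$: the claim that $\widetilde{U}$ is connected ``being the preimage under the proper surjection $\varphi$ of the connected $U$'' is not valid reasoning in general (proper surjections need not have connected total space). The conclusion is nonetheless true, and easily: $\widetilde{U}$ is an open neighbourhood of the zero section $\mathbb{P}(N)$ in the line bundle $\mathcal{O}_{\mathbb{P}(N)}(-1)$, the zero section is connected (since $Y$ is), and every point of $\widetilde{U}$ is joined to the zero section by a segment in its line-bundle fibre.
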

\begin{proof}
Evidently, $(1)$ holds and $\pi$ is surjective by the definition.
Assume that $N$, $\varphi$, $\psi$, $\phi$, $\widetilde{U}$ and $W$ are the ones in the definition of generalized blow-ups.
Let $T\subseteq W$ be a tubular neighborhood of $Y$.
Given a metric on the smooth vector bundle $T$ over $Y$ and denote by $T_r$, $\overline{T}_r$ the open, closed disc bundles with radius $r$ respectively.
Then $X-T_{1/2}\subseteq X-Y$ and $\overline{T}_1\subseteq W$.
For any compact set $K\subseteq X$,  $K\cap (X-T_{1/2})$ and $(l\circ \varphi|_{\widetilde{U}})^{-1}(K\cap \overline{T}_1)$  are compact.
Under the quotient map $(X-Y)\sqcup \widetilde{U}\rightarrow\widetilde{X}$, $\pi^{-1}(K)$ is the image of $(K\cap (X-T_{1/2}))\sqcup(l\circ \varphi|_{\widetilde{U}})^{-1}(K\cap \overline{T}_1)$ and then is compact.
Hence $\pi$ is proper. We proved $(2)$.
By the definition, $\psi$ restricts to a diffeomorphism  $E=\varphi^{-1}(Y)\tilde{\rightarrow} \mathbb{P}(N)$ over $Y$, which implies $(3)$.
By Lemma \ref{NB-vb},
\begin{displaymath}
N_{E/\widetilde{X}}=N_{E/\widetilde{U}}=N_{\mathbb{P}(N)/\mathcal{O}_{\mathbb{P}(N)}(-1)}\cong \mathcal{O}_{\mathbb{P}(N)}(-1),
\end{displaymath}
i.e., $(4)$ holds.
For $(5)$, gluing the identity $id:X-Y\rightarrow X-Y$ and the inclusion $\widetilde{U}^{\prime}=\varphi^{-1}(U^{\prime})\hookrightarrow \widetilde{U}$ gives a diffeomorphism between $\widetilde{X}^{\prime}$ and $\widetilde{X}$ over $X$.
Assume that $X$ and $Y$ are orientable.
By Lemma \ref{orientable}, $E=\mathbb{P}(N)$ and then $\mathcal{O}_{\mathbb{P}(N)}(-1)$ are orientable, so is $\widetilde{U}\subseteq \mathcal{O}_{\mathbb{P}(N)}(-1)$.
Clearly, $X-Y$ is orientable.
Choose suitable orientations for $\mathcal{O}_{\mathbb{P}(N)}(-1)$ and $X-Y$ such that $\phi:\widetilde{U}-\mathbb{P}(N)\tilde{\rightarrow} W-Y$ preserves orientations, which gives an orientation on  $\widetilde{X}$.
We obtain $(6)$.
\end{proof}

\subsubsection{A class of generalized blow-ups}
We recall the blow-up of a holomorphic ideal in a smooth manifold defined in \cite{BCv}.
Denote by $\mathcal{C}^\infty_X$ the sheaf of germs of complex valued smooth functions on a smooth manifold $X$.
\begin{defn}\label{ideal}
$(1)$ (\cite[Definition 3.1]{BCv})
Let $Y$ be a submanifold of a smooth manifold $X$ with codimension  $2r$ for $r\geq 1$.
A \emph{holomorphic ideal} for $Y$ in $X$ is an ideal sheaf $\mathcal{I}_Y\subseteq \mathcal{C}^\infty_X$ satisfying that:

$(i)$ $\mathcal{I}_Y|_{X-Y}= \mathcal{C}^\infty_X|_{X-Y}$.

$(ii)$ For any $y\in Y$, there exists an open neighborhood $U$ of $y$ and $z_1$, $\ldots$, $z_r$ $\in \mathcal{I}_Y(U)$, such that $\mathcal{I}_Y|_U$ is generated by $z_1$, $\ldots$, $z_r$ and $z=(z_1,\ldots, z_r):U\rightarrow\mathbb{C}^r$ is a submersion with $z^{-1}(o)=Y\cap U$.

$(2)$ (\cite[Definition 3.4]{BCv}) A \emph{divisor} on a smooth manifold $X$ is an ideal sheaf $\mathcal{I}\subseteq \mathcal{C}^\infty_X$ which can be locally generated by a single function  and whose zero set is nowhere dense in $X$.
\end{defn}

Suppose that $f:Y\rightarrow X$ is a smooth map of smooth manifolds and $\mathcal{I}$ is an ideal sheaf  of $\mathcal{C}^\infty_X$.
Denote by $f^{-1}\mathcal{I}\cdot \mathcal{C}^\infty_Y\subseteq \mathcal{C}_Y^\infty$ the image of the natural morphism $f^*\mathcal{I}=f^{-1}\mathcal{I}\otimes_{f^{-1}\mathcal{C}_X^\infty}\mathcal{C}_Y^\infty\rightarrow f^{-1}\mathcal{C}_X^\infty\otimes_{f^{-1}\mathcal{C}_X^\infty}\mathcal{C}_Y^\infty\cong \mathcal{C}_Y^\infty$ induced by the inclusion $\mathcal{I}\hookrightarrow \mathcal{C}_X^\infty$.
For the inclusion $j:U\hookrightarrow X$ of an open set $U$, $j^{-1}\mathcal{I}\cdot \mathcal{C}^\infty_U=\mathcal{I}|_U$.
Assume that $g:Z\rightarrow Y$ is a smooth map of smooth manifolds.
By $g^*f^*\mathcal{I}\cong(f\circ g)^*\mathcal{I}$, we easily get
\begin{equation}\label{composition}
g^{-1}(f^{-1}\mathcal{I}\cdot\mathcal{C}_Y^\infty)\cdot\mathcal{C}_Z^\infty\cong (f\circ g)^{-1}\mathcal{I}\cdot \mathcal{C}_Z^\infty.
\end{equation}
\begin{defn}[{\cite[Definition 3.6]{BCv}}]\label{real blow-up}
Let $\mathcal{I}_Y$ be a holomorphic ideal for a submanifold $Y$ in a smooth manifold $X$.
The \emph{blow-up} of $\mathcal{I}_Y$ in $X$ is defined as a smooth map $\pi:\widetilde{X}\rightarrow X$ between smooth manifolds such that $\pi^{-1}\mathcal{I}_Y\cdot\mathcal{C}_{\widetilde{X}}^\infty$ is a divisor on $\widetilde{X}$ and the following universal property holds:
For any smooth map $f:Z\rightarrow X$ such that $f^{-1}\mathcal{I}_Y\cdot\mathcal{C}_{Z}^\infty$ is a divisor, there is a unique smooth map $g:Z\rightarrow \widetilde{X}$  such that  $\pi\circ g=f$.
\end{defn}

\begin{thm}[{\cite[Theorem 3.7]{BCv}}]\label{exist unique real blow-up}
Given a holomorphic ideal $\mathcal{I}_Y$ for a submanifold $Y$ in a smooth manifold $X$, there exists a unique blow-up $\pi:\widetilde{X}\rightarrow X$ of $\mathcal{I}_Y$ in $X$ up to unique isomorphism.
\end{thm}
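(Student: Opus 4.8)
Since this is \cite[Theorem 3.7]{BCv}, the plan is to reconstruct its proof. \emph{Uniqueness} I would obtain purely formally: a blow-up is a terminal object among smooth $X$-manifolds on which $\mathcal{I}_Y$ pulls back to a divisor. If $\pi:\widetilde X\to X$ and $\pi':\widetilde X'\to X$ are two blow-ups of $\mathcal{I}_Y$, then applying the universal property of $\pi'$ to $\pi$ (legitimate since $\pi^{-1}\mathcal I_Y\cdot\mathcal C^\infty_{\widetilde X}$ is a divisor), and symmetrically the universal property of $\pi$ to $\pi'$, produces mutually inverse $X$-maps, whose compositions are the identities by the uniqueness clauses; hence $\widetilde X$ is unique up to a unique isomorphism over $X$.

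For \emph{existence} I would build a local model and then glue. Near $y\in Y$, use part $(1)(ii)$ of Definition~\ref{ideal} to choose a chart $U\ni y$ with a submersion $z=(z_1,\dots,z_r):U\to\mathbb C^r$ such that $z^{-1}(o)=Y\cap U$ and $\mathcal I_Y|_U=(z_1,\dots,z_r)$, and take the fibre product
\begin{displaymath}
\widetilde U:=U\times_{\mathbb C^r}\Bl_o\mathbb C^r=\{(x,v)\in U\times\Bl_o\mathbb C^r\mid z(x)=\sigma(v)\},\qquad \pi_U:=pr_1:\widetilde U\to U,
\end{displaymath}
where $\Bl_o\mathbb C^r\subseteq\mathbb C^r\times\mathbb P^{r-1}$ is the classical blow-up of the origin (the total space of $\mathcal O_{\mathbb P^{r-1}}(-1)$) with blow-down $\sigma$. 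Since $z$ is a submersion it is automatically transverse to $\sigma$, so $\widetilde U$ is a smooth manifold with $\dim\widetilde U=\dim U$, the map $\pi_U$ is a diffeomorphism over $U-Y$, and $\pi_U^{-1}(Y\cap U)\cong(Y\cap U)\times\mathbb P^{r-1}$ has codimension $2$. In the standard affine charts of $\mathbb P^{r-1}$ I would check that $\pi_U^{-1}\mathcal I_Y\cdot\mathcal C^\infty_{\widetilde U}$ is locally principal with nowhere dense zero set, hence a divisor, and that $\pi_U$ solves the universal problem: for $f:Z\to U$ with $f^{-1}\mathcal I_Y\cdot\mathcal C^\infty_Z$ a divisor, the tuple $(z_1\circ f,\dots,z_r\circ f)$ locally equals $s\cdot(g_1,\dots,g_r)$ with $s$ a local generator of the divisor and $g_1,\dots,g_r$ without common zero, which is precisely the data of a lift of $z\circ f$ to $\Bl_o\mathbb C^r$; the resulting $g:Z\to\widetilde U$ over $U$ is unique because $f^{-1}(Y)$ is nowhere dense and $\pi_U$ is a diffeomorphism off the exceptional set.

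To globalize, I would glue the $\widetilde U$. For two charts $(U,z)$ and $(U',z')$, both $\pi_U$ and $\pi_{U'}$ restricted over $U\cap U'$ solve the same local universal problem, so uniqueness yields a canonical diffeomorphism $\theta_{UU'}$ over $U\cap U'$, and uniqueness again forces the cocycle identity on triple overlaps; gluing produces a smooth $\pi:\widetilde X\to X$. Since ``being a divisor'' and ``being the unique lift'' are local conditions, $\pi^{-1}\mathcal I_Y\cdot\mathcal C^\infty_{\widetilde X}$ is a divisor and $\pi$ has the global universal property (lift over an open cover of $Z$ and patch by local uniqueness), so $\pi$ is a blow-up of $\mathcal I_Y$ in $X$. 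I would also remark that $\widetilde U$ is exactly the generalized blow-up of Sect.~5.2.1 of $U$ along $Y\cap U$ for the complex structure on $N_{Y\cap U/U}$ read off from the linear parts of $z_1,\dots,z_r$, which identifies $E$ with $\mathbb P(N_{Y/X})$ and, via Lemma~\ref{NB-vb}, equips $N_{E/\widetilde X}$ with the complex structure of $\mathcal O_E(-1)$.

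The hard part will be the local model $\widetilde U$: confirming it is a smooth manifold, that the pulled-back ideal is a genuine divisor (locally principal \emph{and} with nowhere dense zero locus), and—most delicately—that $\pi_U$ represents the local universal problem, where uniqueness of the lift must be forced by density of $Z-f^{-1}(Y)$ together with continuity. This is precisely the point at which the structural hypotheses in Definition~\ref{ideal} (the generators forming a submersion cutting out $Y$) are used essentially.
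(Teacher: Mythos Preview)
The paper does not supply its own proof of this theorem: it is stated as a citation of \cite[Theorem 3.7]{BCv} and used as a black box (the surrounding material develops consequences such as Proposition~\ref{blow-up fund} and Proposition~\ref{ideal-vb}, but the existence/uniqueness of the blow-up itself is taken for granted). So there is nothing in the paper to compare your argument against directly.

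That said, your reconstruction is the standard one and matches the argument in \cite{BCv} (see also \cite[\S2.3]{v}): uniqueness is the usual terminal-object argument, and existence is built from the local model $U\times_{\mathbb C^r}\Bl_o\mathbb C^r$ using the submersion $z$ guaranteed by Definition~\ref{ideal}, with gluing over overlaps forced by the local universal property. Your identification of the delicate step---uniqueness of the lift coming from density of $Z\setminus f^{-1}(Y)$ plus continuity, which in turn relies on the ``nowhere dense zero set'' clause in the definition of a divisor---is exactly right. The closing remark linking $\widetilde U$ to the generalized blow-up of Sect.~5.2.1 is in the same spirit as the paper's Proposition~\ref{real-generalized}.
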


We have the following properties.
\begin{prop}\label{blow-up fund}
Suppose that $X$ is a smooth manifold and  $\mathcal{I}_Y\subseteq \mathcal{C}^\infty_X$ is a holomorphic ideal for a submanifold $Y$ in $X$.

$(1)$ Let $\pi:\widetilde{X}\rightarrow X$ be the blow-up of $\mathcal{I}_Y$ in $X$.

$(i)$ For any open set $U\subseteq X$, $\pi|_{\pi^{-1}(U)}:\pi^{-1}(U)\rightarrow U$ is the blow-up of $\mathcal{I}_Y|_U$ in $U$.

$(ii)$ Assume that  $Z$ is a smooth manifold and $pr_2:Z\times X\rightarrow X$ is the second projection.
Then $pr_2^{-1}\mathcal{I}_Y\cdot\mathcal{C}^\infty_{Z\times X}$ is a holomorphic ideal for $Z\times Y$ in $Z\times X$ and $id_Z\times \pi:Z\times \widetilde{X}\rightarrow Z\times X$ is the blow-up of $pr_2^{-1}\mathcal{I}_Y\cdot\mathcal{C}^\infty_{Z\times X}$ in $Z\times X$.

$(2)$ Suppose that $\pi:\widetilde{X}\rightarrow X$ is a smooth map and $\mathfrak{U}$ is an open covering of $X$ such that $\pi|_{\pi^{-1}(U)}:\pi^{-1}(U)\rightarrow U$ is the blow-up of $\mathcal{I}_Y|_U$ in $U$ for any $U\in \mathfrak{U}$.
Then $\pi:\widetilde{X}\rightarrow X$ is the blow-up of $\mathcal{I}_Y$ in $X$.
\end{prop}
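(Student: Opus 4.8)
The plan is to derive everything from three ingredients supplied above: the universal property in Definition \ref{real blow-up}, the existence-and-uniqueness statement Theorem \ref{exist unique real blow-up}, and the composition law (\ref{composition}) for the operation $\mathcal{I}\mapsto f^{-1}\mathcal{I}\cdot\mathcal{C}^\infty$, together with the elementary remark that $j^{-1}\mathcal{I}\cdot\mathcal{C}^\infty_U=\mathcal{I}|_U$ for an open inclusion $j:U\hookrightarrow X$. Thus each assertion amounts to checking the two defining conditions of Definition \ref{real blow-up} (the pullback ideal is a divisor, and the mapping property holds) for the map in question, after rewriting the relevant pullback ideals via (\ref{composition}).

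For part $(1)(i)$, write $\widetilde{U}=\pi^{-1}(U)$ and let $\iota:\widetilde{U}\hookrightarrow\widetilde{X}$, $j:U\hookrightarrow X$ be the inclusions, $\pi_U=\pi|_{\widetilde{U}}$, so that $j\circ\pi_U=\pi\circ\iota$. Two applications of (\ref{composition}) give
\begin{displaymath}
\pi_U^{-1}(\mathcal{I}_Y|_U)\cdot\mathcal{C}^\infty_{\widetilde{U}}
=(\pi\circ\iota)^{-1}\mathcal{I}_Y\cdot\mathcal{C}^\infty_{\widetilde{U}}
=\iota^{-1}\big(\pi^{-1}\mathcal{I}_Y\cdot\mathcal{C}^\infty_{\widetilde{X}}\big)\cdot\mathcal{C}^\infty_{\widetilde{U}}
=\big(\pi^{-1}\mathcal{I}_Y\cdot\mathcal{C}^\infty_{\widetilde{X}}\big)\big|_{\widetilde{U}},
\end{displaymath}
which is a divisor on $\widetilde{U}$ since the restriction of a divisor to an open subset is a divisor (local principality is a local condition, and a nowhere dense subset of $\widetilde{X}$ meets the open set $\widetilde{U}$ in a nowhere dense subset of $\widetilde{U}$). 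For the mapping property, given $f:Z\to U$ with $f^{-1}(\mathcal{I}_Y|_U)\cdot\mathcal{C}^\infty_Z$ a divisor, $(j\circ f)^{-1}\mathcal{I}_Y\cdot\mathcal{C}^\infty_Z=f^{-1}(\mathcal{I}_Y|_U)\cdot\mathcal{C}^\infty_Z$ by (\ref{composition}), so the universal property of $\pi$ yields a unique $g:Z\to\widetilde{X}$ with $\pi\circ g=j\circ f$; since $\pi(g(z))=f(z)\in U$, the map $g$ factors through $\widetilde{U}$, producing $g':Z\to\widetilde{U}$ with $\pi_U\circ g'=f$, and uniqueness of $g'$ follows from injectivity of $\iota$ and uniqueness of $g$.

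For part $(1)(ii)$, one first checks that $pr_2^{-1}\mathcal{I}_Y\cdot\mathcal{C}^\infty_{Z\times X}$ is a holomorphic ideal for $Z\times Y$: the computation of the previous paragraph, applied to the open inclusion $Z\times(X-Y)\hookrightarrow Z\times X$ together with $\mathcal{I}_Y|_{X-Y}=\mathcal{C}^\infty_{X-Y}$, shows it restricts to the full structure sheaf off $Z\times Y$; and if $w=(w_1,\dots,w_r):U\to\mathbb{C}^r$ is a local submersion near $y\in Y$ defining $Y\cap U$ with $\mathcal{I}_Y|_U=(w_1,\dots,w_r)$, then $w\circ pr_2$ is a submersion on $Z\times U$ with zero set $(Z\times Y)\cap(Z\times U)$ and $(pr_2^{-1}\mathcal{I}_Y\cdot\mathcal{C}^\infty)|_{Z\times U}$ is generated by $w_1\circ pr_2,\dots,w_r\circ pr_2$, verifying Definition \ref{ideal}$(1)$. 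For the blow-up claim, writing $q:Z\times\widetilde{X}\to\widetilde{X}$ for the projection one has $pr_2\circ(\textrm{id}_Z\times\pi)=\pi\circ q$, so by (\ref{composition})
\begin{displaymath}
(\textrm{id}_Z\times\pi)^{-1}\big(pr_2^{-1}\mathcal{I}_Y\cdot\mathcal{C}^\infty_{Z\times X}\big)\cdot\mathcal{C}^\infty_{Z\times\widetilde{X}}
=q^{-1}\big(\pi^{-1}\mathcal{I}_Y\cdot\mathcal{C}^\infty_{\widetilde{X}}\big)\cdot\mathcal{C}^\infty_{Z\times\widetilde{X}},
\end{displaymath}
which is a divisor on $Z\times\widetilde{X}$ (a single local generator pulls back to a single local generator along the submersion $q$, and the zero set is a product $Z\times(\text{nowhere dense})$, hence nowhere dense). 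The mapping property is verified by decomposing $f:W\to Z\times X$ as $(f_1,f_2)$, noting $f^{-1}(pr_2^{-1}\mathcal{I}_Y\cdot\mathcal{C}^\infty)\cdot\mathcal{C}^\infty_W=f_2^{-1}\mathcal{I}_Y\cdot\mathcal{C}^\infty_W$ by (\ref{composition}), lifting $f_2$ uniquely to $g_2:W\to\widetilde{X}$, and setting $g=(f_1,g_2)$; uniqueness of $g$ follows since $(\textrm{id}_Z\times\pi)\circ g'=f$ forces $g'=(f_1,g_2)$.

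Finally, for part $(2)$, let $\rho:\widehat{X}\to X$ be the blow-up of $\mathcal{I}_Y$ in $X$, which exists by Theorem \ref{exist unique real blow-up}. By part $(1)(i)$, for each $U\in\mathfrak{U}$ both $\pi|_{\pi^{-1}(U)}$ and $\rho|_{\rho^{-1}(U)}$ are the blow-up of $\mathcal{I}_Y|_U$ in $U$, so the uniqueness clause of Theorem \ref{exist unique real blow-up} gives a unique isomorphism $\theta_U:\pi^{-1}(U)\tilde{\rightarrow}\rho^{-1}(U)$ over $U$. Restricting to $U\cap V$ and invoking part $(1)(i)$ once more, both $\theta_U|_{\pi^{-1}(U\cap V)}$ and $\theta_V|_{\pi^{-1}(U\cap V)}$ are the unique isomorphism between the blow-ups of $\mathcal{I}_Y|_{U\cap V}$ realised by $\pi^{-1}(U\cap V)$ and $\rho^{-1}(U\cap V)$, hence coincide there; so the $\theta_U$ glue to a diffeomorphism $\theta:\widetilde{X}\tilde{\rightarrow}\widehat{X}$ with $\rho\circ\theta=\pi$, and transporting the divisor condition and the universal property of $\rho$ across $\theta$ shows that $\pi:\widetilde{X}\to X$ is the blow-up of $\mathcal{I}_Y$ in $X$. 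I expect no substantial obstacle here: the only mildly delicate points are the point-set verifications that the divisor property is stable under open restriction, submersion pullback, and products, and these are routine, the remainder being a formal manipulation of the universal property.
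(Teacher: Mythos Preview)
Your arguments for parts $(1)(i)$ and $(1)(ii)$ are correct and match the paper's proof essentially verbatim: both verify the divisor condition via the composition law (\ref{composition}) and then check the universal property by composing with the inclusion (for $(i)$) or decomposing into components (for $(ii)$).

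For part $(2)$ your argument is correct but follows a genuinely different route from the paper. The paper proceeds \emph{directly}: it shows $\pi^{-1}\mathcal{I}_Y\cdot\mathcal{C}^\infty_{\widetilde{X}}$ is a divisor because it is so on each $\pi^{-1}(U)$, and then, given $f:Z\to X$ with the divisor condition, it restricts $f$ to each $f^{-1}(U)$, lifts locally to $\tilde f_U:f^{-1}(U)\to\pi^{-1}(U)$ via the universal property of $\pi|_{\pi^{-1}(U)}$, checks that $\tilde f_U$ and $\tilde f_V$ agree on $f^{-1}(U\cap V)$ (using $(1)(i)$ to know $\pi|_{\pi^{-1}(U\cap V)}$ is again a blow-up, whence the lift over $U\cap V$ is unique), and glues to a global $\tilde f$. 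Uniqueness of $\tilde f$ is obtained by the same local uniqueness. By contrast, you first invoke the existence theorem (Theorem \ref{exist unique real blow-up}) to obtain a global blow-up $\rho:\widehat X\to X$, produce local comparison isomorphisms $\theta_U:\pi^{-1}(U)\tilde\rightarrow\rho^{-1}(U)$ by uniqueness, glue them (again via $(1)(i)$ on overlaps) to a global diffeomorphism $\theta$ over $X$, and transport the blow-up structure across $\theta$. Both arguments hinge on the same overlap computation; your route is slightly more conceptual but consumes the existence theorem, whereas the paper's direct verification is self-contained and avoids that dependence.
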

\begin{proof}
$(1)$ $(i)$ Evidently, $(\pi|_{\pi^{-1}(U)})^{-1}(\mathcal{I}_Y|_U)\cdot\mathcal{C}^\infty_{\pi^{-1}(U)}=(\pi^{-1}\mathcal{I}_Y\cdot\mathcal{C}_{\widetilde{X}}^\infty)|_{\pi^{-1}(U)}$ is a divisor on $\pi^{-1}(U)$.
Suppose that $j_U:U\rightarrow X$  and $\tilde{j}_U:\pi^{-1}(U)\rightarrow \widetilde{X}$ are the inclusions.
Let $f:Z\rightarrow U$ be any smooth map of smooth manifolds such that $f^{-1}(\mathcal{I}_Y|_U)\cdot\mathcal{C}_Z^\infty$ is a divisor on $Z$.
By (\ref{composition}), $(j_U\circ f)^{-1}\mathcal{I}_Y\cdot \mathcal{C}^\infty_Z=f^{-1}(\mathcal{I}_Y|_U)\cdot\mathcal{C}_Z^\infty$ is a divisor on $Z$.
By the universal property of the blow-up $\pi$, there is a smooth map $\tilde{f}_1:Z\rightarrow \widetilde{X}$ such that $\pi\circ \tilde{f}_1=j_U\circ f$.
So $\tilde{f}_1(Z)\subseteq \pi^{-1}(U)$.
Then $\tilde{f}_1$ induces a smooth map $\tilde{f}:Z\rightarrow \pi^{-1}(U)$, i.e., $\tilde{j}_U\circ \tilde{f} =\tilde{f}_1$.
Since $j_U$ is injective, $\pi|_{\pi^{-1}(U)}\circ \tilde{f}=f$.
Let  $g:Z\rightarrow \pi^{-1}(U)$ be another smooth map such that $\pi|_{\pi^{-1}(U)}\circ g=f$.
Then $\pi\circ(\tilde{j}_U\circ g)=j_U\circ f$.
By the universal property of the blow-up $\pi$, $\tilde{j}_U\circ g=\tilde{f}_1$, so $g=\tilde{f}$.

$(ii)$
By (\ref{composition}), $(pr_2^{-1}\mathcal{I}_Y\cdot\mathcal{C}^\infty_{Z\times X})|_{Z\times X-Z\times Y}=pr_2^{-1}(\mathcal{I}_Y|_{X-Y})\cdot\mathcal{C}^\infty_{Z\times X}=\mathcal{C}^\infty_{Z\times X}$.
For any $(z,y)\in Z\times Y$, there exists an open neighborhood $U$ of $y$ and $u_1$, \ldots, $u_r$ $\in \mathcal{I}_Y(U)$ such that $\mathcal{I}_Y|_Y=\langle u_1,\ldots, u_r\rangle$ and $u=(u_1,\ldots,u_r):U\rightarrow \mathbb{C}^r$ is a submersion with $u^{-1}(o)=Y\cap U$.
Set $v_i=pr_2^*u_i\in\mathcal{C}^\infty_{Z\times X}(Z\times U)$.
Then $(pr_2^{-1}\mathcal{I}_Y\cdot\mathcal{C}^\infty_{Z\times X})|_{Z\times U}=\langle v_1,\ldots,v_r\rangle$ and $v=(v_1,\ldots,v_r):Z\times U\rightarrow \mathbb{C}^r$ is a submersion with $u^{-1}(o)=Z\times (Y\cap U)$.
So $pr_2^{-1}\mathcal{I}_Y\cdot\mathcal{C}^\infty_{Z\times X}$ is a holomorphic ideal for $Z\times Y$ in $Z\times X$.
Let $pr^{\prime}_1$ and $pr^{\prime}_2$ be the projections from $Z\times \widetilde{X}$ onto $Z$ and $\widetilde{X}$ respectively.
Since $\pi^{-1}\mathcal{I}_Y\cdot\mathcal{C}^\infty_{\widetilde{X}}$ is a divisor on $\widetilde{X}$, so is  $(id_Z\times \pi)^{-1}(pr_2^{-1}\mathcal{I}_Y\cdot\mathcal{C}_{Z\times X}^\infty)\cdot \mathcal{C}^\infty_{Z\times \widetilde{X}}=pr_2^{\prime-1}(\pi^{-1}\mathcal{I}_Y\cdot\mathcal{C}^\infty_{\widetilde{X}})\cdot\mathcal{C}^\infty_{Z\times \widetilde{X}}$  on $Z\times \widetilde{X}$.
Let $f:M\rightarrow Z\times X$ be any smooth map of smooth manifolds such that $f^{-1}(pr_2^{-1}\mathcal{I}_Y\cdot\mathcal{C}^\infty_{Z\times Y})\cdot\mathcal{C}_M^\infty$ is a divisor on $M$.
Then $(pr_2\circ f)^{-1}\mathcal{I}_Y\cdot \mathcal{C}^\infty_M=f^{-1}(pr_2^{-1}\mathcal{I}_Y\cdot\mathcal{C}^\infty_{Z\times X})\cdot\mathcal{C}^\infty_M$ is a divisor on $M$.
By the universal property of the blow-up $\pi$, there a unique smooth map $\tilde{f}_1:M\rightarrow X$ such that $\pi\circ\tilde{f}_1=pr_2\circ f$.
Define $\tilde{f}:M\rightarrow Z\times \widetilde{X}$ as $m\mapsto (pr_1\circ f(m),\tilde{f}_1(m))$.
Clearly, $(id_Z\times \pi)\circ\tilde{f}=f$.
Suppose that $g:M\rightarrow Z\times \widetilde{X}$ is another smooth map satisfying that $(id_Z\times \pi)\circ g=f$.
Then $\pi\circ (pr^{\prime}_2\circ g)=pr_2\circ (id_Z\times \pi)\circ g=pr_2\circ f=\pi\circ\tilde{f}_1$.
So $pr^{\prime}_2\circ g=\tilde{f}_1$ by the universal property of the blow-up $\pi$.
Moreover, $pr^{\prime}_1\circ g=pr_1\circ (id_Z\times \pi)\circ g=pr_1\circ f$.
Hence $g=\tilde{f}$.

$(2)$ By the assumption,  $(\pi^{-1}\mathcal{I}_Y\cdot\mathcal{C}^\infty_{\widetilde{X}})|_{\pi^{-1}(U)}=(\pi|_{\pi^{-1}(U)})^{-1}(\mathcal{I}_Y|_{U})\cdot\mathcal{C}^\infty_{\pi^{-1}(U)}$ is a divisor on $\pi^{-1}(U)$  for any $U\in \mathfrak{U}$, so is $\pi^{-1}\mathcal{I}_Y\cdot\mathcal{C}^\infty_{\widetilde{X}}$ on $\widetilde{X}$.
Let $f:Z\rightarrow  X$ be any smooth map of smooth manifolds such that $f^{-1}\mathcal{I}_Y\cdot\mathcal{C}^\infty_Z$ is a divisor on $Z$.
By (\ref{composition}), $(f|_{f^{-1}(U)})^{-1}(\mathcal{I}_Y|_U)\cdot\mathcal{C}^\infty_{f^{-1}(U)}=(f^{-1}\mathcal{I}_Y\cdot\mathcal{C}^\infty_{f^{-1}(U)})|_{f^{-1}(U)}$ is a divisor on $f^{-1}(U)$.
By the universal property of the blow-up $\pi|_{\pi^{-1}(U)}$, there a unique smooth map $\tilde{f}_U:f^{-1}(U)\rightarrow \pi^{-1}(U)$ such that $\pi|_{\pi^{-1}(U)}\circ\tilde{f}_U=f|_{f^{-1}(U)}$.
For any $U$, $V$ $\in\mathfrak{U}$,  $\pi|_{\pi^{-1}(U\cap V)}\circ\tilde{f}_U|_{f^{-1}(U)\cap f^{-1}(V)}=f|_{f^{-1}(U\cap V)}$ and $\pi|_{\pi^{-1}(U\cap V)}\circ\tilde{f}_V|_{f^{-1}(U)\cap f^{-1}(V)}=f|_{f^{-1}(U\cap V)}$.
By $(1)$ $(i)$, $\pi|_{\pi^{-1}(U\cap V)}:\pi^{-1}(U\cap V)\rightarrow U\cap V$ is the blow-up of $\mathcal{I}_Y|_{U\cap V}$ in $U\cap V$.
Notice that $(f|_{f^{-1}(U\cap V)})^{-1}(\mathcal{I}_Y|_{U\cap V})\cdot\mathcal{C}^\infty_{f^{-1}(U\cap V)}=(f^{-1}\mathcal{I}_Y\cdot\mathcal{C}^\infty_{Z})|_{f^{-1}(U\cap V)}$ is a divisor on $f^{-1}(U\cap V)$.
So $\tilde{f}_U|_{f^{-1}(U)\cap f^{-1}(V)}=\tilde{f}_V|_{f^{-1}(U)\cap f^{-1}(V)}$ by the universal property of the blow-up $\pi|_{\pi^{-1}(U)}$.
We obtain a smooth map $\tilde{f}:Z\rightarrow \widetilde{X}$ satisfying that $\tilde{f}|_{f^{-1}(U)}=\tilde{f}_U$ for any $U\in \mathfrak{U}$.
Clearly, $\pi\circ \tilde{f}=f$.
Assume that $g:Z\rightarrow \widetilde{X}$ is another smooth map such that $\pi\circ g=f$.
By the universal property of the blow-up $\pi|_{\pi^{-1}(U)}$, $g|_{f^{-1}(U)}=\tilde{f}_U$ for any $U\in \mathfrak{U}$, which implies that $g=\tilde{f}$.
\end{proof}

M. Bailey, G. Cavalcanti and J. van der Leer Dur\'{a}n \cite[p. 2114]{BCv} defined a canonical holomorphic ideal for the zero section in the normal bundle $N_{Y/X}$.
We generalize their definition on general complex vector bundles.
Suppose that $F$ is a complex vector bundle over a smooth manifold $X$.
A \emph{canonical holomorphic ideal} $\mathcal{I}_{0,F}$ for the zero section $X$ in $F$ is constructed as follows:
Suppose that  $U\subseteq X$ is an open set with a trivialization $\phi_U:F|_U\cong U\times \mathbb{C}^r$.
Let  $z_1,\ldots,z_r$ be the canonical holomorphic coordinates of $\mathbb{C}^r$.
For every $i$, $z_i$ can be viewed as a complex valued smooth function on $F|_U$ via the projection $U\times \mathbb{C}^r\rightarrow \mathbb{C}^r$.
Set $\mathcal{I}_U=\langle z_1, \ldots, z_r\rangle\subseteq \mathcal{C}^\infty_{F|_U}$.
On such two charts $U$ and $V$, the generators of $\mathcal{I}_U$ and the ones of $\mathcal{I}_V$ defined as above can represent each other via a translation matrix $\phi_V\circ \phi_U^{-1}$ of $F$ over $U\cap V$.
So $\mathcal{I}_U|_{U\cap V}= \mathcal{I}_V|_{U\cap V}$.
Hence the definition of $\mathcal{I}_U$ is independent of the choice of the trivialization $\phi_U$  and $\{\mathcal{I}_U\}$ define a ideal $\mathcal{I}_{0,F}\subseteq \mathcal{C}^\infty_{F}$ such that $\mathcal{I}_{0,F}|_{F|_U}=\mathcal{I}_U$.
Clearly, $\mathcal{I}_{0,F}$ is a holomorphic ideal for the zero section $X$.

\begin{prop}\label{ideal-vb}
Let $F$ be a complex vector bundle over a smooth manifold $X$.

$(1)$ For any open set $U\subseteq X$, $\mathcal{I}_{0,F}|_{F|_U}=\mathcal{I}_{0,F|_U}$.

$(2)$ If $F$ is a complex line bundle, $\mathcal{I}_{0,F}$ is a divisor on $F$.

$(3)$ Suppose that $Y$  is a smooth manifold and $pr_2:Y\times X\rightarrow X$ is the second projection.
Denote by $pr_2^{\prime}$ the second projection of  $pr_2^*F=Y\times F\rightarrow F$.
Then
\begin{equation}\label{pullback}
pr_2^{\prime-1}\mathcal{I}_{0,F}\cdot \mathcal{C}^\infty_{pr_2^*F}=\mathcal{I}_{0,pr_2^*F}.
\end{equation}

$(4)$ The blow-up of $\mathcal{I}_{0,F}$ in $F$ is the projection $\varphi:\mathcal{O}_{\mathbb{P}(F)}(-1)\rightarrow F$.
Moreover,  $\varphi^{-1}\mathcal{I}_{0,F}\cdot\mathcal{C}^\infty_{\mathcal{O}_{\mathbb{P}(F)}(-1)}=\mathcal{I}_{0,\mathcal{O}_{\mathbb{P}(F)}(-1)}$.
\end{prop}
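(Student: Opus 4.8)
The plan is to obtain (1)--(3) directly from the local recipe defining $\mathcal{I}_{0,F}$, and to prove (4) by reducing, through the functoriality statements in Proposition \ref{blow-up fund}, to the classical blow-up of $\mathbb{C}^r$ at the origin. For (1): if $V\subseteq X$ is any open set over which $F$ is trivial, then $U\cap V$ is a trivializing chart for $F|_U$, and by construction both $\mathcal{I}_{0,F}|_{F|_{U\cap V}}$ and $\mathcal{I}_{0,F|_U}|_{F|_{U\cap V}}$ equal the ideal generated by the fibre coordinates $z_1,\dots,z_r$; since such $V$ cover $X$, the two sheaves coincide. For (2): when $r=1$ the recipe says $\mathcal{I}_{0,F}$ is generated over each $F|_V$ by the single smooth function $z_1$, and its zero set is the zero section, which is nowhere dense in the total space; hence $\mathcal{I}_{0,F}$ is a divisor in the sense of Definition \ref{ideal}(2).

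For (3) I would work over a trivializing chart $V$ of $F$, where $pr_2^*F|_{Y\times V}\cong(Y\times V)\times\mathbb{C}^{r}$ compatibly with $F|_{V}\cong V\times\mathbb{C}^{r}$. Under these identifications $pr_2^{\prime}\colon pr_2^*F\to F$ pulls the fibre coordinates $z_j$ of $F$ back to the fibre coordinates $z_j$ of $pr_2^*F$, so $pr_2^{\prime-1}\mathcal{I}_{0,F}\cdot\mathcal{C}^{\infty}_{pr_2^*F}$ is locally generated by $z_1,\dots,z_r$, which is exactly the local description of $\mathcal{I}_{0,pr_2^*F}$; gluing over the $V$'s gives the asserted equality. (That $pr_2^{\prime-1}\mathcal{I}_{0,F}\cdot\mathcal{C}^{\infty}$ is at least a holomorphic ideal for the zero section is a special case of Proposition \ref{blow-up fund}(1)(ii), but the identification of generators is the point.)

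For (4), by Theorem \ref{exist unique real blow-up} the blow-up of $\mathcal{I}_{0,F}$ in $F$ exists and is unique, and by Proposition \ref{blow-up fund}(2) it suffices to show that $\varphi$ restricts over each trivializing chart $U$ of $F$ to the blow-up of $\mathcal{I}_{0,F}|_{F|_U}$ in $F|_U$. Using (1) and the identification $\mathcal{O}_{\mathbb{P}(F)}(-1)|_{\mathbb{P}(F|_U)}\cong U\times\mathcal{O}_{\mathbb{P}^{r-1}}(-1)$, under which $\varphi$ becomes $\mathrm{id}_U\times\varphi_0$ for the tautological projection $\varphi_0\colon\mathcal{O}_{\mathbb{P}^{r-1}}(-1)\to\mathbb{C}^r$, Proposition \ref{blow-up fund}(1)(ii) together with (3) (applied with trivial base) reduces the claim to: $\varphi_0$ is the blow-up of $\langle z_1,\dots,z_r\rangle=\mathcal{I}_{0,\mathbb{C}^r}$ in $\mathbb{C}^r$. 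Covering $\mathcal{O}_{\mathbb{P}^{r-1}}(-1)$ by its standard charts, on the $i$-th one with coordinates $(w_1,\dots,\widehat{w_i},\dots,w_r,t)$ (so $w_i=1$) one has $\varphi_0^{*}z_j=tw_j$ and $\varphi_0^{*}z_i=t$, whence $\varphi_0^{-1}\langle z_1,\dots,z_r\rangle\cdot\mathcal{C}^{\infty}=\langle t\rangle$, a divisor cutting out the exceptional $\mathbb{P}^{r-1}$ with multiplicity one; since $t$ is precisely the fibre coordinate of $\mathcal{O}_{\mathbb{P}^{r-1}}(-1)$ in this chart, the same computation yields the ``Moreover'' assertion $\varphi^{-1}\mathcal{I}_{0,F}\cdot\mathcal{C}^{\infty}_{\mathcal{O}_{\mathbb{P}(F)}(-1)}=\mathcal{I}_{0,\mathcal{O}_{\mathbb{P}(F)}(-1)}$.

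It then remains to verify the universal property of Definition \ref{real blow-up} for $\varphi_0$. Given a smooth map $f=(f_1,\dots,f_r)\colon Z\to\mathbb{C}^r$ with $\langle f_1,\dots,f_r\rangle=f^{-1}\mathcal{I}_{0,\mathbb{C}^r}\cdot\mathcal{C}^{\infty}_Z$ a divisor on $Z$, one defines $g\colon Z\to\mathcal{O}_{\mathbb{P}^{r-1}}(-1)$ by $g(p)=(\mathbb{C}\,f(p),f(p))$ on the dense open set $Z-f^{-1}(o)$, and extends it near a point $p_0\in f^{-1}(o)$ by writing locally $f_j=hg_j$ and $h=\sum_j a_j f_j$ with $h,g_j,a_j$ smooth: then $h(\sum_j a_j g_j-1)=0$ on a dense set, so $\sum_j a_j g_j\equiv 1$, $(g_1,\dots,g_r)$ is nowhere zero near $p_0$, and $g(p):=([g_1(p):\dots:g_r(p)],f(p))$ is smooth there and agrees with the first formula where $h\ne 0$; these pieces glue to a global smooth $g$ with $\varphi_0\circ g=f$, and $g$ is already determined on the dense set $Z-f^{-1}(o)$, where $\varphi_0$ is injective, which gives uniqueness. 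The main obstacle is exactly this local universal property --- producing a \emph{smooth} lift across the exceptional set and checking that it is independent of the local factorization $f_j=hg_j$ --- and it is the only place where the divisor hypothesis is genuinely used, through the identity $\sum_j a_j g_j\equiv 1$ that prevents $(g_1,\dots,g_r)$ from vanishing; everything else is bookkeeping with local trivializations.
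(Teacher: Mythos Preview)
Your argument is correct and follows essentially the same route as the paper: parts (1)--(3) are immediate from the local definition, and for (4) both you and the paper reduce via Proposition~\ref{blow-up fund} to trivializing charts, identify $\varphi$ there with $\mathrm{id}_U\times\varphi_0$, and invoke the base case of $\varphi_0\colon\mathcal{O}_{\mathbb{P}^{r-1}}(-1)\to\mathbb{C}^r$. The one genuine difference is that the paper simply cites \cite[Proposition~2.3.1]{v} for the fact that $\varphi_0$ satisfies the universal property of Definition~\ref{real blow-up}, whereas you supply a direct proof: writing $f_j=hg_j$ locally from the divisor hypothesis, using $h(\sum_j a_jg_j-1)=0$ on a dense set to force $(g_1,\dots,g_r)$ nowhere-vanishing, and gluing the resulting lifts by density. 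Your argument is sound (and is in effect what lies behind the cited reference); it has the advantage of making the role of the divisor condition completely explicit, at the cost of a paragraph that the paper avoids by citation. The chart computation $\varphi_0^{-1}\langle z_1,\dots,z_r\rangle\cdot\mathcal{C}^\infty=\langle t\rangle=\mathcal{I}_{0,\mathcal{O}_{\mathbb{P}^{r-1}}(-1)}$ and its globalization to the ``Moreover'' statement are carried out identically in both.
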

\begin{proof}
Clearly, $(1)$ and $(2)$ hold by the definition of $\mathcal{I}_{0,F}$.
Let $U\subseteq X$ be any open set with a trivialization $\phi_U:F|_U\tilde{\rightarrow} U\times \mathbb{C}^r$.
There is the trivialization $id_Y\times\phi_U:(pr_2^*F)|_{Y\times U}\tilde{\rightarrow}  Y\times U\times \mathbb{C}^r$.
Denote by $pr_2^{\prime\prime}:U\times \mathbb{C}^r\rightarrow \mathbb{C}^r$ the second projection and by $pr_3^{\prime\prime\prime}:Y\times U\times \mathbb{C}^r\rightarrow \mathbb{C}^r$ the third projection.
Let $z_1$, \ldots, $z_r$ be the canonical holomorphic coordinates of $\mathbb{C}^r$.
Then $\mathcal{I}_{0,F}|_{F|_U}=\langle z_1, \ldots, z_r\rangle$ and $(\mathcal{I}_{0,pr_2^*F})|_{(pr_2^*F)|_{Y\times U}}=\langle z_1, \ldots, z_r\rangle$,  where $z_i$ is viewed as the complex valued smooth function on  $F|_U$ via $pr_2^{\prime\prime}$ and on $pr_2^*F|_{Y\times U}$ via $pr_3^{\prime\prime\prime}$ respectively for $1\leq i\leq r$.
Evidently, (\ref{pullback}) holds on $(pr_2^*F)|_{Y\times U}$, so does (\ref{pullback}) on $pr_2^*F$.
We proved $(3)$.
Obviously, $\langle z_1,\ldots,z_r\rangle\subseteq \mathcal{C}^\infty_{\mathbb{C}^r}$ is a holomorphic ideal for the original point $o$ in $\mathbb{C}^r$.
By \cite[Proposition 2.3.1]{v}, the blow-up of $\langle z_1,\ldots,z_r\rangle$ in $\mathbb{C}^r$ is just the complex blow-up $\varphi_o:\mathcal{O}_{\mathbb{C}P^{r-1}}(-1)\rightarrow \mathbb{C}^r$ of $\mathbb{C}^r$ along  $o$, which is naturally induced by the projection $\mathbb{C}P^{r-1}\times \mathbb{C}^r\rightarrow \mathbb{C}^r$.
As we know, $\mathcal{O}_{\mathbb{C}P^{r-1}}(-1)$ consists of $([w_1,\ldots,w_r],(z_1,\ldots,z_r))\in\mathbb{C}P^{r-1}\times\mathbb{C}^r$ such that $w_iz_j=w_jz_i$ for $1\leq i, j\leq r$.
Set $U_i=\{[w_1,\ldots,w_r]\in\mathbb{C}P^{r-1}|w_i\neq 0\}$ for $1\leq i\leq r$.
Then $([w_1,\ldots,w_r],(z_1,\ldots,z_r))\mapsto (u_1,\ldots,\widehat{u_i},\ldots,u_r,t)=(\frac{w_1}{w_i},\ldots,\widehat{\frac{w_i}{w_i}},\ldots,\frac{w_r}{w_i},z_i)$ gives a trivialization $\mathcal{O}_{\mathbb{C}P^{r-1}}(-1)|_{U_i}\tilde{\rightarrow}U_i\times \mathbb{C}$, where $(u_1,\ldots,\widehat{u_i},\ldots,u_r)$ is the coordinates of $U_i\cong \mathbb{C}^{r-1}$.
Then
\begin{displaymath}
\left(\varphi_o^{-1}\langle z_1,\ldots,z_r\rangle\cdot\mathcal{C}^\infty_{\mathcal{O}_{\mathbb{C}P^{r-1}}(-1)}\right)|_{\mathcal{O}_{\mathbb{C}P^{r-1}}(-1)|_{U_i}}=\langle u_1t,\ldots, \widehat{u_it},\ldots,u_rt,t\rangle =\langle t\rangle.
\end{displaymath}
By the definition,
\begin{equation}\label{taut}
\varphi_o^{-1}\langle z_1,\ldots,z_r\rangle\cdot\mathcal{C}^\infty_{\mathcal{O}_{\mathbb{C}P^{r-1}}(-1)}=\mathcal{I}_{0,\mathcal{O}_{\mathbb{C}P^{r-1}}(-1)}.
\end{equation}
By $(2)$,
\begin{equation}\label{trivial}
pr^{\prime\prime-1}_2\langle z_1,\ldots,z_r\rangle\cdot\mathcal{C}_{U\times \mathbb{C}^r}^\infty=\mathcal{I}_{0,U\times \mathbb{C}^r}.
\end{equation}
Hence $id_U\times \varphi_o:\mathcal{O}_{U\times\mathbb{C}P^{r-1}}(-1)=U\times \mathcal{O}_{\mathbb{C}P^{r-1}}(-1)\rightarrow U\times \mathbb{C}^r$ is the blow-up of $\mathcal{I}_{0,U\times \mathbb{C}^r}$ in $U\times \mathbb{C}^r$ by  Proposition \ref{blow-up fund} $(1)$ $(ii)$.
Denote by $pr_2^{\prime}:U\times \mathcal{O}_{\mathbb{C}P^{r-1}}(-1)\rightarrow \mathcal{O}_{\mathbb{C}P^{r-1}}(-1)$ the second projection.
Then
\begin{displaymath}
\begin{aligned}
&(id_U\times \varphi_o)^{-1}\mathcal{I}_{0,U\times \mathbb{C}^r}\cdot \mathcal{C}^\infty_{\mathcal{O}_{U\times\mathbb{C}P^{r-1}}(-1)}\\
=&(id_U\times \varphi_o)^{-1}\left(pr^{\prime\prime-1}_2\langle z_1,\ldots,z_r\rangle\cdot \mathcal{C}^\infty_{U\times \mathbb{C}^r}\right)\cdot \mathcal{C}^\infty_{\mathcal{O}_{U\times\mathbb{C}P^{r-1}}(-1)} \qquad\quad\mbox{ }(\mbox{by  (\ref{trivial})})\\
=&pr^{\prime-1}_2\left(\varphi^{-1}_o\langle z_1,\ldots,z_r\rangle\cdot \mathcal{C}^\infty_{\mathcal{O}_{\mathbb{C}P^{r-1}}(-1)}\right)\cdot \mathcal{C}^\infty_{\mathcal{O}_{U\times\mathbb{C}P^{r-1}}(-1)} \qquad\qquad\quad(\mbox{by  (\ref{composition})})\\
=&pr^{\prime-1}_2\mathcal{I}_{0,\mathcal{O}_{\mathbb{C}P^{r-1}}(-1)}\cdot \mathcal{C}^\infty_{\mathcal{O}_{U\times\mathbb{C}P^{r-1}}(-1)} \qquad\qquad\qquad\qquad\qquad\qquad\quad(\mbox{by  (\ref{taut})})\\
=&\mathcal{I}_{0,\mathcal{O}_{U\times\mathbb{C}P^{r-1}}(-1)} \qquad\qquad\qquad\qquad\qquad\qquad\qquad\qquad\qquad\qquad\mbox{ }\mbox{ }\mbox{ }(\mbox{by (\ref{pullback})}).
\end{aligned}
\end{displaymath}
Via the diffeomorphism $\phi_U$,  $\varphi|_{\mathcal{O}_{\mathbb{P}(F|_U)}(-1)}:\mathcal{O}_{\mathbb{P}(F|_U)}(-1)\rightarrow F|_U$ is the blow-up of $\mathcal{I}_{0,F|_U}=\mathcal{I}_{0,F}|_{F|_U}$ in $F|_U$ with $(\varphi|_{\mathcal{O}_{\mathbb{P}(F|_U)}(-1)})^{-1}\mathcal{I}_{0,F|_U}\cdot\mathcal{C}^\infty_{\mathcal{O}_{\mathbb{P}(F|_U)}(-1)}=\mathcal{I}_{0,\mathcal{O}_{\mathbb{P}(F|_U)}(-1)}$.
By Proposition \ref{blow-up fund} $(2)$, we get $(4)$.
%Set $I_U=\langle z_1, \ldots, z_r\rangle\subseteq C^\infty_{F|_U}$.
%Evidently, $\phi_U$ gives an isomorphism $\Phi:\mathcal{O}_{\mathbb{P}(F|_U)}(-1)\tilde{\rightarrow} U\times \mathcal{O}_{\mathbb{C}P^{r-1}}(-1)$.
%Then $\langle z_1,\ldots,z_r\rangle\subseteq \mathcal{C}^\infty_{\mathbb{C}^r}$ is a holomorphic ideal for the original point and $\varphi^{\prime*}\langle z_1,\ldots,z_r\rangle=I_{0,\mathcal{O}_{\mathbb{C}P^{r-1}}(-1)}$.
%Set $I_U=\langle z_1, \ldots, z_r\rangle\subseteq C^\infty_{F|_U}$.
% and $pr^{\prime}_2:\mathcal{O}_{U\times\mathbb{C}P^{r-1}}(-1)=U\times \mathcal{O}_{\mathbb{C}P^{r-1}}(-1)\rightarrow \mathcal{O}_{\mathbb{C}P^{r-1}}(-1)$ the second projections.
%Then
%$pr_2^*I_{0,\mathcal{O}_{\mathbb{C}P^{r-1}}(-1)}=I_{0,\mathcal{O}_{U\times\mathbb{C}P^{r-1}}(-1)}$, $(\varphi^*I_{0,F})|_{\mathcal{O}_{\mathbb{P}(F|_U)}(-1)}=I_{0,\mathcal{O}_{\mathbb{P}(F|_U)}(-1)}$.
%We complete the proof.
\end{proof}

If $\mathcal{I}_Y$ is a holomorphic ideal for $Y$, then the complexification  of the conormal bundle of $Y$ in $X$ has the decomposition $N_{Y/X}^*\otimes_{\mathbb{R}} \mathbb{C}=N_{Y/X}^{*1,0}\oplus N_{Y/X}^{*0,1}$, where $N_{Y/X,y}^{*1,0}=\{(df)_y|\mbox{ }f\in \mathcal{I}_{Y,y}\}$ and $N_{Y/X,y}^{*0,1}=\{(d\bar{f})_y|\mbox{ }f\in \mathcal{I}_{Y,y}\}$ for any $y\in Y$.
Then $N_{Y/X}^{*1,0}$ gives a complex vector bundle structure on  $N_{Y/X}^*$ via the isomorphism $N_{Y/X}^*\cong N_{Y/X}^{*1,0}$ of smooth vector bundles.
Hence the normal bundle $N_{Y/X}$ has a complex vector bundle structure,
which is said to be the \emph{complex vector bundle structure of $N_{Y/X}$ induced by $\mathcal{I}_Y$}.

For complex vector bundles, Lemma \ref{NB-vb} is strengthen as follows.
\begin{lem}\label{NB-cvb}
Let $F$ be a complex vector bundle over a smooth manifold $X$.
Equip $N_{X/F}$ with the complex vector bundle structure induced by $\mathcal{I}_{0,F}$.
Then there exists an isomorphism $N_{X/F}\tilde{\rightarrow} F$ of complex vector bundles over $X$.
\end{lem}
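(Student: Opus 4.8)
The plan is to upgrade the smooth isomorphism produced in Lemma \ref{NB-vb} to one which is complex linear, once $N_{X/F}$ carries the complex structure induced by $\mathcal{I}_{0,F}$. Keep the notation from the proof of Lemma \ref{NB-vb}: fix an open covering $\{U_\alpha\}$ of $X$ with trivializations $F|_{U_\alpha}\cong U_\alpha\times\mathbb{C}^r$ and transition functions $g_{\alpha\beta}\in GL_r(\mathbb{C})$, and write $z_j=u^j+\sqrt{-1}\,v^j$ for the canonical holomorphic coordinates on $\mathbb{C}^r$, so that $u^1,v^1,\dots,u^r,v^r$ are real fibre coordinates. As in that proof, $N_{X/F}|_{U_\alpha}$ is freely generated, modulo $\textrm{T}X|_{U_\alpha}$, by $\partial/\partial u^1,\partial/\partial v^1,\dots,\partial/\partial u^r,\partial/\partial v^r$; this yields a smooth trivialization $\theta_\alpha\colon N_{X/F}|_{U_\alpha}\tilde{\rightarrow}U_\alpha\times\mathbb{C}^r$ (identifying $\mathbb{R}^{2r}$ with $\mathbb{C}^r$ via $(u^j,v^j)\mapsto u^j+\sqrt{-1}\,v^j$), and since the fibrewise-linear transition maps of $F$ equal their own fibre derivatives, the $\theta_\alpha$ are glued by the $g_{\alpha\beta}$, recovering the smooth isomorphism $N_{X/F}\cong F$ of Lemma \ref{NB-vb}.

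Next I would identify, in these coordinates, the complex structure $J$ on $N_{X/F}$ coming from $\mathcal{I}_{0,F}$. By the definition of the canonical holomorphic ideal (see also Proposition \ref{ideal-vb} $(1)$), $\mathcal{I}_{0,F}|_{F|_{U_\alpha}}=\langle z_1,\dots,z_r\rangle$, so the $(1,0)$-part of the complexified conormal bundle is $N^{*1,0}_{X/F}|_{U_\alpha}=\textrm{span}_{\mathbb{C}}\{dz_1,\dots,dz_r\}$. Writing $du^j$ and $dv^j$ in terms of $dz_j$ and $d\bar z_j$ and unwinding the definition of the complex structure that $N^{*1,0}_{X/F}$ puts on $N^{*}_{X/F}$, one finds that, modulo $\textrm{T}^*X|_{U_\alpha}$, this structure sends $du^j\mapsto -dv^j$ and $dv^j\mapsto du^j$, which is exactly the complex structure of the fibre dual of $U_\alpha\times\mathbb{C}^r$. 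Dualising, $J$ corresponds under $\theta_\alpha$ to the standard complex structure of $\mathbb{C}^r$; hence each $\theta_\alpha$ is fibrewise $\mathbb{C}$-linear, the locally defined $J$ agree on overlaps because $g_{\alpha\beta}\in GL_r(\mathbb{C})$, and the glued isomorphism $N_{X/F}\tilde{\rightarrow}F$ is one of complex vector bundles, as required.

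The only point that requires care is the computation in the second step: one must keep track of the duality conventions relating the given decomposition $N^{*}_{X/F}\otimes_{\mathbb{R}}\mathbb{C}=N^{*1,0}_{X/F}\oplus N^{*0,1}_{X/F}$, the complex structure it induces on the real bundle $N^{*}_{X/F}$ through $N^{*}_{X/F}\cong N^{*1,0}_{X/F}$, the passage to $N_{X/F}=(N^{*}_{X/F})^{*}$, and the sign of $J$ on the real fibre frame $\partial/\partial u^j,\partial/\partial v^j$. Once the standard conventions are fixed and applied to $dz_j=du^j+\sqrt{-1}\,dv^j$, the identification with the standard complex structure on $\mathbb{C}^r$ — rather than its conjugate — is forced, and no further computation is needed; this is the step I expect to be the main (if only bookkeeping) obstacle.
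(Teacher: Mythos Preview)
Your proposal is correct and follows essentially the same approach as the paper: both identify $N^{*1,0}_{X/F}|_{U_\alpha}=\mathrm{span}_{\mathbb{C}}\{dz_1,\dots,dz_r\}$ from the generators of $\mathcal{I}_{0,F}$, and then check that the resulting local isomorphisms glue via the same transition functions $g_{\alpha\beta}$ as $F$. The only difference is presentational: the paper works directly with the complex frame $\partial/\partial z_1,\dots,\partial/\partial z_r$ (dual to $dz_1,\dots,dz_r$) and defines $\Psi_U\bigl(\sum a_i\,\partial/\partial z_i\bigr)=\sum a_i\,e_i$, thereby bypassing the real-to-complex bookkeeping you flag in your final paragraph, whereas you start from the real frame $\partial/\partial u^j,\partial/\partial v^j$ and then verify that the induced $J$ matches the standard one.
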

\begin{proof}
Let $\phi_U:F|_U\tilde{\rightarrow} U\times \mathbb{C}^r$ be  a trivialization of $F$ over an open set $U\subseteq X$ given by  the sections $e_1$, \ldots, $e_r$ of $F|_U$, i.e., $\phi_U(\sum\limits_{i=1}^r z_i\cdot e_i(x))=(x,z_1, \ldots, z_r)$.
For the complex vector bundle structure induced by $\mathcal{I}_{0,F}$, $N^*_{X/F}|_U=\{df|f\in\langle z_1,\ldots, z_r\rangle\}=\textrm{span}_{\mathbb{C}}\{dz_1,\ldots,dz_r\}$, where $z_1$, \ldots, $z_r$  are viewed as complex valued smooth functions on $F|_U$.
Define $\Psi_U:N_{X/F}|_U\rightarrow F|_U$ as $\sum\limits_{i=1}^r a_i\cdot \frac{\partial}{\partial z_i}$ modulo $\textrm{T}X|_U$ $\mapsto \sum\limits_{i=1}^r a_i\cdot e_i(x)$, which is an isomorphism of complex vector bundles over $U$.
Assume that $f_1$, \ldots, $f_r$ give a trivialization $\Psi_V$ of $F$ over  an open set $V\subseteq X$ and  $e_i=\sum\limits_{j=1}^rg_{ij}\cdot f_j$ with $g_{ij}\in \mathcal{C}^\infty(U\cap V)$ for $1\leq i,j\leq r$ on $U\cap V$.
Then
\begin{displaymath}
(x,w_1,\ldots,w_r)=\phi_V\circ \phi_U^{-1}(x,z_1,\ldots,z_r)=(x,\sum_{i=1}^rz_ig_{i1}(x),\ldots,\sum_{i=1}^rz_ig_{ir}(x)).
\end{displaymath}
So $\sum\limits_{i=1}^r a_i\cdot \frac{\partial}{\partial z_i}=\sum\limits_{i,j=1}^r a_ig_{ij}\cdot \frac{\partial}{\partial w_j}$ modulo $\textrm{T}X|_{U\cap V}$, by which we easily check that $\{\Psi_U\}$  give a global isomorphism $N_{X/F}\tilde{\rightarrow} F$ of complex vector bundles over $X$.
\end{proof}

%There are  several important examples of blow-ups of holomorphic ideals in  smooth manifolds.
\begin{ex}
$(1)$  Let  $Y$ be a complex submanifold of a complex manifold $X$.
Denote by $\mathcal{I}_Y\subseteq \mathcal{C}_X^\infty$ the ideal generated by the holomorphic functions which vanish on $Y$.
Clearly, $\mathcal{I}_Y$ is a holomorphic ideal for $Y$.
The blow-up of $\mathcal{I}_Y$ in $X$ is just the classical complex blow-up of $X$ along $Y$, see \cite[Proposition 2.3.1]{v}.

$(2)$ Let $Y$ be a generalized Poisson submanifold of a generalized complex manifold $X$. There exists an canonical holomorphic ideal $\mathcal{I}_Y$ for $Y$ (\cite[Proposition 3.12]{BCv}) and we can get the  blow-up $\widetilde{X}$ of $\mathcal{I}_Y$ in $X$.
Under suitable conditions, $\widetilde{X}$ has a generalized complex structure (\cite[Theorem 3.16]{BCv}).

$(3)$ Suppose that $Y$ is a submanifold of $X$ such that normal bundle $N_{Y/X}$ has a complex vector bundle structure and we choose a tubular embedding $l:N_{Y/X}\rightarrow X$.
Let $\mathcal{I}_{Y,W}\subseteq \mathcal{C}^\infty_W$ be the ideal on $W=l(N_{Y/X})$ satisfying that  $\mathcal{I}_{0,N_{Y/X}}=l^{\prime-1}\mathcal{I}_{Y,W}$, where $l^{\prime}:N_{Y/X}\rightarrow W$ is the diffeomorphism induced by $l$.
Since $\mathcal{I}_{Y,W}|_{W-Y}=\mathcal{C}^\infty_{W-Y}$, we obtain a sheaf $\mathcal{I}_Y\subseteq \mathcal{C}^\infty_X$ by gluing $\mathcal{I}_{Y,W}$ and $\mathcal{C}^\infty_{X-Y}$.
Clearly, $\mathcal{I}_Y$ is a holomorphic ideal for $Y$ satisfying  $l^{-1}\mathcal{I}_Y=\mathcal{I}_{0,N_{Y/X}}$.
Then the complex vector bundle structure of $N_{Y/X}$ induced by $\mathcal{I}_Y$ coincides with the one of $N_{Y/N_{Y/X}}$ induced by $\mathcal{I}_{0,N_{Y/X}}$ via the diffeomorphism $l^{\prime}$, hence is just the given one by Lemma \ref{NB-cvb}.
Moreover, if $Y$ is compact, such holomorphic ideal is unique up to (non-canonical) diffeomorphism by \cite[Corollary 3.3]{BCv}.
In such way, the blow-up of a compact generalized Poisson transversal $Y$ in  a generalized complex manifold $X$ is defined well, which carries a generalized complex structure, see \cite[Theorem 3.34]{BCv}.
\end{ex}

\begin{prop}\label{real-generalized}
The blow-up of a holomorphic ideal for $Y$ in $X$ is a generalized blow-up of $X$ along $Y$.
\end{prop}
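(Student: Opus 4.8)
The plan is to realize the blow-up $\pi\colon\widetilde{X}\to X$ of a holomorphic ideal $\mathcal{I}_Y$ for $Y$ in $X$ as a generalized blow-up in the sense of Section 5.2.1, and then to appeal to the uniqueness in Theorem \ref{exist unique real blow-up}. First I would equip the normal bundle $N:=N_{Y/X}$ with the complex vector bundle structure induced by $\mathcal{I}_Y$ (the one described just before Lemma \ref{NB-cvb}). Next, using the normal form for holomorphic ideals from \cite{BCv} (the same device that produces the existence part of Theorem \ref{exist unique real blow-up}), I would produce a smooth embedding $l\colon U\hookrightarrow X$ of an open neighbourhood $U$ of the zero section $Y\subseteq N$ onto an open neighbourhood $W$ of $Y$ in $X$ with $l|_Y=\mathrm{id}_Y$ and
\[
l^{-1}\mathcal{I}_Y\cdot\mathcal{C}^\infty_U=\mathcal{I}_{0,N}|_U .
\]
Let $\pi'\colon\widetilde{X}'\to X$ be the generalized blow-up of $X$ along $Y$ associated to this complex vector bundle structure of $N$ and this embedding $l$, and let $E$ be its exceptional divisor.

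It then remains to check that $\pi'$ is a blow-up of $\mathcal{I}_Y$ in $X$ in the sense of Definition \ref{real blow-up}; by Proposition \ref{blow-up fund} $(2)$ this may be done on the open cover $\{X-Y,\ W\}$ of $X$. Over $X-Y$: by Proposition \ref{blow-up-properties} $(1)$ the map $\pi'$ restricts to a diffeomorphism $\widetilde{X}'-E\tilde{\rightarrow}X-Y$, and $\mathcal{I}_Y|_{X-Y}=\mathcal{C}^\infty_{X-Y}$ by Definition \ref{ideal}; since $\mathcal{C}^\infty_{X-Y}$ (generated by $1$, whose zero set is empty, hence nowhere dense) is a divisor and the identity map trivially satisfies the required universal property, $\pi'|_{\pi'^{-1}(X-Y)}$ is the blow-up of $\mathcal{I}_Y|_{X-Y}$ in $X-Y$. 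Over $W$: under the identification $\pi'^{-1}(W)\cong\varphi^{-1}(U)$ coming from the construction of $\widetilde{X}'$, the restriction $\pi'|_{\pi'^{-1}(W)}$ is $l\circ\varphi|_{\varphi^{-1}(U)}$, where $\varphi\colon\mathcal{O}_{\mathbb{P}(N)}(-1)\to N$; by Proposition \ref{ideal-vb} $(4)$ the map $\varphi$ is the blow-up of $\mathcal{I}_{0,N}$ in $N$, so Proposition \ref{blow-up fund} $(1)$ $(i)$ gives that $\varphi|_{\varphi^{-1}(U)}\colon\varphi^{-1}(U)\to U$ is the blow-up of $\mathcal{I}_{0,N}|_U$ in $U$, and transporting through the diffeomorphism $l$ (which preserves blow-ups, since it preserves the universal property, and which carries $\mathcal{I}_{0,N}|_U$ to $\mathcal{I}_Y|_W$) shows that $\pi'|_{\pi'^{-1}(W)}$ is the blow-up of $\mathcal{I}_Y|_W$ in $W$. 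Hence by Proposition \ref{blow-up fund} $(2)$ the map $\pi'\colon\widetilde{X}'\to X$ is the blow-up of $\mathcal{I}_Y$ in $X$, and by the uniqueness in Theorem \ref{exist unique real blow-up} the blow-up $\pi\colon\widetilde{X}\to X$ of $\mathcal{I}_Y$ is isomorphic over $X$ to the generalized blow-up $\pi'$, as desired.

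The main obstacle is the normal form step, i.e.\ the existence of a tubular embedding $l$ with $l^{-1}\mathcal{I}_Y\cdot\mathcal{C}^\infty_U=\mathcal{I}_{0,N}|_U$; I would either cite \cite{BCv} or argue as follows. Cover $Y$ by opens $V_\alpha$ carrying submersions $z^\alpha=(z^\alpha_1,\dots,z^\alpha_r)\colon V_\alpha\to\mathbb{C}^r$ with $\langle z^\alpha_1,\dots,z^\alpha_r\rangle=\mathcal{I}_Y|_{V_\alpha}$ and $(z^\alpha)^{-1}(o)=Y\cap V_\alpha$ (Definition \ref{ideal}); on overlaps $z^\alpha=g_{\alpha\beta}z^\beta$ with $g_{\alpha\beta}\colon V_\alpha\cap V_\beta\to GL_r(\mathbb{C})$, and this cocycle is precisely the one defining the $\mathcal{I}_Y$-induced complex structure on $N$. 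Each $z^\alpha$, being a submersion onto $\mathbb{C}^r$ along $Y\cap V_\alpha$, yields a local model identifying a neighbourhood of $Y\cap V_\alpha$ in $X$ with a neighbourhood of the zero section in a trivialized piece of $N$; these local models are then patched by a partition of unity subordinate to $\{V_\alpha\}$, averaging in the fibre directions so that the resulting global embedding still pulls $\mathcal{I}_Y$ back to $\mathcal{I}_{0,N}$. The only mild point of care is to keep $U$ open so that $W=l(U)$ is open and Proposition \ref{blow-up fund} $(2)$ applies to $\{X-Y,W\}$; shrinking $U$ is harmless since the identity $l^{-1}\mathcal{I}_Y\cdot\mathcal{C}^\infty_U=\mathcal{I}_{0,N}|_U$ holds stalkwise.
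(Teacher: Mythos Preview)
Your proof is correct and follows essentially the same strategy as the paper: equip $N=N_{Y/X}$ with the complex structure induced by $\mathcal{I}_Y$, invoke \cite[Proposition 3.2]{BCv} to obtain a tubular embedding $l$ with $l^{-1}\mathcal{I}_Y=\mathcal{I}_{0,N}$, build the associated generalized blow-up $\pi'$, and then identify $\pi$ with $\pi'$ via Proposition~\ref{ideal-vb}~$(4)$ and the uniqueness in Theorem~\ref{exist unique real blow-up}.

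The only organizational difference is in the last step. The paper applies uniqueness locally over $W=l(N)$ to get a diffeomorphism $a\colon\pi^{-1}(W)\tilde{\rightarrow}\mathcal{O}_{\mathbb{P}(N)}(-1)$, and then glues $a$ with $\pi|_{\widetilde{X}-E}$ by hand to produce the global diffeomorphism $\widetilde{X}\tilde{\rightarrow}\widetilde{X}'$ over $X$. You instead verify directly that $\pi'$ satisfies the universal property of the blow-up of $\mathcal{I}_Y$ by checking it on the cover $\{X-Y,\,W\}$ via Proposition~\ref{blow-up fund}~$(2)$, and then invoke global uniqueness once. Your packaging is slightly cleaner since it exploits Proposition~\ref{blow-up fund}~$(2)$ (which the paper proves but does not actually use here) and avoids the explicit gluing check; the paper's version is more hands-on but equally short. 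Both arguments are complete, and the normal-form step is handled identically (citation of \cite{BCv}).
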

\begin{proof}
Let $\mathcal{I}_Y$ be a  holomorphic ideal for $Y$ and let $\pi:\widetilde{X}\rightarrow X$ be the blow-up of $\mathcal{I}_Y$ in $X$.
Then $N=N_{Y/X}$ has the  complex  vector bundle structure induced by $\mathcal{I}_Y$.
By \cite[Proposition 3.2]{BCv}, there is a tubular embedding $l:N\rightarrow X$ such that $l^{-1}\mathcal{I}_Y=\mathcal{I}_{0,N}$.
Set $W=l(N)$.
We use the notations in the diagram (\ref{blow-up-vb}).
Let $\pi^{\prime}:\widetilde{X}^{\prime}=(X-Y)\cup_{\phi}\mathcal{O}_{\mathbb{P}(N)}(-1)\rightarrow X$ be the blow-up of $X$ along $Y$ associated to the complex vector bundle structure of $N$ induced by $\mathcal{I}_Y$ and the embedding $l$.
Denote by $[x]$ the class in $\widetilde{X}^{\prime}$ of $x\in (X-Y)\sqcup \mathcal{O}_{\mathbb{P}(N)}(-1)$.
Then $\pi^{\prime}([x])=x$ for $x\in X-Y$ and $\pi^{\prime}([x])=l\circ\varphi(x)$ for $x\in \mathcal{O}_{\mathbb{P}(N)}(-1)$.
Let $l^{\prime}:N\rightarrow W$ be the diffeomorphism induced by $l$.
Since $l^{\prime-1}(\mathcal{I}_Y|_W)=\mathcal{I}_{0,N}$, $l^{\prime}\circ\varphi:\mathcal{O}_{\mathbb{P}(N)}(-1)\rightarrow W$ is  the blow-up of $\mathcal{I}_Y|_W$ in $W$ with $(l^{\prime}\circ\varphi)^{-1}(\mathcal{I}_{Y}|_W)\cdot\mathcal{C}^\infty_{\mathcal{O}_{\mathbb{P}(N)}(-1)}=\mathcal{I}_{0,\mathcal{O}_{\mathbb{P}(N)}(-1)}$ by Proposition \ref{ideal-vb} $(4)$.
By Proposition \ref{blow-up fund} $(1)$ $(i)$, $\pi|_{\pi^{-1}(W)}:\pi^{-1}(W)\rightarrow W$ is the blow-up of $\mathcal{I}_Y|_W$ in $W$.
By Theorem \ref{exist unique real blow-up}, there exists a diffeomorphism $a:\pi^{-1}(W)\rightarrow \mathcal{O}_{\mathbb{P}(N)}(-1)$ such that $(l^{\prime}\circ\varphi)\circ  a=\pi|_{\pi^{-1}(W)}$.
Define $f:\widetilde{X}\rightarrow \widetilde{X}^{\prime}$ as $f(\tilde{x})=[\pi(\tilde{x})]$ for $\tilde{x}\in \pi^{-1}(X-Y)$ and $f(\tilde{x})=[a(\tilde{x})]$ for $\tilde{x}\in \pi^{-1}(W)$.
Obviously, $f$ is defined well and $\pi^{\prime}\circ f=\pi$.
The restrictions  $f:f^{-1}\left(\pi^{\prime-1}(X-Y)\right)=\pi^{-1}(X-Y)\rightarrow \pi^{\prime-1}(X-Y)$ and $f:f^{-1}\left(\pi^{\prime-1}(W)\right)=\pi^{-1}(W)\rightarrow \pi^{\prime-1}(W)$ are induced by diffeomorphisms $\pi^{-1}(X-Y)\tilde{\rightarrow} X-Y$ and $a$ respectively, hence they are both diffeomorphisms.
Then $f$ is a diffeomorphism over $X$.
So $\pi:\widetilde{X}\rightarrow X$ is a generalized blow-up.
\end{proof}

\subsubsection{A proof of Theorem \ref{1.3}}
By Proposition \ref{blow-up-properties} $(6)$, $\widetilde{X}$ and $E$ are oriented,  so the pushforward $i_{E*}$ is defined well.
Set $U=X-Y$ and $\widetilde{U}=\widetilde{X}-E$.
By Proposition \ref{blow-up-properties} $(1)$, $\pi|_{\widetilde{U}}:\widetilde{U}\rightarrow U$ is a diffeomorphism.
By Proposition \ref{comm-long}, there is a commutative diagram of long exact sequences
\begin{displaymath}
\small{\xymatrix{
 \cdots H_{\Phi|_U}^k(U,\mathcal{L})\ar[d]^{(\pi|_{\widetilde{U}})^*}_{\cong} \ar[r]& H_{\Phi}^k(X,\mathcal{L}) \ar[d]^{\pi^*} \ar[r]^{i_Y^*}& H_{\Phi|_Y}^k(Y,\mathcal{L}|_Y) \ar[d]^{(\pi\mid_E)^*}\ar[r]& H_{\Phi|_U}^{k+1}(U,\mathcal{L})\ar[d]^{(\pi|_{\widetilde{U}})^*}_{\cong}\cdots\\
 \cdots H_{(\pi^{-1}\Phi)|_{\widetilde{U}}}^k(\widetilde{U},\pi^{-1}\mathcal{L})       \ar[r]& H_{\pi^{-1}\Phi}^k(\widetilde{X},\pi^{-1}\mathcal{L})     \ar[r]^{i_E^*\quad} & H_{(\pi^{-1}\Phi)|_E}^k(E,(\pi^{-1}\mathcal{L})|_E)     \ar[r]& H_{(\pi^{-1}\Phi)|_{\widetilde{U}}}^{k+1}(\widetilde{U},\pi^{-1}\mathcal{L})  \cdots. }}
\end{displaymath}
By (\ref{pro-formula2-dR}), $\pi_*\pi^*=id$, so $\pi^*$ is injective.
By the snake lemma, $i_E^*$ induces an isomorphism
$\textrm{coker}\pi^*\tilde{\rightarrow}\textrm{coker}(\pi|_E)^*$.
We get a commutative diagram of short exact sequences
\begin{equation}\label{commutative2}
\xymatrix{
 0\ar[r]&H_{\Phi}^k(X,\mathcal{L})\ar[d]^{i_Y^*} \ar[r]^{\pi^*\quad}& H_{\pi^{-1}\Phi}^k(\widetilde{X},\pi^{-1}\mathcal{L})\ar[d]^{i_E^*} \ar[r]& \textrm{coker}\pi^* \ar[d]^{\cong}\ar[r]& 0\\
 0\ar[r]&H_{\Phi|_Y}^k(Y,i_Y^{-1}\mathcal{L})       \ar[r]^{(\pi|_E)^*\qquad}& H_{(\pi^{-1}\Phi)|_E}^k(E,i_E^{-1}\pi^{-1}\mathcal{L})   \ar[r]^{} &  \textrm{coker} (\pi|_E)^*    \ar[r]& 0. }
\end{equation}
By Proposition \ref{blow-up-properties} $(4)$, $c_1(N_{E/\widetilde{X}})=h$, hence $i_E^*i_{E*}(\bullet)=h\cup\bullet$ on $H_{(\pi^{-1}\Phi)|_E}^\bullet(E,i_E^{-1}\pi^{-1}\mathcal{L})$ by Proposition \ref{key}.
With the similar proofs of Lemma \ref{special} and Proposition \ref{relation}, we easily show that (\ref{b-u-m1b}) and (\ref{b-u-m3b}) are inverse isomorphisms.
We prove Theorem \ref{1.3}.

\begin{prop}\label{mod2}
Let $\pi:\widetilde{X}\rightarrow X$ be a proper, surjective, preserving orientations, smooth map of  oriented smooth manifolds with degree one and let $\mathcal{L}$ be a local system of $\mathbf{k}$-modules of finite rank on $X$.
Assume that the set $E$ of critical points of $\pi$ and $Y=\pi(E)$ are \emph{(}not necessarily orientable\emph{)} smooth manifolds.
Then there exists an short exact sequence
\begin{displaymath}
\begin{aligned}
\small{\xymatrix{
0\ar[r] &H_{\Phi}^k(X,\mathcal{L})\ar[r]^{(\pi^*,i_Y^*)\quad\qquad\qquad} &H_{\pi^{-1}\Phi}^k(\widetilde{X},\pi^{-1}\mathcal{L})\oplus H_{\Phi|_Y}^k(Y,\mathcal{L}|_E)\ar[r]^{\qquad i_E^*-\pi|_E^*} & H_{(\pi^{-1}\Phi)|_E}^k(E,i_E^{-1}\pi^{-1}\mathcal{L})\ar[r]&0
}}
\end{aligned}
\end{displaymath}
for any $k$, where $i_Y:Y\rightarrow X$ and  $i_E:E\rightarrow X$ are inclusions.
\end{prop}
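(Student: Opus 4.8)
The plan is to realise the claimed sequence as the Mayer--Vietoris (descent) sequence attached to the square formed by $\pi\colon\widetilde X\to X$, the inclusion $i_Y\colon Y\hookrightarrow X$, the inclusion $i_E\colon E\hookrightarrow\widetilde X$ and $\pi|_E\colon E\to Y$, and then to collapse the resulting long exact sequence to a short one using that $\pi^*$ is injective.

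First I would settle the underlying geometry. Since $\pi$ is proper, $E$ (the critical set) is closed in $\widetilde X$ and $Y=\pi(E)$ is closed in $X$. Because $\pi$ is orientation preserving of degree one, Sard's theorem together with the local degree shows that $\pi^{-1}(x)$ consists of a single point for every regular value $x$; combined with properness and with the hypothesis that $E$ and $Y$ are boundaryless submanifolds, this forces $\pi$ to restrict to a diffeomorphism $\pi|_{\widetilde X\setminus E}\colon\widetilde X\setminus E\xrightarrow{\ \sim\ }X\setminus Y$, and in particular $\pi^{-1}(Y)=E$. I expect this to be the main obstacle; once it is in place the rest is formal. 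Alongside it one records the matching of coefficients and supports: $\pi\circ i_E=i_Y\circ(\pi|_E)$ gives $i_E^{-1}\pi^{-1}\mathcal{L}=(\pi|_E)^{-1}(\mathcal{L}|_Y)$, Proposition~\ref{inverse-paracompact}$(1)$ gives $(\pi^{-1}\Phi)|_E=(\pi|_E)^{-1}(\Phi|_Y)$, and the diffeomorphism above identifies $(\pi^{-1}\Phi)|_{\widetilde X\setminus E}$ with $\Phi|_{X\setminus Y}$.

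Next I would apply Lemma~\ref{comm-long} with $A=Y$, so that $B=\pi^{-1}(Y)=E$: this exhibits $\pi^*\colon\Gamma_{\Phi}(X,\mathcal{L}^{[\bullet]})\to\Gamma_{\pi^{-1}\Phi}(\widetilde X,(\pi^{-1}\mathcal{L})^{[\bullet]})$, together with the two restriction maps to $Y$ and to $E$, as a morphism of short exact sequences of complexes whose kernel terms --- the sections supported off $Y$, respectively off $E$ --- are identified via the isomorphism of complexes $(\pi|_{\widetilde X\setminus E})^*$. Thus the square of complexes is homotopy Cartesian, and its Mayer--Vietoris long exact sequence reads
\[
\cdots\to H^k_{\Phi}(X,\mathcal{L})\xrightarrow{(\pi^*,\,i_Y^*)}H^k_{\pi^{-1}\Phi}(\widetilde X,\pi^{-1}\mathcal{L})\oplus H^k_{\Phi|_Y}(Y,\mathcal{L}|_Y)\xrightarrow{\ i_E^*-(\pi|_E)^*\ }H^k_{(\pi^{-1}\Phi)|_E}(E,i_E^{-1}\pi^{-1}\mathcal{L})\xrightarrow{\ \partial\ }H^{k+1}_{\Phi}(X,\mathcal{L})\to\cdots,
\]
the relation $i_E^*\pi^*=(\pi|_E)^*i_Y^*$ ensuring that consecutive arrows compose to zero.

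Finally, by $(\ref{pro-formula2-dR})$ --- exactly as in the proof of Theorem~\ref{1.3} --- one has $\pi_*\pi^*=\mathrm{id}$ since $\deg\pi=1$, so $\pi^*$, and a fortiori $(\pi^*,i_Y^*)$, is injective on cohomology in every degree. Hence $\mathrm{Im}\,\partial=\ker\big((\pi^*,i_Y^*)\big)=0$ in all degrees, i.e.\ every connecting homomorphism $\partial$ vanishes, and the Mayer--Vietoris sequence breaks into the asserted short exact sequences, one for each $k$. (The orientations of $\widetilde X$ and $X$ enter only through the pushforward $\pi_*$; $E$ and $Y$ need not be orientable.)
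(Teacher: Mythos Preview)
Your proposal is correct and follows essentially the same route as the paper. The paper's proof is a one-liner: it asserts that the diagram~(\ref{commutative2}) established in the proof of Theorem~\ref{1.3} still holds here (via Lemma~\ref{comm-long}, the diffeomorphism on complements, and $\pi_*\pi^*=\mathrm{id}$), and then says the short exact sequence follows by diagram chasing. Your Mayer--Vietoris/homotopy-Cartesian packaging is exactly that diagram chase, and your use of Lemma~\ref{comm-long} and of (\ref{pro-formula2-dR}) matches the paper's; you simply spell out the geometric step $\pi^{-1}(Y)=E$ that the paper leaves implicit in ``with the same proof.'' One minor correction: the identification $(\pi^{-1}\Phi)|_E=(\pi|_E)^{-1}(\Phi|_Y)$ comes from Proposition~\ref{inverse-paracompact}$(2)$ (applied with $f=\pi$, $A=Y$, using $\pi^{-1}(Y)=E$), not from part~$(1)$.
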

\begin{proof}
With the same proof,  (\ref{commutative2}) still holds in such case, which  easily  imply the conclusion by diagram chasing.
\end{proof}

As an application of Theorem \ref{1.3}, we have the \emph{excess intersection formula}.
\begin{cor}\label{excess intersection}
Under the assumptions in Theorem \ref{1.3}, let  $Q$ be the quotient bundle $(\pi|_E)^*N_{Y/X}/O_E(-1)$ over $E$.
Then
\begin{equation}
\xymatrix{
  H_{\Phi|_Y}^{\bullet}(Y,\mathcal{L}|_Y)\ar[d]_{c_{r-1}(Q)\cup(\pi|_E)^*(\bullet)}  \ar[r]^{\quad i_{Y*}\quad}  & H_{\Phi}^{\bullet+2r}(X,\mathcal{L}) \ar[d]^{\pi^*} \\
  H_{(\pi|_E)^{-1}(\Phi|_Y)}^{\bullet+2r-2}(E,(\pi|_E)^{-1}(\mathcal{L}|_Y)) \ar[r]^{\qquad\qquad i_{E*}\quad}  & H_{\pi^{-1}\Phi}^{\bullet+2r}(Y,\pi^{-1}\mathcal{L}).}
\end{equation}
is a commutative diagram.
\end{cor}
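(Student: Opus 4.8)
The plan is to establish the identity $\pi^{*}\circ i_{Y*}=i_{E*}\circ\bigl(c_{r-1}(Q)\cup\bigr)\circ(\pi|_{E})^{*}$ on $H_{\Phi|_{Y}}^{\bullet}(Y,\mathcal{L}|_{Y})$. The key reduction is that, by Theorem \ref{1.3}, the map $(\pi_{*},\,G_{h,\Phi}^{-1}\circ i_{E}^{*},\,\dots,\,G_{h,\Phi}^{-r+1}\circ i_{E}^{*})$ is an isomorphism; equivalently (via the diagram (\ref{commutative2}) together with $\pi_{*}\pi^{*}=\mathrm{id}$), a class in $H_{\pi^{-1}\Phi}^{\bullet}(\widetilde{X},\pi^{-1}\mathcal{L})$ is determined by its images under $\pi_{*}$ and under $i_{E}^{*}$. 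So it is enough to check that the two sides of the asserted identity have equal images under $\pi_{*}$ and under $i_{E}^{*}$. Throughout one keeps track of the families of supports with Proposition \ref{inverse-paracompact}: pullbacks and cup products with Chern classes stay inside the prescribed families, and $i_{Y*}$, $i_{E*}$, $(\pi|_{E})_{*}$, $\pi_{*}$ are well defined along the proper maps $i_{Y}$, $i_{E}$, $\pi|_{E}$, $\pi$ between oriented manifolds (properness and orientability from Proposition \ref{blow-up-properties}).

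For the $\pi_{*}$-component, fix $\alpha\in H_{\Phi|_{Y}}^{\bullet}(Y,\mathcal{L}|_{Y})$. On one side $\pi_{*}\pi^{*}i_{Y*}\alpha=i_{Y*}\alpha$ by (\ref{pro-formula2-dR}). On the other side, since $\pi\circ i_{E}=i_{Y}\circ(\pi|_{E})$, functoriality of the pushforward and the projection formula (\ref{pro-formula1-dR}) give
\[
\pi_{*}i_{E*}\bigl(c_{r-1}(Q)\cup(\pi|_{E})^{*}\alpha\bigr)
 = i_{Y*}(\pi|_{E})_{*}\bigl(c_{r-1}(Q)\cup(\pi|_{E})^{*}\alpha\bigr)
 = i_{Y*}\bigl((\pi|_{E})_{*}c_{r-1}(Q)\cup\alpha\bigr),
\]
so it remains to show $(\pi|_{E})_{*}c_{r-1}(Q)=1$. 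This I deduce from the tautological exact sequence $0\to\mathcal{O}_{E}(-1)\to(\pi|_{E})^{*}N_{Y/X}\to Q\to0$ on $E=\mathbb{P}(N_{Y/X})$: by Whitney's formula $(1+h)\,c(Q)=(\pi|_{E})^{*}c(N_{Y/X})$ with $h=c_{1}(\mathcal{O}_{E}(-1))$, hence $c_{r-1}(Q)=\sum_{k=0}^{r-1}(-1)^{k}h^{k}\cup(\pi|_{E})^{*}c_{r-1-k}(N_{Y/X})$; applying $(\pi|_{E})_{*}$ together with the projection formula and the fibre integrals $(\pi|_{E})_{*}h^{k}=0$ for $0\le k\le r-2$ and $(\pi|_{E})_{*}h^{r-1}=(-1)^{r-1}$ (recorded in Sect. 5.1) leaves only the $k=r-1$ term, which equals $(-1)^{r-1}(-1)^{r-1}=1$.

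For the $i_{E}^{*}$-component, use $i_{E}^{*}\pi^{*}=(\pi|_{E})^{*}i_{Y}^{*}$ and the self-intersection formula (Proposition \ref{key}), which gives $i_{Y}^{*}i_{Y*}\alpha=c_{r}(N_{Y/X})\cup\alpha$ since $N_{Y/X}$ carries a complex structure (of rank $r$) and its top Chern class is the Euler class $[Y]|_{Y}$; thus $i_{E}^{*}\pi^{*}i_{Y*}\alpha=(\pi|_{E})^{*}\bigl(c_{r}(N_{Y/X})\cup\alpha\bigr)$. On the other side, the self-intersection formula for $E$ in $\widetilde{X}$ — with $N_{E/\widetilde{X}}\cong\mathcal{O}_{E}(-1)$ by Proposition \ref{blow-up-properties}(4), so that $[E]|_{E}=c_{1}(\mathcal{O}_{E}(-1))=h$; this is precisely the computation $i_{E}^{*}i_{E*}(\bullet)=h\cup\bullet$ already carried out in the proof of Theorem \ref{1.3} — yields $i_{E}^{*}i_{E*}\bigl(c_{r-1}(Q)\cup(\pi|_{E})^{*}\alpha\bigr)=h\cup c_{r-1}(Q)\cup(\pi|_{E})^{*}\alpha$. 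These two classes agree because the degree-$r$ part of $(1+h)\,c(Q)=(\pi|_{E})^{*}c(N_{Y/X})$ reads $(\pi|_{E})^{*}c_{r}(N_{Y/X})=c_{r}(Q)+h\cup c_{r-1}(Q)=h\cup c_{r-1}(Q)$, using $\rk Q=r-1$.

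Combining the last two steps, $\pi_{*}$ and $i_{E}^{*}$ take the same values on $\pi^{*}i_{Y*}\alpha$ and on $i_{E*}\bigl(c_{r-1}(Q)\cup(\pi|_{E})^{*}\alpha\bigr)$ for every $\alpha$, so by Theorem \ref{1.3} the two composite maps coincide, which is the claimed commutativity. The one point that needs genuine care is fixing the sign conventions in the two Chern-class identities $(\pi|_{E})_{*}c_{r-1}(Q)=1$ and $h\cup c_{r-1}(Q)=(\pi|_{E})^{*}c_{r}(N_{Y/X})$ compatibly with the paper's convention $h=c_{1}(\mathcal{O}_{E}(-1))$ and with the fibre integral $(\pi|_{E})_{*}h^{r-1}=(-1)^{r-1}$; once that normalization is pinned down the signs cancel and the rest is formal, modulo the routine bookkeeping of supports.
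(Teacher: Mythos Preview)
Your argument is correct and follows essentially the same route as the paper: both proofs rest on the Whitney identities $h\cup c_{r-1}(Q)=(\pi|_E)^*c_r(N_{Y/X})$ and $(\pi|_E)_*c_{r-1}(Q)=1$, the self-intersection formula $i_E^*i_{E*}=h\cup\bullet$, the functoriality $\pi_*i_{E*}=i_{Y*}(\pi|_E)_*$, and the injectivity coming from Theorem~\ref{1.3}. The only organisational difference is that the paper starts from $\alpha=i_{E*}(c_{r-1}(Q)\cup(\pi|_E)^*\sigma)$, decomposes it via (\ref{b-u-m1b}), reads off $\gamma_i=0$ from the shape of $i_E^*\alpha$, and then identifies $\beta=\pi_*\alpha=i_{Y*}\sigma$; you instead compare both sides under $(\pi_*,i_E^*)$ directly, which requires one extra (but available) input, namely the self-intersection formula $i_Y^*i_{Y*}=c_r(N_{Y/X})\cup\bullet$ for $Y\subset X$ from Proposition~\ref{key}.
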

\begin{proof}
The total Chern classes satisfy  $c(O_E(-1))\cup c(Q)=(\pi|_E)^*c(N_{Y/X})$, which implies that
\begin{equation}\label{Chern class 1}
h\cup c_{r-1}(Q)=(\pi|_E)^*c_r(N_{Y/X}),
\end{equation}
\begin{equation}\label{Chern class 2}
c_{r-1}(Q)=\sum\limits_{i=0}^{r-1}(-1)^ih^i\cup(\pi|_E)^*c_{r-1-i}(N_{Y/X}).
\end{equation}
Notice that $(\pi|_{E})_*h^i=0$  for $0\leq i\leq r-2$ and  $(\pi|_{E})_*h^{r-1}=(-1)^{r-1}$, see Sect. 4.3.
By (\ref{pro-formula2-dR}),
\begin{equation}\label{Chern class 3}
(\pi|_E)_*(c_{r-1}(Q))=1.
\end{equation}
Fix an integer $k$.
For $\sigma\in H_{\Phi|_Y}^{\bullet}(Y,\mathcal{L}|_Y)$, set $\alpha=i_{E*}(c_{r-1}(Q)\cup (\pi|_E)^*\sigma)$.
By Proposition \ref{key} and (\ref{Chern class 1}),
\begin{equation}\label{self-1}
i_E^*\alpha=h\cup c_{r-1}(Q)\cup (\pi|_E)^*\sigma=(\pi|_E)^*(c_r(N_{Y/X})\cup\sigma).
\end{equation}
By Theorem \ref{1.3}, $\alpha=\pi^*\beta+\sum\limits_{i=1}^{r-1}i_{E*} (h^{i-1}\cup (\pi|_E)^*\gamma_i)$
for unique $\beta\in H_{\Phi}^{k+2r}(X,\mathcal{L})$ and $\gamma_i\in H_{\Phi|_Y}^{k+2r-2i}(Y,\mathcal{L}|_Y)$.
By Proposition \ref{key},
\begin{equation}\label{self-2}
i_{E}^*\alpha=(\pi|_E)^*i_Y^*\beta+\sum_{i=1}^{r-1}h^i\cup (\pi|_E)^*\gamma_i.
\end{equation}
Comparing (\ref{self-1}) and (\ref{self-2}), $\gamma_i=0$ for $1\leq i\leq r-1$ by Proposition \ref{3}.
Then  $\alpha=\pi^*\beta$.
So
\begin{displaymath}
\begin{aligned}
\beta=&\pi_*\alpha\qquad\qquad\qquad\qquad\qquad\qquad(\mbox{by  (\ref{pro-formula2-dR})})\\
=&i_{Y*}(\pi|_E)_*(c_{r-1}(Q)\cup (\pi|_E)^*\sigma) \mbox{ }\mbox{ }\mbox{ }(\mbox{by the definition of $\alpha$})\\
=&i_{Y*}\sigma  \qquad\qquad\qquad\qquad\qquad\quad\mbox{ }\mbox{ }(\mbox{by  (\ref{pro-formula2-dR}) and (\ref{Chern class 3})}).
\end{aligned}
\end{displaymath}
Hence, $\alpha=\pi^*i_{Y*}\sigma$.
We complete the proof.
\end{proof}

%==============================================================

\end{document}